\documentclass[11pt]{amsart} \textwidth=14.5cm \oddsidemargin=1cm
\usepackage{amsmath,wasysym,array,longtable}
\usepackage{amsxtra}
\usepackage{amscd}
\usepackage{amsthm}
\usepackage{enumitem}
\usepackage{amsfonts}
\usepackage{amssymb}
\usepackage{eucal}
\usepackage{graphics, color}
\usepackage{hyperref,mathrsfs}
\usepackage[dvipsnames]{xcolor}
\usepackage{tikz}
\usepackage{tikz-cd}
\usepackage{stmaryrd}
\usepackage[all]{xy}
\usepackage{hyperref}
\usepackage{color}
\usepackage{bbm} 
\usepackage{upgreek}
\usepackage{rotating}
\usepackage[T1]{fontenc}
\allowdisplaybreaks

\hypersetup{colorlinks,linkcolor=blue, urlcolor=blue,
	citecolor=blue}
\newcommand{\Ql}{\overline{\mathbb Q}_{\ell}}
\newcommand{\IC}{\operatorname{IC}}

\newcommand{\GL}{\operatorname{GL}}

\newcommand{\DD}{\mathbb{D}}
\newcommand{\nDD}{\mathfrak{D}}
\newcommand{\boxt}{\boxtimes}

\newtheorem{thm}{Theorem} [section]
\newtheorem{lem}[thm]{Lemma}
\newtheorem{cor}[thm]{Corollary}
\newtheorem{prop}[thm]{Proposition}

\theoremstyle{definition}

\newtheorem{rem}[thm]{Remark}

\newcommand{\e}[0]{\mathbf{e}}

\newcommand{\f}[0]{\mathbf{f}}

\newcommand{\bd}[0]{\mathbf{d}}

\newcommand{\bt}[0]{\mathbf{t}}

\numberwithin{equation}{section}

\newcommand{\A}{\mathcal A}

\newcommand{\End}{{\mathrm{End}}}

\newcommand{\ff}{\mathtt{f}}
\newcommand{\sF}{{\mathscr F}}

\newcommand{\Hom}{{\mathrm{Hom}}}
\newcommand{\id}{\mathbbm{1}}

\newcommand{\q}{\mathbf{q}}

\newcommand{\T}{\mathbb T}

\newcommand{\Mat}{\mathrm{Mat}}
\newcommand{\nc}{\newcommand}
\nc{\redtext}[1]{\textcolor{red}{#1}}
\nc{\red}[1]{\redtext{ #1}}
\nc{\browntext}[1]{\textcolor{brown}{#1}}
\nc{\greentext}[1]{\textcolor{green}{#1}}
\nc{\bluetext}[1]{\textcolor{blue}{#1}}
\nc{\brown}[1]{\browntext{ #1}}
\nc{\green}[1]{\greentext{ #1}}
\nc{\blue}[1]{\bluetext{ #1}}
\providecommand{\Ql}{\overline{\mathbb Q}_{\ell}}
\providecommand{\IC}{\operatorname{IC}}
\providecommand{\Fq}{\mathbb F_q}
\providecommand{\Fqq}{\mathbb F_{q^2}}
\providecommand{\Gr}{\operatorname{Gr}}

\providecommand{\sh}[1]{\langle #1\rangle}
\providecommand{\const}[1]{\underline{\Ql}_{#1}}
\providecommand{\Flag}{\operatorname{Fl}}

\providecommand{\PP}{\mathbb P}
\providecommand{\tIC}{\widetilde{\IC}}
\providecommand{\cP}{\mathcal P}
\providecommand{\et}{\mathrm{\acute{e}t}}
\providecommand{\sm}[1]{\left(\begin{smallmatrix}#1\end{smallmatrix}\right)}

\title[Non-split groups and $\mathrm{i}$Schur algebras]
{Non-split groups of Lie type and $\mathrm{i}$Schur algebras with unequal parameters}

\author[Li Luo]{Li Luo}
\author[Rui Song]{Rui Song}
\author[Yang Yang]{Yang Yang}
\address{School of Mathematical Sciences, Key Laboratory of MEA (Ministry of Education) \& Shanghai Key Laboratory of PMMP, East China Normal University, Shanghai 200241, China}
\email{lluo@math.ecnu.edu.cn (Luo)}
\email{51265500035@stu.ecnu.edu.cn (Song)} 
\email{52275500011@stu.ecnu.edu.cn (Yang)}

\begin{document}

\begin{abstract}
   We present a geometric framework for realizing quantum Schur algebras with unequal parameters, via constructible sheaves on flag varieties arising from non-split groups of Lie type. In the special cases of general unitary groups and minus-type orthogonal groups, this framework yields the iSchur algebras with unequal parameters, which are closely related to the quasi-split iquantum groups of type AIII. Such a geometric realization enables the construction of canonical bases, integral forms, and monoidal categorification for these unequal-parameter iSchur algebras and the corresponding iquantum groups.
 This material is not a formal generalization from the equal-parameter case to the unequal-parameter case. A nontrivial arithmetic structure is imposed on the geometric structure of split groups, giving rise to many new phenomena. For example, the support varieties of generators of an integral form may not be rationally smooth, the categorification is provided by the category of descent data of semisimple complexes fixed by the twisted Frobenius, and the positivity of the canonical basis is established using a refined covering.
\end{abstract}

\maketitle
\tableofcontents

\section{Introduction}
\subsection{Background}
%
In geometric representation theory, one of the initial steps is to realize algebraic structures geometrically. A foundational example is Iwahori's pioneering construction of the Hecke algebra associated with a finite Chevalley group $\mathrm{G}$, as the convolution algebra of $\mathrm{G}$-invariant functions on the product of two complete flag varieties (see \cite{Iw64}), where the parameter $q$ arises as the size of the residue field $\mathbb{F}_q$. This construction was generalized by Beilinson, Lusztig and MacPherson (BLM) to realize quantum Schur algebras of type A in their influential work \cite{BLM90}, using $n$-step flags in place of complete flags. Furthermore, they introduced a stabilization procedure to obtain quantum $\mathfrak{gl}$. See also \cite{Lu99,DF15} for the BLM-type realization of affine Schur algebras and quantum affine $\mathfrak{gl}$.

However, for more than two decades, the construction along the line of Beilinson-Lusztig-MacPherson could not be extended to other classical quantum groups. The reason lies in a fundamental fact: unlike the type A case, quantum orthogonal and symplectic groups do not directly form a Schur-type duality with the corresponding Hecke algebras. After Bao and Wang \cite{BW18} established a Schur-type duality between iquantum groups of type AIII and Hecke algebras of type B/C, this puzzle was eventually solved in \cite{BKLW18}, which shows that the stabilization procedure for quantum Schur algebras of type B/C does not produce quantum orthogonal or symplectic groups, but rather quasi-split iquantum groups of type AIII. For this reason, we refer to the quantum Schur algebras associated with Hecke algebras of type B/C as iSchur algebras. This geometric construction was later extended to affine iSchur algebras and affine iquantum groups of type AIII in \cite{FLLLW20,FLLLW23}. Furthermore, there are also some attempts \cite{FL15, DLZ25, CF24} focusing on quantum Schur algebras of type D.

In contrast to the type A case, the iSchur algebras and the iquantum groups, as well as the Hecke algebras of types B/C, inherently involve two parameters (cf. \cite{BWW18,LL21}), owing to the non-simply-laced Dynkin diagrams. In \cite{LL21}, Lai and the first author employed the unequal-parameter Hecke algebras of types B/C to give a BLM-type realization for iSchur algebras and type AIII iquantum groups with two arbitrary parameters (see also \cite{LY27} for its affinization with three parameters), generalizing the case of equal parameters studied in \cite{BKLW18}. We note that the work in \cite{LL21, LY27} is purely algebraic and involves no geometric objects. Consequently, powerful tools and frameworks from geometric representation theory (such as Springer theory, geometric Kazhdan-Lusztig theory, and the geometric Langlands correspondence) are not available for further study of these unequal-parameter iSchur algebras and iquantum groups.
Although geometric architectures in the spirit of Iwahori's realization have not yet been fully investigated for arbitrary choices of the two parameters, some special unequal-parameter cases have been established for Hecke algebras via non-split groups of Lie type. These special unequal parameters encode the Galois action on the root datum of a non-split group. The equal-parameter case is therefore merely the special case in which the Galois action is trivial and the group is split.

It is remarkable that Kato \cite{Ka09} provided an equivariant K-theoretic construction of affine Hecke algebras of type C with three arbitrary parameters, which was generalized by Antor \cite{An25} to all affine Hecke algebras of types B/C/F/G with two arbitrary parameters. In \cite{LXY26}, the first and third authors, together with Xu, gave an equivariant K-theoretic construction of affine iSchur algebras and iquantum groups with three or two arbitrary parameters based on the achievements of Kato and Antor. In all of these works, unequal parameters arise by a different mechanism. They do not come from the Galois action induced by field extensions, but rather from the so-called exotic representations, which are direct sums of two or three submodules whose nonzero weight sets correspond to the set of short roots or the set of long roots. Although the equivariant K-theoretic realization (as a coherent sheaf construction) and the aforementioned BLM-type realization (as a constructible sheaf construction) constitute a form of Langlands reciprocity (cf. Bezrukavnikov's derived equivalence \cite{Be16}), there is currently no satisfactory explanation linking the two different mechanisms by which unequal parameters arise. Namely, Kato's exotic method has not yet been applied to a constructible sheaf construction.

\subsection{New contributions}

The present paper aims to establish a geometric framework on a constructible sheaf construction for iSchur algebras with two specific unequal parameters, using non-split groups as a foundation. 
Although a general construction is provided in Section~\ref{sec:generalframework}, we focus on two non-split groups, the general unitary group $\mathbf{U}_D$ and the minus-type orthogonal group $\mathbf{O}^-_{2d+2}$, in the remaining sections. Each of them produces an $\imath$-family and a $\jmath$-family of iSchur algebras with unequal parameters, denoted by $\mathbf{S}^{\mathfrak{c},D}_{n,d}$ ($\mathfrak{c}=\imath,\jmath$) for $\mathbf{U}_D$, and $\mathbf{S}^{\mathfrak{c},O}_{n,d}$ ($\mathfrak{c}=\imath,\jmath$) for $\mathbf{O}^-_{2d+2}$. 
The main results of this paper are the geometric approaches to their canonical bases, integral forms, and monoidal categorifications, via constructible sheaves on flag varieties.

To the best of our knowledge, this is the first work to study iSchur algebras and iquantum groups via non-split groups. Since a nontrivial arithmetic structure is imposed on top of the geometric one, we emphasize that this paper is neither a straightforward generalization of the work in \cite{BLM90,BKLW18,FL15} from split groups to non-split groups, nor merely a geometric formulation of some results in \cite{LL21}. In fact, it goes far beyond the results in these references, as well as several other related works, in the following respects.

First, in studying the unequal-parameter case with non-split groups, we find several interesting phenomena that can not be observed in the equal-parameter case. When we consider the non-split group $\mathbf{U}_D$ and its split form $\mathbf{GL}_D$, some formulations, such as the dimension of the stabilizer of a pair of flags, simultaneously exhibit characteristics of both type A and type B/C. They essentially arise through the descent procedure that takes a split structure over $\mathbb{F}_{q^2}$ (type A) to a non-split one over $\mathbb{F}_q$ (type B/C).
When we study the iSchur algebras $\mathbf{S}^{\mathfrak{c},O}_{n,d}$, we find that, on the $\mathbb{F}_q$-rational points of double flag varieties, the orbits under the minus-type orthogonal group $\mathbf{O}^-_{2d+2}(\mathbb{F}_q)$ coincide with those under the minus-type special orthogonal group $\mathbf{SO}^-_{2d+2}(\mathbb{F}_q)$. This is different from the split situation for orthogonal and special orthogonal groups, where some $\mathbf{O}_{2d+2}(\mathbb{F}_q)$-orbits split into two distinct $\mathbf{SO}_{2d+2}(\mathbb{F}_q)$-orbits. In some recent work, such as \cite[Appendix A]{LSX26} for a coherent sheaf construction of iquantum groups, it has been observed that it is more natural to choose the orthogonal $\mathbf{O}_{2d+2}$ rather than the special orthogonal $\mathbf{SO}_{2d+2}$. However, this paper shows from another perspective that, when we perform the Frobenius descent from the split case over $\mathbb{F}_{q^2}$ to the non-split case over $\mathbb{F}_q$, we have to adopt $\mathbf{SO}_{2d+2}(\mathbb{F}_{q^2})$-orbits, because the two $\mathbf{SO}_{2d+2}(\mathbb{F}_{q^2})$-orbits arising from the same $\mathbf{O}_{2d+2}(\mathbb{F}_{q^2})$-orbit are precisely swapped by the twisted Frobenius $F$.

Secondly, this paper gives us an opportunity to understand and clarify many of the algebraic formulations given in \cite{BWW18,LL21}. In the iquantum group of type $\mathrm{AIII}_{2n-1}$ and its corresponding iSchur algebra, there is an extra generator $\mathbf{t}$. In the equal-parameter case, the support variety of $\mathbf{t}$ is rationally smooth, by which a sheaf-theoretic description of $\mathbf{t}$ was given easily in \cite[\S5.3]{BKLW18}. But in the unequal-parameter case, the support variety of $\mathbf{t}$ is no longer rationally smooth in general.
Although a unified formulation for $\mathbf{t}$ was provided in \cite{BWW18,LL21} in the case of two arbitrary parameters, we will clarify in this paper that for different non-split groups when different unequal parameters are assigned, this unified $\mathbf{t}$ is not always a shadow of a semisimple complex. We introduce a modified $\mathbf{t}'$, which is particularly well adapted to canonical bases and integral forms of iSchur algebras. The associated Kazhdan-Lusztig polynomials for defining $\mathbf{t}'$ are subtle to compute, and may have negative coefficients. This negativity does not seem new, since it was already known that in the case of unequal parameters, the positivity property fails for the canonical basis. But through the study in this paper, we have identified that these negative coefficients arise from the influence of the Frobenius trace, and thus establish a positivity property for the canonical basis on a certain refined covering of the iSchur algebra.

Finally, though by no means exhaustively, we present or strengthen several results that were not known or written down even for the equal-parameter case. We provide a monoidal categorification of iSchur algebras $\mathbf{S}^{\mathfrak{c},\square}_{n,d}$ (and hence for iquantum groups) by the category of descent data of the $F$-fixed semisimple complexes. In doing so, we use the convolution of mixed complexes on flag varieties to prove that the category of semisimple complexes is closed under convolution. This step is also valid to give a usual monoidal categorification of Schur algebras with equal parameters by semisimple complexes, corresponding to the geometric constructions in \cite{BKLW18,FL15}.
These categorifications allow us to construct bar involutions on iSchur algebras via Verdier duality, whether equal or unequal parameters. The bar involutions obtained from this construction are automatically $\mathbb{Z}$-algebra involutions. Although this conclusion is presumably known to experts in the case of equal parameters, it does not appear to have been written down in the literature.

\subsection{Follow-up work}
Let us briefly discuss some follow-up work that we have begun to prepare.

A natural continuation of this paper is to develop an affine generalization of the geometric framework established in this paper. More precisely, we aim to construct affine iSchur algebras and affine iquantum groups with unequal parameters geometrically, using non-split groups over the function field $\mathbb{F}_q((\epsilon))$.

In light of Langlands reciprocity, we will further study equivariant K-theoretic realizations of these unequal-parameter iSchur algebras, and then obtain their finite-dimensional modules by Springer theory. 

There have been several works generalizing Bezrukavnikov's derived equivalence for affine Hecke algebras to the setting of non-split reductive groups. For a Frobenius-equivariant formulation in the unramified case,
see \cite[Theorem~5.1 \& Remark~5.2]{Zh25}; for the tamely ramified equal-characteristic case, see \cite[Theorem~0.1.1]{Ci25}. We expect that our future work will extend their categorifications to affine iSchur algebras via constructible sheaves on partial flag varieties and coherent sheaves on generalized Steinberg varieties.

\subsection{Organization}
 The paper is organized as follows. 
 
 In Section~\ref{sec:generalframework}, we recall some basic facts about the Frobenius morphism $F$ associated with a connected reductive group $\mathbf{G}$, and then provide a framework for realizing geometric Schur duality between quantum Schur and Hecke algebras with unequal parameters via convolution constructions of $\mathbf{G}(\overline{\mathbb{F}_q})^F$-invariant functions on double flag varieties.

 In Section~\ref{sec:UandO}, we recall two non-split algebraic groups: the general Unitary group $\mathbf{U}_D$ and the minus type orthogonal group $\mathbf{O}^-_{2d+2}$. The attached twisted Frobenius $F$, the $N$-step isotropic flag varieties, and the orbits on products of these flag varieties are described in detail.

 In Section~\ref{sec:structure_Sj_U(D)}, we introduce the iSchur algebras $\mathbf{S}_{n,d}^{\mathfrak{c},\square}$ ($\mathfrak{c}=\imath,\jmath$; $\square=D,O$) with unequal parameters through non-split reductive groups $\mathbf{U}_D$ and $\mathbf{O}^-_{2d+2}$. We obtain multiplication formulas for their Chevalley generators, which help us to establish explicit maps from the iSchur algebras with two arbitrary parameters to  $\mathbf{S}_{n,d}^{\mathfrak{c},\square}$. 

Section~\ref{sec:canonicalintegral} is devoted to the canonical bases and integral forms of $\mathbf{S}_{n,d}^{\mathfrak{c},\square}$. In the process, we revise the extra generator $\mathbf{t}$ using its support variety and the associated Kazhdan-Lusztig polynomials.

In Section~\ref{sec:categorification}, we classify the $F$-descent data on semisimple complexes in
$D^b_{\mathrm{m},\mathbf{G}_{q^2}}\bigl(\pmb{\widetilde{\sF}}_\square^\mathfrak{c}\times\pmb{\widetilde{\sF}}_\square^\mathfrak{c}\bigr)$,
and thereby show that the category $\mathrm{Semis}^{\mathfrak{c},W}_\square$ of all descent data of the $F$-fixed semisimple complexes is a monoidal categorification of $\mathbf{S}^{\mathfrak{c},\square}_{n,d}$. As an application, we construct a bar involution on $\mathbf{S}^{\mathfrak{c},\square}_{n,d}$ via Verdier duality.

In Section~\ref{sec:iquantumgroups}, we add some material on iquantum groups. Using the projections from the iquantum groups onto the iSchur algebras, a weak categorification for the iquantum groups is obtained, with explicit formulations for generators. 
 
Appendix~\ref{app:mixed-convolution} is devoted to the convolution of mixed complexes on double flag varieties, which is used to prove that the category $\mathrm{Semis}^\mathfrak{c}_\square$ of semisimple complexes is closed under convolution. 

Appendix~\ref{AppendixB} shows that the positive descent objects do not form a monoidal subcategory, so that it is necessary to categorify $\mathbf{S}^{\mathfrak{c},\square}_{n,d}$ by the category $\mathrm{Semis}^\mathfrak{c}_\square$.

\subsection*{Notation and conventions}

Let $\ell\ne p>3$ be two primes, and $q$ a power of $p$. All sheaves and complexes are $\ell$-adic with
$\overline{\mathbb Q}_{\ell}$-coefficients.
All derived categories are taken with these coefficients.
Fix an embedding $\overline{\mathbb Q}_{\ell}\hookrightarrow\mathbb C$
and choose $q^{1/2}$ with a positive image.
The half Tate twist $\overline{\mathbb Q}_{\ell}(1/2)$ is the rank-one
Weil sheaf on $\operatorname{Spec}\mathbb F_q$ on which the geometric
Frobenius acts by $q^{-1/2}$.
For a complex $\mathcal F$, we write
\[
  \mathcal F\langle m\rangle=\mathcal F[m](m/2)
\]
for its normalized shift.

For an $\mathbb F_q$-variety $X$, the geometric Frobenius morphism on
its base change is $F=F^{\mathrm{abs}}_{q,X}\times\mathrm{id}$,
where $F^{\mathrm{abs}}_{q,X}$ is the absolute $q$-Frobenius of $X$.
We also write $F$ for the induced maps on rational or geometric points.
Both the geometric Frobenius morphism and these induced maps
will be referred to as Frobenius.
The relation with the Frobenius on geometric-point groups is explained in \S\ref{Frobenius} \& \S\ref{subsec:frobenius-revisited}.
Connectedness of varieties means geometric connectedness.
Stalks at rational points are geometric stalks with the Frobenius
actions supplied by the chosen Weil structures.
The symbols $\Gamma$ and $R\Gamma$ denote geometric global sections
and geometric derived global sections, respectively.
They carry the Frobenius actions induced by the chosen Weil structures.

The main notation is collected below.
Detailed definitions are given in the indicated sections.
In Sections~\ref{sec:UandO}--\ref{sec:categorification}, the bold flag and orbit symbols denote varieties,
while the corresponding non-bold symbols denote finite sets of rational points.
Other uses of the same symbol for a variety and its rational points are clear from the context.

\begingroup
\small
\setlength{\tabcolsep}{5pt}
\setlength{\LTpre}{5pt}
\setlength{\LTpost}{5pt}
\renewcommand{\arraystretch}{1.13}
\edef\NotationSavedTable{\number\value{table}}
\begin{longtable}{@{}>{\raggedright\arraybackslash}p{0.30\textwidth}
                    >{\raggedright\arraybackslash}p{\dimexpr0.70\textwidth-10pt\relax}@{}}
\hline
\textbf{Notation} & \textbf{Meaning} \\
\hline
\endfirsthead
\multicolumn{2}{@{}l}{\textit{Notation and conventions (continued)}}\\[3pt]
\hline
\textbf{Notation} & \textbf{Meaning} \\
\hline
\endhead
\hline
\endfoot
\hline
\endlastfoot

\multicolumn{2}{@{}l}{\textit{Parameters and algebraic groups}}\\*[3pt]
$N=2n+1$, $D$
& The number of flag steps and the unitary dimension
  $D=2d$ or $2d+1$ (\S\ref{ex:U_D}--\S\ref{HermitF_D}). \\[3pt]
$\mathfrak c$, $\square$
& $\mathfrak c\in\{\imath,\jmath\}$ labels the two families.
  $\square\in\{D, O\}$ labels the unitary and orthogonal cases;
  the $\square=\mathrm{SO}$ convention is specified below. \\[3pt]
$v$, $\mathbf q$, $\mathcal A$
& Indeterminates $\mathbf q$ and $v$ with $\mathbf q=v^2$, and the coefficient ring
  $\mathcal A=\mathbb Z[v,v^{-1}]$ (\S\ref{subsec:General}).
  The formal parameter $\mathbf q$ is distinct from the field size $q$. \\[3pt]
$\mathbf u$, $\mathbf v$
& The two independent parameters of the generic Hecke, Schur,
  and $\imath$quantum algebras (\S\ref{sec:Hecke} \& \S\ref{subsec:iquantum-groups}). \\[3pt]
$\mathbf G$, $\mathrm G$
& In Section~\ref{sec:generalframework}, the split algebraic group and its geometric-point
  group $\mathrm G=\mathbf G(\overline{\mathbb F}_q)$.
  
  \\[3pt]
$\mathbf G_{q^2}$
& The base change $\mathbf G\times_{\mathbb F_q}\mathbb F_{q^2}$
  (\S\ref{Frobenius}). \\[3pt]
$F_q$, $\tau$, $F$
& In \S\ref{Frobenius}, the standard split Frobenius, the diagram
  automorphism, and the twisted Frobenius $F=F_q\circ\tau$.
  The order of $\tau$ is two in the cases treated after Section~\ref{sec:generalframework}. \\[3pt]
$\mathrm G^F$, $\mathrm G(q)$
& The twisted finite group and the split finite group
  $\mathbf G(\mathbb F_q)$, respectively (\S\ref{Frobenius}). \\[3pt]
$W$, $W^F=W^\tau$, $\ell(\cdot)$
& The Weyl group, the folded Weyl group, and the length function
  on $W$, not on $W^F$ (\S\ref{subsec:nonsplit-groups}). \\[5pt]

\multicolumn{2}{@{}l}{\textit{Flags, orbits, and indices}}\\*[3pt]
$\Lambda_{n+1,d}$
& Weak compositions of $d$ into $n+1$ parts (\S\ref{HermitF_D}). \\[3pt]
$\pmb{\mathscr F}^{\mathfrak c}_{\square}$,
$\mathscr F^{\mathfrak c}_{\square}$
& The flag variety over $\mathbb F_q$ and its set of
  $\mathbb F_q$-rational points (\S\ref{HermitF_D} \& \S\ref{O-flag}). \\[3pt]
$\widetilde{\pmb{\mathscr F}}^{\mathfrak c}_{\square}$,
$\widetilde{\mathscr F}^{\mathfrak c}_{\square}$
& Its base change to $\mathbb F_{q^2}$ and the set of
  $\mathbb F_{q^2}$-rational points (\S\ref{sec:basechange}). \\[3pt]
$\Theta^{\mathfrak c}_{\square}$,
$\widetilde\Theta^{\mathfrak c}_{\square}$
& Double-flag orbit indices over $\mathbb F_q$ and in the split
  setting over $\mathbb F_{q^2}$ (\S\ref{HermitF_D}--\S\ref{sec:basechange}). \\[3pt]
$\pmb{\mathcal O}_A$, $\mathcal O_A$, $\widetilde{\mathcal O}_A$
& The orbit variety over $\mathbb F_q$ and its finite orbit of
  rational points. Tilded symbols denote the corresponding
  objects over $\mathbb F_{q^2}$ (\S\ref{HermitF_D}--\S\ref{sec:basechange}). \\[3pt]
$A^\vartheta$
& The involution on split orbit indices induced by $F$;
  it is central reversal of matrices in the unitary case
  (\S\ref{subsec:frobenius-orbits} \& \S\ref{subsec:so-orbits}). \\[3pt]
$\widetilde\Theta^{\mathfrak c}_{\mathrm{SO}}$
& The refined index set distinguishing the split
  $\mathrm{SO}$-orbits (\S\ref{subsec:so-orbits}).
  In Section~\ref{sec:categorification} and \S\ref{subsec:iquantum-categorification}, $\mathrm{O}$ is replaced by $\mathrm{SO}$;
  the flag varieties and Schur algebras are unchanged. \\[3pt]
$\operatorname{ro}(A)$,
$\operatorname{co}(A)$
& Row-sum and column-sum vectors of $A$ (\S\ref{sec:basechange}). \\[3pt]
$E_{ij}^{\theta}$
& $E_{ij}+E_{N+1-i,N+1-j}$, where $E_{ij}$ is a matrix unit
  (\S\ref{subsec:bruhatorder}). The superscript $\theta$ is distinct from
  the orbit involution $\vartheta$. \\[5pt]

\multicolumn{2}{@{}l}{\textit{Algebras, bases, and generators}}\\*[3pt]
$\mathbb H$, $\mathbb T_\ff$, $\mathbb S_{\mathtt f}$
& The generic two-parameter Hecke algebra, fock space and Schur algebra; 
  (\S\ref{sec:Hecke}). \\[3pt]
$\mathbb S_{n,d}^{\mathfrak c}$,
$\mathbf S_{n,d}^{\mathfrak c,\square}$
& The two-parameter $\imath$Schur algebra and its geometric
  specialization over $\mathcal A$ (\S\ref{flag F_D} \& \S\ref{subsec:specialization}). \\[3pt]
  $\ast$
  & Convolution of geometric Hecke algebra and Schur algebra (\ref{subsec:General}).  \\ [3pt]
$d(A)$, $r(A)$
& The dimension of the orbit indexed by $A$ and of its image
  under the first projection (\S\ref{subsec:normalization-D} \& \S\ref{subsec:normalization-O}). \\[3pt]
$e_A$, $[A]$
& The orbit characteristic function and the normalized standard
  basis element $[A]=v^{-d(A)+r(A)}e_A$
  (\S\ref{flag F_D}, \S\ref{subsec:normalization-D} \& \S\ref{subsec:normalization-O}). \\[3pt]
$P_{B,A}$, $\{A\}$
& The Kazhdan--Lusztig polynomial and the canonical
  basis element $\{A\}=\sum_{B\leq A}P_{B,A}[B]$ (\S\ref{subsec:canonical-basis}). \\[3pt]
$1_\lambda$
& The idempotent corresponding to the diagonal matrix $\lambda$
  (\S\ref{generator}). \\[3pt]
$\mathbf e_i$, $\mathbf f_i$,
$\mathbf d_a^{\pm1}$
& The generators of the geometric $\mathrm{i}$Schur algebras
  (\S\ref{generator}). \\[3pt]
$\mathbf t$, $\mathbf t'$
& The extra generator in the $\imath$-family and its
  modification (\S\ref{generator}--\S\ref{subsec:integral-form}). \\[3pt]
$\mathbb U_n^\jmath$,
$\mathbb U_n^\imath$
& The quasi-split $\imath$quantum groups of types
  $\mathrm{AIII}_{2n}$ and $\mathrm{AIII}_{2n-1}$.
  Their generators $e_i,f_i,d_a^{\pm1},t$ are unbold
  (\S\ref{subsec:iquantum-groups}). \\[5pt]

\multicolumn{2}{@{}l}{\textit{Mixed complexes and categorification}}\\*[3pt]
$D^b_{\mathrm m,\mathbf G}(X)$
& The bounded mixed $\mathbf G$-equivariant derived category
  of $\ell$-adic complexes on $X$ with
  $\overline{\mathbb Q}_{\ell}$-coefficients
  (\S\ref{subsec:ICC}). \\[3pt]
$\widetilde{\mathrm{IC}}_A$,
$\mathrm{IC}_A$, $\mathrm{IC}'_A$
& For $A\in\Theta^{\mathfrak c}_{\square}$, the intersection
  complexes on the split $\mathbb F_{q^2}$-orbit
  closure and its twisted and split $\mathbb F_q$-forms,
  respectively. They restrict to the constant sheaf in degree zero,
  rather than being perverse-normalized (\S\ref{subsec:ICC}). \\[3pt]
$\mathrm{Semis}^{\mathfrak c}_{\square}$
& The additive category generated by
  $\widetilde{\mathrm{IC}}_A\langle m\rangle$ for
  $A\in\widetilde\Theta^{\mathfrak c}_{\square}$ and $m\in\mathbb Z$
  (\S\ref{subsec:semisimple-complexes}). \\[3pt]
$\mathrm{Semis}^{\mathfrak c,F}_{\square}$,
$\mathrm{Semis}^{\mathfrak c,W}_{\square}$
& The full subcategory of $F$-stable objects and the category
  of objects with $F$-descent data, respectively (\S\ref{subsec:semisimple-complexes}). \\[3pt]
$\varphi$, $\Psi_{\mathcal F}$
& The descent datum $\varphi:F^*\mathcal F\xrightarrow{\sim}\mathcal F$
  and the prescribed $\mathbb F_{q^2}$-Weil structure,
  with $\varphi\circ F^*(\varphi)=\Psi_{\mathcal F}$ (\S\ref{subsec:semisimple-complexes}). \\[3pt]
$\widetilde{\mathrm{IC}}_A^{\pm}$,
$P_{\{B,B^\vartheta\}}$
& Positive and negative descents of an $F$-stable IC complex,
  and the exchange descent for a two-element orbit
  (Proposition~\ref{prop:main}). \\[3pt]
$\star$, $\diamond$, $\odot$, $\ast_\mathbf{P}$
& Convolution of complexes on $\mathbb F_{q^2}$-varieties, convolution of descent
  data, convolution on the descended $\mathbb F_q$-variety and twisted-product convolution with the middle variety $\mathbf{G}/\mathbf{P}$,
  respectively (\S\ref{subsec:convolution}, \S\ref{subsec:bar-involution}~\& \S \ref{app:convolution-models}). \\[3pt]
  $\otimes^{\mathbb{L}}$,
  $\boxtimes$, $\widetilde{\boxtimes}$
  & Derived tensor product, external product and twisted external product, respectively (\S \ref{subsec:convolution},~\S\ref{subsec:bar-involution}~\& \S\ref{app:convolution-models}).
  \\[3pt]
$K_0^\oplus(\mathcal C)$
& The split Grothendieck group of an additive category;
  normalized shifts give $v[M]=[M\langle-1\rangle]$
  (\S\ref{subsec:monoidal-categorification}). \\[3pt]
$\chi$, $\mathrm{ch}$
& The Frobenius trace homomorphism at $v=-q^{1/2}$ and
  the generic characteristic homomorphism (\S\ref{subsec:monoidal-categorification}). \\[3pt]
$\mathbb D$,
$\mathfrak D^{\mathfrak c}_{\square}$,
$\overline{(\,\cdot\,)}$
& Verdier duality, its dimension-normalized version,
  and the induced bar involution (\S\ref{subsec:bar-involution}). \\[3pt]
$\mathcal L_A$
& $\widetilde{\mathrm{IC}}_A^+\langle d(A)-r(A)\rangle$,
  with $\mathrm{ch}([\mathcal L_A])=\{A\}$ (\S\ref{subsec:iquantum-categorification}). \\[3pt]
$\mathsf R_{H,J}$
& The unshifted equivariant-descent equivalence between
  the double-flag and single-flag models (\S~\ref{app:convolution-models}). \\
\end{longtable}
\setcounter{table}{\NotationSavedTable}
\endgroup

\subsection*{Acknowledgments}
We thank Weiqiang Wang for his helpful comments on the positivity property using a refined covering. The work is partially supported by the National Key R\&D Program of China (No. 2024YFA1013802) and the NSF of China (Nos. 12371028, 125B2001 and 12631002).

\section{General construction} \label{sec:generalframework}
In this section, we provide a general framework for quantum Schur algebras with unequal parameters. In particular, we recall the Frobenius morphisms that define the $\mathbb{F}_q$-structure of non-split groups, by which we present a geometric realization of several quantum Schur algebras with unequal parameters via convolution algebras of invariant functions on products of flag varieties.

\subsection{Frobenius morphism}\label{Frobenius}
Let $q=p^r$ be a power of a prime number $p>3$, where the condition $p>3$ is imposed to avoid bad characteristic.
The Frobenius map (relative to the finite field $\mathbb{F}_q$ of $q$ elements)
$$
    F_q : \overline{\mathbb{F}_q} \to \overline{\mathbb{F}_q}, \quad x \mapsto x^q
$$
fixes the finite subfield $\mathbb{F}_q \subset \overline{\mathbb{F}_q}$ pointwise.
Let $\mathbf{G}$ be a split connected reductive algebraic group defined over $\mathbb{F}_q$.  
For any $\mathbb{F}_q$-algebra $R$, the set of $R$-points of $\mathbf{G}$ is
$$
    \mathbf{G}(R) = \operatorname{Hom}_{\mathbb{F}_q\text{-alg}}(\mathbb{F}_q[\mathbf{G}], R),
$$
where $\mathbb{F}_q[\mathbf{G}]$ denotes the coordinate ring of $\mathbf{G}$.  
Taking $R = \overline{\mathbb{F}_q}$, we obtain the group of $\overline{\mathbb{F}_q}$-points, which we denote by 
$\mathrm{G}:=\mathbf{G}(\overline{\mathbb{F}_q})$, e.g. the general linear group and the special orthogonal group over $\overline{\mathbb{F}_q}$: $$\mathrm{GL}_D:=\mathbf{GL}_D(\overline{\mathbb{F}_q}) \quad\text{and}\quad \mathrm{SO}_{2d+2}:=\mathbf{SO}_{2d+2}(\overline{\mathbb{F}_q}).$$  
The map $F_q$ induces a group automorphism, called the standard Frobenius morphism relative to $\mathbb{F}_q$ and still denoted by $F_q$, on $\mathrm{G}$ by
$$
F_q\!(g)(x)=F_q(g(x))= g(x)^q, \qquad
\forall g \in \mathrm{G}, x\in\mathbb{F}_q[\mathbf{G}].
$$
equivalently, if $\mathbf{G}$ is embedded as a closed subgroup of $\mathbf{GL}_n$ via equations with coefficients in $\mathbb{F}_q$, then $F_q$ acts entrywise by raising matrix entries to their $q$-th powers.

An endomorphism $F:\mathrm{G}\to \mathrm{G}$ is called a Frobenius morphism if some power $F^a$ is the standard Frobenius morphism relative to $\mathbb{F}_{p^b}$ for some integers $a,b\geq 1$. The fixed-point subgroup 
$$\mathrm{G}^F=\{g\in \mathrm{G}~|~F(g)=g\}$$ is then a finite group of Lie type. In particular, the standard Frobenius morphism $F_q$ itself is a Frobenius morphism, and $\mathrm{G}^{F_q}$ is the split group 
$$\mathrm{G}(q):=\mathbf{G}(\mathbb{F}_q)$$ consisting of the $\mathbb{F}_q$-points of $\mathrm{G}$. 

Other Frobenius morphisms can be obtained as follows.
Fix a maximal torus a Borel subgroup $T\subset B$ of $\mathrm{G}$ that are both $F_q$-invariant. Let $\tau$ be an automorphism of $\mathrm{G}$ defined over $\mathbb{F}_q$, of finite order $a$, preserving $T\subset B$ and inducing a nontrivial automorphism of the Dynkin diagram. Then $\tau$ commutes with $F_q$, so $F=F_q\circ\tau$ satisfies $F^a=F_{q^a}$ and is a Frobenius morphism preserving $T$ and $B$.

In the following, we always consider Frobenius morphisms (called twisted Frobenius morphisms) of the form 
$$F = F_q \circ \tau \quad\text{with $\tau$ inducing a nontrivial graph automorphism}.$$ 
In fact, nontrivial graph automorphisms occur only in the cases of types $A$, $D$ and $E_6$ illustrated in Figures 1-5 below. 

\begin{center}
\label{tab:diagrams}

\begin{tikzpicture}[thick, baseline=(current bounding box.center), scale=0.8]
\matrix [column sep={0.5cm}, row sep={0.35cm}, nodes={draw=none, inner sep=2pt}, ampersand replacement=\&] {
    \node(U1) [draw, circle, fill=white, scale=0.6, minimum size=5mm, inner sep=0pt, label={above:$1$}] {}; 
    \& \node(U2) [draw, circle, fill=white, scale=0.6, minimum size=5mm, inner sep=0pt, label={above:$2$}] {}; 
    \& \node(U3) {$\cdots$}; 
    \& \node(U4) [draw, circle, fill=white, scale=0.6, minimum size=5mm, inner sep=0pt, label={above:$d$}] {}; \\
    \& \& \& \\
    \node(L1) [draw, circle, fill=white, scale=0.6, minimum size=5mm, inner sep=0pt, label={below:$2d$}] {}; 
    \& \node(L2) [draw, circle, fill=white, scale=0.6, minimum size=5mm, inner sep=0pt, label={below:$2d-1$}] {}; 
    \& \node(L3) {$\cdots$}; 
    \& \node(L4) [draw, circle, fill=white, scale=0.6, minimum size=5mm, inner sep=0pt, label={below:$d+1$}] {}; \\
};
\draw (U1) -- (U2);
\draw (U2) -- (U3);
\draw (U3) -- (U4);
\draw (U4) -- (L4);
\draw (L1) -- (L2);
\draw (L2) -- (L3);
\draw (L3) -- (L4);
\draw[color=blue, <->, bend right, shorten >=3pt, shorten <=3pt] (L1) to (U1);
\draw[color=blue, <->, bend right, shorten >=3pt, shorten <=3pt] (L2) to (U2);
\draw[color=blue, <->, bend left, shorten >=3pt, shorten <=3pt] (L4) to (U4);
\node at (current bounding box.south) [yshift=-8pt] {Figure 1: Type $A_{2d}$};
\end{tikzpicture}
\hspace{0.5cm}
\begin{tikzpicture}[thick, baseline=(current bounding box.center), scale=0.8]
\matrix [column sep={0.5cm}, row sep={0.15cm}, nodes={draw=none, inner sep=2pt}, ampersand replacement=\&] {
    \node(U1) [draw, circle, fill=white, scale=0.6, minimum size=5mm, inner sep=0pt, label={above:$1$}] {}; 
    \& \node(U2) [draw, circle, fill=white, scale=0.6, minimum size=5mm, inner sep=0pt, label={above:$2$}] {}; 
    \& \node(U3) {$\cdots$}; 
    \& \node(U4) [draw, circle, fill=white, scale=0.6, minimum size=5mm, inner sep=0pt, label={above:$d-1$}] {}; \\
    \& \& \& \& \node(C) [draw, circle, fill=white, scale=0.6, minimum size=5mm, inner sep=0pt, label={right:$d$}] {}; \\
    \node(L1) [draw, circle, fill=white, scale=0.6, minimum size=5mm, inner sep=0pt, label={below:$2d-1$}] {}; 
    \& \node(L2) [draw, circle, fill=white, scale=0.6, minimum size=5mm, inner sep=0pt, label={below:$2d-2$}] {}; 
    \& \node(L3) {$\cdots$}; 
    \& \node(L4) [draw, circle, fill=white, scale=0.6, minimum size=5mm, inner sep=0pt, label={below:$d+1$}] {}; \\
};
\draw (U1) -- (U2);
\draw (U2) -- (U3);
\draw (U3) -- (U4);
\draw (L1) -- (L2);
\draw (L2) -- (L3);
\draw (L3) -- (L4);
\draw (U4) -- (C);
\draw (C) -- (L4);
\draw[color=blue, <->, bend right, shorten >=3pt, shorten <=3pt] (L1) to (U1);
\draw[color=blue, <->, bend right, shorten >=3pt, shorten <=3pt] (L2) to (U2);
\draw[color=blue, <->, bend left, shorten >=3pt, shorten <=3pt] (L4) to (U4);
\node at (current bounding box.south) [yshift=-8pt] {Figure 2: Type $A_{2d-1}$};
\end{tikzpicture}

\vspace{0.4cm}

\begin{tikzpicture}[thick, baseline=(current bounding box.center), scale=0.8]
\node (U1) at (0,0) [draw, circle, fill=white, scale=0.6, minimum size=5mm, inner sep=0pt, label={above:$1$}] {};
\node (U2) at (1.0,0) [draw, circle, fill=white, scale=0.6, minimum size=5mm, inner sep=0pt, label={above:$2$}] {};
\node (U3) at (2.0,0) [draw=none] {$\cdots$};
\node (U4) at (3.0,0) [draw, circle, fill=white, scale=0.6, minimum size=5mm, inner sep=0pt, label={above:$d-1$}] {};
\node (D1) at (4.0,0.6) [draw, circle, fill=white, scale=0.6, minimum size=5mm, inner sep=0pt, label={above:$d$}] {};
\node (D2) at (4.0,-0.6) [draw, circle, fill=white, scale=0.6, minimum size=5mm, inner sep=0pt, label={below:$d+1$}] {};
\draw (U1) -- (U2);
\draw (U2) -- (U3);
\draw (U3) -- (U4);
\draw (U4) -- (D1);
\draw (U4) -- (D2);
\draw[color=blue, <->, bend left=45, shorten >=3pt, shorten <=3pt] (D1) to (D2);
\node at (current bounding box.south) [yshift=-8pt] {Figure 3: Type $D_{d+1}$};
\end{tikzpicture}
\hspace{0.3cm}
\begin{tikzpicture}[thick, baseline=(current bounding box.center), scale=0.8]
\node (L1) at (0,0) [draw, circle, fill=white, scale=0.6, minimum size=5mm, inner sep=0pt, label={above:$1$}] {};
\node (L2) at (0.9,0) [draw, circle, fill=white, scale=0.6, minimum size=5mm, inner sep=0pt, label={above:$2$}] {};
\node (U1) at (1.8,0.6) [draw, circle, fill=white, scale=0.6, minimum size=5mm, inner sep=0pt, label={above:$3$}] {};
\node (U2) at (2.7,0.6) [draw, circle, fill=white, scale=0.6, minimum size=5mm, inner sep=0pt, label={above:$4$}] {};
\node (D1) at (1.8,-0.6) [draw, circle, fill=white, scale=0.6, minimum size=5mm, inner sep=0pt, label={below:$5$}] {};
\node (D2) at (2.7,-0.6) [draw, circle, fill=white, scale=0.6, minimum size=5mm, inner sep=0pt, label={below:$6$}] {};
\draw (L1) -- (L2);
\draw (L2) -- (U1);
\draw (U1) -- (U2);
\draw (L2) -- (D1);
\draw (D1) -- (D2);
\draw[color=blue, <->, bend left=30, shorten >=3pt, shorten <=3pt] (U1) to (D1);
\draw[color=blue, <->, bend left=30, shorten >=3pt, shorten <=3pt] (U2) to (D2);
\node at (current bounding box.south) [yshift=-8pt] {Figure 4: Type $E_6$};
\end{tikzpicture}
\hspace{0.3cm}
\raisebox{-0.25cm}{
\begin{tikzpicture}[thick, baseline=(current bounding box.center), scale=0.8]
\node (U1) at (0,0) [draw, circle, fill=white, scale=0.6, minimum size=5mm, inner sep=0pt, label={above:$2$}] {};
\node (D1) at (0.9,0) [draw, circle, fill=white, scale=0.6, minimum size=5mm, inner sep=0pt, label={above:$1$}] {};
\node (U2) at (1.8,0.6) [draw, circle, fill=white, scale=0.6, minimum size=5mm, inner sep=0pt, label={below:$3$}] {};
\node (U3) at (1.8,-0.6) [draw, circle, fill=white, scale=0.6, minimum size=5mm, inner sep=0pt, label={below:$4$}] {};
\draw (U1) -- (D1);
\draw (D1) -- (U2);
\draw (D1) -- (U3);
\draw[color=blue, ->, bend left=30, shorten >=3pt, shorten <=3pt] (U1) to (U2);
\draw[color=blue, ->, bend left=30, shorten >=3pt, shorten <=3pt] (U2) to (U3);
\draw[color=blue, ->, bend left=30, shorten >=3pt, shorten <=3pt] (U3) to (U1);
\node at (current bounding box.south) [yshift=-8pt] {Figure 5: Type $D_4$};
\end{tikzpicture}}

\end{center}

After Section~\ref{sec:generalframework}, we will focus on the cases associated with Figures 1-3, in which the order of $\tau$ is always $2$. Since we will frequently use descent from $\mathbb{F}_{q^2}$ to $\mathbb{F}_q$, we sometimes write $\mathbf{G}_{q^2}:=\mathbf{G}\times_{\mathbb{F}_q}\mathbb{F}_{q^2}$ for short.

 \subsection{Non-split groups of Lie type}\label{subsec:nonsplit-groups}
Let $r$ be the semisimple rank of the $\mathrm{G}$.
Let $Q=\mathrm{Hom}(T,\mathbb{G}_m)$ be the weight lattice of $\mathrm{G}$. Denote by $\Phi$ and $\Delta=\{\alpha_1,\alpha_2,\ldots,\alpha_{r}\}\subset\Phi$ the root system and the set of simple roots (with respect to $T\subset B$), respectively. Let $W = N_{\mathrm{G}}(T)/T$ be the Weyl group of $\mathrm{G}$ associated with $T$. The conjugation action of $N_{\mathrm{G}}(T)$ on $T$ induces a right action of $W$ on $Q$ by $(\alpha \cdot w) (t)=\alpha(\dot{w}t\dot{w}^{-1})$ for $\alpha\in Q, w\in W, t\in T$, where $\dot{w}\in N_{\mathrm{G}}(T)$ is a representative of $w$. Extending this action on the real vector space $E:=\mathbb{R}Q$, we identify $W$ as the reflection group of $E$ generated by simple reflections $s_i:=s_{\alpha_i} (i=1,2,\ldots,r)$. Denote $S=\{s_1, s_2,\ldots,s_r\}$.

Take the Frobenius $F=F_q\circ \tau$ with nontrivial $\tau$. The $F$-fixed point subgroup $\mathrm{G}^F$ of $\mathrm{G}$ is called a non-split group (or twisted group) of Lie type. Similarly, we denote the $F$-fixed point subgroups of $B$ and $T$ by $B^F$ and $T^F$, respectively. Since $T$ is $F$-invariant, we can define an $F$-action on $W = N_{\mathrm{G}}(T)/T$ by $F(gT) = F(g)T$. The associated $F$-fixed point subgroup is denoted by $W^F$. It is known (cf. \cite[Theorem 24.10]{MT11}) that $(B^F, N_{\mathrm{G}}(T)^F)$ is a $BN$-pair in $\mathrm{G}^F$ with the Weyl group $W^F$, which implies the Bruhat decomposition for $\mathrm{G}^F$ (refer to \cite[Proposition 4.4.1]{DM20}):
\begin{equation}\label{Bruhat}
    \mathrm{G}^F = \bigsqcup_{w\in W^F} B^F\dot{w} B^F,\quad \mbox{with cardinality $\left|B^F\dot{w}B^F\right| = q^{\ell(w)}\left|B^F\right|$},
\end{equation}
where $\ell(w)$ is the length function of $W$ (NOT of $W^F$).
Moreover, since the standard Frobenius map $F_q$ acts trivially on $W$, $W^F=W^\tau$ is also the $\tau$-fixed point subgroup.

Write $\{\mathcal{I}_1,\ldots, \mathcal{I}_d\}$ for the set of all $\tau$-orbits in $\Delta$. The parabolic subgroup $W_i$ of $W$ generated by the reflections $s_\alpha (\alpha \in \mathcal{I}_i)$ has a unique longest element $t_i$. It is known (refer to \cite[Lemma~C.1]{MT11}) that $W^F$ can be identified as the Weyl group associated with the folding Dynkin diagram, whose simple reflections are $\{t_1,t_2,\ldots, t_d\}$. We list these data in the following table.
\begin{table}[h!]
    \centering
    \caption{}
    \begin{tabular}{|c|c|c|}
        \hline
        $W$ & $W^F$ & $t_1, t_2,\ldots,t_d$ \\
        \hline
        $A_{2d}$ & $B_d$ & $s_1s_{2d},s_2s_{2d-1},\ldots, s_{d-1}s_{d+2}, s_ds_{d+1}s_d$ \\
        \hline
        $A_{2d-1}$ & $B_d$ & $s_1s_{2d-1},s_2s_{2d-2},\ldots,s_{d-1}s_{d+1},s_d$ \\
        \hline
        $D_{d+1}$ & $B_d$ & $s_1,s_2,\ldots,s_{d-1},s_ds_{d+1}$ \\
        \hline
        $E_6$ & $F_4$ & $s_1, s_2, s_3s_5, s_4s_6$ \\
        \hline
        $D_4$ & $G_2$ & $s_1, s_2s_3s_4$ \\
        \hline
    \end{tabular}
\label{tab1}
\end{table}

The $\tau$-action on $\Delta$ can be naturally extended to $Q$ (and hence to $E=\mathbb{R}Q$). As a formal expression, the $\tau$-action of $Q$ is given by (refer to \cite[Proposition 22.2]{MT11})
$$(\tau \alpha) (t)=\alpha(\tau t) \quad \mbox{for $\alpha\in Q=\mathrm{Hom}(T,\mathbb{G}_m)$ and $t\in T$}.$$
Let $Q^\tau\subset Q$ denote the sublattice of $\tau$-fixed points in $Q$, which is $W^F$-invariant because for any $w\in W^F=W^\tau$, $\alpha\in Q^\tau$ and $t\in T$,
\begin{align}\label{WFonQF}
(\alpha\cdot w)(t) &= (\tau\alpha\cdot w)(t)=\tau \alpha(\dot{w}t\dot{w}^{-1})=\alpha(\tau(\dot{w}t\dot{w}^{-1}))\\
&=\alpha(\dot{w}(\tau t)\dot{w}^{-1})=(\alpha\cdot w)(\tau t)=(\tau (\alpha\cdot w))(t). \nonumber
\end{align}
Furthermore, it is known (cf. \cite[Lemma~C.3]{MT11}) that 
\begin{equation}\label{eq:ti=s}
t_i|_{Q^{\tau}}=s_{\beta_i}|_{Q^{\tau}}\quad (\forall i=1,2,\ldots,d), \quad \mbox{where $\beta_i:=\sum_{\alpha\in \mathcal{I}_i}\alpha$}.
\end{equation}

We denote by $\Lambda$ the set of all $W^F$-orbits in $Q^\tau$.

\begin{lem}
    For each $W^F$-orbit $\gamma\in\Lambda$, there exists a unique element $\mathbf{i}_\gamma \in \gamma$ such that $(\mathbf{i}_\gamma, \beta_i)\leq0$ ($\forall i=1,2,\ldots,d$), where $(\cdot,\cdot)$ is the standard bilinear form on $E\simeq \mathbb{R}^D$ induced from the Killing form.
\end{lem}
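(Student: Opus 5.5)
The plan is to reduce the statement to the standard fact that every $W$-orbit in $E$ meets the closed antidominant chamber exactly once, applied to the folded reflection group $W^F$ acting on the subspace $E^\tau=\mathbb{R}Q^\tau$. Since the Killing form is $W$- and $\tau$-invariant, it restricts to a positive definite $W^F$-invariant form on $E^\tau$. By \eqref{WFonQF}, $W^F$ preserves $Q^\tau$. By \eqref{eq:ti=s}, $W^F$ acts on $E^\tau$ as the group generated by the orthogonal reflections $s_{\beta_i}|_{E^\tau}$, $i=1,\dots,d$. Each $\beta_i$ lies in $E^\tau$, since $\mathcal I_i$ is a $\tau$-orbit.

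The vectors $\beta_1,\dots,\beta_d$ form a base of a (possibly non-reduced) root system $\Phi^\tau$ in $E^\tau$ with Weyl group $W^F$. I will justify this using the folding description cited from \cite[Lemma~C.1]{MT11}. Concretely, one takes as roots the restrictions to $E^\tau$ of the sums of $\tau$-orbits of roots of $\Phi$. The positive ones are nonnegative combinations of the $\beta_i$, because every positive root of $\Phi$ is a nonnegative combination of $\Delta$ and $\tau$ preserves $\Phi^+$. The associated open antidominant chamber in $E^\tau$ is $C^-=\{x:(x,\beta_i)<0\ \forall i\}$, and its closure is $\overline{C^-}$.

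For existence, take $\gamma\in\Lambda$. It is a finite orbit, so I choose $\mathbf i\in\gamma$ minimizing the height functional $x\mapsto(x,\rho^\tau)$, where $\rho^\tau$ is a vector with $(\beta_j^\vee,\rho^\tau)>0$ for all $j$ (for example $\rho^\tau=\sum$ of fundamental coweights dual to the $\beta_j$). Suppose $(\mathbf i,\beta_i)>0$ for some $i$. Then $s_{\beta_i}\mathbf i=\mathbf i-\tfrac{2(\mathbf i,\beta_i)}{(\beta_i,\beta_i)}\beta_i\in\gamma$ has strictly smaller height, which is a contradiction. Here $s_{\beta_i}\mathbf i=\mathbf i\cdot t_i\in\gamma$ by \eqref{eq:ti=s}.

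For uniqueness, suppose $x,wx\in\overline{C^-}$ with $w\in W^F$. I argue by induction on the length of $w$ in the Coxeter system $(W^F,\{t_i\})$. If $w\ne1$, choose $t_i$ with $\ell'(wt_i)<\ell'(w)$, where $\ell'$ is the $W^F$-length. Then $w\beta_i$ is a negative root of $\Phi^\tau$, by the standard exchange property for the root system $\Phi^\tau$. Hence $0\le (wx, w\beta_i)=(x,\beta_i)\le 0$, so $(x,\beta_i)=0$. This gives $t_ix=x$, so $wx=(wt_i)x$, and induction yields $wx=x$.

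The main obstacle is the step that $\{\beta_i\}$ is a genuine base of a root system on $E^\tau$ whose Weyl group, generated by the $s_{\beta_i}$, is identified with $W^F$ compatibly with lengths. The case $A_{2d}$, where $\beta_d=\alpha_d+\alpha_{d+1}$ and $\Phi^\tau$ is non-reduced of type $BC$, deserves separate attention. I would rely on \cite[Lemma~C.1, C.3]{MT11} for this and check the $BC$ case directly. Alternatively, I could avoid root-system language for uniqueness by using the $W$-version on $E$: $\overline{C^-}\cap E^\tau$ equals the $W$-antidominant chamber intersected with $E^\tau$, because $(x,\alpha)=(x,\tau\alpha)$ for $x\in E^\tau$, so $(x,\beta_i)=|\mathcal I_i|(x,\alpha)$ for $\alpha\in\mathcal I_i$. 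Uniqueness then follows from uniqueness of antidominant representatives for $W$ itself.
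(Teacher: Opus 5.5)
Your proposal is correct and follows the paper's route. The paper's proof is a one-line appeal to the standard fact that, by \eqref{eq:ti=s}, $W^F$ acts on $Q^\tau$ as the reflection group generated by the $s_{\beta_i}$, so each orbit contains a unique antidominant element; you supply the omitted details (height minimization for existence, a chamber argument for uniqueness). Of your two uniqueness arguments, the one comparing with the $W$-antidominant chamber via $(x,\beta_i)=|\mathcal{I}_i|(x,\alpha)$ is the cleanest, since it avoids checking that $\{\beta_i\}$ is a base of a folded root system. As a side remark, if you literally use orbit sums, the $A_{2d}$ case gives a reduced system of type $B_d$; the non-reduced $BC_d$ arises only from orbit averages (projections).
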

\begin{proof}
    Thanks to \eqref{eq:ti=s}, the sublattice $Q^\tau$ affords an action of the Weyl group $W^F$ with simple reflections $s_{\beta_i}$ ($i=1,2,\ldots,d$). Thus, there is a unique anti-dominant weight $\mathbf{i}_\gamma$ in each $W^F$-orbit $\gamma\in\Lambda$, saying $(\mathbf{i}_\gamma, \beta_i)\leq0$ ($\forall i=1,2,\ldots,d$).
\end{proof}

For $\gamma\in\Lambda$, let $W_\gamma^F$ denote the parabolic subgroup of $W^F$ generated by $\{ t_i \mid 1\leq i\leq d, \mathbf{i}_\gamma \cdot t_i= \mathbf{i}_\gamma\}$, and let $P_\gamma = \bigsqcup_{w\in W_\gamma^F} B^F\dot{w} B^F$ be a parabolic subgroup of $\mathrm{G}^F$ associated with $\gamma$. In particular, if $\gamma$ is a regular $W^F$-orbit, then $W_\gamma^F=\{\id\}$ is trivial and $P_\gamma=B^F$. By definition, the parabolic subgroup $P_\gamma$ is $F$-stable.

\subsection{Hecke algebras and Schur algebras}\label{sec:Hecke}
Let $\mathbf{u},\mathbf{v}$ be two indeterminates. Since the folding Dynkin diagram induced by a nontrivial $\tau$ yields a root system with both long and short roots, the Hecke algebra for $W^F$ comes with a two-parameter formulation, which is a $\mathbb{Z}[\mathbf{u}^{\pm1},\mathbf{v}^{\pm1}]$-algebra $\mathbb{H}$ generated by $T_1,T_2,\ldots,T_d$ subject to the following:
\begin{itemize}
    \item quadratic relations:
        $$(T_i+1)(T_i-\mathbf{v}^2)=0 \ (i < d), \quad (T_d+1)(T_d-\mathbf{u}^2)=0$$ if $W^F$ is of type $B_d/C_d$;
        $$(T_i+1)(T_i-\mathbf{v}^2)=0 \ (i=1,2), \quad  (T_i+1)(T_i-\mathbf{u}^2)=0\ (i=3,4)$$ if $W^F$ is of type $F_4$;
        $$(T_1+1)(T_1-\mathbf{v}^2)=0, \quad  (T_2+1)(T_2-\mathbf{u}^2)=0 $$ if $W^F$ is of type $G_2$.
    \item braid relations:
        \begin{gather*}T_i T_j = T_j T_i \ (\mid i-j \mid > 1),\\
        T_iT_{i+1}T_i = T_{i+1}T_iT_{i+1} \ (i < d-1), \quad (T_{d-1}T_d)^2 = (T_dT_{d-1})^2
        \end{gather*} if $W^F$ is of type $B_d/C_d$;
        \begin{gather*}
        T_iT_j=T_jT_i \ (|i-j|>1),\\
        T_iT_{i+1}T_i = T_{i+1}T_iT_{i+1} \ (i = 1,3), \quad (T_2T_3)^2 = (T_3T_2)^2
        \end{gather*} if $W^F$ is of type $F_4$;
        $$(T_1T_2)^3 = (T_2T_1)^3$$ if $W^F$ is of type $G_2$.
\end{itemize}
As usual, for $w\in W^F$ we denote $T_w=T_{i_1}\ldots T_{i_l}$ if $w=t_{i_1}\ldots t_{i_l}$ is a reduced form in $W^F$. It is known that $\{T_w~|~w\in W^F\}$ forms a $\mathbb{Z}[\mathbf{u}^{\pm1},\mathbf{v}^{\pm1}]$-basis of $\mathbb{H}$.

Fix a finite $W^F$-invariant subset $Q_{\ff}\subset Q^\tau$. Denote the set of $W^F$-orbits in $Q_{\ff}$ by
    $$\Lambda_{\ff} = \{W^F\mbox{-orbits~in~}Q_{\ff}\}\subset\Lambda.
    $$
Let $\mathbb{T}_\ff=\bigoplus_{\gamma\in\Lambda_\ff}x_\gamma\mathbb{H}$ with $x_\gamma=\sum_{w\in W_\gamma^F}T_w$, which carries a natural right $\mathbb{H}$-action. The two-parameter Schur algebra is defined as
\begin{equation}\label{def:bbSf}
\mathbb{S}_\ff=\mathrm{End}_{\mathbb{H}}(\mathbb{T}_\ff)=\bigoplus_{\gamma,\nu\in\Lambda_\ff}\mathrm{Hom}_{\mathbb{H}}(x_\nu\mathbb{H},x_\gamma\mathbb{H}),
\end{equation}
which depends on the choice of $\Lambda_\ff$.

 Let $\mathcal{D}_{\gamma\nu}$ (resp. $\mathcal{D}_\gamma$) be the set of minimal length representatives of the double coset $W^F_\gamma\setminus W^F/W^F_\nu$ (resp. the left coset $W^F_\gamma\setminus W^F$).
It is a standard statement that, as free $\mathbb{Z}[\mathbf{u}^{\pm1},\mathbf{v}^{\pm1}]$-modules, $\mathbb{T}_\ff$ has a basis $\{x_\gamma T_w~|~\gamma\in\Lambda_{\ff}, w\in\mathcal{D}_\gamma\}$ and $\mathbb{S}_{\ff}$ has a basis $\{e_{\gamma\nu}^{w} ~|~ \gamma,\nu\in\Lambda_\ff, w\in\mathcal{D}_{\gamma\nu}\}$, where
$$e_{\gamma\nu}^{w}\in \mathrm{Hom}_{\mathbb{H}}(x_\nu\mathbb{H},x_\gamma\mathbb{H}): \quad x_\nu\to\sum_{w'\in W_\gamma^Fw W_\nu^F}T_{w'}.$$

\subsection{Geometric construction in general}\label{subsec:General}
    Denote 
    $$
        \mathscr{B} = \mathrm{G}^F/B^F,\quad \mathscr{F}_\gamma=\mathrm{G}^F/P_\gamma \quad\mbox{and}\quad \mathscr{F}_{\ff} = \bigsqcup_{\gamma\in\Lambda_{\ff}} \mathscr{F}_\gamma.
    $$
The diagonal $\mathrm{G}^F$-action on $\mathscr{B\times B}$ induces a $\mathrm{G}^F$-action on the space of $\mathbb{Z}$-valued functions on $\mathscr{B\times B}$ by
    $$g\cdot f(xB^F, yB^F) = f(g^{-1}xB^F, g^{-1}yB^F)$$ for any function $f$ on $\mathscr{B\times B}$, $g\in \mathrm{G}^F$ and $(xB^F, yB^F)\in\mathscr{B\times B}$. 
    Denote $$\mathcal{H}:=\mathbb{Z}_{\mathrm{G}^F}[\mathscr{B\times B}]$$ the $\mathbb{Z}$-algebra of $\mathrm{G}^F$-invariant $\mathbb{Z}$-valued functions on $\mathscr{B\times B}$, whose convolution product $*$ is defined by
    \begin{equation}\label{equ:Hecke-convolution-Borel}
        f * g (B^F, \dot{w}B^F) = \sum_{xB^F\in \mathrm{G}^F/B^F} f(B^F, xB^F) g(xB^F, \dot{w}B^F).
    \end{equation}
The $\mathrm{G}^F$-orbits on $\mathscr{B\times B}$ match the double coset decomposition of $\mathrm{G}^F$ of $B^F$, hence can be indexed by $W^F$ (independent of $q$) thanks to the Bruhat decomposition \eqref{Bruhat}. Therefore, $\mathcal{H}$ has a basis $\{e_w|w\in W^F\}$, where $e_w$ is the characteristic function of the $\mathrm{G}^F$-orbit on $\mathscr{B\times B}$ associated with $w\in W^F$.
    
    Moreover, we can introduce a $\mathbb{Z}$-algebra $\mathcal{S}_\ff:=\mathbb{Z}_{\mathrm{G}^F}[\mathscr{F}_\ff\times \mathscr{F}_\ff]$ and a $(\mathcal{S}_\ff,\mathcal{H})$-bimodule $\mathcal{T}_\ff:=\mathbb{Z}_{\mathrm{G}^F}[\mathscr{F}_\ff\times \mathscr{B}]$ with similar convolution products. 
   
   The $\mathrm{G}^F$-orbits on $\mathscr{F}_\gamma\times \mathscr{F}_\nu$ (resp., on $\mathscr{F}_\gamma\times \mathscr{B}$) are indexed by $\mathcal{D}_{\gamma\nu}$ (resp. $\mathcal{D}_\gamma$), also independent of $q$. Thus, $\mathcal{S}_\ff$ (resp. $\mathcal{T}_\ff$) has a basis $\{e_{\gamma,w,\nu}|\gamma,\nu\in\Lambda_\ff, w\in\mathcal{D}_{\gamma\nu}\}$ (resp. $\{e_{\lambda,w}|\lambda\in\Lambda_\ff,w\in\mathcal{D}_\lambda\}$), where $e_{\gamma,w,\nu}$ and $e_{\lambda,w}$ are the corresponding characteristic functions.

Let $v$ and $\q$ be indeterminates with $\q=v^2$. Denote $\mathcal{A}=\mathbb{Z}[v,v^{-1}]$. 
It is a well-known property of these convolution products that there exist polynomials in $\mathbb{Z}[v]$ such that, for every odd prime power $q$, the structure constants for the above basis elements are given by evaluating these polynomials at $\q=v^2=q$. Thus, we can introduce $\mathcal{A}$-algebras 
\begin{equation*}
\mathbf{H}:=\mathcal{A}_{\mathrm{G}^F}[\mathscr{B\times B}]\quad \text{and}\quad \mathbf{S}_\ff:=\mathcal{A}_{\mathrm{G}^F}[\mathscr{F}_\ff\times \mathscr{F}_\ff],
\end{equation*}
and an $(\mathbf{S}_\ff,\mathbf{H})$-bimodule 
\begin{equation*}
\mathbf{T}_\ff:=\mathcal{A}_{\mathrm{G}^F}[\mathscr{F}_\ff\times \mathscr{B}],
\end{equation*}
with bases $\{e_w\}, \{e_{\gamma,w,\nu}\}$ and $\{e_{\lambda,w}\}$, respectively. Their multiplication and action structure constants are given by the above polynomials in $\mathbb{Z}[v]$.

The following theorem tells us that $\mathbf{S}_\ff$, $\mathbf{T}_\ff$ and $\mathbf{H}$ are, respectively, the specializations of $\mathbb{S}_\ff$, $\mathbb{T}_\ff$ and $\mathbb{H}$ with the corresponding unequal parameters.

\begin{thm}[Geometric Schur duality] \label{thm:special}
   We have the following commutative diagram: 
    \begin{eqnarray*}
\begin{array}{ccccc}
  \mathbb{S}_\ff &\circlearrowright
 & \T_{\ff} & \circlearrowleft  \; & \mathbb{H}
  \\
  \downarrow\;\; & & \downarrow & & \downarrow
 \\
\mathbf{S}_{\ff}     & \circlearrowright
& \mathbf{T}_{\ff} &  \circlearrowleft  \; & \mathbf{H}
\end{array}
 \end{eqnarray*}
 where the arrows $\downarrow$ are given, respectively, by 
  \begin{equation*}
  e_{\gamma\nu}^w\mapsto e_{\gamma,w,\nu},\quad x_{\gamma}T_w \mapsto e_{\gamma,w}, \quad T_w\mapsto e_w 
  \end{equation*}
  with the following specialization:  
 \begin{itemize}
 \item $\mathbf{u}^2=\q^3,\mathbf{v}^2=\q^2$ for Figure 1;
\item
    $\mathbf{u}^2=\q,\mathbf{v}^2=\q^2$ for Figure 2;
\item        $\mathbf{u}^2=\q^2,\mathbf{v}^2=\q$ for Figure 3;
\item        $\mathbf{u}^2=\q^2,\mathbf{v}^2=\q$ for Figure 4;
\item        $\mathbf{u}^2=\q^3,\mathbf{v}^2=\q$ for Figure 5.
\end{itemize} 
\end{thm}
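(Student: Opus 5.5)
The plan is to reduce everything to Iwahori's classical computation for the finite BN-pair $(B^F,N_{\mathrm G}(T)^F)$ of $\mathrm G^F$, and then transport it to the partial flag and Schur settings by the standard idempotent argument. First I would treat the right column. By \eqref{Bruhat}, $\mathcal H$ has basis $\{e_w\}_{w\in W^F}$. For a simple reflection $t_i$ of $W^F$, the orbit $B^F\dot t_iB^F$ has $q^{\ell(t_i)}$ left $B^F$-cosets, where $\ell$ is the length in $W$ and not in $W^F$. The usual BN-pair argument gives two facts. When $\ell(w t_i)=\ell(w)+\ell(t_i)$ one has $e_w*e_{t_i}=e_{wt_i}$; this needs additivity of $W$-length along reduced words in $W^F$, which holds because each $t_i$ is the longest element of $W_i$ and $W^F$-reduced expressions are $W$-reduced (\cite[Lemma~C.1]{MT11}). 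Moreover $e_{t_i}*e_{t_i}=q^{\ell(t_i)}e_1+(q^{\ell(t_i)}-1)e_{t_i}$. Hence $e_{t_i}$ satisfies $(e_{t_i}+1)(e_{t_i}-\q^{\ell(t_i)})=0$ and the braid relations of $W^F$, with all structure constants polynomial in $q$. This already yields a homomorphism $\mathbb H\to\mathbf H$, $T_w\mapsto e_w$, provided $\mathbf u^2,\mathbf v^2$ are specialized to $\q^{\ell(t_i)}$ on the long and short generators.

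The specializations then come from reading off $\ell(t_i)$ in Table~\ref{tab1}.
\begin{itemize}
\item $A_{2d}$: $\ell(s_is_{2d+1-i})=2$ for $i<d$ and $\ell(s_ds_{d+1}s_d)=3$, so $\mathbf v^2=\q^2$, $\mathbf u^2=\q^3$.
\item $A_{2d-1}$: lengths $2$ and $\ell(s_d)=1$, so $\mathbf v^2=\q^2$, $\mathbf u^2=\q$.
\item $D_{d+1}$: lengths $1$ and $\ell(s_ds_{d+1})=2$, so $\mathbf v^2=\q$, $\mathbf u^2=\q^2$.
\item $E_6$: $\ell=1,1,2,2$.
\item $D_4$: $\ell(s_1)=1$, $\ell(s_2s_3s_4)=3$.
\end{itemize}
These match the theorem, once the convention of which generator carries $\mathbf u$ in each type is checked against the labeling of Section~\ref{sec:Hecke}.

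Next I would treat the middle and left columns. For each $\gamma$, the function $\sum_{w\in W^F_\gamma}e_w$ is the characteristic function of $\{(xB^F,yB^F): xP_\gamma=yP_\gamma\}$. Pullback along $\mathscr B\to\mathscr F_\gamma$ then identifies $\mathbb Z_{\mathrm G^F}[\mathscr F_\gamma\times\mathscr B]$ with $\chi_\gamma*\mathcal H$, with $e_{\gamma,w}\leftrightarrow\chi_\gamma*e_w$ for $w\in\mathcal D_\gamma$. This gives $x_\gamma T_w\mapsto e_{\gamma,w}$, and it is compatible with the right $\mathbb H$-action by the first step. Finally I would show that $\mathcal S_\ff\cong\mathrm{End}_{\mathcal H}(\mathcal T_\ff)$ through left convolution, the standard Iwahori/Dipper–James argument, under which $e_{\gamma,w,\nu}$ maps to the endomorphism sending $\chi_\nu$ to the characteristic function of $P_\gamma\dot wP_\nu/B^F$, namely $\sum_{w'\in W^F_\gamma wW^F_\nu}e_{w'}$. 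Comparing with the definition of $e^w_{\gamma\nu}$ gives the left arrow and commutativity of the diagram. Genericity in $q$ of all structure constants lets us pass to the $\mathcal A$-forms.

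The main obstacle is the length bookkeeping in the first step. One must check that $|B^F\dot wB^F/B^F|=q^{\ell(w)}$ with $W$-length is multiplicative along $W^F$-reduced words, and that the quadratic coefficient is exactly $q^{\ell(t_i)}$. I would verify the latter directly: $P_i^F=B^F\cup B^F\dot t_iB^F$ is a rank-one finite group of Lie type, for instance $\mathrm{SU}_3(q)$ in the $A_{2d}$ case, with $q^{\ell(t_i)}+1$ points on $P_i^F/B^F$. The quadratic relation then follows from $2$-transitivity of $P_i^F$ on these points.
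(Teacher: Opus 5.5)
Your proposal is correct and follows essentially the same route as the paper. For $\mathbf{H}$ you use Iwahori--Matsumoto via $|B^F\dot wB^F/B^F|=q^{\ell(w)}$, and you additionally spell out how the parameters come from the $W$-lengths of the $t_i$ in Table~\ref{tab1}. You then identify $\mathbf{T}_\ff$ with $\bigoplus_\gamma x_\gamma\mathbf{H}$ directly, and handle $\mathbf{S}_\ff$ by the standard endomorphism-algebra argument, which the paper delegates to \cite[Theorem~13.15]{DDPW08}.
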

\begin{proof}
Since $|B^F\dot{w}B^F/B^F|=q^{\ell(w)}$ by \eqref{Bruhat}, it is a theorem due to Iwahori and Matsumoto (cf. \cite[\S8.4.6]{GP00}) that $\mathbf{H}$ is a specialization of $\mathbb{H}$ with the corresponding unequal parameters by identifying $T_w \rightarrow e_w$ for each $w\in W^F$. This is proven by checking the quadratic and braid relations for the generators $e_{t_1},\ldots, e_{t_d}$ directly. Similarly, we can also directly check that, as a right $\mathbf{H}$-module, $\mathbf{T}_\ff$ is a specialization of the $\mathbb{H}$-module $\mathbb{T}_\ff$  with the corresponding unequal parameters by identifying $x_\lambda T_w\rightarrow e_{\lambda,w}$.

The proof of the corresponding specialization $\mathbb{S}_\ff\to\mathbf{S}_\ff$ is standard but cumbersome. We omit it here and refer to the proof of \cite[Theorem~13.15]{DDPW08}. The proof for the validity of $\mathbb{T}_\ff$ (as an $\mathbb{S}_\ff$-module) to $\mathbf{T}_\ff$ (as an $\mathbf{S}_\ff$-module) is almost the same.
\end{proof}

\begin{rem}\label{rem:S=Special}
In the following, we focus on the cases of Figures 1, 2 and 3, for which we will choose special $\Lambda_\ff$ related to the corresponding $N$-step isotropic flag varieties (see the next section). Under this choice of $\Lambda_\ff$, the two-parameter quantum Schur algebras defined in \eqref{def:bbSf} are precisely the iSchur algebras $\mathbb{S}_{n,d}^\jmath$ and $\mathbb{S}_{n,d}^\imath$ studied in \cite{BWW18,LL21}. By the above theorem, we obtain geometric realizations of these iSchur algebras at the three specializations corresponding to Figures 1, 2 and 3, respectively.
\end{rem}

\subsection{Revisit Frobenius morphism}\label{subsec:frobenius-revisited}
We conclude this section by revisiting Frobenius morphisms and their orbits, which will be used in subsequent sections.

\begin{lem}\label{connected}
    The intersection of any two parabolic subgroups of a connected reductive group $\mathrm{G}=\mathbf{G}(\overline{\mathbb{F}_q})$ is connected. In particular, all stabilizers for the diagonal action of $\mathrm{G}$ on a product of two partial flag varieties are connected.
\end{lem}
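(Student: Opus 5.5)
The plan is the standard argument: two parabolic subgroups always contain a common maximal torus, and the intersection then decomposes as a product of connected pieces. Let $P,Q\subset\mathrm{G}$ be parabolic subgroups.

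\textbf{Common torus.} By the Bruhat decomposition for $\mathrm{G}$, any two Borel subgroups contain a common maximal torus. Choose Borel subgroups $B_1\subset P$ and $B_2\subset Q$, and a maximal torus $T'\subset B_1\cap B_2$. Then $T'\subset P\cap Q$.

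\textbf{Levi decompositions.} Let $L_P$ and $L_Q$ be the unique Levi factors of $P$ and $Q$ containing $T'$, and let $U_P$, $U_Q$ be the unipotent radicals. Everything is now described through the root subgroups $U_\alpha$, $\alpha\in\Phi(T')$. I will use the classical structure result (Borel--Tits; see also Digne--Michel, Prop.~2.1 / Carter, \S2.8):
\[
P\cap Q=(L_P\cap L_Q)\cdot(L_P\cap U_Q)\cdot(U_P\cap L_Q)\cdot(U_P\cap U_Q).
\]
In this product, $L_P\cap L_Q$ is the reductive group generated by $T'$ and the $U_\alpha$ with $\alpha\in\Phi_{L_P}\cap\Phi_{L_Q}$. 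It is connected because it is generated by connected subgroups. Each of the other three factors is a $T'$-stable closed subgroup of a unipotent group. Such a subgroup is the product of the root subgroups it contains, taken in any fixed order, so it is isomorphic as a variety to an affine space. Hence it is connected.

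\textbf{Conclusion.} Then $P\cap Q$ is the image of a product of connected varieties under the multiplication morphism, so it is connected. The point I expect to need the most care is the decomposition itself, in particular the claim that every element of $P\cap Q$ actually factors this way, and not merely that the right-hand side is contained in $P\cap Q$. My first approach is to cite the standard reference. As a fallback, argue as follows: $P\cap Q$ is normalized by $T'$, so it is the semidirect product of the connected solvable part and the Weyl-type part. Moreover, $N_{P\cap Q}(T')/T'\subset W_{L_P}\cap W_{L_Q}$ is realized inside $L_P\cap L_Q$. Combining these with the Bruhat decomposition of $P\cap Q$ relative to the Borel $B'\supset T'$ shows that all components of $P\cap Q$ meet the identity component.

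\textbf{Stabilizers.} For the second statement, take a point $(g_1P_1,g_2P_2)$ of $\mathrm{G}/P_1\times\mathrm{G}/P_2$. Its stabilizer under the diagonal action is $g_1P_1g_1^{-1}\cap g_2P_2g_2^{-1}$. This is an intersection of two parabolic subgroups, so it is connected by the first part.
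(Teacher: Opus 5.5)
Your proposal is correct and is essentially the paper's argument: the paper cites \cite[Corollary~4.5]{BT65} for the connectedness of $P\cap Q$, which is the same Borel--Tits structure theory you invoke and which already includes connectedness, and then identifies the stabilizer of $(g_1P_1,g_2P_2)$ with $g_1P_1g_1^{-1}\cap g_2P_2g_2^{-1}$ exactly as you do. If you keep the unpacked version, justify the connectedness of $L_P\cap L_Q$ by writing it as $C_{\mathrm{G}}\bigl(Z(L_P)^\circ Z(L_Q)^\circ\bigr)$, the centralizer of a subtorus of $T'$ (your ``generated by connected subgroups'' presupposes that identification), and drop the fallback paragraph, which is too vague as written (its clean form is $H=H^\circ N_H(T')$ for $H=P\cap Q$, together with $W_{L_P}\cap W_{L_Q}=W(L_P\cap L_Q)$).
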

\begin{proof}
The first assertion follows from
\cite[Corollary~4.5]{BT65}, applied over
$\overline{\mathbb{F}_q}$.
The stabilizer of $(g_1P,g_2Q)\in \mathrm{G}/P\times \mathrm{G}/Q$ is
$g_1Pg_1^{-1}\cap g_2Qg_2^{-1}$, which is connected by the first assertion.
\end{proof}

\begin{prop}\label{prop:rational-orbits}
Let $\mathrm{G}=\mathbf{G}(\overline{\mathbb{F}_q})$ equip with a Frobenius $F$. Let $P,Q\subset \mathrm{G}$ be $F$-stable parabolic subgroups. Put $X=\mathrm{G}/P\times \mathrm{G}/Q$, then for any $x\in X^F$,
$$
(\mathrm{G}\cdot x)^F=\mathrm{G}^F\cdot x.
$$
In particular, two $F$-fixed points of $X$ lie in the same $\mathrm{G}^F$-orbit if and only if they lie in the same geometric $\mathrm{G}$-orbit.\end{prop}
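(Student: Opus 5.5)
The plan is to apply Lang's theorem to the stabilizer. By Lemma~\ref{connected}, this stabilizer is a connected algebraic group.

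Fix $x=(g_1P,g_2Q)\in X^F$ and set $H:=\mathrm{Stab}_{\mathrm{G}}(x)=g_1Pg_1^{-1}\cap g_2Qg_2^{-1}$.
\begin{itemize}
\item Since $P,Q$ are $F$-stable, $F$ induces a map on $X$ compatible with the action, namely $F(g\cdot y)=F(g)\cdot F(y)$.
\item Since $F(x)=x$, we get $F(H)=H$, so $F$ restricts to a Frobenius endomorphism of the closed subgroup $H$. A suitable power of it is a standard Frobenius for some $\mathbb F_{p^b}$-structure on $H$.
\item By Lemma~\ref{connected}, $H$ is connected.
\end{itemize}

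The inclusion $\mathrm{G}^F\cdot x\subset(\mathrm{G}\cdot x)^F$ is clear. For the converse, let $y=g\cdot x$ with $F(y)=y$. Then
\[
F(g)\cdot x = F(g)\cdot F(x)=F(g\cdot x)=F(y)=y=g\cdot x,
\]
so $h:=g^{-1}F(g)\in H$.

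Now apply the Lang--Steinberg theorem to the connected group $H$ with its Frobenius $F|_H$. The map $H\to H$, $k\mapsto k^{-1}F(k)$, is surjective, so there is $k\in H$ with $k^{-1}F(k)=h=g^{-1}F(g)$. Put $g':=gk^{-1}$. Then
\[
F(g')=F(g)F(k)^{-1}=g\,h\,h^{-1}k^{-1}=gk^{-1}=g'.
\]
Thus $g'\in\mathrm{G}^F$, and $g'\cdot x=g\cdot(k^{-1}\cdot x)=g\cdot x=y$. This proves $(\mathrm{G}\cdot x)^F=\mathrm{G}^F\cdot x$.

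For the ``in particular'': two $F$-fixed points in the same $\mathrm{G}$-orbit have equal $\mathrm{G}$-orbits. Applying the equality just proved to each point shows their $\mathrm{G}^F$-orbits coincide. The converse direction is trivial.

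The main obstacle is justifying that Lang's theorem applies to $H$. This requires $H$ to be a connected linear algebraic group on which $F$ restricts to a genuine Frobenius endomorphism, i.e.\ one with finitely many fixed points, or with some power equal to a standard Frobenius for an $\mathbb F_{p^b}$-structure in which $H$ is defined. I would argue this directly: $F^a=F_{q^a}$ on $\mathrm{G}$ and $H$ is $F$-stable, so $H$ is defined over $\mathbb F_{q^a}$ by Galois descent for closed subvarieties stable under the standard Frobenius. Connectedness is exactly Lemma~\ref{connected}, which is where the connectedness of intersections of parabolics from \cite{BT65} enters.
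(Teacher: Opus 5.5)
Your proposal is correct and follows the paper's proof essentially step for step. Both arguments use the connectedness and $F$-stability of the stabilizer $H$ (via Lemma~\ref{connected}), the observation $g^{-1}F(g)\in H$, and Lang's theorem on $H$ to replace $g$ by an $F$-fixed element with the same action on $x$. The only difference is notational: you write $k^{-1}F(k)=g^{-1}F(g)$ and take $gk^{-1}$, while the paper writes $g^{-1}F(g)=hF(h)^{-1}$ and takes $gh$.
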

\begin{proof} Let $x,y\in X^F$ and suppose that $y=g\cdot x$ for some $g\in \mathrm{G}$. The stabilizer $H=\mathrm{G}_x$ is connected by Lemma~\ref{connected} and is $F$-stable because $F(x)=x$. Since $F(y)=y$, we have
$g^{-1}F(g)\in H$. By Lang's theorem (cf. \cite[Theorem~17.96]{Mi17}), there exists $h\in H$ such that $g^{-1}F(g)=hF(h)^{-1}$. Consequently, $F(gh)=gh$ and $gh\cdot x=g\cdot x=y$. Thus $y\in \mathrm{G}^F\cdot x$, proving the assertion. 
\end{proof}

Recall $q=p^r$. Let $\mathbf{X}$ be a variety over $\mathbb{F}_q$. We write
$F_\mathbf{X}: \mathbf{X} \to \mathbf{X}$ for the absolute Frobenius morphism, which is the identity on the underlying topological space and acts on functions by $f \mapsto f^p$. Its $r$-th iterate
$F_{\mathbf{X},q} := F_\mathbf{X}^r$
is called the $q$-Frobenius morphism of $\mathbf{X}$.

If $\mathbf{X}$ is a split reductive group $\mathbf G$, this morphism induces the standard Frobenius $F_q$ on $\mathrm{G}=\mathbf G(\overline{\mathbb{F}_q})$.
In fact, for $g\in \mathrm{G}$ and $f\in \mathbb{F}_q[\mathbf G]$, its action is
$$
g(f)\mapsto g(f^q)=g(f)^q.
$$
Assume that there is a twisted Frobenius $F=F_q\circ\tau$ on $\mathrm{G}$. Since $\tau$ is defined over $\mathbb{F}_q$ and has finite order,
Galois descent gives a $\mathbb{F}_q$-form $\mathbf G_0$
of $\mathbf G\times_{\mathbb{F}_q} {\overline{\mathbb{F}_q}}$; see
\cite[Theorem~3.43]{Mi17}.
Under the identification
$\mathbf G_0\times_{\mathbb{F}_q}{\overline{\mathbb{F}_q}}\simeq\mathbf G\times_{\mathbb{F}_q}{\overline{\mathbb{F}_q}}$,
the $q$-Frobenius of $\mathbf G_0$
induces $F$ on $\mathrm{G}$.
In particular,
$$
\mathbf G_0(\mathbb{F}_q)=\mathrm{G}^F.
$$
Let $P,Q$ be $F$-stable parabolic subgroups of $\mathrm{G}$.
They descend to parabolic subgroups
$\mathbf P_0,\mathbf Q_0$ of $\mathbf G_0$.
Set
$$
\pmb{\mathscr{F}}_0=\mathbf G_0/\mathbf P_0
       \times\mathbf G_0/\mathbf Q_0,
\qquad
\mathscr{F}=\pmb{\mathscr{F}}_0(\overline{\mathbb{F}_q}).
$$
The $q$-Frobenius of $\pmb{\mathscr{F}}_0$ induces an action $F$ on $\mathscr{F}$ such that $\pmb{\mathscr{F}}_0(\mathbb{F}_q)=\mathscr{F}^F$.
For a $\mathbb{F}_q$-point $x\in\pmb{\mathscr{F}}_0(\mathbb F_q)$, let $\boldsymbol{\mathcal{O}}_x$ be the associated locally closed $\mathbb{F}_q$-subvariety of $\mathscr{F}_0$ (with reduced structure) whose underlying topological space is $\mathbf{G}_0\cdot x$. We have
$$
\boldsymbol{\mathcal O}_x({\mathbb F_q})=\mathcal (\boldsymbol{\mathcal O}_x(\overline{\mathbb{F}_q}))^F
=
(\mathrm{G}\cdot x)^F=\mathbf{G}_0(\mathbb{F}_q)\cdot x,
$$ where the last equality is due to Proposition \ref{prop:rational-orbits}.

\section{General unitary groups and minus-type orthogonal groups}
\label{sec:UandO}
In this section, we provide specific information corresponding to Figures 1, 2 and 3, for which the restricted Weyl groups $W^F$ are all of type B and the associated non-split groups are the general unitary groups and the minus-type orthogonal groups.

 \subsection{General unitary group}\label{ex:U_D}
        Fix $D=2d+1$ or $2d\in\mathbb{N}$. Let $\mathbf U_D$ and $\mathbf{GL}_D$, respectively, be the unitary group and general linear group over $\mathbb F_q$.
Choose a suitable isomorphism
$$
\mathbf{U}_{D}\times_{\mathbb{F}_q} \overline{\mathbb{F}_q}
\simeq \mathbf{GL}_{D}\times_{\mathbb{F}_q}\overline{\mathbb{F}_q}.$$
It identifies $\mathbf{U}_D(\overline{\mathbb{F}_q})$ with $\mathrm{G}=\mathbf{GL}_D(\overline{\mathbb{F}_q})$.
Under this identification, the $q$-Frobenius of
$\mathbf{U}_D$ induces $F=F_q\circ\tau$ on $\mathrm{G}$, where
        \begin{equation}\label{def:J}
        \tau: \mathrm{G}\to \mathrm{G},\quad g \mapsto Jg^{\mathrm{-tr}}J\quad\mbox{with}\quad
        J = {\tiny\begin{pmatrix}
                &&& 1\\
                && 1\\
                &\begin{sideways}$\ddots$\end{sideways}\\
                1
            \end{pmatrix}}.
            \end{equation}
        It preserves the upper triangular Borel subgroup $B$ and the diagonal maximal torus $T$, and corresponds to the nontrivial graph automorphism shown in Figure 1 if $D=2d+1$ or in Figure 2 if $D=2d$. 
        
        The $F$-fixed point subgroup $\mathrm{G}^F =\mathbf{U}_D(\mathbb{F}_q)=\mathrm{U}_{D}(q)$ is the general unitary group consisting of invertible $D\times D$ matrices over $\mathbb{F}_{q^2}$ leaving invariant the non-degenerate Hermitian form
        \begin{equation}\label{form1}
        \langle\cdot,\cdot\rangle: \mathbb{F}_{q^2}^{D}\times \mathbb{F}_{q^2}^{D}\to \mathbb{F}_{q^2},\qquad \big\langle\sum_{i=1}^{D} x_iv_i,\sum_{i=1}^{D} y_iv_i\big\rangle=\sum_{i=1}^{D} x_iy_{D+1-i}^q,
        \end{equation}
        where $\{v_i~|~1\leq i\leq D\}$ denotes the standard basis of $\mathbb{F}_{q^2}^{D}$.
\begin{rem}
Although the elements of $\mathrm{U}_D(q)$ are represented by matrices
over $\mathbb{F}_{q^2}$, we have $\mathrm{U}_D(q)=\mathbf{U}_D(\mathbb{F}_q)$.
Since $\tau^2=1$ and $\tau$ commutes with $F_q$,
we have $F^2=F_{q^2}$.
Thus $\mathbf{U}_D$ becomes split over $\mathbb{F}_{q^2}$,
and we obtain
\begin{equation}\label{eq:Uq2=GLq2}
\mathbf{U}_D\times_{\mathbb{F}_q} \mathbb{F}_{q^2} \simeq \mathbf{GL}_D\times_{\mathbb{F}_q} \mathbb{F}_{q^2}=\mathbf{GL}_{D,q^2}.
\end{equation}
\end{rem}
        
    Let $\epsilon_i\in Q$ ($i=1,2,\ldots, D$) be the weights satisfying $$\epsilon_i(\mathrm{diag}(\lambda_1,\ldots,\lambda_{D})) =\lambda_i.$$ Via the standard Euclidean form, $\epsilon_1,\ldots,\epsilon_{D}$ become an orthonormal basis of $\mathbb{R}^{D}\simeq\bigoplus_{i=1}^{D}\mathbb{R}\epsilon_i$.

    The set of simple roots of $\mathrm{G}$ is $\Delta=\{\alpha_i=\epsilon_i-\epsilon_{i+1}~|~i=1,2,\ldots,D-1\}$. Let us extend the graph automorphism 
    $$\tau: \Delta\to\Delta, \quad \alpha_i\mapsto \alpha_{D-i} \quad (i=1,2,\ldots,D-1)$$ 
    to the automorphism
    $$\tau: \mathbb{R}^{D}\to\mathbb{R}^{D}, \quad \epsilon_i\mapsto -\epsilon_{D+1-i}\quad (i=1,2,\ldots,D).$$

    The preimage $\dot{s}_i\in N_{\mathrm{G}}(T)$ of the simple generators $s_i=s_{\alpha_i}\in W$ $(i=1,2\ldots,D-1)$ can be chosen as 
    $$\dot{s}_i = {\tiny\begin{pmatrix}
        I_{i-1}\\
        &  & 1\\
        & 1 & \\
        &&& I_{D-1-i}
    \end{pmatrix}}.$$
    Clearly, $F(\dot{s}_i)=\dot{s}_{D-i}$ for $i=1,2,\ldots,D-1$.
    
    Recall the generators $t_1,t_2, \ldots, t_d$ of $W^F$ listed in Table~\ref{tab1}, whose preimage $\dot{t}_i$ can be chosen as 
    $$
    \dot{t}_i={\tiny\begin{pmatrix}
        I_{i-1} & & \\
        &  & 1 \\
        & 1 &  \\
        & & & I_{D-2i-2} \\
        & & & &  & 1 \\
        & & & & 1 &  \\
        & & & & & & I_{i-1}
    \end{pmatrix}}\quad(1\le i<d), \mbox{and} 
    $$
    $$\dot{t}_d=
    {\tiny\begin{pmatrix}
        I_{d-1} & & \\
        & & & 1 \\
        & & 1 \\
        & 1 \\
        & & & & I_{d-1}
    \end{pmatrix}} \quad\mbox{if $D=2d+1$}; \quad 
    \dot{t}_d=
    {\tiny\begin{pmatrix}
        I_{d-1} &  \\
        & &  1 \\
        &  1 \\
        & & & I_{d-1}
    \end{pmatrix}} \quad\mbox{if $D=2d$}.
    $$
   These matrices generate a faithful permutation representation of the type $B_d$ Weyl group on the set $\{1,2,\ldots, D\}$.
\subsection{Hermitian isotropic flag varieties}\label{HermitF_D}
    Fix $N=2n+1$ with $n\in\mathbb{N}$. Take $Q_{\ff}$ as
        $$
            Q_n := \{\sum_{i=1}^d a_i(\epsilon_i - \epsilon_{D+1-i}) \mid a_i\in\mathbb{Z}, -n\le a_i \le n, \forall i\},
        $$ which lies in $Q^\tau$ and is clearly $W^F$-invariant.
        The set $\Lambda_{\ff}$ of $W^F$-orbits on $Q_n$ corresponds to the set of weak compositions of $d$ into $n+1$ parts
        \begin{equation}\label{def:Lambda}
            \Lambda_{n+1,d} = \{\gamma = (\gamma_0, \gamma_1, \cdots, \gamma_{n})\mid \sum_{i=0}^{n}\gamma_i = d, \gamma_i\in\mathbb{Z}_{\geq 0}\},
        \end{equation}
        where an orbit $\gamma = (\gamma_0, \cdots, \gamma_{n})\in\Lambda_{n+1,d}$ consists of all weights $\sum_{i=1}^{d}a_i(\epsilon_i - \epsilon_{D+1-i})$ such that $\gamma_k = \#\{1\leq i\leq d \mid \left|a_i\right| = k\}$. The unique anti-dominant weight $\mathbf{i}_\gamma = \sum_{i=1}^d a_i(\epsilon_i -\epsilon_{D+1-i})$ in this orbit satisfies $a_1\leq a_2\leq\cdots\leq a_d\leq 0$. For $\gamma\in\Lambda_{n+1,d}$, the associated parabolic subgroup $W_\gamma^F$ is generated by
        $$
        \{t_i \mid 1\leq i\leq d, i\neq\gamma_{n}, \gamma_{n-1}+\gamma_{n},\cdots,\gamma_1+\cdots+\gamma_{n}\}.
        $$
               
        Set $V_{(i)} = \mathrm{Span}\{ v_1,\cdots, v_i \}$ for $i=1,2,\ldots,D$. The parabolic subgroup $P_\gamma= \bigsqcup_{w\in W_\gamma^F}B^F\dot{w}B^F$ stabilizes the $N$-step flag
        \begin{align*}
            &\mathfrak{f}_\gamma := \\
            &\left\{\begin{array}{l}
            (0 = V_{(0)} \subseteq V_{(\gamma_{n})} \subseteq \cdots \subseteq V_{(d-\gamma_0)}\subset V_{(d+1+\gamma_0)}\subseteq \cdots \subseteq V_{(D)} = \mathbb{F}_{q^2}^{D}), \ \mbox{if}~ D = 2d+1;\\
        (0 = V_{(0)} \subseteq V_{(\gamma_{n})}\subseteq \cdots \subseteq V_{(d-\gamma_0)}\subseteq V_{(d+\gamma_0)}\subseteq \cdots\subseteq V_{(D)} = \mathbb{F}_{q^2}^{D}), \ \mbox{if}~ D = 2d.
        \end{array}
        \right.
        \end{align*}
        In both cases, $\mathscr{F}_{\ff}=\bigsqcup_{\gamma\in\Lambda_{\ff}} \mathrm{G}^F/P_\gamma$ is naturally identified
with the following set of Hermitian isotropic flags:
        \begin{equation}\label{equ:N-step-flags}
            \mathscr{F}^\jmath_D=\{\mathfrak{f} = (0 = \mathfrak{f}_0 \subseteq \mathfrak{f}_1 \subseteq\cdots\subseteq \mathfrak{f}_{N} = \mathbb{F}_{q^2}^{D}) \mid \mathfrak{f}_i = \mathfrak{f}_j^\perp\ \text{if}\ i+j = N\},
        \end{equation}
        where the orthogonal complement is defined with respect to the Hermitian form defined in \eqref{form1}.
Furthermore, we are also interested in the following set of Hermitian isotropic flags:
\begin{align}\label{eq:N-step-flags-i}
\sF_D^{\imath}=\{\mathfrak{f}=(\mathfrak{f}_i)\in\sF^\jmath_D\mid\mathfrak{f}_n\ \text{is maximal isotropic}\}\subset \sF^\jmath_D.
\end{align} 
These sets are the $\mathbb{F}_q$-rational points of the
corresponding disjoint unions of partial flag varieties 
$\pmb{\mathscr{F}}_D^\jmath=\bigsqcup_{\gamma\in\Lambda_\ff}\mathbf{U}_D/\mathbf{P}_\gamma$ and $\pmb{\mathscr{F}}_D^\imath=\bigsqcup_{\gamma\in\Lambda_{\ff},\gamma_0=0}\mathbf{U}_D/\mathbf{P}_\gamma$
of
$\mathbf{U}_D$, where $\mathbf{P}_\gamma$ is the parabolic subgroup of $\mathbf{U}_{D}$ with $\mathbf{P}_\gamma(\mathbb{F}_q)=P_\gamma$.

For a pair $(\mathfrak{f},\mathfrak{f}')\in \mathscr{F}^\jmath_D\times\mathscr{F}^\jmath_D$, we associate an $N\times N$ matrix $A=(a_{ij})$ with
    \begin{align*}
        a_{ij} &= \dim \frac{\mathfrak{f}_{i-1}+\mathfrak{f}_i\cap \mathfrak{f}'_j}{\mathfrak{f}_{i-1}+\mathfrak{f}_i\cap \mathfrak{f}'_{j-1}}\\
        &=\dim\mathfrak{f}_i\cap \mathfrak{f}'_j-\dim\mathfrak{f}_{i-1}\cap \mathfrak{f}'_j-\dim\mathfrak{f}_i\cap \mathfrak{f}'_{j-1}+\dim\mathfrak{f}_{i-1}\cap \mathfrak{f}'_{j-1}
        \qquad (\forall 1\leq i,j\leq N).
    \end{align*}
    Note that $\sum_ja_{ij} = \dim \mathfrak{f}_i/\mathfrak{f}_{i-1}$ and $\sum_{i,j}a_{ij} = \sum_i\dim \mathfrak{f}_i/\mathfrak{f}_{i-1} = \dim\mathfrak{f}_N - \dim\mathfrak{f}_0 = D$. By Zassenhaus Lemma,
    \begin{equation}\label{eq:Zassenhaus}
        \frac{\mathfrak{f}_{i-1}+\mathfrak{f}_i\cap \mathfrak{f}'_j}{\mathfrak{f}_{i-1}+\mathfrak{f}_i\cap \mathfrak{f}'_{j-1}}\simeq \frac{\mathfrak{f}'_{j-1} + \mathfrak{f}'_j\cap \mathfrak{f}_i}{\mathfrak{f}'_{j-1} + \mathfrak{f}'_j\cap \mathfrak{f}_{i-1}},
    \end{equation}
    which implies $\sum_ia_{ij} = \dim \mathfrak{f}'_j/\mathfrak{f}'_{j-1}$. Furthermore, the conditions $\mathfrak{f}_i = \mathfrak{f}_{N-i}^\perp$ and $\mathfrak{f}'_j = {\mathfrak{f}'}_{N-j}^\perp$ yield
    \begin{align*}
        \dim \mathfrak{f}_i\cap \mathfrak{f}'_j &= \dim \mathfrak{f}_{N-i}^\perp\cap {\mathfrak{f}'}_{N-j}^{\perp} = \dim (\mathfrak{f}_{N-i} + \mathfrak{f}'_{N-j})^\perp\\
        &= D - \dim \mathfrak{f}_{N-i} - \dim \mathfrak{f}'_{N-j} + \dim\mathfrak{f}_{N-i}\cap\mathfrak{f}'_{N-j},
    \end{align*}
    which establishes the central-symmetry $a_{ij} = a_{N+1-i,N+1-j}$. 

In addition, suppose $(\mathfrak{f},\mathfrak{f}')\in \mathscr{F}^\imath_D\times\mathscr{F}^\imath_D$. Since the subspace $\mathfrak{f}_{n+1}\cap \mathfrak{f}'_j\subset \mathfrak{f}'_j$ is always isotropic for $j\leq n$, we have $a_{n+1,j}=\dim \mathfrak{f}_{n}/\mathfrak{f}_{n}=0 (\forall j\leq n)$. Then $a_{n+1,j}=0 (\forall j>n+1)$ by central-symmetry. Similarly, $a_{i,n+1}=0$ ($\forall i\neq n+1$) by \eqref{eq:Zassenhaus}. Finally, $$a_{n+1,n+1}\leq\dim \mathfrak{f}_{n+1}/\mathfrak{f}_{n}=\left\{\begin{array}{ll}
0, & \text{if $D=2d$},\\
1, & \text{if $D=2d+1$},
\end{array}\right.$$
which implies $a_{n+1,n+1}=0$ if $D=2d$; and $a_{n+1,n+1}=1$ if $D=2d+1$ because in this case $a_{n+1,n+1}$ must be odd due to central-symmetry.

Consequently, the map $(\mathfrak{f},\mathfrak{f}')\mapsto(a_{ij})$ sends $\mathscr{F}^\jmath_D\times\mathscr{F}^\jmath_D$ and 
    $\mathscr{F}^\imath_D\times\mathscr{F}^\imath_D$ to the sets $\Theta_D^\jmath$ and $\Theta_D^\imath$, respectively, where
    \begin{align*}
       \Theta_D^\jmath&:= \{A = (a_{ij})\in\mathrm{Mat}_{N\times N}(\mathbb{N}) \mid \sum_{i,j}a_{ij} =D,\ a_{ij} = a_{N+1-i,N+1-j}\},\\
     \Theta_{D}^\imath&:=
        \big\{A=(a_{ij})\in\Theta^\jmath_D ~\big|~\begin{array}{c} a_{n+1,n+1}=D-2d,\\
      a_{n+1,j}=a_{i,n+1}=0\ \text{for}\ i,j\neq n+1
      \end{array}
      \big\}
     \end{align*}

\begin{lem}\label{lem:bijections}
  For $\mathfrak{c}=\imath,\jmath$, there is a bijection
        \begin{equation*}
            \mathrm{U}_D(q)\setminus \mathscr{F}^\mathfrak{c}_D\times\mathscr{F}^\mathfrak{c}_D\longleftrightarrow\Theta_D^\mathfrak{c},\quad (\mathfrak{f},\mathfrak{f}')\mapsto(a_{ij}),
        \end{equation*}
        where $(\mathfrak{f},\mathfrak{f}')$ is a representative of the corresponding $\mathrm{U}_D(q)$-orbit.
         \end{lem}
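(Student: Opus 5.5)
The map is already well defined: the discussion preceding the lemma shows that $(\mathfrak f,\mathfrak f')\mapsto(a_{ij})$ lands in $\Theta^{\mathfrak c}_D$. Since $a_{ij}$ is built from dimensions of intersections, it is constant on $\mathrm U_D(q)$-orbits. It remains to prove injectivity on orbits and surjectivity. I would reduce both to the geometric (split) situation over $\overline{\mathbb F_q}$ and then descend using Proposition~\ref{prop:rational-orbits}.

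\textbf{Injectivity.} Suppose $(\mathfrak f,\mathfrak f')$ and $(\mathfrak g,\mathfrak g')$ in $\mathscr F^{\mathfrak c}_D\times\mathscr F^{\mathfrak c}_D$ give the same matrix $A$.
\begin{enumerate}
\item By row sums, $\mathfrak f$ and $\mathfrak g$ have the same type $\gamma$; by column sums, $\mathfrak f'$ and $\mathfrak g'$ have the same type $\nu$. So both pairs are $F$-fixed points of $X=\mathrm G/P_\gamma\times\mathrm G/P_\nu$, viewed inside $\mathbf U_D/\mathbf P_\gamma\times\mathbf U_D/\mathbf P_\nu$ over $\overline{\mathbb F_q}$.
\item By Proposition~\ref{prop:rational-orbits}, it suffices to show that the two pairs lie in one geometric $\mathrm G$-orbit.
\item Over $\overline{\mathbb F_q}$, with the transported nondegenerate form, $\mathrm G$-orbits on $\mathrm G/P_\gamma\times\mathrm G/P_\nu$ are indexed by $W^F_\gamma\backslash W^F/W^F_\nu=\mathcal D_{\gamma\nu}$, via the Bruhat decomposition~\eqref{Bruhat}.
\item Each double coset representative $w$ gives the standard pair $(\mathfrak f_\gamma,\dot w\mathfrak f_\nu)$, whose matrix is read off from the permutation $w$ acting on $\{1,\dots,D\}$ (the faithful type-$B_d$ permutation representation from the $\dot t_i$).
\item The matrix of the standard pair counts basis vectors $v_k$ with $k$ in block $i$ of $\gamma$ and $w^{-1}(k)$ in block $j$ of $\nu$. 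Distinct double cosets give distinct such block-count matrices; this is the usual type-B combinatorics, parallel to type A, where double cosets $W_\gamma\backslash W/W_\nu$ correspond to matrices with prescribed row and column sums.
\end{enumerate}
So equal matrices force the same geometric orbit, hence the same $\mathrm U_D(q)$-orbit.

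\textbf{Surjectivity.} Given $A\in\Theta^{\mathfrak c}_D$, I would construct a pair explicitly.
\begin{enumerate}
\item Partition the index set $\{1,\dots,D\}$ of the basis $v_1,\dots,v_D$ into blocks $S_{ij}$ with $|S_{ij}|=a_{ij}$, chosen so that $k\in S_{ij}$ if and only if $D+1-k\in S_{N+1-i,N+1-j}$. Central symmetry of $A$ makes this possible.
\item When $D$ is odd, the middle index $d+1$ must sit in a centrally symmetric block $S_{ij}$ with $(i,j)=(N+1-i,N+1-j)$, i.e.\ in $S_{n+1,n+1}$. The total $\sum_{i,j}a_{ij}=D$ is odd, and off-centre entries pair up with equal values, so $a_{n+1,n+1}$ is odd. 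Hence this block exists and the assignment is consistent.
\item Order the blocks lexicographically in $(i,j)$ to fill $\mathfrak f$, and in $(j,i)$ to fill $\mathfrak f'$. Concretely, set $\mathfrak f_i=\mathrm{Span}\{v_k:k\in S_{i'j},\,i'\le i\}$ and $\mathfrak f'_j=\mathrm{Span}\{v_k:k\in S_{ij'},\,j'\le j\}$.
\item For the lexicographic $(i,j)$ ordering, $\mathfrak f$ is the standard isotropic flag $\mathfrak f_\gamma$. The $(j,i)$ ordering is then realised by some $\dot w$ with $w\in W^F$ commuting with $k\mapsto D+1-k$.
\item Each $\mathfrak f_i$ is spanned by a set of basis vectors $v_k$. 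Since $\langle v_k,v_l\rangle\neq0$ only when $l=D+1-k$, we get $\mathfrak f_i^\perp=\mathrm{Span}\{v_l: D+1-l\notin\text{indices of }\mathfrak f_i\}$. Symmetry of the blocks shows this equals $\mathfrak f_{N-i}$, and likewise for $\mathfrak f'$.
\item Both flags are defined over $\mathbb F_q$-rational bases, so they are $\mathbb F_q$-points, i.e.\ lie in $\mathscr F^\jmath_D$. Their matrix is $A$ by construction.
\item For $\mathfrak c=\imath$, the extra vanishing conditions on row and column $n+1$ force $\dim\mathfrak f_n=\dim\mathfrak f'_n=d$, so both flags lie in $\mathscr F^\imath_D$.
\end{enumerate}

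\textbf{Main obstacle.} The delicate step is the middle block: making the symmetric assignment compatible with the Hermitian-perpendicularity convention when $D$ is odd, and checking that the resulting permutation really lies in $W^F=W^\tau$ rather than merely in $W$. I would handle it through the explicit $\dot t_i$, showing they generate exactly the permutations commuting with $k\mapsto D+1-k$.
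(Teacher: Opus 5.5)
Your well-definedness and surjectivity arguments are sound. The surjectivity construction is essentially the paper's: the standard basis vectors $v_k$, distributed among the blocks $S_{ij}$ compatibly with $k\mapsto D+1-k$, play the role of the paper's adapted basis $e^{(t)}_{ij}$, and this even spares you an orthonormal basis of the central block. Your surjectivity step~4 and the ``main obstacle'' about $w\in W^F$ are not needed, since step~5 checks $\mathfrak f_i^\perp=\mathfrak f_{N-i}$ directly.

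The gap is step~3 of the injectivity argument, which is false as stated.
\begin{itemize}
\item The group $\mathrm G=\mathrm{GL}_D(\overline{\mathbb F_q})$ acts on ordinary partial flags and preserves no form. The Hermitian form is only $q$-semilinear, and only $\mathrm G^F=\mathrm U_D(q)$ preserves it.
\item \eqref{Bruhat} is the Bruhat decomposition of the finite group $\mathrm G^F$, not of $\mathrm G$.
\item The geometric $\mathrm G$-orbits on pairs of partial flags of types $\gamma,\nu$ are ordinary type-A orbits. They are indexed by all matrices with the prescribed row and column sums, including non-centrosymmetric ones, which give non-$F$-stable orbits (cf.\ Proposition~\ref{F act on O_D}).
\end{itemize}
So steps~2 and~3 do not fit together.

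There are two ways to repair this.
\begin{itemize}
\item \emph{Keep step~2.} The only input you then need is the type-A fact that the geometric orbit of a pair of partial flags is determined by $(\dim\mathfrak f_i\cap\mathfrak f'_j)$ \cite[\S1.1]{BLM90}. Combined with Proposition~\ref{prop:rational-orbits}, this is exactly the paper's proof, and it needs no type-B combinatorics.
\item \emph{Drop step~2.} Apply \eqref{Bruhat} to $\mathrm U_D(q)$ itself, so that rational orbits are indexed by $\mathcal D_{\gamma\nu}$. Then step~5 carries all the content and cannot be dismissed as ``parallel to type A''.
\end{itemize}

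For the second route, let $I_1,\dots,I_N$ and $J_1,\dots,J_N$ be the index blocks of $\mathfrak f_\gamma$ and $\mathfrak f_\nu$. Let $\mathfrak S_\gamma,\mathfrak S_\nu\subset\mathfrak S_D=W$ be the Young subgroups preserving each block. You must show that $w,w'\in W^F$ with $|I_i\cap w(J_j)|=|I_i\cap w'(J_j)|$ for all $i,j$ lie in one $(W^F_\gamma,W^F_\nu)$-double coset.

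One way is as follows. Choose a permutation $x$ that maps $I_i\cap w(J_j)$ onto $I_i\cap w'(J_j)$ for all $(i,j)$ and commutes with $k\mapsto D+1-k$. This is possible because the involution sends the $(i,j)$-set to the $(N+1-i,N+1-j)$-set. For odd $D$, the fixed index $d+1$ lies in the central set on both sides.

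Then $x$ preserves each $I_i$, and $y=w'^{-1}xw$ preserves each $J_j$. Using $W^F\cap\mathfrak S_\gamma=W^F_\gamma$ and $W^F\cap\mathfrak S_\nu=W^F_\nu$, you get $x\in W^F_\gamma$ and $y\in W^F_\nu$. Hence $w'=xwy^{-1}$ lies in the double coset of $w$.
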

    \begin{proof}
The assignment $(a_{ij})$ is invariant on each $\mathrm{U}_D(q)$-orbit in $\mathscr{F}^\mathfrak{c}_D\times\mathscr{F}^\mathfrak{c}_D$. Hence, the map is well-defined.  

For injectivity, let two pairs of flags share the same associated
matrix. They lie in the same $\mathrm{GL}_D$-orbit by \cite[\S 1.1]{BLM90}.
Since both pairs are fixed by the Frobenius $F$,
Proposition~\ref{prop:rational-orbits} shows that they lie in the
same orbit of $\mathrm{GL}_D^F=\mathrm{U}_D(q)$. Hence the map is injective.

Finally, let us show the surjectivity.
For each $(a_{ij})\in \Theta_D^\mathfrak{c}$, we need to find a pair $(\mathfrak{f},\mathfrak{f}')\in\mathscr{F}^\mathfrak{c}_D\times\mathscr{F}^\mathfrak{c}_D$ such that its associated matrix is $(a_{ij})$. Let $\{e^{(t)}_{ij}\mid 1\le i,j\le N, a_{ij}\neq0, 1\le t\le a_{ij} \}$ be a basis of $\mathbb{F}^{D}_{q^2}$
    such that $\langle e^{(t_1)}_{ij},e^{(t_2)}_{kl}\rangle=\delta_{t_1,t_2}\delta_{(i,j),(N+1-k,N+1-l)}$. Let $\{B_{ij}\}_{i,j=1}^N$ be some $\mathbb{F}_{q^2}$-vector spaces such that$$B_{ij}=\begin{cases}
        0, &\text{if}\ a_{ij}=0,\\
        \mathrm{Span}\{e^{(1)}_{ij},\cdots,e^{(a_{ij})}_{ij}\}, &\text{if}\ a_{ij}\neq0.
    \end{cases} $$
 
 Let $\mathfrak{f}=(0=\mathfrak{f}_0\subseteq\mathfrak{f}_1\subseteq\cdots\subseteq\mathfrak{f}_N=\mathbb{F}_{q^2}^{D})$ and $\mathfrak{f}'=(0=\mathfrak{f}'_0\subseteq\mathfrak{f}'_1\subseteq\cdots\subseteq\mathfrak{f}'_N=\mathbb{F}_{q^2}^{D})$, where $\mathfrak{f}_i:=\bigoplus_{1\le k\le i; 1\le l\le N}B_{kl}$ and $\mathfrak{f}'_j:=\bigoplus_{1\le l\le j;1\le k\le N}B_{kl}$, $1\le i,j\le N$. 
 It is clear that $\mathfrak{f}_i=\mathfrak{f}_{N-i}^\perp$ and $\mathfrak{f}'_j=\mathfrak{f}'^{\perp}_{N-j}$. Moreover, $\mathfrak{f}_n$ and $\mathfrak{f}'_n$ are both maximal isotropic if $(a_ij)\in\Theta_D^\imath$. So $(\mathfrak{f},\mathfrak{f}')\in\mathscr{F}^\mathfrak{c}_D\times\mathscr{F}^\mathfrak{c}_D$. The associated matrix of $(\mathfrak{f},\mathfrak{f}')$ is $(\dim B_{ij})=(a_{ij})$.
\end{proof}

For $A\in\Theta_D^\mathfrak{c}$ ($\mathfrak{c}=\imath,\jmath$), let $\mathcal{O}_A$ denote the corresponding $\mathrm{U}_D(q)$-orbit in $\mathscr{F}^\mathfrak{c}_D\times\mathscr{F}^\mathfrak{c}_D$.

\subsection{Minus-type orthogonal group} \label{ex:GO_D}

Let $\mathbf{O}_{2d+2}$ (resp. $\mathbf{O}^-_{2d+2}$) be the orthogonal group (resp., minus-type orthogonal group) defined over $\mathbb{F}_q$. 
Take a standard basis $\{v_1, v_2\cdots, v_{2d+2}\}$ of $\overline{\mathbb{F}_q}^{2d+2}$. The group $\mathrm{O}_{2d+2}=\mathbf{O}_{2d+2}(\overline{\mathbb{F}_q})$ of geometric points is the isometry group of the standard bilinear form
\begin{equation}\label{standarform}
(\cdot,\cdot): \overline{\mathbb{F}_q}^{2d+2}\times \overline{\mathbb{F}_q}^{2d+2} \to \overline{\mathbb{F}_q}, \quad (\sum_{i=1}^{2d+2}x_iv_i,\sum_{i=1}^{2d+2}y_iv_i)=\sum_{i=1}^{2d+2}x_iy_{2d+3-i},
\end{equation}
while the group $\mathrm{O}_{2d+2}^-=\mathbf{O}_{2d+2}^-(\overline{\mathbb{F}_q})$ of geometric points is the isometry group of another non-degenerate bilinear form $\langle\cdot,\cdot\rangle: \overline{\mathbb{F}_q}^{2d+2}\times \overline{\mathbb{F}_q}^{2d+2} \to \overline{\mathbb{F}_q}$,
  \begin{align}  \label{def:quad-form}
\big\langle\sum_{i=1}^{2d+2} x_iv_i,\sum_{i=1}^{2d+2} y_iv_i\big\rangle =\sum_{i=1}^{d} (x_iy_{2d+3-i}+x_{2d+3-i}y_i)+x_{d+1}y_{d+1}-\omega x_{d+2}y_{d+2},
    \end{align}
    where $\omega$ is a non-square element in $\mathbb{F}_q$.
It is a very basic fact that $\mathrm{O}_{2d+2}\simeq\mathrm{O}_{2d+2}^-$. However, the finite groups of $\mathbb{F}_q$-points $\mathrm{O}_{2d+2}(q)$ and $\mathrm{O}_{2d+2}^-(q)$ are not isomorphic to each other because the two bilinear forms are not equivalent over $\mathbb{F}_q$ when $\sqrt{\omega}\not\in\mathbb{F}_q$.

Note that $\mathrm{O}_{2d+2}$ is not connected. To adapt the framework established in Section~\ref{sec:generalframework}, we take its identity component $\mathrm{G}=\mathrm{SO}_{2d+2}=\mathrm{O}_{2d+2}^{\circ}$. In particular, the root data, Weyl group and parabolic subgroups below are those of $\mathrm{G}$. 

Take the maximal torus $T$ and the Borel subgroup $B$ of $\mathrm{G}=\mathrm{SO}_{2d+2}$ as
\begin{gather*}
T = \left\{\mathrm{diag}(\lambda_1,\cdots, \lambda_{d+1}, \lambda_{d+1}^{-1}, \cdots, \lambda_1^{-1})\right\},\\
B=\left\{\begin{pmatrix}
        Y & Y J X\\
        0 & J Y^{-\mathrm{tr}}J
    \end{pmatrix}\middle| \begin{array}{c}X,Y\in \mathrm{Mat}_{d+1}(\overline{\mathbb{F}_q}),\\X^{\mathrm{tr}}=-X,\ \det Y\ne0, \ Y\text{ upper triangular}\end{array}\right\},
\end{gather*}    
    where $J$ is defined in \eqref{def:J} with size $(d+1)\times(d+1)$.
The element
    \begin{equation}\label{def:g}
        g:={\tiny\begin{pmatrix}
            I_{d} \\
            &  & 1\\
            & 1 & \\
            &&& I_{d}
        \end{pmatrix}}\in\mathrm{O}_{2d+2}
    \end{equation}
    normalizes both $T$ and $B$. The map 
    \begin{equation}\label{def:FO2d+2}
    F = F_q\circ \mathrm{Ad}_g: \mathrm{O}_{2d+2}\to \mathrm{O}_{2d+2}, \quad x\mapsto gF_q(x)g^{-1}
    \end{equation}
    preserves $\mathrm{G}$. Its restriction to $\mathrm{G}$, also denoted by $F$, is the twisted Frobenius morphism involving the nontrivial graph automorphism shown in Figure 3. Here $\det g=-1$, but conjugation by $g$ defines an automorphism of $\mathrm{G}$ preserving $T$ and $B$. It is known that 
    \begin{equation}\label{F_q res}
    \mathrm{O}_{2d+2}^-(q)=(\mathrm{O}_{2d+2}^-)^{F_q}\simeq (\mathrm{O}_{2d+2})^F.
    \end{equation}
Here the isomorphism $(\mathrm{O}_{2d+2}^-)^{F_q}\simeq (\mathrm{O}_{2d+2})^F$ induced from $\mathrm{Ad}_{P^{-1}}$ on $\mathrm{O}_{2d+2}$; see the following commutative diagram:
      $$  \begin{tikzcd}
\mathrm{O}_{2d+2} \arrow[d, "\mathrm{Ad}_{P^{-1}}"] \arrow[r, "F_q\circ\mathrm{Ad}_g"] & \mathrm{O}_{2d+2} \arrow[d, "\mathrm{Ad}_{P^{-1}}"] \\
P^{-1}\mathrm{O}_{2d+2} P \arrow[r, "F_q"]                                          & P^{-1}\mathrm{O}_{2d+2}P
\end{tikzcd}$$
with
 $$P={\tiny\begin{pmatrix}
                I_{d}&&&\\
                & \alpha&\alpha\cdot\sqrt{\omega}\\
                &\frac{1}{2\alpha}&\frac{-\sqrt{\omega}}{2\alpha}\\
                &&&I_{d}
            \end{pmatrix}}, \quad \text {a congruent matrix from $J_{2d+2}$ to ${\tiny\begin{pmatrix}
                &&&J_{d}\\
                & 1\\
                && -\omega\\
                J_{d}&&&
            \end{pmatrix}}$},$$
            where $\alpha\in \mathbb{F}_{q^2}$ satisfies $\alpha^{q+1}=\frac{1}{2}$.
In particular, $F_q(P)\cdot P^{-1}=g$ is the matrix defined in \eqref{def:g}.

\begin{rem}
We remark here that $F_q$ on second term of \eqref{F_q res} denotes the restriction of the standard Frobenius on $\mathrm{GL}_{2d+2}$ on $\mathrm{O}^-_{2d+2}$.
It acts by raising each matrix entry to its $q$-th power. It should not be confused with the standard Frobenius
on $\mathrm O_{2d+2}$ in the sense of \S \ref{Frobenius}.
\end{rem}

Since $\sqrt{\omega}\in\mathbb{F}_{q^2}$, the two bilinear forms $(\cdot,\cdot)$ and $\langle\cdot,\cdot\rangle$ become equivalent over $\mathbb{F}_{q^2}$. Therefore, the base change to $\mathbb{F}_{q^2}$ gives 
\begin{equation}\label{eq:Oq2=Oq2}
\mathbf{O}_{2d+2}^{-}\times_{\mathbb{F}_q} \mathbb{F}_{q^2} \simeq\mathbf{O}_{2d+2}\times_{\mathbb{F}_q} \mathbb{F}_{q^2}=\mathbf{O}_{2d+2,q^2}.
\end{equation}
We also denote by $\mathbf{SO}_{2d+2}^-$ the special orthogonal group of minus-type defined over $\mathbb{F}_q$, which is the identity component of $\mathbf{O}^-_{2d+2}$. Let $\mathrm{SO}^-_{2d+2}(q)$ be its $\mathbb{F}_q$-rational points. There is an isomorphism $\mathrm{SO}_{2d+2}^-(q)\simeq(\mathrm{SO}_{2d+2})^F$, which is also induced from $\mathrm{Ad}_{P^{-1}}$ on $\mathrm{SO}_{2d+2}$. 

Let $\epsilon_i\in Q$ ($i=1,2,\ldots,d+1$) be the weights satisfying $$\epsilon_i(\mathrm{diag}(\lambda_1,\ldots,\lambda_{d+1},\lambda_{d+1}^{-1},\ldots,\lambda_1^{-1}))=\lambda_i,$$ 
    which form an orthonormal basis of $\mathbb{R}^{d+1}\simeq \bigoplus_{i=1}^{d+1}\mathbb{R}\epsilon_i$ via the standard bilinear form induced from the Killing form. The set of simple roots is $\Delta=\{\alpha_1=\epsilon_1-\epsilon_2,\ldots,\alpha_d=\epsilon_{d}-\epsilon_{d+1},\alpha_{d+1}=\epsilon_{d}+\epsilon_{d+1}\}$. Extend the graph automorphism $\tau: \Delta\to\Delta$ that switches $\alpha_d$ and $\alpha_{d+1}$ to the automorphism 
    $\tau: \mathbb{R}^{d+1}\to\mathbb{R}^{d+1}$ that switches $\epsilon_{d+1}$ and $-\epsilon_{d+1}$.

    The preimage $\dot{s}_i\in N_{\mathrm{G}}(T)$ of $s_i=s_{\alpha_i}\in W (i = 1, 2, \ldots, d+1)$ can be chosen as
        \begin{align*}
            & \dot{s}_i = {\tiny \begin{pmatrix}
                I_{i-1}\\
                & 0 & 1\\
                & 1 & 0\\
                &&& I_{2d-2i}\\
                &&&& 0 & 1\\
                &&&& 1 & 0\\
                &&&&&& I_{i-1}
            \end{pmatrix}} \quad (i\leq d); \quad \dot{s}_{d+1} = {\tiny \begin{pmatrix}
                I_{d-1}\\
                &&& 1\\
                &&&& 1\\
                & 1 \\
                && 1\\
                &&&&&& I_{d-1}
            \end{pmatrix}}.
        \end{align*}
It is clear that $F(s_i) = s_i$ ($i<d$), $F(s_d) = s_{d+1}$ and $F(s_{d+1}) = s_{d}$.

The group $W^F$ is generated by $t_1= s_1,\cdots, t_{d-1}= s_{d-1}$ and $t_{d}= s_{d}s_{d+1}$. In particular, $\epsilon_i\cdot t_d= \epsilon_i$ for $i<d$ and $\epsilon_{i}\cdot t_d = -\epsilon_{i}$ for $i=d,d+1$.
        
\subsection{Orthogonal isotropic flag varieties}\label{O-flag}
We take $Q_{\ff}$ as
        $$
            Q'_n = \{\sum_{i=1}^{d} a_i\epsilon_i \mid a_i\in\mathbb{Z}, -n\le a_i\le n\},
        $$
        which lies in $Q^\tau$ and is $W^F$-invariant. The set $\Lambda_\ff$ of $W^F$-orbits on $Q'_n$ corresponds to the set $\Lambda_{n+1,d}$ defined in \eqref{def:Lambda} again, for which an orbit $\gamma = (\gamma_0,\cdots,\gamma_{n})\in\Lambda_{n+1,d}$ consists of all weights $\sum_{i=1}^{d}a_i\epsilon_i$ such that $\gamma_k = \#\{1\leq i\leq d~|\left|a_i\right| = k\}$. The unique anti-dominant weight $\mathbf{i}_\gamma = \sum_{i=1}^d a_i\epsilon_i$ in this orbit satisfies $a_1\leq a_2\leq\cdots\leq a_d\leq 0$.
        For $\gamma \in\Lambda_{n+1,d}$, the associated parabolic group $W^F_\gamma$ is generated by
        $$
        \{ t_i\mid 1\leq i\leq d, i\neq\gamma_{n}, \gamma_{n-1} + \gamma_{n}, \cdots, \gamma_1+\cdots+\gamma_{n}\}.
        $$

        Up to $\mathrm{Ad}_{P^{-1}}$, we now identify $\mathrm{G}^F$ with $\mathrm{SO}_{2d+2}^-(q)$ and transport the subgroups $P_\gamma$ accordingly. Since $P$ acts non-trivially only on $\mathrm{Span}\{v_{d+1},v_{d+2}\}$, it preserves all subspaces in the flags $\mathfrak{f}_\gamma$ below.
        Set $\mathfrak{e}_i = \mathrm{Span}\{ v_1,\cdots, v_i \}$ for $i=1,2,\ldots,2d+2$. The parabolic group $P_\gamma$ is the stabilizer in $\mathrm{SO}_{2d+2}^-(q)$ of the $N$-step flag
        \begin{align*}
            \mathfrak{f}_\gamma = (0 = \mathfrak{e}_0 \subseteq \mathfrak{e}_{\gamma_{n}} \subseteq \cdots \subseteq \mathfrak{e}_{d-\gamma_0}\subseteq \mathfrak{e}_{d+2+\gamma_0}\subseteq \cdots \subseteq \mathfrak{e}_{2d+2-\gamma_{n}} \subseteq \mathfrak{e}_{2d+2} = \mathbb{F}_{q}^{2d+2}).
        \end{align*}
        Therefore, $\mathscr{F}_{\ff}$ is isomorphic to the set of isotropic $N$-step flags
        $$
           \mathscr{F}^\jmath_O=\{\mathfrak{f} = (0 = \mathfrak{f}_0 \subseteq \cdots\subseteq \mathfrak{f}_n \overset{\geq 2}{\subset} \mathfrak{f}_{n+1} \cdots\subseteq \mathfrak{f}_{N} = \mathbb{F}_{q}^{2d+2}) \mid \mathfrak{f}_i = \mathfrak{f}_j^\perp\ \text{if}\ i+j = N\},
        $$
        where the orthogonal complement is defined with respect to the quadratic form \eqref{def:quad-form}.
        For each $\mathfrak{f}\in\mathscr{F}^\jmath_O$, since the restriction of the quadratic form on $\mathfrak{f}_n^\perp/\mathfrak{f}_n$ is nondegenerate and $\dim \mathfrak{f}_n^\perp/\mathfrak{f}_n\geq2$, we can choose $w\in \mathfrak{f}_n^\perp$ with $\langle w,w\rangle\ne0$. The reflection $s_w(x)=x-\frac{2\langle x,w\rangle}{\langle w,w\rangle} w$ is defined over $\mathbb{F}_q$, has determinant $-1$ and fixes $\mathfrak{f}_n$ pointwise; hence it stabilizes $\mathfrak{f}$. Thus $\mathrm{O}_{2d+2}^-(q)\mathfrak{f}=\mathrm{SO}_{2d+2}^-(q)\mathfrak{f}$ ($\forall \mathfrak{f}\in\mathscr{F}^\jmath_O$), by a stabilizer argument similar to that in \cite[Lemma~4.4]{DLZ25}.

In addition, we are also interested in the following set of isotropic $N$-step flags
\begin{align}\label{eq:N-step-itypeO}
\sF_O^{\imath}=\{\mathfrak{f}=(\mathfrak{f}_i)\in\sF^\jmath_O\mid\mathfrak{f}_n\ \text{is maximal isotropic}\}\subset \sF^\jmath_O.
\end{align} 
As before, we denote the corresponding $\mathbb{F}_q$-varieties of $\mathscr{F}^\jmath_O$ and $\mathscr{F}^\imath_O$ by bold letters $\pmb{\mathscr{F}}^\jmath_O$ and $\pmb{\mathscr{F}}^\imath_O$, respectively. 

     For $(\mathfrak{f},\mathfrak{f}')\in\sF^\jmath_O\times\sF^\jmath_O$, we define a matrix $$(a_{ij})_{N\times N}:= \bigg(\dim\frac{\mathfrak{f}_{i-1}+\mathfrak{f}_i\cap\mathfrak{f}'_j}{\mathfrak{f}_{i-1}+\mathfrak{f}_i\cap\mathfrak{f}'_{j-1}}\bigg)_{N\times N}.$$

\begin{lem}\label{ann}
    For any $(\mathfrak{f},\mathfrak{f}')\in\sF^\jmath_O\times\sF^\jmath_O$, the central entry  $a_{n+1,n+1}\ge 2$.
\end{lem}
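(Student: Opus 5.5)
We need to show that $a_{n+1,n+1}\ge 2$ for orthogonal isotropic flags over $\mathbb{F}_q$ with the minus-type form. The entry is
\[
a_{n+1,n+1}=\dim\frac{\mathfrak{f}_n+\mathfrak{f}_{n+1}\cap\mathfrak{f}'_{n+1}}{\mathfrak{f}_n+\mathfrak{f}_{n+1}\cap\mathfrak{f}'_n}.
\]
The plan is to reduce this to a statement about the nondegenerate quotient $\mathfrak{f}_{n+1}/\mathfrak{f}_n=\mathfrak{f}_n^\perp/\mathfrak{f}_n$ and then use anisotropy of the minus-type form.

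Set $\bar V:=\mathfrak{f}_n^\perp/\mathfrak{f}_n$, a nondegenerate quadratic space over $\mathbb{F}_q$ of dimension $2m\ge 2$. Since $\mathbb{F}_q^{2d+2}$ has Witt index $d$ for the form \eqref{def:quad-form}, the space $\bar V$ has Witt index $m-1$, because $\mathfrak{f}_n$ is totally isotropic and Witt indices add under such reductions. Let $U$ and $U'$ be the images of $\mathfrak{f}_{n+1}\cap\mathfrak{f}'_{n+1}$ and $\mathfrak{f}_{n+1}\cap\mathfrak{f}'_n$ in $\bar V$. Then $a_{n+1,n+1}=\dim U-\dim U'$.

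\textbf{Step 1: identify $U$.} Write $W:=\mathfrak{f}_{n+1}\cap\mathfrak{f}'_{n+1}=\mathfrak{f}_n^\perp\cap\mathfrak{f}_n'^\perp=(\mathfrak{f}_n+\mathfrak{f}'_n)^\perp$. Then $U=(W+\mathfrak{f}_n)/\mathfrak{f}_n$, and $U'$ is the image of $\mathfrak{f}_n^\perp\cap\mathfrak{f}'_n$. Since $\mathfrak{f}'_n$ is isotropic, $U'$ is a totally isotropic subspace of $\bar V$. We next check that $U=U'^\perp$ inside $\bar V$. Using $(\mathfrak{f}_n^\perp\cap\mathfrak{f}'_n)^\perp=\mathfrak{f}_n+\mathfrak{f}_n'^\perp$, the preimage of $U'^\perp$ in $\mathfrak{f}_n^\perp$ is $\mathfrak{f}_n^\perp\cap(\mathfrak{f}_n+\mathfrak{f}_n'^\perp)=\mathfrak{f}_n+(\mathfrak{f}_n^\perp\cap\mathfrak{f}_n'^\perp)$ by the modular law, since $\mathfrak{f}_n\subseteq\mathfrak{f}_n^\perp$. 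This preimage is exactly $\mathfrak{f}_n+W$, so $U=U'^\perp$.

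\textbf{Step 2: the dimension bound.} From Step 1 and nondegeneracy of $\bar V$,
\[
a_{n+1,n+1}=\dim U'^\perp-\dim U'=2m-2\dim U'.
\]
Because $U'$ is totally isotropic in $\bar V$ and $\bar V$ has Witt index $m-1$, we get $\dim U'\le m-1$, hence $a_{n+1,n+1}\ge 2$.

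The main obstacle is the Witt-index claim. It is essential: in the split case ($\mathbf{O}_{2d+2}$) the entry can be $0$. To establish it, take a maximal isotropic $\mathbb{F}_q$-subspace $L\supseteq\mathfrak{f}_n$ of $V=\mathbb{F}_q^{2d+2}$. Its image $L/\mathfrak{f}_n$ is totally isotropic in $\bar V$. Conversely, any totally isotropic subspace of $\bar V$ lifts to a totally isotropic subspace of $V$ containing $\mathfrak{f}_n$. The Witt index of $V$ under \eqref{def:quad-form} is $d$, since the binary part $x_{d+1}^2-\omega x_{d+2}^2$ is anisotropic over $\mathbb{F}_q$ because $\omega$ is a non-square. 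Therefore the Witt index of $\bar V$ is $d-\dim\mathfrak{f}_n=m-1$. Everything takes place over $\mathbb{F}_q$-rational flags, which is the setting of $\sF^\jmath_O$, so the anisotropy argument applies.
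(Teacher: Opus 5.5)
Your proof is correct and is essentially the paper's argument: both come down to the fact that the totally isotropic subspace $\mathfrak{f}_n+\mathfrak{f}_{n+1}\cap\mathfrak{f}'_n$ has dimension at most $d$, the Witt index of the minus-type form, which gives $a_{n+1,n+1}=2\bigl(d+1-\dim(\mathfrak{f}_n+\mathfrak{f}_{n+1}\cap\mathfrak{f}'_n)\bigr)\ge 2$. The only difference is bookkeeping: you get the numerator from $U=U'^\perp$ in the nondegenerate quotient $\mathfrak{f}_n^\perp/\mathfrak{f}_n$, while the paper computes $\dim(\mathfrak{f}_n+\mathfrak{f}_{n+1}\cap\mathfrak{f}'_{n+1})$ directly using the rank of the pairing between $\mathfrak{f}_n$ and $\mathfrak{f}'_n$.
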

\begin{proof}
    We write $a=\dim\mathfrak{f}_n$, $b=\dim\mathfrak{f}'_n$, $t=\dim\mathfrak{f}_n\cap\mathfrak{f}'_n$ and $r=\mathrm{rank}(\langle\cdot,\cdot\rangle|_{\mathfrak{f}_n\times\mathfrak{f}'_n})$. Since $\mathfrak{f}_n+\mathfrak{f}_{n+1}\cap\mathfrak{f}_{n}'$ is isotropic, we have $$d\geq\dim(\mathfrak{f}_n+\mathfrak{f}_{n+1}\cap\mathfrak{f}_{n}')=\dim\mathfrak{f}_n+\dim(\mathfrak{f}'_n\cap\mathfrak{f}_{n+1})-\dim\mathfrak{f}_n\cap\mathfrak{f}_{n}'=a+b-r-t.$$
    Moreover, we have
    \begin{align*}
\dim(\mathfrak{f}_n+\mathfrak{f}_{n+1}\cap\mathfrak{f}_{n+1}')&=\dim\mathfrak{f}_n+\dim\mathfrak{f}'_{n+1}\cap\mathfrak{f}_{n+1}-\dim\mathfrak{f}_n\cap\mathfrak{f}_{n+1}'\\
&=a+(2d+2-(a+b-t))-(a-r)=2d+2-a-b+t+r,
\end{align*}
    where we use $\dim\mathfrak{f}'_{n+1}\cap\mathfrak{f}_{n+1}=\dim(\mathfrak{f}_n+\mathfrak{f}_n')^\perp=2d+2-(a+b-t)$ and $\dim\mathfrak{f}_n\cap\mathfrak{f}'_{n+1}=a-r$.
    So we have $a_{n+1,n+1}=2(d+1-a-b+r+t)\ge2$.
\end{proof}

Denote 
\begin{align*}
\Theta_O^\jmath&:=\{A=(a_{ij})\in\Mat_{N\times N}(\mathbb{N})\mid\sum a_{ij}=2d+2, a_{ij}=a_{N+1-i,N+1-j}, a_{n+1,n+1}\ge 2\},\\
\Theta_O^\imath&:=\{A=(a_{ij})\in\Theta^\jmath_O\mid a_{n+1,j}=a_{i,n+1}=0\ \text{for}\ i,j\neq n+1,\ a_{n+1,n+1}=2\}.
\end{align*}
\begin{lem}\label{biject D} 
In $\sF^\mathfrak{c}_O\times\sF^\mathfrak{c}_O$ ($\mathfrak{c}=\imath,\jmath$), the $\mathrm{SO}^-_{2d+2}(q)$-orbits and the $\mathrm{O}^-_{2d+2}(q)$-orbits coincide. Moreover, the common orbit set is in bijection with $\Theta_O^\mathfrak{c}$:
$$\mathrm{SO}^-_{2d+2}(q)\backslash\sF^\mathfrak{c}_O\times\sF^\mathfrak{c}_O=\mathrm{O}^-_{2d+2}(q)\backslash\sF^\mathfrak{c}_O\times\sF^\mathfrak{c}_O\longleftrightarrow\Theta_O^\mathfrak{c},\qquad (\mathfrak{f},\mathfrak{f}')\mapsto(a_{ij})_{N\times N},$$
  where $(\mathfrak{f},\mathfrak{f}')$ is a representative of the corresponding orbit.
\end{lem}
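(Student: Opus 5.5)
We would follow the pattern of Lemma~\ref{lem:bijections}, with three steps: well-definedness, injectivity via Lang's theorem, and surjectivity via an explicit adapted basis.

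\textbf{Well-definedness.} The assignment $(\mathfrak{f},\mathfrak{f}')\mapsto(a_{ij})$ is defined by dimensions of subquotients, so it is constant on $\mathrm{O}^-_{2d+2}(q)$-orbits. The image lies in $\Theta_O^\mathfrak{c}$ for the following reasons:
\begin{itemize}
\item central symmetry follows from the same $\perp$-computation as in the unitary case;
\item $a_{n+1,n+1}\ge2$ is Lemma~\ref{ann};
\item in the $\imath$-case, the vanishing of $a_{n+1,j}$ and $a_{i,n+1}$ for $i,j\ne n+1$ follows verbatim from the unitary argument, using that $\mathfrak{f}_n,\mathfrak{f}'_n$ are maximal isotropic;
\item then $a_{n+1,n+1}=\dim\mathfrak{f}_{n+1}/\mathfrak{f}_n=2$.
\end{itemize}

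\textbf{Orbits coincide.} Since $\mathrm{SO}^-\subset\mathrm{O}^-$, it suffices to show each $\mathrm{O}^-_{2d+2}(q)$-orbit on pairs is a single $\mathrm{SO}^-_{2d+2}(q)$-orbit. For that it is enough to find, for every pair $(\mathfrak{f},\mathfrak{f}')$, an element of determinant $-1$ in $\mathrm{O}^-_{2d+2}(q)$ stabilizing both flags. Take the $\mathbb{F}_q$-subspace $U=\mathfrak{f}_{n+1}\cap\mathfrak{f}'_{n+1}$; its image carries the central block of size $a_{n+1,n+1}\ge2$.

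Concretely, $U$ contains $R=\mathfrak{f}_n\cap\mathfrak{f}'_{n+1}+\mathfrak{f}_{n+1}\cap\mathfrak{f}'_n$. One checks that $R=U\cap U^\perp$ is the radical of the form restricted to $U$: indeed $U^\perp=\mathfrak{f}_n+\mathfrak{f}'_n$, and $U\cap(\mathfrak{f}_n+\mathfrak{f}'_n)=R$ by modularity. So $U/R$ is nondegenerate of dimension $a_{n+1,n+1}\ge2$, and we choose $w\in U$ with $\langle w,w\rangle\ne0$, defined over $\mathbb{F}_q$. The reflection $s_w$ has determinant $-1$ and fixes $w^\perp$ pointwise.

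We then verify that $s_w$ stabilizes each $\mathfrak{f}_i$ and $\mathfrak{f}'_j$:
\begin{itemize}
\item for $i\le n$ we have $\mathfrak{f}_i\subset\mathfrak{f}_n\subset U^\perp\subset w^\perp$, so $\mathfrak{f}_i$ is fixed pointwise;
\item for $i\ge n+1$ we have $w\in\mathfrak{f}_{n+1}\subset\mathfrak{f}_i$, and $s_w(x)\in x+\overline{\mathbb{F}_q}w$;
\item the same holds for $\mathfrak{f}'$.
\end{itemize}
This is the step we expect to be the main obstacle, because the reflection must work for both flags at once, which forces $w\in\mathfrak{f}_{n+1}\cap\mathfrak{f}'_{n+1}$. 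The identity $U\cap U^\perp=R$ is what makes a suitable $w$ exist, and it is exactly where Lemma~\ref{ann} enters. This step also shows that the stabilizer meets the non-identity component of $\mathrm{O}^-$.

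\textbf{Injectivity.} Suppose two pairs have the same matrix. By the split-type classification over $\overline{\mathbb{F}_q}$, they lie in the same geometric $\mathrm{O}_{2d+2}$-orbit; this uses the $\mathrm{O}_{2d+2}$-orbit classification of pairs of isotropic flags (Witt's theorem, as in \cite{BKLW18}). By the previous paragraph applied geometrically, the geometric stabilizer contains a reflection, so the $\mathrm{O}$-orbit equals the $\mathrm{SO}_{2d+2}$-orbit. Both pairs are $F$-fixed, so Proposition~\ref{prop:rational-orbits} for $\mathrm{G}=\mathrm{SO}_{2d+2}$ with the $F$-stable parabolics puts them in one $\mathrm{G}^F\simeq\mathrm{SO}^-_{2d+2}(q)$-orbit.

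\textbf{Surjectivity.} We mimic the basis construction in the proof of Lemma~\ref{lem:bijections}. For $(i,j)\ne(n+1,n+1)$, choose vectors $e^{(t)}_{ij}$ spanning hyperbolic pairs over $\mathbb{F}_q$, with pairing $\langle e^{(t)}_{ij},e^{(s)}_{kl}\rangle=\delta_{ts}\delta_{(i,j),(N+1-k,N+1-l)}$. This uses up a hyperbolic subspace of dimension $2d+2-a_{n+1,n+1}$. Its orthogonal complement has dimension $a_{n+1,n+1}\ge2$ and Witt index $(a_{n+1,n+1}-2)/2$, since the whole form has minus type; we let this complement be $B_{n+1,n+1}$. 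We split it as hyperbolic pairs plus one anisotropic plane, and place the hyperbolic pairs among the positions $(n+1,n+1)$ consistently; this is harmless because $B_{n+1,n+1}$ sits entirely inside the middle block. The flags are then defined as before, and the $\imath$-case is the special case $a_{n+1,n+1}=2$.
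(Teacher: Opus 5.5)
Your proof is correct. It differs from the paper's mainly in how you show that the $\mathrm{O}^-_{2d+2}(q)$- and $\mathrm{SO}^-_{2d+2}(q)$-orbits coincide.

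\textbf{The paper's route.} The coincidence comes out as a byproduct of injectivity. Two $F$-fixed pairs with the same matrix lie in one $\mathrm{O}_{2d+2}(q^2)$-orbit by \cite[Corollary~3.7]{DLZ25}. That orbit is a single $\mathrm{SO}_{2d+2}(q^2)$-orbit by \cite[Proposition~4.5]{DLZ25}, because $a_{n+1,n+1}\neq0$. Lang's theorem, in the form of Proposition~\ref{prop:rational-orbits}, then puts them in one $\mathrm{SO}^-_{2d+2}(q)$-orbit. Hence both orbit maps are injective and the two orbit decompositions agree.

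\textbf{Your route.} You prove the coincidence directly over $\mathbb{F}_q$ by putting a reflection into the stabilizer of the pair. For $U=\mathfrak{f}_{n+1}\cap\mathfrak{f}'_{n+1}$ you compute $U\cap U^\perp=R$. You should state explicitly the Zassenhaus isomorphism
$U/R\cong(\mathfrak{f}_n+U)/(\mathfrak{f}_n+\mathfrak{f}_{n+1}\cap\mathfrak{f}'_n)$,
since this is what identifies $\dim U/R$ with $a_{n+1,n+1}$. Together these give an anisotropic $w\in U$, and $s_w$ stabilizes both flags. This is the pair version of the single-flag argument preceding the lemma. It is self-contained and shows transparently why Lemma~\ref{ann} is exactly the needed input. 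In your injectivity step, the same reflection over $\overline{\mathbb{F}_q}$ replaces \cite[Proposition~4.5]{DLZ25}, and Witt's theorem over $\overline{\mathbb{F}_q}$ replaces \cite[Corollary~3.7]{DLZ25}. For the even orthogonal case you should cite \cite{FL15} or \cite{DLZ25} rather than \cite{BKLW18}, which treats types B/C.

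\textbf{What each buys.} The paper's route is economical because the DLZ25 results are needed anyway for Lemma~\ref{lem:2SO-orbits}. That lemma also requires the converse, namely that the $\mathrm{O}$-orbit splits when $a_{n+1,n+1}=0$, which your reflection argument does not supply.

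Your surjectivity construction matches the paper's, with the Witt-index bookkeeping made explicit. The remark about splitting $B_{n+1,n+1}$ into hyperbolic pairs is unnecessary, since only the whole middle block enters the flags.
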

\begin{proof}
By Lemma \ref{ann}, the map $(\mathfrak{f},\mathfrak{f}')\mapsto(a_{ij})_{N\times N}$ is well-defined at least for $\mathfrak{c}=\jmath$. Below
we explain that it is also well-defined for $\mathfrak{c}=\imath$. For $(\mathfrak{f},\mathfrak{f}')\in\sF_O^{\imath}\times\sF_O^{\imath}$, we claim
\begin{equation}\label{claim:bij}
(a_{ij}):=(\dim\frac{\mathfrak{f}_{i-1}+\mathfrak{f}_i\cap\mathfrak{f}'_j}{\mathfrak{f}_{i-1}+\mathfrak{f}_i\cap\mathfrak{f}'_{j-1}})\in\Theta_O^{\imath}.
\end{equation}
Note that in this case $\mathfrak{f}_{n}+\mathfrak{f}_{n+1}\cap\mathfrak{f}'_n$ is maximal isotropic. In particular, $\dim(\mathfrak{f}_{n}+\mathfrak{f}_{n+1}\cap\mathfrak{f}'_n)=d$. By the proof of Lemma \ref{ann}, the central entry $a_{n+1,n+1}=2$. 
    Consider $a_{n+1,j}=\dim\frac{\mathfrak{f}_{n}+\mathfrak{f}_{n+1}\cap\mathfrak{f}'_j}{\mathfrak{f}_{n}+\mathfrak{f}_{n+1}\cap\mathfrak{f}'_{j-1}}$ with $j\neq n+1$. By the central symmetry of the matrix $(a_{ij})$, we may assume $j\le n$. In such a situation, $\mathfrak{f}_{n}+\mathfrak{f}_{n+1}\cap\mathfrak{f}'_j$ and $\mathfrak{f}_{n}+\mathfrak{f}_{n+1}\cap\mathfrak{f}'_{j-1}$ are both 
    maximal isotropic. Hence $a_{n+1,j}=0$ for $j\neq n+1$. Moreover, by Zassenhaus Lemma, we also have $a_{i,n+1}=0$ for $i\neq n+1$. Hence, we have shown the validity of \eqref{claim:bij}, and then have a well-defined map $\mathrm{O}^-_{2d+2}(q)\backslash\sF_O^{\imath}\times\sF_O^{\imath}\rightarrow\Theta_O^\imath$ via $(\mathfrak{f},\mathfrak{f}')\mapsto (a_{ij})$.

For injectivity, take two pairs of flags with
the same associated matrix $A\in\Theta_O^\jmath$.
Under the identification induced by $\mathrm{Ad}_P$,
both pairs of flags are fixed by $F$.
Hence they are defined over $\mathbb F_{q^2}$,
since $F^2=F_{q^2}$.
By \cite[Corollary~3.7]{DLZ25}, applied over $\mathbb F_{q^2}$,
they lie in the same $\mathrm{O}_{2d+2}({q^2})$-orbit.
Moreover, $a_{n+1,n+1}\ge 2$, so this orbit is a single
$\mathrm{SO}_{2d+2}({q^2})$-orbit by
\cite[Proposition~4.5]{DLZ25}.
Applying Proposition~\ref{prop:rational-orbits} to
$\mathrm{G}=\mathrm{SO}_{2d+2}$ with the Frobenius $F$,
we conclude that both pairs lie in the same
$\mathrm{G}^F=\mathrm{SO}_{2d+2}^-(q)$-orbit.
This proves the injectivity for the
$\mathrm{SO}_{2d+2}^-(q)$-orbit map.
Since $\mathrm{SO}_{2d+2}^-(q)\subset\mathrm{O}_{2d+2}^-(q)$,
the two pairs also lie in the same
$\mathrm{O}_{2d+2}^-(q)$-orbit.
Thus the orbit maps for both groups are injective.

By a procedure similar to that of Lemma~\ref{lem:bijections}, the map is surjective no matter for $\mathrm{SO}^-_{2d+2}(q)$-orbits or for $\mathrm{O}^-_{2d+2}(q)$-orbits.
The proof is complete.
\end{proof}   

For $A\in\Theta^\mathfrak{c}_O$, let $\mathcal{O}_A$ denote the $\mathrm{O}_{2d+2}^{-}(q)$-orbit in $\sF^\mathfrak{c}_O\times\sF^\mathfrak{c}_O$ associated to $A$ .

\subsection{Base change}\label{sec:basechange}
Denote $\widetilde{\pmb{\sF}}^\mathfrak{c}_{\square}=\pmb{\sF}^\mathfrak{c}_{\square}\times_{\mathbb{F}_q}\mathbb{F}_{q^2}$, ($\square=D$ or $O$, $\mathfrak{c}=\imath$ or $\jmath$).  

For $\square=D$, after base change to $\mathbb{F}_{q^2}$, the Hermitian form \eqref{form1} no longer imposes isotropic conditions. So the requirements that $\mathfrak{f}_i = \mathfrak{f}_{N-i}^\perp$ and that $\mathfrak{f}_n$ is maximal isotropic only survive as the dimension constraints $\dim \mathfrak{f}_i+\dim \mathfrak{f}_{N-i}=D$ and $\dim\mathfrak{f}_n=d$, respectively.

For $\square=O$, after base change to $\mathbb{F}_{q^2}$, the bilinear form \eqref{def:quad-form} becomes equivalent to the standard one \eqref{standarform}. So the requirement that $\mathfrak{f}_n$ is maximal isotropic reduces to the condition $\dim\mathfrak{f}_n=d$ only.
 
In summary, we have 
\begin{align*}
\widetilde{\mathscr{F}}^\jmath_D:=\widetilde{\pmb{\mathscr{F}}}^\jmath_D(\mathbb{F}_{q^2})&=\{\mathfrak{f} = (0 = \mathfrak{f}_0 \subseteq \mathfrak{f}_1 \subseteq\cdots\subseteq \mathfrak{f}_{N} = \mathbb{F}_{q^2}^{D}) \mid \dim\mathfrak{f}_i +\dim\mathfrak{f}_{N-i}=D,\ \forall i\},\\
\widetilde{\mathscr{F}}^\imath_D:=\widetilde{\pmb{\mathscr{F}}}^\imath_D(\mathbb{F}_{q^2})&=\{\mathfrak{f}\in \widetilde{\mathscr{F}}^\jmath_D\mid \dim \mathfrak{f}_n=d\},\\
\widetilde{\mathscr{F}}^\jmath_O:=\widetilde{\pmb{\mathscr{F}}}^\jmath_O(\mathbb{F}_{q^2})&=\{\mathfrak{f} = (0 = \mathfrak{f}_0 \subseteq \cdots\subseteq \mathfrak{f}_n \overset{\geq 2}{\subset} \mathfrak{f}_{n+1} \cdots\subseteq \mathfrak{f}_{N} = \mathbb{F}_{q^2}^{2d+2}) \mid \mathfrak{f}_i = \mathfrak{f}_j^\perp\ \text{if}\ i+j = N\},
\\
\widetilde{\mathscr{F}}^\imath_O:=\widetilde{\pmb{\mathscr{F}}}^\imath_O(\mathbb{F}_{q^2})&=\{\mathfrak{f}\in \widetilde{\mathscr{F}}^\jmath_O\mid\dim\mathfrak{f}_n=d\}.
\end{align*}

Thanks to \eqref{eq:Uq2=GLq2} and \eqref{eq:Oq2=Oq2}, $\widetilde{\sF}^\mathfrak{c}_D\times\widetilde{\sF}^\mathfrak{c}_D$ admits a diagonal $\mathrm{GL}_{D}(q^2)$-action, while $\widetilde{\sF}^\mathfrak{c}_O\times\widetilde{\sF}^\mathfrak{c}_O$ admits a diagonal $\mathrm{O}_{2d+2}(q^2)$-action and a diagonal $\mathrm{SO}_{2d+2}(q^2)$-action.

 Set 
 \begin{align*}
 \widetilde{\Theta}^\jmath_D&:=\big\{A=(a_{ij})\in\Mat_{N\times N}(\mathbb{N}) ~\big|~ \sum_{i,j=1}^N a_{ij}=D,\begin{array}{l}\mathrm{ro}(A)_i=\mathrm{ro}(A)_{N+1-i},\\ \mathrm{co}(A)_i=\mathrm{co}(A)_{N+1-i}\end{array}\big\},\\
\widetilde{\Theta}^\imath_D&:=\big\{A=(a_{ij})\in\widetilde{\Theta}^\jmath_D ~\big|~ 
\mathrm{ro}(A)_{n+1}=\mathrm{co}(A)_{n+1}=D-2d
\big\},
 \\
 \widetilde{\Theta}^\jmath_O&:=\big\{A=(a_{ij})\in\Mat_{N\times N}(\mathbb{N}) ~\big|~ 
 \sum_{i,j=1}^{N} a_{ij}=2d+2,
 \begin{array}{c}
 a_{ij}=a_{N+1-i,N+1-j},\\
 \mathrm{ro}(A)_{n+1},\mathrm{co}(A)_{n+1}\ge2
 \end{array}\big\},\\
 \widetilde{\Theta}^\imath_O&:=\{A\in\widetilde{\Theta}^\jmath_O\mid \mathrm{ro}(A)_{n+1}=\mathrm{co}(A)_{n+1}=2\}.
 \end{align*}
It is clear that $\Theta_\square^\mathfrak{c}\subset\widetilde{\Theta}^\mathfrak{c}_\square$.
The following lemma tells us that $\widetilde{\Theta}^\mathfrak{c}_D$ (resp. $\widetilde{\Theta}^\mathfrak{c}_O$) is an index set of the $\mathrm{GL}_{D}(q^2)$-orbits (resp. $\mathrm{O}_{2d+2}(q^2)$-orbits) in $\widetilde{\sF}^\mathfrak{c}_D\times \widetilde{\sF}^\mathfrak{c}_D$ (resp. $\widetilde{\sF}^\mathfrak{c}_O\times \widetilde{\sF}^\mathfrak{c}_O$). 
\begin{lem}
There are bijections
    $$\mathrm{GL}_{D}(q^2)\backslash\widetilde{\sF}^\mathfrak{c}_D\times\widetilde{\sF}^\mathfrak{c}_D\longleftrightarrow\widetilde{\Theta}_D^\mathfrak{c} \quad
    \text{and} \quad \mathrm{O}_{2d+2}(q^2)\backslash\widetilde{\sF}^\mathfrak{c}_O\times\widetilde{\sF}^\mathfrak{c}_O\longleftrightarrow\widetilde{\Theta}_O^\mathfrak{c}.$$
\end{lem}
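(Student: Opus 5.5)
The plan is to treat the two cases separately. In each case I will use the same matrix assignment $(\mathfrak{f},\mathfrak{f}')\mapsto(a_{ij})$, with $a_{ij}=\dim\frac{\mathfrak{f}_{i-1}+\mathfrak{f}_i\cap\mathfrak{f}'_j}{\mathfrak{f}_{i-1}+\mathfrak{f}_i\cap\mathfrak{f}'_{j-1}}$. The argument has three parts, as in Lemmas~\ref{lem:bijections} and \ref{biject D}: check that the map is well-defined with image in $\widetilde{\Theta}^\mathfrak{c}_\square$, prove injectivity by quoting the known orbit classifications over the field $\mathbb{F}_{q^2}$, and prove surjectivity by building a representative pair from an adapted basis.

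\emph{Unitary case.} After base change, $\widetilde{\mathscr F}^\jmath_D$ is just a disjoint union of ordinary $N$-step partial flag varieties of $\mathbb{F}_{q^2}^D$. The only constraint is the dimension symmetry $\dim\mathfrak{f}_i+\dim\mathfrak{f}_{N-i}=D$, together with $\dim\mathfrak f_n=d$ in the $\imath$ case.

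\begin{itemize}
\item \emph{Well-definedness.} The identities $\mathrm{ro}(A)_i=\dim\mathfrak f_i/\mathfrak f_{i-1}$ and $\mathrm{co}(A)_j=\dim\mathfrak f'_j/\mathfrak f'_{j-1}$ (the latter via \eqref{eq:Zassenhaus}) translate these constraints exactly into the row and column conditions defining $\widetilde\Theta^\jmath_D$. The constraint $\dim\mathfrak f_n=d$ gives $\dim\mathfrak f_{n+1}-\dim\mathfrak f_n=D-2d$, which is the condition defining $\widetilde\Theta^\imath_D$. No central symmetry of $A$ should be expected, since there is no form.
\item \emph{Injectivity.} This is the BLM classification \cite[\S1.1]{BLM90} applied over $\mathbb F_{q^2}$: $\mathrm{GL}_D(q^2)$-orbits on pairs of $N$-step flags correspond bijectively to matrices $A\in\Mat_{N\times N}(\mathbb N)$ with $\sum a_{ij}=D$.
\item \emph{Surjectivity.} Take a basis $\{e^{(t)}_{ij}\}$ of $\mathbb F_{q^2}^D$ and set $\mathfrak f_i=\bigoplus_{k\le i}B_{kl}$ and $\mathfrak f'_j=\bigoplus_{l\le j}B_{kl}$. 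The row and column conditions on $A$ then guarantee that these flags lie in $\widetilde{\mathscr F}^\mathfrak c_D$.
\end{itemize}

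\emph{Orthogonal case.} The form is now equivalent to the standard split form \eqref{standarform} over $\mathbb F_{q^2}$, and the flags are genuinely isotropic.

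\begin{itemize}
\item \emph{Well-definedness.} The computation in the paragraph before Lemma~\ref{lem:bijections} uses only $\mathfrak f_i=\mathfrak f_{N-i}^\perp$ and a nondegenerate form, so it yields the central symmetry $a_{ij}=a_{N+1-i,N+1-j}$. The conditions $\mathrm{ro}(A)_{n+1}=\dim\mathfrak f_{n+1}/\mathfrak f_n\ge2$ and $\mathrm{co}(A)_{n+1}\ge 2$ are built into $\widetilde{\mathscr F}^\jmath_O$. In the $\imath$ case, $\dim\mathfrak f_n=d$ gives $\mathrm{ro}(A)_{n+1}=\mathrm{co}(A)_{n+1}=2$.
\item \emph{Injectivity.} This is \cite[Corollary~3.7]{DLZ25} applied over $\mathbb F_{q^2}$, exactly as in the proof of Lemma~\ref{biject D}, which classifies $\mathrm O_{2d+2}(q^2)$-orbits by such matrices.
\item \emph{Surjectivity.} Choose a basis $\{e^{(t)}_{ij}\}$ with $(e^{(t_1)}_{ij},e^{(t_2)}_{kl})=\delta_{t_1t_2}\delta_{(i,j),(N+1-k,N+1-l)}$, and define the flags as in Lemma~\ref{lem:bijections}.
\end{itemize}

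The main obstacle is the surjectivity step in the orthogonal case, specifically the central block $B_{n+1,n+1}$. Off the centre, the entries $a_{ij}$ and $a_{N+1-i,N+1-j}$ pair up into hyperbolic pairs. The central block, however, must carry a nondegenerate form of dimension $a_{n+1,n+1}$, which may be odd, and it can be paired neither with itself by a Kronecker-delta basis nor hyperbolically. It also matters that $\widetilde\Theta^\jmath_O$ does \emph{not} require $a_{n+1,n+1}\ge2$: only the row and column sums through the centre are bounded, so $a_{n+1,n+1}$ can be $0$. My plan is to take $B_{n+1,n+1}$ to be an anisotropic-free nondegenerate summand with an orthonormal basis, which exists since $\mathbb F_{q^2}$ contains all square roots from $\mathbb F_q$. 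The remaining dimension $2d+2-a_{n+1,n+1}$ is even by central symmetry, so it can be filled with hyperbolic pairs. Then I check directly that $\mathfrak f_n$ is isotropic, that $\mathfrak f_i^\perp=\mathfrak f_{N-i}$, and that $\dim\mathfrak f_{n+1}/\mathfrak f_n=\mathrm{ro}(A)_{n+1}\ge2$. If a parity issue arises because the total dimension $2d+2$ is even, I would instead realize the central block through the split form restricted to $\mathrm{Span}\{v_{d+1},v_{d+2}\}$, absorbing one hyperbolic pair as needed.
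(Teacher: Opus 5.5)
Your proposal is correct and follows the paper's route. The paper cites \cite{BLM90} and \cite{DLZ25} for the orbit classifications and notes that the row/column conditions defining $\widetilde{\Theta}^\mathfrak{c}_\square$ encode the dimension constraints on the flags; this is your well-definedness and injectivity argument, and your adapted-basis construction supplies the surjectivity the paper leaves implicit.

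One correction to your ``main obstacle'': central symmetry together with $\sum a_{ij}=2d+2$ forces $a_{n+1,n+1}$ to be even. So the Kronecker-delta basis of Lemma~\ref{lem:bijections} (orthonormal on $B_{n+1,n+1}$, hyperbolic pairs elsewhere) works verbatim over $\mathbb{F}_{q^2}$, where $-1$ is a square and this Gram matrix is therefore equivalent to the split form, and no parity fallback is needed.
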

\begin{proof}
These are essentially the known results in \cite{BLM90} for $\mathrm{GL}_D$ and in \cite{DLZ25} for $\mathrm{O}_{2d+2}$. 

The condition $\mathrm{ro}(A)_i=\mathrm{ro}(A)_{N+1-i}, \mathrm{co}(A)_i=\mathrm{co}(A)_{N+1-i}$ in $\widetilde{\Theta}_D^\jmath$ comes from the requirement $\dim\mathfrak{f}_i +\dim\mathfrak{f}_{N-i}=D$, while the condition $ \mathrm{ro}(A)_{n+1},\mathrm{co}(A)_{n+1}\ge2$ in $\widetilde{\Theta}_O^\jmath$ comes from the requirement $\mathfrak{f}_n \overset{\geq 2}{\subset} \mathfrak{f}_{n+1}$. 
The extra conditions in $\widetilde{\Theta}_D^\imath$ and $\widetilde{\Theta}_O^\imath$ reflect the requirement $\dim\mathfrak{f}_n=d$.
\end{proof}

By the above lemma, we denote by $\widetilde{\mathcal{O}}_A$ the $\mathrm{GL}_D(q^2)$-orbit in $\widetilde{\sF}^\mathfrak{c}_D\times\widetilde{\sF}^\mathfrak{c}_D$ (resp. $\mathrm{O}_{2d+2}(q^2)$-orbit in $\widetilde{\sF}^\mathfrak{c}_O\times\widetilde{\sF}^\mathfrak{c}_O$) associated with $A\in\widetilde{\Theta}^\mathfrak{c}_D$ (resp., $\widetilde{\Theta}^\mathfrak{c}_O$). Denote by $\widetilde{\boldsymbol{\mathcal{O}}}_A$ (resp., $\boldsymbol{\mathcal{O}}_A$) the corresponding $\mathbf{GL}_{D,q^2}$ or $\mathbf{O}_{2d+2,q^2}$ (resp., $\mathbf{U}_{D}$ or $\mathbf{O}^-_{2d+2}$)-orbit in $\widetilde{\pmb{\sF}}^\mathfrak{c}_\square\times\widetilde{\pmb{\sF}}^\mathfrak{c}_\square$ (resp., ${\pmb{\sF}}^\mathfrak{c}_\square\times{\pmb{\sF}}^\mathfrak{c}_\square$) associated with $\widetilde{\mathcal{O}}_A$ (resp., $\mathcal{O}_A$), so that $\widetilde{\boldsymbol{\mathcal{O}}}_A(\mathbb{F}_{q^2})=\widetilde{\mathcal{O}}_A$ (resp., $\boldsymbol{\mathcal{O}}_A(\mathbb{F_q})=\mathcal{O}_A$).
 We have the following result:
\begin{lem}
For $A\in \Theta_\square^\mathfrak{c}$ ($\square=D$ or $O$, $\mathfrak{c}=\imath$ or $\jmath$), we have $\widetilde{\boldsymbol{\mathcal{O}}}_A\simeq\boldsymbol{\mathcal{O}}_A\times_{\mathbb{F}_q}\mathbb{F}_{q^2}$.
\end{lem}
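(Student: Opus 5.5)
We compare the two locally closed subvarieties of $\widetilde{\pmb{\sF}}^\mathfrak{c}_\square\times\widetilde{\pmb{\sF}}^\mathfrak{c}_\square=(\pmb{\sF}^\mathfrak{c}_\square\times\pmb{\sF}^\mathfrak{c}_\square)\times_{\mathbb{F}_q}\mathbb{F}_{q^2}$ through their sets of $\overline{\mathbb{F}_q}$-points. Both $\widetilde{\boldsymbol{\mathcal O}}_A$ and $\boldsymbol{\mathcal O}_A\times_{\mathbb{F}_q}\mathbb{F}_{q^2}$ are reduced: the first by definition, the second because $\mathbb{F}_{q^2}/\mathbb{F}_q$ is separable, so base change preserves reducedness. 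Both are locally closed in the same ambient variety. It therefore suffices to show that their underlying sets of geometric points coincide inside $\mathscr{F}=(\pmb{\sF}^\mathfrak{c}_\square\times\pmb{\sF}^\mathfrak{c}_\square)(\overline{\mathbb{F}_q})$.

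\textbf{Step 1.} By construction at the end of \S\ref{subsec:frobenius-revisited}, the geometric points of $\boldsymbol{\mathcal O}_A$ are the single $\mathrm{G}$-orbit $\mathrm{G}\cdot x$. Here $x\in\mathcal O_A$ is any rational point, which exists by Lemma~\ref{lem:bijections} resp.\ Lemma~\ref{biject D}, and $\mathrm{G}=\mathrm{GL}_D$ resp.\ $\mathrm{SO}_{2d+2}$ (transported by $\mathrm{Ad}_{P^{-1}}$ in the orthogonal case). Base change does not alter geometric points, so $(\boldsymbol{\mathcal O}_A\times_{\mathbb{F}_q}\mathbb{F}_{q^2})(\overline{\mathbb{F}_q})=\mathrm{G}\cdot x$.

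\textbf{Step 2.} The geometric points of $\widetilde{\boldsymbol{\mathcal O}}_A$ form the orbit of the split group $\mathbf{GL}_{D,q^2}$ resp.\ $\mathbf{O}_{2d+2,q^2}$, identified via \eqref{eq:Uq2=GLq2} and \eqref{eq:Oq2=Oq2}, through a point of $\widetilde{\mathcal O}_A$. Since $x$ is $F$-fixed, it is $F^2=F_{q^2}$-fixed, so $x\in\widetilde{\sF}^\mathfrak{c}_\square\times\widetilde{\sF}^\mathfrak{c}_\square$. The matrix attached to $x$ is computed by the same dimension formula, which is insensitive to the base field, so it equals $A$. Thus $x\in\widetilde{\mathcal O}_A$, and $\widetilde{\boldsymbol{\mathcal O}}_A(\overline{\mathbb{F}_q})$ is the split-group orbit of $x$.

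\textbf{Step 3.} In the unitary case the groups in Steps 1 and 2 are the same group $\mathrm{GL}_D$, so the two orbits agree and we are done.

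In the orthogonal case this is the main obstacle. Step 2 produces an $\mathrm{O}_{2d+2}$-orbit, while Step 1 produces an $\mathrm{SO}_{2d+2}$-orbit. We must show that $\mathrm{O}_{2d+2}\cdot x=\mathrm{SO}_{2d+2}\cdot x$. I would use the reflection argument of \S\ref{O-flag} over $\overline{\mathbb{F}_q}$. For $x=(\mathfrak f,\mathfrak f')$, the central entry satisfies $a_{n+1,n+1}\ge2$ by Lemma~\ref{ann}. Hence the space $(\mathfrak f_{n}+\mathfrak f'_{n})^\perp/\big((\mathfrak f_{n}+\mathfrak f'_{n})^\perp\cap\mathrm{rad}\big)$ is nonzero. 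More concretely, the proof of Lemma~\ref{ann} shows that $\mathfrak f_{n+1}\cap\mathfrak f'_{n+1}$ contains an anisotropic vector $w$ orthogonal to $\mathfrak f_n+\mathfrak f'_n$.

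The reflection $s_w$ has determinant $-1$. It fixes $\mathfrak f_n$ and $\mathfrak f'_n$ pointwise, and it preserves every $\mathfrak f_i,\mathfrak f'_j$, since each contains or is contained in the relevant spaces and $w\in\mathfrak f_{n+1}\cap\mathfrak f'_{n+1}$. So the stabilizer of $x$ in $\mathrm{O}_{2d+2}$ meets the non-identity component, which gives $\mathrm{O}_{2d+2}\cdot x=\mathrm{SO}_{2d+2}\cdot x$. This is exactly the content cited from \cite[Proposition~4.5]{DLZ25}, and I would invoke that result directly if the explicit construction of $w$ becomes awkward.

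With the geometric point sets equal, the two reduced locally closed subschemes coincide, which gives the isomorphism $\widetilde{\boldsymbol{\mathcal{O}}}_A\simeq\boldsymbol{\mathcal{O}}_A\times_{\mathbb{F}_q}\mathbb{F}_{q^2}$.
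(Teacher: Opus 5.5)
Your proof is correct, but it closes the argument differently from the paper. The paper works intrinsically. With $\mathbf G=\mathbf U_D$ or $\mathbf O^-_{2d+2}$ and $\mathbf H=\operatorname{Stab}_{\mathbf G}(x)$, it writes $\boldsymbol{\mathcal O}_A\simeq\mathbf G/\mathbf H$. It then uses compatibility of fppf quotients with base change to get $\boldsymbol{\mathcal O}_A\times_{\mathbb F_q}\mathbb F_{q^2}\simeq\mathbf G_{q^2}/\mathbf H_{q^2}$, identifies this with the $\mathbf G_{q^2}$-orbit of $\widetilde x$, and concludes because the matrix of $\widetilde x$ is still $A$.

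You share the key point: both sides are the orbit of the same rational point $x$, whose matrix is unchanged. But you finish by comparing two reduced locally closed subschemes of the common ambient $\widetilde{\pmb{\sF}}^{\mathfrak c}_\square\times\widetilde{\pmb{\sF}}^{\mathfrak c}_\square$ through their $\overline{\mathbb F_q}$-points. This needs only two facts: base change along $\mathbb F_{q^2}/\mathbb F_q$ preserves reducedness, and locally closed subsets of a variety are determined by their closed points. It bypasses quotients altogether and yields an equality of subschemes. That is precisely how the statement is used in \S\ref{subsec:ICC}, where $\boldsymbol{\mathcal O}_A$ is treated as the $F$-form of $\widetilde{\boldsymbol{\mathcal O}}_A$. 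The paper's route additionally records $\operatorname{Stab}_{\mathbf G_{q^2}}(\widetilde x)\simeq\mathbf H_{q^2}$.

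Your orthogonal Step 3 arises only because you read $\boldsymbol{\mathcal O}_A$ through the connected group, as in \S\ref{subsec:frobenius-revisited}. In \S\ref{sec:basechange} the paper defines $\boldsymbol{\mathcal O}_A$ as an $\mathbf O^-_{2d+2}$-orbit, whose base change is an $\mathbf O_{2d+2,q^2}$-orbit, so no $\mathrm O$ versus $\mathrm{SO}$ comparison is needed. Your reflection argument is nonetheless valid and consistent with Lemma~\ref{lem:2SO-orbits}. Just replace the appeal to ``the proof of Lemma~\ref{ann}'' by the direct computation. For $W=(\mathfrak f_n+\mathfrak f'_n)^\perp=\mathfrak f_{n+1}\cap\mathfrak f'_{n+1}$ one has $W\cap W^\perp=\mathfrak f_n\cap\mathfrak f'_{n+1}+\mathfrak f_{n+1}\cap\mathfrak f'_n$. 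The modular law then gives $a_{n+1,n+1}=\dim W/(W\cap W^\perp)$, which is the rank of the form on $W$. Hence $a_{n+1,n+1}\ge 1$ and $p\ne 2$ produce an anisotropic $w\in W$.
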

\begin{proof}
Put $k=\mathbb{F}_q$ and $K=\mathbb{F}_{q^2}$.
Let $\mathbf{G}$ be the corresponding algebraic group
$\mathbf{U}_D$ or $\mathbf{O}_{2d+2}^{-}$ over $\mathbb{F}_q$.
Choose $x=(\mathfrak{f},\mathfrak{f}')\in\boldsymbol{\mathcal{O}}_A(\mathbb{F}_q)$
and set $\mathbf{H}=\operatorname{Stab}_{\mathbf{G}}(x)$.
Then $\boldsymbol{\mathcal{O}}_A\simeq\mathbf{G}/\mathbf{H}$, and
$$
\boldsymbol{\mathcal{O}}_A\times_{\mathbb{F}_q} \mathbb{F}_{q^2}
\simeq(\mathbf{G}/\mathbf{H})_{q^2}
\simeq\mathbf{G}_{q^2}/\mathbf{H}_{q^2},
$$ where the last isomorphism is due to the compatibility
of fppf quotients with base change \cite[Tag~0DTF]{Stacks}.
Let $\widetilde{x}$ be the $\mathbb{F}_{q^2}$-point of $\boldsymbol{\mathcal{O}}_A\times_{\mathbb{F}_q} \mathbb{F}_{q^2}$ induces by $x$, we have $\mathrm{Stab}_{\mathbf{G}_{q^2}}(\widetilde{x})\simeq \mathbf{H}_{q^2}$. Thus we can identify
$\boldsymbol{\mathcal{O}}_A\times_{\mathbb{F}_q} \mathbb{F}_{q^2}$ with the
$\mathbf{G}_{q^2}$-orbit of $\widetilde{x}$.
Since the associated matrix of $\widetilde{x}$ remains $A$,
this orbit is precisely $\widetilde{\boldsymbol{\mathcal{O}}}_A$.
\end{proof}

Note that $a_{n+1,n+1}\neq 0$ for each $A=(a_{ij})\in\Theta^\mathfrak{c}_O$ and $a_{n+1,n+1}=0$ for each $A\in\widetilde{\Theta}^\mathfrak{c}_O\setminus\Theta^\mathfrak{c}_O$.
The following is an $\mathbb{F}_{q^2}$-variety version of \cite[Proposition~4.5]{DLZ25}.
\begin{lem}\label{lem:2SO-orbits}
For $A\in\Theta^\mathfrak{c}_O$, the $\mathbf{O}_{2d+2,q^2}$-orbit $\widetilde{\boldsymbol{\mathcal{O}}}_A$ is also a $\mathbf{SO}_{2d+2,q^2}$-orbit, and hence it is geometrically connected. For $A\in\widetilde{\Theta}^\mathfrak{c}_O\setminus\Theta^\mathfrak{c}_O$, the $\mathbf{O}_{2d+2,q^2}$-orbit $\widetilde{\boldsymbol{\mathcal{O}}}_A$ splits into two $\mathbf{SO}_{2d+2,q^2}$-orbits. As a consequence, the $\mathrm{O}_{2d+2}(q^2)$-orbit $\widetilde{\mathcal{O}}_A$ splits into two $\mathrm{SO}_{2d+2}(q^2)$-orbits.
\end{lem}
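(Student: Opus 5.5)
The plan is to reduce everything to a stabilizer computation inside $\mathbf{O}_{2d+2,q^2}$. Since $\mathbf{SO}_{2d+2}$ has index two in $\mathbf{O}_{2d+2}$, the orbit $\widetilde{\boldsymbol{\mathcal{O}}}_A$ is either a single $\mathbf{SO}_{2d+2,q^2}$-orbit or splits into exactly two. Which case occurs depends on whether the stabilizer in $\mathbf{O}_{2d+2}$ of a point $x=(\mathfrak{f},\mathfrak{f}')$ contains an element of determinant $-1$. If it does, $\mathbf{O}\cdot x=\mathbf{SO}\cdot x$. If not, $\mathbf{O}\cdot x=\mathbf{SO}\cdot x\sqcup \mathbf{SO}\cdot(hx)$ for any $h$ with $\det h=-1$, and the two pieces are disjoint. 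I will work with geometric points over $\overline{\mathbb{F}_q}$ and use the standard form \eqref{standarform}, since after base change the two forms are equivalent by \eqref{eq:Oq2=Oq2}.

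\textbf{Case $a_{n+1,n+1}\ge 2$.} Choose a splitting adapted to both flags, as in the surjectivity part of Lemma~\ref{lem:bijections}: a basis $\{e^{(t)}_{ij}\}$ with pairing $\delta_{t_1t_2}\delta_{(i,j),(N+1-k,N+1-l)}$, which exists by Witt's theorem and the central symmetry of $A$. The block $B_{n+1,n+1}$ is then nondegenerate of dimension $a_{n+1,n+1}\geq 1$, so we can pick an anisotropic $w\in B_{n+1,n+1}$. The reflection $s_w$ fixes $B_{n+1,n+1}^\perp=\bigoplus_{(k,l)\ne(n+1,n+1)}B_{kl}$ pointwise and preserves $B_{n+1,n+1}$. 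Hence it preserves every $\mathfrak{f}_i$ and $\mathfrak{f}'_j$, each of which is a sum of blocks, and it has determinant $-1$. So $\widetilde{\boldsymbol{\mathcal{O}}}_A$ is a single $\mathbf{SO}$-orbit. Geometric connectedness then follows because the orbit is a quotient of the connected group $\mathbf{SO}_{2d+2}$.

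\textbf{Case $a_{n+1,n+1}=0$.} This case is the main obstacle. I must show that every $g\in\mathrm{Stab}_{\mathrm{O}}(x)$ has determinant $1$. The plan is to find a $g$-stable maximal isotropic subspace $L$ attached canonically to $x$. The natural candidate is
\[
L=\mathfrak{f}_n+\mathfrak{f}_{n+1}\cap\mathfrak{f}'_n,
\]
which is isotropic and stable under the stabilizer. By the computation in the proof of Lemma~\ref{ann}, its dimension equals $d+1$ precisely when the quantity $d+1-a-b+r+t$, i.e.\ $a_{n+1,n+1}/2$, vanishes. So $L$ is maximal isotropic in dimension $2d+2$. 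An element of $\mathrm{O}_{2d+2}$ preserving a Lagrangian $L$ lies in the parabolic $\mathrm{Stab}(L)$, which is contained in $\mathrm{SO}_{2d+2}$. Indeed, in even dimension elements of determinant $-1$ interchange the two families of Lagrangians, so they cannot fix $L$. Equivalently, $\mathrm{Stab}(L)$ is connected and contains $\mathrm{SO}$-elements, so it lies in $\mathrm{SO}$. Therefore the stabilizer lies in $\mathbf{SO}$, and the orbit splits into two $\mathbf{SO}_{2d+2,q^2}$-orbits, $\mathbf{SO}\cdot x$ and $\mathbf{SO}\cdot gx$ with $g$ as in \eqref{def:g}.

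\textbf{Rational points.} It remains to pass from the variety statement to the finite one. Stabilizers in $\mathbf{SO}$ are connected by Lemma~\ref{connected}, and both $\mathbf{SO}$-orbits are defined over $\mathbb{F}_{q^2}$: $x$ and $gx$ are rational points because $g$ is rational. Applying Proposition~\ref{prop:rational-orbits} with the split Frobenius $F_{q^2}$ gives $(\mathbf{SO}\cdot x)(\mathbb{F}_{q^2})=\mathrm{SO}_{2d+2}(q^2)\cdot x$, and similarly for $gx$. Hence $\widetilde{\mathcal{O}}_A$ is the disjoint union of two $\mathrm{SO}_{2d+2}(q^2)$-orbits.
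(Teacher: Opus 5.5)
Your proof is correct, but it runs in the opposite direction from the paper's.

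The paper takes the finite-field criterion \cite[Proposition~4.5]{DLZ25} as a black box and upgrades it to varieties. For $a_{n+1,n+1}\neq0$ it turns $s\cdot x=h\cdot x$ into an element $h^{-1}s\in\mathbf H_x(\mathbb F_{q^2})$ of determinant $-1$. For $a_{n+1,n+1}=0$ it shows $\mathbf H_x(k)\subseteq\mathbf{SO}(k)$ for every finite $k/\mathbb F_{q^2}$. It then deduces $\mathbf H_x\subseteq\mathbf{SO}$ from openness of $\mathbf{SO}$ in $\mathbf O$, and descends $\det$ to a morphism $\delta\colon\widetilde{\boldsymbol{\mathcal O}}_A\to\{\pm1\}$ whose fibers are the two $\mathbf{SO}$-orbits.

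You instead settle the dichotomy on geometric points by exhibiting the relevant structure directly. For $a_{n+1,n+1}\ge2$ you use a reflection in an anisotropic vector of the central block. For $a_{n+1,n+1}=0$ you use the canonical subspace $L=\mathfrak f_n+\mathfrak f_{n+1}\cap\mathfrak f'_n$, stable under the stabilizer; it satisfies $L^\perp=\mathfrak f_n+\mathfrak f_{n+1}\cap\mathfrak f'_{n+1}$, hence $a_{n+1,n+1}=\dim L^\perp/L$. Only afterwards do you pass to $\mathrm{SO}_{2d+2}(q^2)$-orbits via Lemma~\ref{connected} and Proposition~\ref{prop:rational-orbits}.

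What each route buys:
\begin{itemize}
\item Yours is self-contained, explains why $a_{n+1,n+1}$ is the controlling invariant, and derives the finite-field statement instead of assuming it.
\item The paper's is shorter given the citation, and makes the open-and-closed decomposition explicit. In your version this is automatic, because the orbit map $\mathbf O\to\widetilde{\boldsymbol{\mathcal O}}_A$ is open and the two $\mathbf{SO}$-orbits are the images of the two components of $\mathbf O$.
\end{itemize}

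One small point: the existence of an adapted basis for an \emph{arbitrary} pair $x$ is really the orbit classification, not Witt's theorem alone. This is harmless, since stabilizers along one $\mathbf O$-orbit are conjugate and you may take $x$ to be the model pair. You can also avoid bases altogether. If $a_{n+1,n+1}>0$, then $\mathfrak f_{n+1}\cap\mathfrak f'_{n+1}=(\mathfrak f_n+\mathfrak f'_n)^\perp$ is not totally isotropic; otherwise $L^\perp$ would be isotropic, forcing $L=L^\perp$. The reflection in any anisotropic vector of this subspace then stabilizes both flags.
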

\begin{proof}
Put $\mathbf{O}=\mathbf{O}_{2d+2,q^2}$, $\mathbf{SO}=\mathbf{SO}_{2d+2,q^2}$. For $A\in\widetilde{\Theta}_O^\mathfrak{c}$, choose $x\in\widetilde{\boldsymbol{\mathcal{O}}}_A(\mathbb{F}_{q^2})$ and let $\mathbf{H}_x=\operatorname{Stab}_\mathbf{O}(x)$.
The orbit map $g\mapsto g\cdot x$ induces an isomorphism
$\mathbf{O}/\mathbf{H}_x\xrightarrow{\ \sim\ }\widetilde{\boldsymbol{\mathcal O}}_A$.
We have $\mathbf{O}=\mathbf{SO}\sqcup \mathbf{SO}s$
for a fixed reflection $s\in \mathbf{O}(\mathbb{F}_{q^2})$ with $\det(s)=-1$.
For $y\in\widetilde{\boldsymbol{\mathcal O}}_A(\mathbb{F}_{q^2})$ we write $\mathbf{SO}\cdot y\subseteq \widetilde{\boldsymbol{\mathcal O}}_A$
for its $\mathbf{SO}$-orbit, which is geometrically connected since it is an image
of the geometrically connected group $\mathbf{SO}$.

\smallskip
\noindent\textit{Case $a_{n+1,n+1}\neq0$.}
By \cite[Proposition~4.5]{DLZ25}, there exists $h\in \mathbf{SO}(\mathbb{F}_{q^2})$ with $s\cdot x=h\cdot x$.
Then $t=h^{-1}s\in \mathbf{H}_x(\mathbb{F}_{q^2})$ satisfies $\det(t)=-1$.
Hence $\mathbf{O}=\mathbf{SO}\sqcup \mathbf{SO}t$, and
\[
\widetilde{\boldsymbol{\mathcal O}}_A=\mathbf{SO}\cdot x\,\cup\,\mathbf{SO}t\cdot x
            =\mathbf{SO}\cdot x\,\cup\,\mathbf{SO}\cdot(t\cdot x)
            =\mathbf{SO}\cdot x .
\]
In particular, $\widetilde{\boldsymbol{\mathcal O}}_A$ is geometrically connected.

\smallskip
\noindent\textit{Case $a_{n+1,n+1}=0$.}
We first show that $\mathbf{H}_x\subseteq \mathbf{SO}$.
Let $k/\mathbb{F}_{q^2}$ be a finite extension, and let $g\in \mathbf{H}_x(k)$.
If $\det(g)=-1$, then $sg\in \mathbf{SO}(k)$ and $(sg)x=sx$, so $sx\in \mathbf{SO}(k)\cdot x$.
This contradicts \cite[Proposition~4.5]{DLZ25} after applied over $k$.
Thus $\mathbf{H}_x(k)\subseteq \mathbf{SO}(k)$ for every finite extension $k/\mathbb{F}_{q^2}$,
and hence $\mathbf{H}_x(\overline{\mathbb{F}_q})\subseteq \mathbf{SO}(\overline{\mathbb{F}_q})$.
Since $\mathbf{SO}=\mathbf{O}^\circ$ is open in $\mathbf{O}$,
$\mathbf{H}_x\cap \mathbf{SO}$ is an open subscheme of $\mathbf{H}_x$ containing all of its geometric points;
thus $\mathbf{H}_x\cap\mathbf{SO}=\mathbf{H}_x$, i.e.\ $\mathbf{H}_x\subseteq\mathbf{SO}$.

Consequently, $\det$ is invariant under right translation by $\mathbf{H}_x$.
It therefore descends to a $\mathbb{F}_{q^2}$-morphism
$$
\delta\colon\widetilde{\boldsymbol{\mathcal O}}_A\simeq \mathbf{O}/\mathbf{H}_x\longrightarrow\{\pm1\},
\qquad g\mathbf{H}_x\longmapsto\det(g).
$$
The fibers
$$
\delta^{-1}(1)=\mathbf{SO}\cdot x
\quad\text{and}\quad
\delta^{-1}(-1)=\mathbf{SO}s\cdot x=\mathbf{SO}\cdot(s\cdot x)
$$
are open and closed $\mathbb{F}_{q^2}$-subvarieties of $\widetilde{\boldsymbol{\mathcal O}}_A$.
Both are nonempty and geometrically connected. Therefore
$$
\widetilde{\boldsymbol{\mathcal O}}_A=\mathbf{SO}\cdot x\ \sqcup\ \mathbf{SO}\cdot(s\cdot x)
$$
is the decomposition of $\widetilde{\boldsymbol{\mathcal O}}_A$ into geometrically connected components.
\end{proof}

\subsection{Frobenius action on the orbits}\label{subsec:frobenius-orbits}
Obviously, the standard Frobenius $F_q$ and the automorphism $\tau$ are also well-defined on $\mathrm{GL}_{D}(q^2)$ and $\mathrm{O}_{2d+2}(q^2)$, hence so is the Frobenius $F=F_q\circ\tau$. It follows from \eqref{eq:Uq2=GLq2} and \eqref{eq:Oq2=Oq2} that
$$\mathrm{U}_D(q)=(\mathrm{GL}_D(q^2))^F \quad \text{and} \quad \mathrm{O}^-_{2d+2}(q)=(\mathrm{O}_{2d+2}(q^2))^F.$$
The Frobenius $F$ can be further defined on $\widetilde{\sF}^\mathfrak{c}_\square$ ($\square=D$ or $O$, $\mathfrak{c}=\imath$ or $\jmath$) naturally, and hence diagonally on $\widetilde{\sF}^\mathfrak{c}_\square\times\widetilde{\sF}^\mathfrak{c}_\square$. It induces an involution, still denoted by $F$ and called Frobenius, on the set of orbits $\mathrm{GL}_{D}(q^2)\setminus\widetilde{\sF}^\mathfrak{c}_D\times\widetilde{\sF}^\mathfrak{c}_D$ or $\mathrm{O}_{2d+2}(q^2)\setminus\widetilde{\sF}^\mathfrak{c}_O\times\widetilde{\sF}^\mathfrak{c}_O$.

\subsubsection{Case of $\mathrm{GL}_{D}(q^2)$}
For any $A=(a_{ij})\in\widetilde{\Theta}^\mathfrak{c}_D$, we denote $A^{\vartheta}=(a_{ij}^{\vartheta})$ with $a_{ij}^{\vartheta}=a_{N+1-i,N+1-j}$. It is obvious that $A^{\vartheta}$ is also in $\widetilde{\Theta}^\mathfrak{c}_D$. Moreover,
\begin{equation}\label{eq:A=Atheta}
A=A^\vartheta \quad\text{if and only if}\quad A\in\Theta^\mathfrak{c}_D.
\end{equation}

\begin{prop}\label{F act on O_D}
    For $A=(a_{ij})\in\widetilde{\Theta}^\mathfrak{c}_D$, the Frobenius $F$ on $\mathrm{GL}_{D}(q^2)\setminus\widetilde{\sF}^\mathfrak{c}_D\times\widetilde{\sF}^\mathfrak{c}_D$ swaps the orbits $\widetilde{\mathcal{O}}_A$ and $\widetilde{\mathcal{O}}_{A^{\vartheta}}$. 
    As a consequence, the subset of $F$-invariant orbits is precisely 
    $\{\widetilde{\mathcal{O}}_A\mid A\in\Theta_D^\mathfrak{c}\}$.
\end{prop}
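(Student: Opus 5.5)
We will compute directly how $F=F_q\circ\tau$ acts on a pair of flags over $\mathbb{F}_{q^2}$, and then read off the associated matrix. Under the identification \eqref{eq:Uq2=GLq2}, the $F$-action on $\widetilde{\sF}^\mathfrak{c}_D$ is given by
\[
\mathfrak{f}=(\mathfrak{f}_i)_{i}\longmapsto F(\mathfrak{f})=\big(F_q(\mathfrak{f}_{N-i})^{\perp}\big)_{i},
\]
where $F_q$ raises coordinates to the $q$-th power and $\perp$ is taken for the Hermitian form \eqref{form1}. The stabilizer of $\mathfrak{f}_\gamma$ is $P_\gamma$, and $\tau(g)=Jg^{-\mathrm{tr}}J$ transforms $g\cdot\mathfrak{f}$ into $\tau(g)\cdot\tau(\mathfrak{f})$. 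Hence the induced map is the "dual flag" construction: $V\mapsto V^{\perp}$, composed with reversing the indexing. I will therefore first verify the following two facts.
\begin{itemize}
\item The map is well defined on $\widetilde{\sF}^\mathfrak{c}_D$. This holds because $\dim \mathfrak{f}_i+\dim\mathfrak{f}_{N-i}=D$ is preserved, and likewise $\dim\mathfrak{f}_n=d$.
\item Its fixed points over $\mathbb{F}_{q^2}$ are exactly the Hermitian isotropic flags $\mathscr{F}^\mathfrak{c}_D$, consistent with $\mathrm{U}_D(q)=\mathrm{GL}_D(q^2)^F$.
\end{itemize}
The map is compatible with the $\mathrm{GL}_D(q^2)$-action twisted by $F$, so it permutes the orbits. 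Since $F^2=F_{q^2}$ acts trivially on $\mathbb{F}_{q^2}$-points, it gives an involution on the orbit set.

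Next I compute the associated matrix of $(F\mathfrak{f},F\mathfrak{f}')$ for $(\mathfrak{f},\mathfrak{f}')\in\widetilde{\mathcal O}_A$. The Frobenius twist $F_q$ is a semilinear bijection, so it preserves all dimensions of intersections. It is therefore enough to treat $\mathfrak{f}_i\mapsto\mathfrak{f}_{N-i}^{\perp}$, where $\perp$ now refers to a nondegenerate form. Using $(U+U')^\perp=U^\perp\cap U'^\perp$, I get
\[
\dim \mathfrak{f}_{N-i}^\perp\cap\mathfrak{f}'^{\perp}_{N-j}=D-\dim\mathfrak{f}_{N-i}-\dim\mathfrak{f}'_{N-j}+\dim\mathfrak{f}_{N-i}\cap\mathfrak{f}'_{N-j}.
\]
This is the same identity used to prove central symmetry in \S\ref{HermitF_D}, except that the flags are no longer assumed self-dual. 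I then plug it into the inclusion–exclusion formula for $a_{ij}$. The terms $D-\dim\mathfrak{f}_{N-i}-\dim\mathfrak{f}'_{N-j}$ cancel in the alternating sum over $(i,j),(i-1,j),(i,j-1),(i-1,j-1)$. The remaining terms give
\[
a_{ij}(F\mathfrak{f},F\mathfrak{f}')=a_{N+1-i,N+1-j}(\mathfrak{f},\mathfrak{f}')=a^\vartheta_{ij}.
\]
Thus $F(\widetilde{\mathcal O}_A)=\widetilde{\mathcal O}_{A^\vartheta}$, and since $\vartheta$ is an involution the two orbits are swapped.

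For the consequence, $\widetilde{\mathcal O}_A$ is $F$-invariant exactly when $A=A^\vartheta$. By \eqref{eq:A=Atheta} this happens if and only if $A\in\Theta^\mathfrak{c}_D$.

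The main obstacle is the first step: pinning down precisely that the Frobenius coming from the descent datum of $\mathbf U_D$ acts on flags by $\mathfrak{f}_i\mapsto F_q(\mathfrak{f}_{N-i})^\perp$, with the correct index reversal. I would check this on the base flags $\mathfrak{f}_\gamma$. For these, $V_{(k)}^\perp=V_{(D-k)}$ and $F_q$ fixes them, so $F(\mathfrak f_\gamma)=\mathfrak f_\gamma$. I would then transport the formula to all flags via $g\mathfrak{f}_\gamma\mapsto F(g)\mathfrak{f}_\gamma$, using that $Jg^{-\mathrm{tr}}J$ is the adjoint-inverse for the form \eqref{form1}, so that $(gV)^\perp=\tau(g)V^\perp$ up to $F_q$.
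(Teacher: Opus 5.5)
Your proposal is correct and follows the paper's proof. You identify $F(\mathfrak f)$ with the dual flag $\mathfrak f^\vartheta$, $\mathfrak f^\vartheta_i=\mathfrak f_{N-i}^\perp$; the paper gets this from $F(P_{\mathfrak f})=P_{\mathfrak f^\vartheta}$ via the adjoint $g'$, while you transport it from $\mathfrak f_\gamma$. You then use $(U+U')^\perp=U^\perp\cap U'^\perp$ and inclusion--exclusion to get $A^\vartheta$, and finish with \eqref{eq:A=Atheta}.

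One slip: the Hermitian $\perp$ of \eqref{form1} already contains the $q$-twist, so the correct formula is $F(\mathfrak f)_i=\mathfrak f_{N-i}^\perp$. Your $F_q(\mathfrak f_{N-i})^\perp$ is right only if $\perp$ is taken for the bilinear form $v^{\mathrm{tr}}Jv'$. This does not affect your dimension count, but as literally written it would make the $F$-fixed flags $J$-isotropic rather than Hermitian isotropic.
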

\begin{proof}
Recall that the Frobenius $F$ on $\mathrm{GL}_{D}(q^2)$ maps $g\in\mathrm{GL}_{D}(q^2)$ to $F(g)=(g')^{-1}$, where $g':=JF_q(g^{tr})J$ and satisfies $\langle g(v),v'\rangle=\langle v,g'(v')\rangle$ for any $v,v'\in V$. Here $\langle-,-\rangle$ is the nondegenerate Hermitian form on $\mathbb{F}_{q^2}^{D}$ defined by \eqref{form1}. 
    
    For $\mathfrak{f}=(0=\mathfrak{f}_0\subset \mathfrak{f}_1\subset \mathfrak{f}_2\subset\cdots\subset \mathfrak{f}_i\subset\cdots\subset \mathfrak{f}_{N-1}\subset \mathfrak{f}_N=\mathbb{F}_{q^2}^{D})\in\widetilde{\sF}^\mathfrak{c}_D$,
    we denote 
    $$\mathfrak{f}^{\vartheta}=(0=\mathfrak{f}_N^\perp\subset \mathfrak{f}_{N-1}^\perp\subset \mathfrak{f}_{N-2}^\perp\subset\cdots\subset \mathfrak{f}_{N-i}^\perp\subset\cdots\subset \mathfrak{f}_{1}^\perp\subset \mathfrak{f}_0^\perp=\mathbb{F}_{q^2}^{D}),$$
    which is a flag in $\widetilde{\sF}^\mathfrak{c}_D$, too.
    
    Let $P_\mathfrak{f}\subset\mathrm{GL}_D(q^2)$ be the stabilizer of $\mathfrak{f}$. It is clear that $F(P_\mathfrak{f})$ is the stabilizer of $F(\mathfrak{f})$ in $\mathrm{GL}_D(q^2)$. We claim $F(P_\mathfrak{f})=P_{\mathfrak{f}^\vartheta}$.
    To prove the claim, it suffices to consider the special case $\mathfrak{f}=(0\subset \mathfrak{f}_1\subset \mathbb{F}_{q^2}^{D})$ because of $P_{\mathfrak{f}}=\cap_i P_{\mathfrak{f}_i}$. In this case, 
    \begin{align*}
    g\in P_\mathfrak{f} \iff g(\mathfrak{f}_1)=\mathfrak{f}_1 &\iff \langle g(\mathfrak{f}_1),\mathfrak{f}_1^\perp\rangle=0 \\
    &\iff \langle\mathfrak{f}_1,g'(\mathfrak{f}_1^\perp)\rangle=0 \iff g'\in P_{\mathfrak{f}^{\vartheta}}\iff F(g)\in P_{\mathfrak{f}^{\vartheta}},
    \end{align*}
by which we prove the claim $F(P_\mathfrak{f})=P_{\mathfrak{f}^{\vartheta}}$, and hence 
\begin{equation}\label{eq:Ff=ftheta}
F(\mathfrak{f})=\mathfrak{f}^{\vartheta}.
\end{equation}
 Take $(\mathfrak{f},\mathfrak{f}')\in\widetilde{\mathcal{O}}_A$ with $\mathfrak{f}=(\mathfrak{f}_i), \mathfrak{f}'=(\mathfrak{f}_i')\in\widetilde{\sF}^\mathfrak{c}_D$. It follows from \eqref{eq:Ff=ftheta} that 
    $F(\mathfrak{f},\mathfrak{f}')=(\mathfrak{f}^{\vartheta},{\mathfrak{f}'}^{\vartheta})$.
    Denote $$r_{ij}=\dim(\mathfrak{f}_i\cap \mathfrak{f}'_j)\quad \text{and}\quad r'_{ij}=\dim(\mathfrak{f}_i^\perp\cap {\mathfrak{f}'_j}^\perp).$$
Since $\mathfrak{f}_i^\perp\cap {\mathfrak{f}'_j}^\perp=(\mathfrak{f}_i+\mathfrak{f}_j')^\perp$, we have 
\begin{equation}\label{eq:rij=rij}
r'_{ij}=D-\dim(\mathfrak{f}_i)-\dim(\mathfrak{f}_j')+r_{ij}.
\end{equation}
Compute
\begin{align*}
a_{N+1-i,N+1-j}&=\dim\frac{\mathfrak{f}_{N-i}+\mathfrak{f}_{N+1-i}\cap \mathfrak{f}'_{N+1-j}}{\mathfrak{f}_{N-i}+\mathfrak{f}_{N+1-i}\cap \mathfrak{f}'_{N-j}}\\
&=r_{N+1-i,N+1-j}-r_{N-i,N+1-j}-r_{N+1-i,N-j}+r_{N-i,N-j}\\
&=r'_{N+1-i,N+1-j}-r'_{N-i,N+1-j}-r'_{N+1-i,N-j}+r'_{N-i,N-j}\qquad \text{by \eqref{eq:rij=rij}}\\
&=\dim\frac{\mathfrak{f}^\perp_{N-i+1}+\mathfrak{f}^\perp_{N-i}\cap {\mathfrak{f}'}_{N-j}^\perp}{\mathfrak{f}^\perp_{N-i+1}+\mathfrak{f}^\perp_{N-i}\cap {\mathfrak{f}'}_{N-j+1}^\perp}=\dim\frac{\mathfrak{f}^\vartheta_{i-1}+\mathfrak{f}^\vartheta_{i}\cap {\mathfrak{f}'}_{j}^\vartheta}{\mathfrak{f}^\vartheta_{i-1}+\mathfrak{f}^\vartheta_{i}\cap {\mathfrak{f}'}_{j-1}^\vartheta},
\end{align*}
which implies $F(\mathfrak{f},\mathfrak{f}')\in\widetilde{\mathcal{O}}_{A^\vartheta}$. Hence, the Frobenius $F$ swaps $\widetilde{\mathcal{O}}_{A}$ and $\widetilde{\mathcal{O}}_{A^\vartheta}$.

As a consequence, by \eqref{eq:A=Atheta}, we find that the set of $F$-invariant $\mathrm{GL}_{D}(q^2)$-orbits in $\widetilde{\sF}^\mathfrak{c}_D\times\widetilde{\sF}^\mathfrak{c}_D$ is $\{\widetilde{\mathcal{O}}_A\mid A\in\Theta_D^\mathfrak{c}\}$.
\end{proof}

The following is the $\mathbb{F}_{q^2}$-varieties version of the above proposition.
\begin{cor}\label{F-var_D}
    For $A=(a_{ij})\in\widetilde{\Theta}^\mathfrak{c}_D$, the Frobenius $F$ of $\widetilde{\pmb{\sF}}^\mathfrak{c}_D\times\widetilde{\pmb{\sF}}^\mathfrak{c}_D$ swaps the orbits $\widetilde{\boldsymbol{\mathcal{O}}}_A$ and $\widetilde{\boldsymbol{\mathcal{O}}}_{A^{\vartheta}}$ and the set of $\mathbf{U}_{D,q^2}$-orbits on $\widetilde{\pmb{\sF}}^\mathfrak{c}_D\times\widetilde{\pmb{\sF}}^\mathfrak{c}_D$ are $\{\widetilde{\boldsymbol{\mathcal{O}}}_A\mid A\in\widetilde{\Theta}_D^\mathfrak{c}\}$ precisely. 
    Therefore, the subset of $F$-invariant orbits is precisely 
    $\{\widetilde{\boldsymbol{\mathcal{O}}}_A\mid A\in\Theta_D^\mathfrak{c}\}$.
\end{cor}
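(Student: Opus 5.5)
The plan is to upgrade Proposition~\ref{F act on O_D} from finite sets of $\mathbb{F}_{q^2}$-points to $\mathbb{F}_{q^2}$-varieties, using that the relevant objects are determined by their points over all finite extensions. Write $\mathbf{G}=\mathbf{U}_{D,q^2}\simeq\mathbf{GL}_{D,q^2}$ via \eqref{eq:Uq2=GLq2}.

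First, we identify the $\mathbf{G}$-orbits on $\widetilde{\pmb{\sF}}^\mathfrak{c}_D\times\widetilde{\pmb{\sF}}^\mathfrak{c}_D$. Over $\overline{\mathbb{F}_q}$ the geometric points of $\widetilde{\pmb{\sF}}^\mathfrak{c}_D$ are flags with the dimension constraints listed in \S\ref{sec:basechange}. The BLM classification \cite[\S1.1]{BLM90}, applied over the algebraically closed field, shows that $\mathrm{GL}_D$-orbits on pairs are classified by the matrix $(a_{ij})$, whose range is exactly $\widetilde{\Theta}^\mathfrak{c}_D$. Each such orbit contains an $\mathbb{F}_{q^2}$-rational point, namely the standard pair built from coordinate subspaces as in the surjectivity part of Lemma~\ref{lem:bijections}, without the form. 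So each orbit is the locally closed reduced subvariety $\widetilde{\boldsymbol{\mathcal{O}}}_A$ defined over $\mathbb{F}_{q^2}$, these exhaust the orbits, and they are pairwise distinct.

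Second, we show that $F$ maps $\widetilde{\boldsymbol{\mathcal{O}}}_A$ onto $\widetilde{\boldsymbol{\mathcal{O}}}_{A^\vartheta}$. The morphism $F$ on $\widetilde{\pmb{\sF}}^\mathfrak{c}_D$ is the $q$-Frobenius of the $\mathbb{F}_q$-form twisted by $\tau$. On geometric points it is computed exactly as in \eqref{eq:Ff=ftheta}: $\mathfrak{f}\mapsto F_q(\mathfrak{f})^{\vartheta}$, where $\perp$ is taken with respect to the $\overline{\mathbb{F}_q}$-bilinear pairing $(x,y)\mapsto\sum x_iy_{D+1-i}$. The stabilizer computation in that proof is field-independent, and $F_q$ preserves the incidence dimensions $r_{ij}$. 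The dimension computation via \eqref{eq:rij=rij} then goes through verbatim over $\overline{\mathbb{F}_q}$ and shows that the matrix of $F(\mathfrak{f},\mathfrak{f}')$ is $A^\vartheta$.

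Finally, since the orbits are reduced and $F$ is a universal homeomorphism compatible with the $\mathbf{G}$-action up to the twist $F(g\cdot x)=F(g)\cdot F(x)$, $F$ restricts to a morphism $\widetilde{\boldsymbol{\mathcal{O}}}_A\to\widetilde{\boldsymbol{\mathcal{O}}}_{A^\vartheta}$ that is surjective on geometric points. By symmetry the two orbits are swapped. Invariance of $\widetilde{\boldsymbol{\mathcal{O}}}_A$ then means $A=A^\vartheta$, which by \eqref{eq:A=Atheta} is equivalent to $A\in\Theta^\mathfrak{c}_D$.

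The main obstacle is the first step: making precise that the geometric $\mathbf{G}$-orbits are exactly the $\widetilde{\boldsymbol{\mathcal{O}}}_A$ indexed by $\widetilde{\Theta}^\mathfrak{c}_D$, and matching the scheme-theoretic $F$ with the point-level formula. I would handle this by working entirely with $\overline{\mathbb{F}_q}$-points, since all the varieties involved are reduced and locally closed.
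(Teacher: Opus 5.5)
Your argument is correct, but it upgrades Proposition~\ref{F act on O_D} to varieties by a different mechanism from the paper's. The paper applies that proposition to a single point $x\in\widetilde{\boldsymbol{\mathcal O}}_A(\mathbb F_{q^2})$. It then uses the twisted equivariance that you mention but never exploit: since $F(\mathbf U_{D,q^2})=\mathbf U_{D,q^2}$, one gets $F(\mathbf U_{D,q^2}\cdot x)=\mathbf U_{D,q^2}\cdot F(x)=\widetilde{\boldsymbol{\mathcal O}}_{A^\vartheta}$ at once, and exhaustion is attributed to the split Bruhat decomposition. This needs the Hermitian computation only at $\mathbb F_{q^2}$-points, where it is valid.

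You instead recompute $F$ on all geometric points. This gives a cleaner, self-contained identification of the geometric orbits via BLM over $\overline{\mathbb F_q}$; the paper only speaks of $\mathbb F_{q^2}$-points when asserting exhaustion. The cost is that you become responsible for the pointwise formula over $\overline{\mathbb F_q}$, and your stated justification for it is not right. The stabilizer computation in Proposition~\ref{F act on O_D} is \emph{not} field-independent: the identity $\langle gv,v'\rangle=\langle v,g'v'\rangle$ with $g'=JF_q(g^{\mathrm{tr}})J$ holds only when $F_{q^2}(g)=g$. Moreover, off $\mathbb F_{q^2}$-rational subspaces the sesquilinear complement is not an involution (cf.\ $(V^\perp)^\perp=F_{q^2}(V)$ in the remark after Lemma~\ref{F-map}).

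Your formula $F(\mathfrak f)=F_q(\mathfrak f)^\vartheta$, with $\perp$ taken for the bilinear form $\beta(x,y)=\sum_i x_iy_{D+1-i}$, is nonetheless correct. The proper check is $\beta(gx,\tau(g)y)=\beta(x,y)$ for $\tau(g)=Jg^{-\mathrm{tr}}J$, which gives $\tau(g)(U^{\perp_\beta})=(gU)^{\perp_\beta}$. Combine this with $\mathfrak f_\gamma^\vartheta=\mathfrak f_\gamma$ for the standard flags and $F_q(U^{\perp_\beta})=F_q(U)^{\perp_\beta}$. With that in place, your dimension count and the bijectivity of $F$ on geometric points complete the proof.
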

\begin{proof}
   Let $x\in\widetilde{\boldsymbol{\mathcal{O}}}_A(\mathbb{F}_{q^2})$. By Proposition \ref{F act on O_D}, $F(x)\in\widetilde{{\mathcal{O}}}_{A^{\vartheta}}=\widetilde{\boldsymbol{\mathcal{O}}}_{A^{\vartheta}}(\mathbb{F}_{q^2})$. We have $F(\widetilde{\boldsymbol{\mathcal{O}}}_{A})=F(\mathbf{U}_{D,q^2}\cdot x)=\mathbf{U}_{D,q^2}\cdot F(x)=\widetilde{\boldsymbol{\mathcal{O}}}_{A^{\vartheta}}$ as desired.
   By the Bruhat decomposition in the split case, $\bigsqcup_{A\in\widetilde{\Theta}^\mathfrak{c}_D}\widetilde{\boldsymbol{\mathcal{O}}}_A(\mathbb{F}_{q^2})$ exhausts all the $\mathbb{F}_{q^2}$-rational points of $\widetilde{\pmb{\sF}}^\mathfrak{c}_D\times\widetilde{\pmb{\sF}}^\mathfrak{c}_D$, from which the last assertion follows.
\end{proof}

\subsubsection{Case of $\mathrm{O}_{2d+2}(q^2)$}
Let us turn to study the Frobenius $F$ on $\mathrm{O}_{2d+2}(q^2)\setminus\widetilde{\sF}^\mathfrak{c}_O\times\widetilde{\sF}^\mathfrak{c}_O$.

\begin{prop}\label{O-orbits}
   \begin{itemize}
   \item[(1).] For each $A\in\widetilde{\Theta}^\mathfrak{c}_O$, the $\mathrm{O}_{2d+2}(q^2)$-orbit $\widetilde{\mathcal{O}}_A$ is $F$-invariant. 
   \item[(2).] For $A\in\widetilde{\Theta}^\mathfrak{c}_O\setminus \Theta^\mathfrak{c}_O$, the Frobenius $F$ swaps the two $\mathrm{SO}_{2d+2}(q^2)$-orbits in $\widetilde{\mathcal{O}}_A$. 
   \end{itemize}
\end{prop}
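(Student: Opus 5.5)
For (1), I will describe the action of $F=F_q\circ\mathrm{Ad}_g$ on $\widetilde{\sF}^\mathfrak{c}_O$ concretely and check that it preserves the associated matrix. Here we work in the split model with the standard form $(\cdot,\cdot)$ of \eqref{standarform}, so that $\mathrm{O}_{2d+2}(q^2)$ acts and $F$ is induced from $F_q\circ\mathrm{Ad}_g$.

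On a flag $\mathfrak{f}=(\mathfrak{f}_i)$, the map $F$ acts by $\mathfrak{f}\mapsto g\cdot F_q(\mathfrak{f})$. Here $F_q$ raises coordinates to the $q$-th power; it is a semilinear bijection of $\mathbb{F}_{q^2}^{2d+2}$ preserving the standard form up to Frobenius on scalars. Hence $F_q$ preserves dimensions of subspaces, intersections and sums, and sends isotropic flags to isotropic flags. The element $g\in\mathrm{O}_{2d+2}(q^2)$ is linear and likewise preserves all dimensions. Therefore, for $(\mathfrak{f},\mathfrak{f}')\in\widetilde{\mathcal{O}}_A$ we get
\[
\dim\bigl(F(\mathfrak{f})_i\cap F(\mathfrak{f}')_j\bigr)=\dim(\mathfrak{f}_i\cap\mathfrak{f}'_j)\quad\text{for all } i,j,
\]
so the associated matrix is unchanged. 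By the bijection $\mathrm{O}_{2d+2}(q^2)\backslash\widetilde{\sF}^\mathfrak{c}_O\times\widetilde{\sF}^\mathfrak{c}_O\leftrightarrow\widetilde{\Theta}^\mathfrak{c}_O$, this gives $F(\widetilde{\mathcal{O}}_A)=\widetilde{\mathcal{O}}_A$. This contrasts with the unitary case, where the perp operation produced $A^\vartheta$.

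For (2), let $A\in\widetilde{\Theta}^\mathfrak{c}_O\setminus\Theta^\mathfrak{c}_O$, so $a_{n+1,n+1}=0$. By Lemma~\ref{lem:2SO-orbits}, $\widetilde{\mathcal{O}}_A=\mathrm{SO}_{2d+2}(q^2)x\sqcup\mathrm{SO}_{2d+2}(q^2)sx$, with all stabilizers contained in $\mathrm{SO}$. I will use the function $\delta(hx)=\det h$ from that proof, which is well defined on $\widetilde{\mathcal{O}}_A$ and separates the two $\mathrm{SO}$-orbits. Since $F(hx)=F(h)F(x)$ and $\det F(h)=\det(g)\,\det(h)^q\,\det(g)^{-1}=\det h$ (using $\det h=\pm1$), $F$ maps $\mathrm{SO}$-orbits to $\mathrm{SO}$-orbits. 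Thus it suffices to show $\delta(F(x))=-1$ for one well-chosen point $x$.

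I would take $x=(\mathfrak{f},\mathfrak{f}')$ to be the standard representative built from the split basis, as in the surjectivity part of Lemma~\ref{lem:bijections}. Its subspaces are spans of standard basis vectors $v_k$, so $F_q(x)=x$ and $F(x)=g\cdot x$. Then $\delta(F(x))=\det g=-1$, which shows that $F(x)$ lies in the other $\mathrm{SO}$-orbit. Hence $F$ swaps the two $\mathrm{SO}_{2d+2}(q^2)$-orbits.

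The step I expect to be the main obstacle is making sure that $x$ can be chosen $F_q$-fixed, that is, spanned by standard basis vectors, in the split model compatible with $\mathrm{Ad}_g$ and the identification through $P$. For this I would construct the pair explicitly: assign to each entry $a_{ij}$ a block of standard vectors $v_k$, respecting the pairing $v_k\leftrightarrow v_{2d+3-k}$, and put $a_{n+1,n+1}=0$. Because $a_{n+1,n+1}=0$, the middle vectors $v_{d+1},v_{d+2}$ are forced into off-center blocks $B_{ij}$, $B_{N+1-i,N+1-j}$ with $(i,j)\neq(n+1,n+1)$. This is consistent with $g$ swapping them: $g\cdot x$ is then a different standard pair with the same matrix, but it lies in the opposite $\mathrm{SO}$-orbit because $\det g=-1$ and the stabilizers lie in $\mathrm{SO}$.
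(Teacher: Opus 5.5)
Your proof is correct and follows the paper's route. Part (1) rests on $g\in\mathrm{O}_{2d+2}(q^2)$ together with $F_q$ preserving the orbit invariants, and part (2) on $\det g=-1$ combined with the fact that $F_q$ preserves each $\mathrm{SO}_{2d+2}(q^2)$-orbit; your determinant function $\delta$ and the $F_q$-fixed standard representative $x$ (with $F(x)=g\cdot x$) make explicit this last fact, which the paper asserts without justification.
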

\begin{proof}
The Frobenius $F$ on $\mathrm{O}_{2d+2}(q^2)$ is $F=\mathrm{Ad}_g\circ F_q$, where $g$ is the matrix defined in \eqref{def:g}. Since $g\in \mathrm{O}_{2d+2}(q^2)$, each $\mathrm{O}_{2d+2}(q^2)$-orbit in $\widetilde{\sF}_O^\mathfrak{c}\times\widetilde{\sF}_O^\mathfrak{c}$ is $F$-invariant. 

By Lemma~\ref{lem:2SO-orbits}, for $A\in\widetilde{\Theta}^\mathfrak{c}_O\setminus \Theta^\mathfrak{c}_O$, the $\mathrm{O}_{2d+2}(q^2)$-orbit $\widetilde{\mathcal{O}}_A$ splits into two $\mathrm{SO}_{2d+2}(q^2)$-orbits. Since $g\in\mathrm{O}_{2d+2}(q^2)$ but $g\not\in\mathrm{SO}_{2d+2}(q^2)$ and the standard Frobenius $F_q$ preserve each $\mathrm{SO}_{2d+2}(q^2)$-orbit, the $F$-action on $\widetilde{\mathcal{O}}_A$ must be the swapping of these two components. The proof is complete. 
\end{proof}
The following corollary is an $\mathbb{F}_{q^2}$-variety version of the above proposition. We omit the proof, since it is similar to that of Corollary~\ref{F-var_D} using Proposition~\ref{O-orbits}.
\begin{cor}\label{F-var_O}
    \begin{itemize}
   \item[(1).] For each $A\in\widetilde{\Theta}^\mathfrak{c}_O$, the $\mathbf{O}_{2d+2,q^2}$-orbit $\widetilde{\boldsymbol{\mathcal{O}}}_A$ is $F$-invariant. 
   \item[(2).] For $A\in\widetilde{\Theta}^\mathfrak{c}_O\setminus \Theta^\mathfrak{c}_O$, the Frobenius $F$ swaps the two $\mathbf{SO}_{2d+2,q^2}$-orbits in $\widetilde{\boldsymbol{\mathcal{O}}}_A$. 
   \end{itemize}
\end{cor}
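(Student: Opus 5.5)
The plan is to imitate the proof of Corollary~\ref{F-var_D}: pass from the orbit varieties to their $\mathbb{F}_{q^2}$-rational points, and apply Proposition~\ref{O-orbits} there. The key tool is the equivariance of the Frobenius. The twisted Frobenius $F=\mathrm{Ad}_g\circ F_q$ on $\widetilde{\pmb{\sF}}^\mathfrak{c}_O\times\widetilde{\pmb{\sF}}^\mathfrak{c}_O$ satisfies $F(h\cdot y)=F(h)\cdot F(y)$ for $h\in\mathbf{O}_{2d+2,q^2}$. Since $F$ is a bijection on geometric points and preserves $\mathbf{SO}_{2d+2,q^2}=\mathbf{O}^\circ_{2d+2,q^2}$ (conjugation by $g$ preserves the identity component), it sends $\mathbf{O}$-orbits to $\mathbf{O}$-orbits and $\mathbf{SO}$-orbits to $\mathbf{SO}$-orbits. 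Concretely,
\[
F(\mathbf{O}_{2d+2,q^2}\cdot x)=\mathbf{O}_{2d+2,q^2}\cdot F(x)
\quad\text{and}\quad
F(\mathbf{SO}_{2d+2,q^2}\cdot x)=\mathbf{SO}_{2d+2,q^2}\cdot F(x).
\]

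For (1), I would choose $x\in\widetilde{\boldsymbol{\mathcal O}}_A(\mathbb{F}_{q^2})=\widetilde{\mathcal O}_A$, which is nonempty by the orbit bijection lemma. By Proposition~\ref{O-orbits}(1), $F(x)\in\widetilde{\mathcal O}_A$. The orbit $\widetilde{\boldsymbol{\mathcal O}}_A$ is the geometric orbit through the rational point $x$, so
\[
F(\widetilde{\boldsymbol{\mathcal O}}_A)=\mathbf{O}_{2d+2,q^2}\cdot F(x)=\widetilde{\boldsymbol{\mathcal O}}_A .
\]

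For (2), I would use Lemma~\ref{lem:2SO-orbits}. It gives $\widetilde{\boldsymbol{\mathcal O}}_A=\mathbf{SO}\cdot x\sqcup\mathbf{SO}\cdot(s\cdot x)$ with $x$ rational, and these two pieces are exactly the $\mathbf{SO}$-orbits whose rational points are the two $\mathrm{SO}_{2d+2}(q^2)$-orbits in $\widetilde{\mathcal O}_A$. Proposition~\ref{O-orbits}(2) shows that $F(x)$ lies in the $\mathrm{SO}_{2d+2}(q^2)$-orbit of $s\cdot x$. By the equivariance above, $F(\mathbf{SO}\cdot x)=\mathbf{SO}\cdot F(x)=\mathbf{SO}\cdot(s\cdot x)$. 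Since $F$ is an involution on orbits ($F^2=F_{q^2}$ fixes every $\mathbb{F}_{q^2}$-orbit), it also sends $\mathbf{SO}\cdot(s\cdot x)$ back to $\mathbf{SO}\cdot x$, so the two components are swapped.

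The main obstacle is the bookkeeping between rational-point orbits and the orbit varieties. One must be sure that the two $\mathrm{SO}_{2d+2}(q^2)$-orbits in $\widetilde{\mathcal O}_A$ are precisely the rational points of the two geometric components $\delta^{-1}(\pm1)$, and not some further splitting. I would settle this by applying Proposition~\ref{prop:rational-orbits}, or directly the determinant map $\delta$ from the proof of Lemma~\ref{lem:2SO-orbits}, to each component. That proof shows the stabilizer lies in $\mathbf{SO}$, and the stabilizer is connected by Lemma~\ref{connected} applied to $\mathrm{SO}_{2d+2}$. Lang's theorem over $\mathbb{F}_{q^2}$ then identifies the rational points of each $\mathbf{SO}$-orbit with a single $\mathrm{SO}_{2d+2}(q^2)$-orbit.
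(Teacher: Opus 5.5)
Your argument is correct and follows the route the paper intends; the paper omits the proof as ``similar to that of Corollary~\ref{F-var_D} using Proposition~\ref{O-orbits}'', and your version picks $x\in\widetilde{\mathcal O}_A$, uses $F(\mathbf{G}\cdot x)=\mathbf{G}\cdot F(x)$ for $\mathbf{G}=\mathbf{O}_{2d+2,q^2}$ or $\mathbf{SO}_{2d+2,q^2}$, and reads off the image orbit from the finite-field statement. The extra bookkeeping in your last paragraph is harmless but unnecessary: $F(x)\in\mathrm{SO}_{2d+2}(q^2)\cdot(s\cdot x)\subset\mathbf{SO}\cdot(s\cdot x)$, together with $\mathbf{SO}\cdot x\neq\mathbf{SO}\cdot(s\cdot x)$ from Lemma~\ref{lem:2SO-orbits}, already gives the swap.
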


\subsection{Bruhat order} \label{subsec:bruhatorder}
The Bruhat order $\le$ on $\Theta^\mathfrak{c}_\square$ is defined as follows: for $A,B\in\Theta^\mathfrak{c}_\square$,
\begin{equation*}
A\le B \quad\text{if and only if}\quad \mathcal{O}_A\subseteq\overline{\mathcal{O}}_B,
\end{equation*}
where $\overline{\mathcal{O}}_B$ denotes the $\mathbb{F}_q$-rational points of the Zariski closure, in the ambient variety over $\overline{\mathbb{F}_q}$, of the orbit subvariety associated with $\mathcal{O}_B$. Clearly, the orders on $\Theta^\imath_\square$ and $\Theta^\jmath_\square$ are compatible with the inclusion $\Theta^\imath_\square\subset\Theta^\jmath_\square$. 

In general, Bruhat orders defined via $\mathrm{SO}$-orbits and $\mathrm{O}$-orbits are different. Nevertheless, by Lemma~\ref{biject D} (or more precisely, by \cite[Proposition 4.5]{DLZ25} that $\boldsymbol{\mathcal{O}}_A\times_{\mathbb{F}_q}\overline{\mathbb{F}_q}$ is an $\mathrm{SO}_{2d+2}$-orbit if $A\in \Theta^\mathfrak{c}_O$), the above definition is independent of whether we take $\mathrm{G}=\mathrm{O}_{2d+2}$ or $\mathrm{SO}_{2d+2}$.

For $1\leq i,j\leq N$, let $E_{ij}$ be the $N\times N$ matrix whose $(i,j)$-th entry is $1$ and the others are zeros. Denote
    \begin{equation}\label{def:Eijtheta}
        E_{ij}^\theta = E_{ij} + E_{N+1-i,N+1-j},
    \end{equation}
    whose $(i,j)$-entry is $2$ if $i=j=n+1$ and is $1$ otherwise.

\begin{lem}\label{lem:order-minimal}
For $A\in\Theta^\mathfrak{c}_\square$, if $A-E^\theta_{i,i+1}$ or $A-E^\theta_{i+1,i}$ is diagonal for $A\in\Theta^\mathfrak{c}_\square$, then $A$ is minimal under the Bruhat order in $\Theta^\mathfrak{c}_\square$. 
\end{lem}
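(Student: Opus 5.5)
The statement should be read as saying that $A$ has no strict Bruhat predecessor in $\Theta^\mathfrak{c}_\square$; a diagonal matrix has different row and column sums from $A$, so it can never lie below $A$. By the definition of the Bruhat order, it therefore suffices to show that the orbit variety $\boldsymbol{\mathcal O}_A$ is closed in $\pmb{\sF}^\mathfrak{c}_\square\times\pmb{\sF}^\mathfrak{c}_\square$ (over $\overline{\mathbb F_q}$). Then $\mathcal O_B\subseteq\overline{\mathcal O}_A=\mathcal O_A$ forces $B=A$. By the symmetry $(\mathfrak f,\mathfrak f')\mapsto(\mathfrak f',\mathfrak f)$, which exchanges $A$ and its transpose, it is enough to treat $A=\Delta+E^\theta_{i,i+1}$ with $\Delta$ diagonal.

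\textbf{Step 1: describe $\mathcal O_A$ by incidence conditions.} Take $(\mathfrak f,\mathfrak f')\in\mathcal O_A$, put $r_{kl}=\dim\mathfrak f_k\cap\mathfrak f'_l$, and note that $r_{kl}=\sum_{s\le k,t\le l}a_{st}$. From the shape of $A$ I will read off the following. First, $\mathfrak f'_k=\mathfrak f_k$ for $k\ne i,\,N-i$. Second, $\mathfrak f'_i\subset\mathfrak f_i$ has codimension one. Third, $\mathfrak f_{N-i}\subset\mathfrak f'_{N-i}$ has codimension one, and this last condition is forced by $\perp$-duality in the original setting. Conversely, I will show that any pair of flags of the given row and column types satisfying $\mathfrak f'_k=\mathfrak f_k$ ($k\ne i,N-i$) and $\mathfrak f'_i\subseteq\mathfrak f_i$ has associated matrix exactly $A$. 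The reason is that all $r_{kl}$ are then determined by the types: $r_{kl}=\dim\mathfrak f_{\min}$ whenever the two spaces are nested, and every pair of spaces involved is nested, because $\mathfrak f_{i-1}=\mathfrak f'_{i-1}\subseteq\mathfrak f'_i\subseteq\mathfrak f_i$. Hence $\mathcal O_A$ (geometrically, $\boldsymbol{\mathcal O}_A(\overline{\mathbb F_q})$) coincides with the locus
\[Z=\{(\mathfrak f,\mathfrak f')\mid \mathfrak f'_k=\mathfrak f_k\ (k\neq i,N-i),\ \mathfrak f'_i\subseteq\mathfrak f_i\}\]
inside the product of the two connected components of the prescribed types.

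\textbf{Step 2: closedness.} Equalities and inclusions of subspaces are closed conditions on a product of Grassmannians, and the components of fixed type are closed and open. Hence $Z$ is closed, and therefore so is $\boldsymbol{\mathcal O}_A$.

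\textbf{Step 3: the remaining cases.} In the $\imath$-case the entries $a_{n+1,j}$ and $a_{i,n+1}$ with $i,j\neq n+1$ vanish, so $i\in\{n,n+1\}$ cannot occur and there is nothing to check. In the $\jmath$-case with $i=n$, the pair $(i,N-i)$ becomes $(n,n+1)$ and the condition is $\mathfrak f'_n\subset\mathfrak f_n$ together with $\mathfrak f_{n+1}\subset\mathfrak f'_{n+1}$, obtained by taking perps. The nesting argument of Step 1 still applies, since $\mathfrak f'_n\subseteq\mathfrak f_n\subseteq\mathfrak f_{n+1}\subseteq\mathfrak f'_{n+1}$. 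For $\square=O$, I will invoke the remark after the definition of the Bruhat order (via \cite[Proposition~4.5]{DLZ25}) to see that the closure may be computed with either $\mathrm O$ or $\mathrm{SO}$, since $a_{n+1,n+1}\ge2$.

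The main obstacle I expect is the converse in Step 1: checking that every point of $Z$ has matrix exactly $A$, rather than some matrix of the same row and column sums. The nesting observation should settle this, but the middle index $i=n$ in the $\jmath$-case requires care.
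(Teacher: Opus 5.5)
Your strategy is sound: show that $\boldsymbol{\mathcal O}_A$ is closed, so that $\overline{\mathcal O}_A=\mathcal O_A$. This is what the paper's appeal to \cite[Lemma~3.6]{BLM90} and \cite[Lemma~4.6.1]{FL15} amounts to. However, Step~1 is false as stated for $\square=D$ with $i<n$.

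The $\overline{\mathbb F_q}$-points of $\pmb{\mathscr F}^{\mathfrak c}_D$ carry no Hermitian orthogonality. They are arbitrary type~A flags subject only to $\dim\mathfrak f_k+\dim\mathfrak f_{N-k}=D$ (cf.\ \S\ref{sec:basechange}), and the relation $\mathfrak f_{N-k}=\mathfrak f_k^\perp$ holds only at $F$-fixed points. Three consequences follow:
\begin{itemize}
\item on your locus $Z$ the inclusion $\mathfrak f_{N-i}\subseteq\mathfrak f'_{N-i}$ is not forced;
\item your nesting check, which only looks near index $i$, misses the pair $(\mathfrak f_{N-i},\mathfrak f'_{N-i})$;
\item the claim $Z=\boldsymbol{\mathcal O}_A(\overline{\mathbb F_q})$ fails.
\end{itemize}

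For example, take $n=2$, $D=6$, $i=1$ and $A=\mathrm{diag}(1,1,0,1,1)+E^\theta_{12}$. Then $A\in\Theta^{\imath}_6\subset\Theta^\jmath_6$, and the dimension vectors are $(2,3,3,4,6)$ for $\mathfrak f$ and $(1,3,3,5,6)$ for $\mathfrak f'$. Choose $\mathfrak f_4$ and $\mathfrak f'_4$ containing $\mathfrak f_3=\mathfrak f'_3$ in general position. This gives a point of $Z$ with $\dim(\mathfrak f_4\cap\mathfrak f'_4)=3$, whereas every point of $\boldsymbol{\mathcal O}_A$ has $\mathfrak f_4\subset\mathfrak f'_4$. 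So closedness of $Z$ does not yet give closedness of the orbit.

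For $\square=O$ the form is algebraic, so your $\perp$ argument is valid geometrically. For $i=n$ you already put both inclusions into the locus, so that case is unaffected.

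The repair is immediate, and either of two routes works.
\begin{itemize}
\item Add the closed condition $\mathfrak f_{N-i}\subseteq\mathfrak f'_{N-i}$ to the definition of $Z$, as you did for $i=n$. Then every $\mathfrak f_k$ is comparable with every $\mathfrak f'_l$; for instance $\mathfrak f_{N-i-1}=\mathfrak f'_{N-i-1}$, $\mathfrak f_{N-i}\subseteq\mathfrak f'_{N-i}$, and $\mathfrak f'_{N-i}\subseteq\mathfrak f'_{N-i+1}=\mathfrak f_{N-i+1}$. Hence all $r_{kl}$ are determined by the dimension vectors, $Z$ is the single $\mathrm{GL}_D$-orbit $\boldsymbol{\mathcal O}_A(\overline{\mathbb F_q})$ by \cite[\S1.1]{BLM90}, and $Z$ is closed.
\item Keep your $Z$ and argue on rational points. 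Since $Z$ is closed and contains the orbit, $\overline{\boldsymbol{\mathcal O}_A}\subseteq Z$. An $F$-fixed point of $Z$ is a pair of isotropic flags, and there $\perp$ does force $\mathfrak f_{N-i}\subseteq\mathfrak f'_{N-i}$. So the $\mathbb F_q$-points of the closure are exactly $\mathcal O_A$, which is all the Bruhat order of \S\ref{subsec:bruhatorder} sees.
\end{itemize}

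With either fix, your argument is a hands-on version of the paper's proof. The paper instead imports the split type~A and type~D closedness results, using \cite[Proposition~4.5]{DLZ25} to pass between $\mathrm O$ and $\mathrm{SO}$.
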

\begin{proof}
    This follows by the definition of Bruhat order and \cite[Lemma 3.6]{BLM90}, \cite[Lemma 4.6.1]{FL15} (here we recall that for $A\in\Theta^\jmath_O$, $\mathcal{O}_A\times_{\mathbb{F}_q}\overline{\mathbb{F}_q}$ is an $\mathrm{SO}_{2d+2}$-orbit by \cite[Proposition 4.5]{DLZ25}, hence we can apply \cite[Lemma 4.6.1]{FL15}).
\end{proof}

\subsection{Numbers of isotropic subspaces}
   In the next section, we shall count the numbers of certain one-dimensional subspaces to derive multiplication formulas for Schur algebras, for which we need some lemmas.

First, we consider Hermitian forms $x_1y_1^{q}+\cdots+x_ry_r^{q}$ on $\mathbb{F}_{q^2}^{D}$ of rank $r$, which is non-degenerate when $r=D$ and degenerate when $r<D$. We denote \begin{equation}\label{nom:phi}
  \phi(D,\q) := \frac{(\q^{D+1} - (-1)^{D+1})(\q^{D} - (-1)^{D})}{\q^2 - 1}.
   \end{equation}
   \begin{lem}\label{lem:count1}
   (1) Let $\mathbb{F}_{q^2}^{D}$ be equipped with a non-degenerate Hermitian form.
   The number of its one-dimensional isotropic subspaces is $\phi(D-1,q)$.\\
(2) Let $\mathbb{F}_{q^2}^{D}$ be equipped with a degenerate Hermitian form of rank $D-1$, then the number of its one-dimensional isotropic subspaces is $q^2\phi(D-2,q) + 1$.
   \end{lem}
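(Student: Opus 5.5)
Both parts reduce to counting nonzero isotropic vectors and then dividing by $|\mathbb{F}_{q^2}^\times|=q^2-1$, since every one-dimensional subspace has exactly $q^2-1$ nonzero vectors. For $m\ge 1$, let $N_m$ be the number of nonzero isotropic vectors in $\mathbb{F}_{q^2}^m$ equipped with a non-degenerate Hermitian form. All such forms are equivalent to $x_1y_1^q+\cdots+x_my_m^q$, so $N_m$ depends only on $m$. The target for part (1) is
\[
N_m=\bigl(q^m-(-1)^m\bigr)\bigl(q^{m-1}-(-1)^{m-1}\bigr),
\]
because dividing this by $q^2-1$ gives exactly $\phi(m-1,q)$ as defined in \eqref{nom:phi}.

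\textbf{Step 1 (recursion).} I will derive a recursion for $N_m$ by splitting off one coordinate. Write $v=(x_1,w)$ with $w\in\mathbb{F}_{q^2}^{m-1}$. Then $\langle v,v\rangle = x_1^{q+1}+h(w)$, where $h(w)=w_2^{q+1}+\cdots+w_m^{q+1}\in\mathbb{F}_q$. The norm map $x\mapsto x^{q+1}$ from $\mathbb{F}_{q^2}$ to $\mathbb{F}_q$ is surjective, each nonzero value has exactly $q+1$ preimages, and $0$ has only the preimage $0$. I count all solutions of $\langle v,v\rangle=0$, including $v=0$, in two cases.
\begin{itemize}
\item If $h(w)=0$, then $x_1=0$ is forced; there are $N_{m-1}+1$ such $w$.
\item If $h(w)\neq 0$, then $x_1$ has exactly $q+1$ choices; there are $q^{2(m-1)}-N_{m-1}-1$ such $w$.
\end{itemize}
This gives
\[
N_m+1=(N_{m-1}+1)+(q+1)\bigl(q^{2(m-1)}-N_{m-1}-1\bigr),
\]
that is, $N_m=-qN_{m-1}+(q+1)\bigl(q^{2m-2}-1\bigr)$, with base case $N_1=0$.

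\textbf{Step 2 (closed form).} Next I check by induction that the displayed closed form satisfies this recursion. It holds for $m=1$, since the second factor $q^0-(-1)^0$ vanishes. The inductive step is a routine expansion. Expanding, $N_m=q^{2m-1}+(-1)^m q^m+(-1)^m q^{m-1}-1$. Substituting $N_{m-1}=q^{2m-3}+(-1)^{m-1}q^{m-1}+(-1)^{m-1}q^{m-2}-1$ into $-qN_{m-1}+(q+1)(q^{2m-2}-1)$ reproduces this expression. This algebraic verification is the only step with any real content, and even it is mechanical. Dividing $N_D$ by $q^2-1$ then proves (1).

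\textbf{Step 3 (part (2)).} For a form of rank $D-1$, the radical $R$ is a line. Choose a complement $U$ with $\mathbb{F}_{q^2}^D=R\oplus U$; the form restricted to $U$ is non-degenerate of dimension $D-1$. For $v=r+u$ with $r\in R$ and $u\in U$ we have $\langle v,v\rangle=\langle u,u\rangle$, so $v$ is isotropic exactly when $u$ is isotropic (possibly zero), with $r$ arbitrary. The number of isotropic vectors, including zero, is therefore $q^2(N_{D-1}+1)$. Removing the zero vector and dividing by $q^2-1$ gives
\[
\frac{q^2N_{D-1}+q^2-1}{q^2-1}=q^2\,\frac{N_{D-1}}{q^2-1}+1=q^2\phi(D-2,q)+1,
\]
using part (1) in dimension $D-1$. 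This proves (2).
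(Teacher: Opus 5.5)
Your route is different from the paper's. The paper does no computation here: it reads part~(1) off \cite[Theorem~8.1]{BC66}, the point count of a non-degenerate Hermitian variety, and part~(2) off the corollary there for degenerate forms.

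You instead prove both counts from scratch. The only input is that the norm $x\mapsto x^{q+1}$ maps $\mathbb{F}_{q^2}^\times$ onto $\mathbb{F}_q^\times$ with fibers of size $q+1$, which you turn into a first-order recursion in the dimension. Your Step~3 is the usual ``cone over a non-degenerate form'' reduction. It works verbatim for any corank $k$, giving $q^{2k}(N_{D-k}+1)-1$ nonzero isotropic vectors.

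The citation buys brevity and all ranks at once; your argument buys a self-contained proof. For $D=1$ in part~(2), the rank-zero case, you just need the convention $N_0=0$, which is consistent with your closed form.

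Step~2 as written has a sign error that you must fix. The correct expansion is
\[
\bigl(q^m-(-1)^m\bigr)\bigl(q^{m-1}-(-1)^{m-1}\bigr)=q^{2m-1}+(-1)^mq^m-(-1)^mq^{m-1}-1,
\]
with $-(-1)^mq^{m-1}$ rather than your $+(-1)^mq^{m-1}$.

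Your expanded expression equals $-2$ at $m=1$, so it is not the factored form whose base case you checked. The substitution check still appears to succeed, because your expression differs from the correct one by $2(-1)^mq^{m-1}$. That difference solves the homogeneous recursion $N_m=-qN_{m-1}$. So, as written, the base case and the inductive step are verified for two different sequences, and the induction does not close.

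With the sign corrected it does close. You get $-qN_{m-1}=-q^{2m-2}+(-1)^mq^m-(-1)^mq^{m-1}+q$. Adding $(q+1)(q^{2m-2}-1)=q^{2m-1}+q^{2m-2}-q-1$ gives exactly $q^{2m-1}+(-1)^mq^m-(-1)^mq^{m-1}-1$. The rest is correct: the recursion in Step~1, the base case $N_1=0$, and the radical splitting in Step~3.
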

   \begin{proof}
   The first statement is a reformulation of \cite[Theorem 8.1]{BC66}, while the second is a specialization of \cite[Corollary of Theorem 8.1]{BC66}. 
   \end{proof}
    

Next, we consider the bilinear form 
\begin{equation}\label{eq:bilinear}
\sum_{i=1}^{d} (x_iy_{2d+3-i}+x_{2d+3-i}y_i)+x_{d+1}y_{d+1}-\omega x_{d+2}y_{d+2}
\end{equation} on $\mathbb{F}^{2d+2}_{q}$ (the non-degenerate case) or on $\mathbb{F}^{2d+3}_{q}$ (a degenerate case with co-rank $1$). For an odd number $D=2d+1$, we denote $$\psi(D,\q):=\frac{\q^{D} - 1}{\q - 1}-\q^{\frac{D-1}{2}}=\frac{\q^{2d+1} - 1}{\q - 1}-\q^{d}.$$

\begin{lem}\label{1-isotrop D}
    \begin{itemize}
        \item [(1)] Let $\mathbb{F}^{2d+2}_{q}$ be equipped with the bilinear form \eqref{eq:bilinear}. The number of its one-dimensional isotropic subspaces is $\psi(2d+1,q)$.
        \item[(2)] Let $\mathbb{F}_{q}^{2d+3}$ be equipped with the bilinear form \eqref{eq:bilinear} with corank $1$. The number of its one-dimensional isotropic subspaces is $q\psi(2d+1,q)+1$.
    \end{itemize}
\end{lem}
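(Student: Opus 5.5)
Part (1) reduces to counting the nonzero isotropic vectors of the form \eqref{eq:bilinear} on $\mathbb{F}_q^{2d+2}$ and dividing by $q-1$. The form is the orthogonal sum of $d$ hyperbolic planes $H$ (coordinates $x_i,x_{2d+3-i}$) and the anisotropic plane $E$ with quadratic form $Q_E(x_{d+1},x_{d+2})=x_{d+1}^2-\omega x_{d+2}^2$. Since $\omega$ is a non-square, $Q_E$ is the norm form of $\mathbb{F}_{q^2}/\mathbb{F}_q$ (up to the identification $\mathbb{F}_{q^2}=\mathbb{F}_q(\sqrt\omega)$). Hence $Q_E$ vanishes only at $0$ and takes each nonzero value of $\mathbb{F}_q$ exactly $q+1$ times.

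I will count the solutions of $Q(x)=0$ in $\mathbb{F}_q^{2d+2}$ by induction on $d$. Write $N_d(c)$ for the number of solutions of $Q=c$ on $H^d\perp E$. For a single hyperbolic plane, $xy=c$ has $q-1$ solutions if $c\neq0$ and $2q-1$ solutions if $c=0$. Splitting off one hyperbolic plane gives the recursion
\[
N_d(0)=(2q-1)N_{d-1}(0)+(q-1)\bigl(q^{2d}-N_{d-1}(0)\bigr)=qN_{d-1}(0)+(q-1)q^{2d},
\]
where we have used that the total number of vectors in $H^{d-1}\perp E$ is $q^{2d}$. The base case is $N_0(0)=1$. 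Solving the recursion gives $N_d(0)=q^{2d+1}-q^{d+1}+q^d$, which agrees with the standard formula $q^{2m-1}-q^{m-1}+q^m$ for minus-type quadrics with $m=d+1$. Removing $0$ and dividing by $q-1$ yields
\[
\frac{q^{2d+1}-q^{d+1}+q^{d}-1}{q-1}=\frac{q^{2d+1}-1}{q-1}-q^d=\psi(2d+1,q).
\]
This check of the closed form, and the bookkeeping in the recursion, is the only real computation. One could instead cite a classical count, as is done via \cite{BC66} in Lemma~\ref{lem:count1}.

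Part (2) works on $\mathbb{F}_q^{2d+3}=V\oplus R$, where $V$ is the nondegenerate space of part (1) and $R=\mathbb{F}_q r$ is the radical. A vector $v+ar$ is isotropic if and only if $v$ is isotropic, with $a\in\mathbb{F}_q$ arbitrary. So there are $N_d(0)\cdot q$ isotropic vectors in total. Subtracting the zero vector and dividing by $q-1$ gives
\[
\frac{qN_d(0)-1}{q-1}=\frac{q(N_d(0)-1)}{q-1}+1=q\,\psi(2d+1,q)+1.
\]
Geometrically, the lines are the radical line together with, over each isotropic line of $V$, the $q$ lines in its span with $R$ other than $R$ itself. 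The main obstacle is therefore only the arithmetic in part (1); part (2) follows formally from it.
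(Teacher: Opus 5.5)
Your proof is correct. Part (2) is the same as the paper's argument. The paper notes that $2\sum_{i\le d}x_ix_{2d+3-i}+x_{d+1}^2-\omega x_{d+2}^2=0$ has $q(q-1)\psi(2d+1,q)+q$ solutions in $\mathbb{F}_q^{2d+3}$, which is your count $q\cdot N_d(0)$, and then divides by $q-1$.

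For part (1) the paper does no computation and simply cites the classical solution count for a nondegenerate quadratic equation in Lidl--Niederreiter. Your recursion, which splits off one hyperbolic plane at a time, is a self-contained replacement for that citation. It uses only that the plane $E$ has no nonzero isotropic vectors, so you do not need the fact that $Q_E$ takes each nonzero value $q+1$ times.

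Two small slips, neither of which affects the argument:
\begin{itemize}
\item The hyperbolic planes carry the quadratic form $2x_ix_{2d+3-i}$, not $x_ix_{2d+3-i}$. The solution counts are the same because $q$ is odd.
\item The ``standard formula'' you quote has its signs swapped. The minus-type count in $\mathbb{F}_q^{2m}$ is $q^{2m-1}-q^{m}+q^{m-1}$; what you wrote is the plus-type count. The corrected formula is the one that agrees with your $N_d(0)=q^{2d+1}-q^{d+1}+q^d$ at $m=d+1$.
\end{itemize}
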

    \begin{proof}
        The first statement follows from \cite[Theorem 6.32 \& Definition 6.22]{LN97}. For the second statement, it follows from the fact that the number of solutions $(x_1,\cdots,x_{2d+3})$ of $2\sum_{1\le i\le d }x_ix_{2d+3-i}+x_{d+1}^2-\omega x_{d+2}^2=0$ over $\mathbb{F}_q^{2d+3}$ is $q(q-1)\psi(2d+1,q)+q$.
    \end{proof}

\section{Geometric realization of $\mathrm{i}$Schur algebras}
    \label{sec:structure_Sj_U(D)}
In this section, we give a geometric realization of the iSchur algebras. We establish explicit multiplication formulas and introduce their standard bases. Finally, we show that these geometric algebras are specializations of the generic two-parameter iSchur algebras studied in \cite{LL21}.

\subsection{iSchur algebras}\label{flag F_D}
Recall
Lemma~\ref{lem:bijections} and Lemma~\ref{biject D} on the description of $\mathrm{U}_D(q)$-orbits and $\mathrm{O}_{2d+2}^{-}(q)$-orbits on double isotropic flag varieties. The parameterizations of these orbits are independent of the finite fields $\mathbb{F}_q$. 

Write 
$$\mathfrak{c}=\imath, \jmath, \quad \text{and} \quad \square=D,O.$$
Recall $\mathcal{A}=\mathbb{Z}[v,v^{-1}]$. We denote
    $$
        \mathbf{S}_{n,d}^{\mathfrak{c},D} := \mathcal{A}_{\mathrm{U}_D(q)}(\mathscr{F}^\mathfrak{c}_D\times\mathscr{F}^\mathfrak{c}_D) \quad\text{and}\quad \mathbf{S}_{n,d}^{\mathfrak{c},O} := \mathcal{A}_{\mathrm{O}_{2d+2}^{-}(q)}(\mathscr{F}^\mathfrak{c}_O\times\mathscr{F}^\mathfrak{c}_O)=\mathcal{A}_{\mathrm{SO}_{2d+2}^{-}(q)}(\mathscr{F}^\mathfrak{c}_O\times\mathscr{F}^\mathfrak{c}_O),
    $$ 
where the equality $\mathcal{A}_{\mathrm{O}_{2d+2}^{-}(q)}(\mathscr{F}^\mathfrak{c}_O\times\mathscr{F}^\mathfrak{c}_O)=\mathcal{A}_{\mathrm{SO}_{2d+2}^{-}(q)}(\mathscr{F}^\mathfrak{c}_O\times\mathscr{F}^\mathfrak{c}_O)$ comes from Lemma~\ref{biject D}.
    
    For $A\in\Theta_\square^\mathfrak{c}$, let $e_A$ be the characteristic function of the orbit $\mathcal{O}_A$. Lemmas~\ref{lem:bijections} \& \ref{biject D} imply that $\mathbf{S}_{n,d}^{\mathfrak{c},\square}$ is a free $\mathcal{A}$-module with a basis $\{e_A\mid A\in\Theta_\square^\mathfrak{c}\}$.
    
Applying the general framework given in \S\ref{subsec:General} to $\mathrm{G}=\mathrm{GL}_D$ and $\mathrm{SO}_{2d+2}$, we know that  $\mathbf{S}_{n,d}^{\mathfrak{c},\square}$ is an $\mathcal{A}$-algebra under the convolution product. We call it an iSchur algebra. 

By definition, it is obvious that $\mathbf{S}_{n,d}^{\imath,\square}$ is a subalgebra of $\mathbf{S}_{n,d}^{\jmath,\square}$.

\subsection{Multiplication formulas for $\mathbf{S}_{n,d}^{\mathfrak{c},D}$}\label{subsec:mfD}
Since $\mathbf{S}_{n,d}^{\imath,D}$ is a subalgebra of $\mathbf{S}_{n,d}^{\jmath,D}$. It is enough to focus on the case of $\mathfrak{c}=\jmath$.

For $A=(a_{ij})\in\Theta_D^\jmath$, denote 
$$\mathrm{ro}(A)=(\sum_{j=1}^N a_{1j},\sum_{j=1}^N a_{2j},\ldots,\sum_{j=1}^N a_{Nj})\quad\mbox{and}\quad \mathrm{co}(A)=(\sum_{i=1}^N a_{i1},\sum_{i=1}^N a_{i2},\ldots,\sum_{i=1}^N a_{iN}).$$ It is obvious that $e_B * e_A=0$ if $\mathrm{ro}(A) \neq \mathrm{co}(B)$ for $A,B\in\Theta_D^\jmath$. 
In the sequel, $e_A$ is understood to be $0$ if $A\not\in\Theta_D^\jmath$, to simplify the statement of multiplication formulas.

As Remark~\ref{rem:S=Special} indicates, the following lemma may be derived directly by specializing the multiplication formulas given in \cite{LL21}. However, in order to gain more familiarity with the geometric constructions, we offer a proof via point-counting over finite fields.
 
 Recall matrices $E_{ij}^\theta$ ($i,j=1,2,\ldots,N$) in \eqref{def:Eijtheta}.
\begin{prop}\label{lem:characteristic_function_multiplication}
    Let $A=(a_{ij}),B, C\in\Theta_D^\jmath$ be such that $\mathrm{ro}(A) = \mathrm{co}(B)=\mathrm{co}(C)$ and $B-E_{h,h+1}^\theta, C-E_{h+1,h}^\theta$ are diagonal for some $1\leq h\leq n$. Here are some multiplication formulas:
    \begin{align*}
    e_B * e_A =& \sum_{1\leq k\leq N}
                \q^{2\sum_{j>k}a_{hj}}\frac{\q^{2(1+a_{hk})}-1}{\q^2-1}e_{A+E_{hk}^\theta-E_{h+1,k}^\theta},\\
    \mathop{e_C*e_A}_{(h\neq n)} = &\sum_{1\leq k\leq N}
                \q^{2\sum_{j<k}a_{h+1,j}}\frac{\q^{2(1+a_{h+1,k})}-1}{\q^2-1}e_{A-E_{hk}^\theta+E_{h+1,k}^\theta},\\
     \mathop{e_C * e_A}_{(h=n)} =& \sum_{1\leq k< n+1}\q^{2\sum_{j<k}a_{n+1,j}}\frac{\q^{2(1+a_{n+1,k})}-1}{\q^2-1}e_{A-E_{nk}^\theta+E_{n+1,k}^\theta}\\
                &+ \q^{2\sum_{j<n+1}a_{n+1,j}}\phi(1+a_{n+1,n+1},\q)e_{A-E_{n,n+1}^\theta+E_{n+1,n+1}^\theta}\\
                &+ \sum_{n+1< k\leq N}\q^{1+2\sum_{j<k}a_{n+1,j}}\frac{\q^{2(1+a_{n+1,k})}-1}{\q^2-1}e_{A-E_{nk}^\theta+E_{n+1,k}^\theta}.           
    \end{align*}
    \end{prop}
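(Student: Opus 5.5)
We compute $e_B*e_A$ and $e_C*e_A$ by point counting over $\mathbb{F}_q$, evaluating at a fixed pair $(\mathfrak{f},\mathfrak{f}')\in\mathcal{O}_{A'}$. The convolution $e_B*e_A(\mathfrak{f},\mathfrak{f}')$ is the number of $\mathfrak{f}''\in\mathscr{F}^\jmath_D$ with $(\mathfrak{f},\mathfrak{f}'')\in\mathcal{O}_B$ and $(\mathfrak{f}'',\mathfrak{f}')\in\mathcal{O}_A$. Since $B-E^\theta_{h,h+1}$ is diagonal, $(\mathfrak{f},\mathfrak{f}'')\in\mathcal{O}_B$ means $\mathfrak{f}''_i=\mathfrak{f}_i$ for $i\neq h,N-h$. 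It also means $\mathfrak{f}''_h\supset\mathfrak{f}_h$ with $\dim\mathfrak{f}''_h/\mathfrak{f}_h=1$, and $\mathfrak{f}''_h\subset\mathfrak{f}_{h+1}$ is isotropic when $h=n$. The condition $\mathfrak{f}''_{N-h}=(\mathfrak{f}''_h)^\perp$ is then forced, since $\mathfrak{f}''$ must stay in $\mathscr{F}^\jmath_D$. Dually, for $C$ we have $\mathfrak{f}''_h\subset\mathfrak{f}_h$ of codimension one, with $\mathfrak{f}''_{N-h}$ again determined by perpendicularity.

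Thus in each case we count lines $L=\mathfrak{f}''_h/\mathfrak{f}_h\subset\mathfrak{f}_{h+1}/\mathfrak{f}_h$ (resp. hyperplanes in $\mathfrak{f}_h/\mathfrak{f}_{h-1}$), and we sort them by the relative position of $\mathfrak{f}''$ and $\mathfrak{f}'$. As in \cite[Lemma 3.2]{BLM90}, $L$ produces the matrix $A+E^\theta_{hk}-E^\theta_{h+1,k}$ (resp. $A-E^\theta_{hk}+E^\theta_{h+1,k}$) exactly when $L\subset(\mathfrak{f}_h+\mathfrak{f}_{h+1}\cap\mathfrak{f}'_k)/\mathfrak{f}_h$ but $L\not\subset(\mathfrak{f}_h+\mathfrak{f}_{h+1}\cap\mathfrak{f}'_{k-1})/\mathfrak{f}_h$. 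The change by $E^\theta$ rather than $E$ is automatic from central symmetry. For $h<n$ there is no isotropy constraint on $L$. The flag $\mathfrak{f}_{h+1}/\mathfrak{f}_h$ is filtered by $\mathfrak{f}'$ with graded pieces of dimensions $a_{h+1,j}$ (resp. $a_{hj}$ for the $e_B$ case). So the number of such lines over $\mathbb{F}_{q^2}$ is
\[
q^{2\cdot(\text{dimension of the lower part})}\frac{q^{2(1+a)}-1}{q^2-1}.
\]
Here the $+1$ appears because we count relative to the target matrix entries, whose $(h+1,k)$-entry, say, drops by one. This yields the first two formulas, with $\mathbf{q}=q$ and the base field $\mathbb{F}_{q^2}$ producing the powers of $\mathbf{q}^2$. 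For $e_B$ the exponent $2\sum_{j>k}a_{hj}$ comes from counting hyperplane-type complements, exactly as in BLM.

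The main obstacle is $e_C*e_A$ with $h=n$. There the new space $\mathfrak{f}''_n$ must be isotropic, so the chosen line $L$ lies in $\mathfrak{f}_{n+1}/\mathfrak{f}_n=\mathfrak{f}_n^\perp/\mathfrak{f}_n$, which carries a nondegenerate Hermitian form, and $L$ must be isotropic. We split according to $k$. For $k<n+1$, the relevant subquotient $(\mathfrak{f}_n+\mathfrak{f}_{n+1}\cap\mathfrak{f}'_k)/\mathfrak{f}_n$ is totally isotropic, because $\mathfrak{f}'_k\subset\mathfrak{f}'_n$ is isotropic, so the plain count applies. For $k=n+1$, we count isotropic lines in a layer of dimension $1+a_{n+1,n+1}$ that is nondegenerate modulo the isotropic part below it. Lemma~\ref{lem:count1}(1) gives $\phi(a_{n+1,n+1}+1-1,q)$ up to the normalization in the statement, times the factor $q^{2\sum_{j<n+1}a_{n+1,j}}$ contributed by the isotropic radical directions. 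We have to check that this matches $\phi(1+a_{n+1,n+1},\mathbf{q})$ under the indexing of \eqref{nom:phi}. For $k>n+1$, the layer is a degenerate space whose radical is the lower isotropic part. We count isotropic lines not contained in the previous layer, using Lemma~\ref{lem:count1}(2) and subtracting the count for the smaller space. The difference of the two $\phi$-type expressions should collapse to $q^{1+2\sum_{j<k}a_{n+1,j}}\frac{q^{2(1+a_{n+1,k})}-1}{q^2-1}$; the odd exponent $1$ is the signature of Hermitian isotropy. This telescoping identity is where the care goes, and I would verify it by induction on $k$ with explicit $\phi$ algebra.

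Finally, all counts are polynomials in $q$, so the identities hold in $\mathbf{S}^{\jmath,D}_{n,d}$ with $\mathbf{q}=v^2$ replacing $q$.
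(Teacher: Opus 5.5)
Your overall strategy is the right one: point counting as in BLM, and for $e_C*e_A$ at $h=n$, counting isotropic lines in $\mathfrak f_n^\perp/\mathfrak f_n$. As written, however, the argument fails at three concrete steps.

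\textbf{The inclusions for $B$ and $C$ are reversed.} Row $h$ of $B$ sums to $b_{hh}+1$ while column $h$ sums to $b_{hh}$. So $(\mathfrak f,\mathfrak f'')\in\mathcal O_B$ means $\mathfrak f_{h-1}\subset\mathfrak f''_h\overset{1}{\subset}\mathfrak f_h$. For $C$ one has $\mathfrak f_h\overset{1}{\subset}\mathfrak f''_h\subset\mathfrak f_{h+1}$, which is what you actually use in your $h=n$ analysis, contradicting your own setup. This is not cosmetic, because the first formula is claimed for all $h\le n$. With the correct direction, $\mathfrak f''_n$ is a hyperplane of the isotropic space $\mathfrak f_n$, hence automatically isotropic, and the BLM count applies verbatim at $h=n$. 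Under your setup, $e_B*e_A$ at $h=n$ would carry an isotropy constraint, and you only treat $h<n$.

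\textbf{The layer dimension at $k=n+1$ is wrong.} Write $U_j=(\mathfrak f_n+\mathfrak f_n^\perp\cap\mathfrak f'_j)/\mathfrak f_n$. The layer $U_{n+1}/U_n$ has dimension $a_{n+1,n+1}+2$, not $1+a_{n+1,n+1}$. The reason is that the target matrix is $A-E^\theta_{n,n+1}+E^\theta_{n+1,n+1}$, and $E^\theta_{n+1,n+1}$ has central entry $2$. Lemma~\ref{lem:count1}(1) then gives exactly $\phi(1+a_{n+1,n+1},q)$. No ``normalization'' turns your $\phi(a_{n+1,n+1},q)$ into the stated coefficient. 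For instance, with $D=2$, $n=1$, $A=E_{12}+E_{32}$, yours gives $\phi(0,q)=0$ instead of the actual $q+1$ isotropic lines.

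\textbf{The case $k>n+1$ is only asserted, and Lemma~\ref{lem:count1}(2) does not cover it.} You say a difference ``should collapse'' but do not show it. Inside $\mathfrak f_n^\perp/\mathfrak f_n$ one has $U_j^\perp=U_{N-j}$, so the radical of $U_k$ is $U_{N-k}$, which has arbitrary dimension, not one. What you need is the general count, which follows immediately from Lemma~\ref{lem:count1}(1). A Hermitian space with radical of dimension $r$ and nondegenerate quotient of dimension $m$ has $q^{2r}z(m)$ isotropic vectors (zero included), where $z(m)=q^{2m-1}+(-1)^m(q^m-q^{m-1})$. Passing from $U_{k-1}$ to $U_k$ lowers the radical dimension by $b=\dim U_k/U_{k-1}$ and raises $m$ by $2b$, by central symmetry of row $n+1$. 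The sign terms therefore cancel. With $r_j=\dim U_j$, the number of isotropic lines in $U_k$ not contained in $U_{k-1}$ is
\[
\frac{q^{2r_{k-1}-1}\,(q^{2b}-1)}{q^2-1},\qquad r_{k-1}=1+\sum_{j<k}a_{n+1,j},\qquad b=1+a_{n+1,k}.
\]
The extra $1$ in $r_{k-1}$ comes from the $(n+1,N+1-k)$ entry of $E^\theta_{n+1,k}$. This is exactly the stated coefficient, obtained in one step with no induction on $k$.

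With these repairs your route is a genuine simplification of the paper's. The paper reaches the same count through the map $\Psi_k$ to partial flags $\mathfrak e$, the decomposition over $I_k$, and the identity $\phi(a+1,q)-q^2\phi(a-1,q)-1=q^{2a+1}$. Your difference formula avoids all of that and also reproduces the cases $k\le n+1$ uniformly.
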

    \begin{proof}

    It suffices to derive the formulas for $\q=q$ by the definition of the convolution product. 
    
    The verification of the first two formulas is similar to that of \cite[Lemma~3.2]{BLM90}, since each subspace of an isotropic space is naturally isotropic. In the following, we focus on the third formula. Let $A'=(a'_{ij})\in \Theta_D^\jmath$ and take $(\mathfrak{f},\mathfrak{f}')\in \mathcal{O}_{A'}$ so that
        $$
            a'_{ij} = \dim \frac{\mathfrak{f}_{i-1}+\mathfrak{f}_i\cap \mathfrak{f}'_j}{\mathfrak{f}_{i-1}+\mathfrak{f}_i\cap \mathfrak{f}'_{j-1}}.
        $$
        The convolution
        $$
            e_C * e_A(\mathfrak{f},\mathfrak{f}')=\sum_{\mathfrak{f}''\in \mathscr{F}_D}e_C(\mathfrak{f},\mathfrak{f}'')e_A(\mathfrak{f}'',\mathfrak{f}')
        $$
        requires that $\mathfrak{f}''$ is obtained from $\mathfrak{f}$ by replacing $\mathfrak{f}_n$ with an isotropic subspace $\mathfrak{s}\in\mathbb{F}_{q^2}^{D}$ satisfying $\mathfrak{f}_n \overset{1}{\subset} \mathfrak{s}\subset\mathfrak{f}_{n+1}=\mathfrak{f}_n^\perp$. Since $\mathfrak{f}$ and $\mathfrak{f}''$ differ only at the $n$ and $(n+1)$-th subspace, it must be $a_{ij}=a'_{ij}$ unless $i=n,n+1,n+2$.
        
For $1\leq k\leq N$, denote
            $$G_k = \{ \mathfrak{s}\  \mbox{is isotropic} ~|~ \mathfrak{f}_n\overset{1}{\subset}\mathfrak{s}\subset \mathfrak{f}_n^\perp,  \mathfrak{s} \cap \mathfrak{f}'_j = \mathfrak{f}_n \cap \mathfrak{f}'_j ~ \ (\forall j < k), \mathfrak{s}\cap \mathfrak{f}'_j \neq \mathfrak{f}_n \cap \mathfrak{f}'_j ~\ (\forall j \geq k)\}.$$
An isotropic subspace $\mathfrak{s}$ with $\mathfrak{f}_n\overset{1}{\subset}\mathfrak{s}\subset \mathfrak{f}_n^\perp$ lies in $G_k$ if and only if        
\begin{equation}          
        \mathfrak{s} + \mathfrak{f}'_j \neq \mathfrak{f}_n + \mathfrak{f}'_j ~\ (\forall j < k), \quad \mathfrak{s} + \mathfrak{f}'_j = \mathfrak{f}_n + \mathfrak{f}'_j ~\ (\forall j \geq k). \label{condition-n-2}
        \end{equation}
Taking orthogonal complements on both sides of (\ref{condition-n-2}), we have
        \begin{equation}\label{condition-n-3}
            \mathfrak{s}^\perp \cap \mathfrak{f}'_j=\mathfrak{f}_n^\perp \cap \mathfrak{f}'_j  ~(j \leq N-k) \quad \mbox{and} \quad  \mathfrak{s}^\perp \cap \mathfrak{f}'_j\neq \mathfrak{f}_n^\perp \cap \mathfrak{f}'_j ~(j > N-k).
        \end{equation}

For $\mathfrak{f}''$ contributing to the convolution with
$\mathfrak{f}''_n=\mathfrak{s}\in G_k$, we obtain
\[
a'_{nj}=a_{nj}-\delta_{j,k},\qquad
a'_{n+1,j}
=a_{n+1,j}+\delta_{j,k}+\delta_{j,N+1-k}
\quad(1\leq j\leq N).
\]
Moreover, $a'_{ij}=a_{ij}$ for $i<n$.
The remaining entries are determined by central symmetry.
Hence
\[
A'=A-E_{nk}^{\theta}+E_{n+1,k}^{\theta}.
\] Thus
        $$
            e_C * e_A =\sum_{1\leq k\leq N,a_{nk}\geq 1}\# G_k\cdot e_{A-E_{nk}^\theta+E_{n+1,k}^\theta}.
        $$
        The proof reduces to computing $\#G_k$.

        For $k<n+1$, the situation is similar to \cite[Lemma~3.2]{BLM90} again. 
        
        For $k=n+1$, consider $\Psi_{n+1}:\mathfrak{s}\mapsto \mathfrak{s}+\mathfrak{s}^\perp\cap \mathfrak{f}'_n$, mapping $G_{n+1}$ onto 
\begin{equation}\label{equ:G'_n+1_size}
            G'_{n+1} = \{\mathfrak{t}~\mbox{is~isotropic~}\mid  \mathfrak{f}_n+\mathfrak{f}_n^\perp\cap \mathfrak{f}_n'\overset{1}{\subset} \mathfrak{t}\subset \mathfrak{f}_n+\mathfrak{f}_n^\perp\cap \mathfrak{f}'_{n+1}\}.
        \end{equation}
        Since
        $$
            \dim\frac{\mathfrak{f}_n+\mathfrak{f}_n^\perp\cap \mathfrak{f}'_{n+1}}{\mathfrak{f}_n+\mathfrak{f}_n^\perp\cap \mathfrak{f}_n'} = \dim\frac{\mathfrak{s}+\mathfrak{s}^\perp\cap \mathfrak{f}'_{n+1}}{\mathfrak{s}+\mathfrak{s}^\perp\cap \mathfrak{f}_n'} + 2 = a_{n+1,n+1} + 2,
        $$
        and $\mathfrak{t}$ corresponds to isotropic lines in this quotient, it follows from Lemma~\ref{lem:count1} that
        \begin{align*}
            \# G'_{n+1} =\phi(a_{n+1,n+1}+1,q)
        \end{align*}
        For each $\mathfrak{t}\in G'_{n+1}$, the fiber size $\#\Psi^{-1}_{n+1}(\mathfrak{t})=q^{2\sum_{j<n+1}a_{n+1,j}}$ is determined by the same arguments used in the proof of \cite[Lemma~3.2(b)]{BKLW18}. Therefore,
        \begin{align*}
            \# G_{n+1} =\#\Psi_{n+1}^{-1}(\mathfrak{t})\cdot\# G'_{n+1}
            = q^{2\sum_{j<n+1}a_{n+1,j}}\phi(a_{n+1,n+1}+1,q).
        \end{align*}

        Finally, consider $k> n+1$. Define the map $\Psi_k:\mathfrak{s}\mapsto (\mathfrak{s}+\mathfrak{s}^\perp\cap \mathfrak{f}'_i)_{1\leq i\leq n}$ on $G_k$, whose image set (denoted by $G'_k$) consists of the isotropic flags $\mathfrak{e}=(\mathfrak{e}_i)_{1\leq i\leq n}$ in $\mathbb{F}_{q^2}^{D}$ satisfying
        \begin{itemize}
            \item $\mathfrak{e}_i$ is isotropic for $1\leq i\leq n$, with $\mathfrak{f}_n\subseteq \mathfrak{e}_1\subseteq \mathfrak{e}_2\subseteq\cdots\subseteq \mathfrak{e}_n$,
            \item $\mathfrak{f}_n+\mathfrak{f}_n^\perp\cap \mathfrak{f}'_i\overset{1}{\subset} \mathfrak{e}_i$ for $1\leq i\leq N-k$,
            \item $\dim \mathfrak{e}_i=\dim (\mathfrak{f}_n+\mathfrak{f}_n^\perp\cap \mathfrak{f}'_i)$ and $(\mathfrak{e}_i\cap(\mathfrak{f}_n+\mathfrak{f}_n^\perp\cap \mathfrak{f}'_i)) \overset{1}{\subset} \mathfrak{e}_i$ for $N-k+1\leq i\leq n$,
            \item $\mathfrak{e}_1\not\subseteq \mathfrak{f}_n+\mathfrak{f}_n^\perp\cap \mathfrak{f}'_{k-1}$.
        \end{itemize}
        For each $\mathfrak{e}\in G'_k$, the fiber $\Psi_k^{-1}(\mathfrak{e}) \simeq \mathbb{F}_{q^2}^{a_{n+1,1}}$ (cf. \cite[(3.6)]{BKLW18}), and hence its size         \begin{equation}\label{equ:fiber-Psi}
            \# \Psi_k^{-1}(\mathfrak{e}) = q^{2a_{n+1,1}}. 
        \end{equation}

Let $I_k$ be the set of all flags $U=(U_{N-k+1}\subseteq\cdots\subseteq U_n)$ satisfying
        \begin{itemize}
            \item $\mathfrak{f}_n+\mathfrak{f}_n^\perp\cap \mathfrak{f}'_{N-k}\subset U_{N-k+1}\overset{1}{\subset} \mathfrak{f}_n+\mathfrak{f}_n^\perp\cap \mathfrak{f}'_{N-k+1}$ and
            \item $\mathfrak{f}_n+\mathfrak{f}_n^\perp\cap \mathfrak{f}'_{i-1}\not\subset U_i\overset{1}{\subset} \mathfrak{f}_n+\mathfrak{f}_n^\perp\cap \mathfrak{f}'_i$ for $N-k+2\leq i\leq n$.
            \end{itemize}
Decompose $G'_k = \bigsqcup_{U\in I_k}G'_{k,U}$ with $$G'_{k,U}=\{\mathfrak{e}\in G'_k\mid \mathfrak{e}_i\cap(\mathfrak{f}_n+\mathfrak{f}_n^\perp\cap \mathfrak{f}'_i)=U_i,~\forall N-k+1\leq i\leq n\}.$$
The subspace $\mathfrak{e}_n$ of a flag $\mathfrak{e}\in G'_{k,U}$ must satisfy
        $$
            U_n\subset \mathfrak{e}_n\subset U_n^\perp,\quad \mathfrak{e}_n\not\subset \mathfrak{f}_n+\mathfrak{f}_n^\perp\cap \mathfrak{f}'_{n+1}.
        $$
        Compute
        \begin{gather*}
            \dim U_n^\perp/U_n = \dim\frac{\mathfrak{s}+\mathfrak{f}_n^\perp\cap \mathfrak{f}'_{n+1}}{\mathfrak{f}_n+\mathfrak{s}^\perp\cap \mathfrak{f}'_n} = \dim \frac{\mathfrak{s}+\mathfrak{s}^\perp\cap \mathfrak{f}'_{n+1}}{\mathfrak{s}+\mathfrak{s}^\perp\cap \mathfrak{f}'_n}+2 = a_{n+1,n+1}+2,\\
            \dim(\mathfrak{f}_n+\mathfrak{f}_n^\perp\cap \mathfrak{f}'_{n+1})/U_n = \dim \frac{\mathfrak{s}+\mathfrak{s}^\perp\cap \mathfrak{f}'_{n+1}}{\mathfrak{s}+\mathfrak{s}^\perp\cap \mathfrak{f}'_n} +1 = a_{n+1,n+1}+1.
        \end{gather*}
        Since $(\mathfrak{f}_n+\mathfrak{f}_n^\perp\cap \mathfrak{f}'_{n+1})^\perp=\mathfrak{f}_n+\mathfrak{f}_n^\perp\cap \mathfrak{f}'_n\overset{1}{\supset} U_n$, the number of valid $\mathfrak{e}_n$ is equal to the number of one-dimensional isotropic subspaces in $U_n^\perp/U_n$ excluding those in $(\mathfrak{f}_n+\mathfrak{f}_n^\perp\cap \mathfrak{f}'_{n+1})/U_n$. Note that the co-rank of the Hermitian form in $(\mathfrak{f}_n+\mathfrak{f}_n^\perp\cap \mathfrak{f}'_{n+1})/U_n$ is $1$. Thanks to Lemma~\ref{lem:count1}, the number of choices of $\mathfrak{e}_n$ is
\begin{equation}\label{equ:choices_of_W_n}
\phi(a_{n+1,n+1}+1,q)-(q^2\phi(a_{n+1,n+1}-1,q)+1)=q^{2a_{n+1,n+1}+1}.
\end{equation}

For a fixed $\mathfrak{e}_n$, the number of possible $\mathfrak{e}_{n-1}$ is $q^{2\dim U_n/U_{n-1}}=q^{2a_{n+1,n}}$. Inductively, we obtain
        \begin{equation}\label{G'pU}
            \# G'_{k,U} = q \prod_{1\leq i\leq n}q^{2a_{n+1,i+1}}.
        \end{equation}
 Finally, follow \cite[(3.8)]{BKLW18} to obtain the remaining combinatorics:
        \begin{equation}\label{Ip}
            \# I_k=\frac{q^{2(1+a_{n+1,k})}-1}{q^2-1}\prod_{n+2\leq i\leq k-1}q^{2a_{n+1,i}}.
        \end{equation}

        Combining \eqref{equ:fiber-Psi}, \eqref{G'pU}, and \eqref{Ip}, we have
        $$
            \# G_k = q^{2a_{n+1,1}}\# G'_k = q^{2a_{n+1,1}}\# I_k \# G'_{k,U} =\frac{q^{2(1+a_{n+1,k})}-1}{q^2-1}\cdot q\cdot\prod_{1\leq i<k}q^{2a_{n+1,i}}.   
        $$
     \end{proof}

    Consider a pair of matrices $A$ and $B$ such that $A-E_{h,h+1}^\theta$ and $B-RE_{h,h+1}^\theta$ ($R\in\mathbb{Z}^{>0}$) are diagonal and $\mathrm{co}(A) = \mathrm{ro}(B)$. We have
    $$e_A * e_{B} = \frac{\q^{2(1+R)}-1}{\q^2 - 1}e_{C}, \quad \text{where $C-(R+1)E_{h,h+1}^\theta$ is diagonal}.$$
    This extends Proposition~\ref{lem:characteristic_function_multiplication} to the following corollary.

    To simplify the notation, for $t,a\in\mathbb{N}$ with $a\geq t$, we denote
\begin{equation}\label{equ:[t]_and_[N,t]}
[t]_{\q} = \frac{\q^{t} - 1}{\q - 1}, \quad [t]_{\q}!=[t]_\q[t-1]_\q\cdots[1]_\q\quad\mbox{and}\quad
     \left[\begin{array}{c}a\\t\end{array}\right]_\q = \frac{[a]_\q!}{[t]_\q![a-t]_\q!}.
\end{equation}

\begin{cor}\label{cor:characteristic_function_multiplication_R}
    Take $1\leq h\leq n$ and $R\in\mathbb{N}$. If $A=(a_{ij}),B, C\in\Theta_D^\jmath$ satisfy that $\mathrm{ro}(A) = \mathrm{co}(B)=\mathrm{co}(C)$ and $B-RE_{h,h+1}^\theta, C-RE_{h+1,h}^\theta$ are diagonal, then
\begin{align*}
e_B * e_A = &\sum_{\bt}\q^{2\sum_{j>k}a_{hj}t_k}\prod_{k=1}^N \left[\begin{array}{c}a_{hk}+t_k\\t_k\end{array}\right]_{\q^2} e_{A+\sum_{k=1}^Nt_k(E_{hk}^\theta-E_{h+1,k}^\theta)},\\
\mathop{e_C*e_A}_{(h\neq n)} = &\sum_{\bt}\q^{2\sum_{j<k}a_{h+1,j}t_k}\prod_{k=1}^N 
\left[\begin{array}{c}a_{h+1,k}+t_k\\t_k\end{array}\right]_{\q^2}
e_{A-\sum_{k=1}^Nt_k(E_{hk}^\theta-E_{h+1,k}^\theta)},\\
\mathop{e_C*e_A}_{(h=n)} = & \sum_{\bt} \q^{ 2 \sum_{j<k} a_{n+1, j} t_k+2\sum_{N+1-j<k<j} t_kt_j + \sum_{k>n+1} t_k^2} \prod_{k=1}^n\frac{[a_{n+1,k}+t_k+t_{N+1-k}]_{\q^2}!}{[a_{n+1,k}]_{\q^2}![t_k]_{\q^2}![t_{N+1-k}]_{\q^2}!}\\
 &\qquad\qquad \cdot \frac{\prod_{i=1}^{t_{n+1}} \phi(2i-1+a_{n+1,n+1},\q)}{[t_{n+1}]_{\q^2}!} e_{A- \sum_{k=1}^N t_k( E^{\theta}_{nk} - E^{\theta}_{n+1,k})},
\end{align*}
where $\bt=(t_1,\cdots,t_N)$ runs over $\mathbb{N}^N$ with $t_1+\cdots+t_N= R$. 
    \end{cor}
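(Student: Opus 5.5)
We will prove the three formulas by induction on $R$, using Proposition~\ref{lem:characteristic_function_multiplication} (the case $R=1$) together with the factorization stated just before the corollary. For $R=0$ all three formulas are trivial, since $B$ and $C$ are diagonal and $e_B,e_C$ act as the identity idempotent. Write $B_R$ for the matrix with $B_R-RE^\theta_{h,h+1}$ diagonal and the prescribed column sums, and similarly $C_R$. The displayed identity before the corollary gives
\[
e_{B_1}*e_{B_R}=[R+1]_{\q^2}\,e_{B_{R+1}},
\]
and the analogous identity holds for $C$; this is proved either by the same point count or by transposition, i.e.\ by swapping the roles of $\mathfrak f,\mathfrak f'$. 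Hence
\[
[R+1]_{\q^2}\,e_{B_{R+1}}*e_A=e_{B_1}*(e_{B_R}*e_A).
\]
We then expand the right-hand side using the induction hypothesis and the $R=1$ formula, and divide by $[R+1]_{\q^2}$ in $\mathcal A$. This division is legitimate because the structure constants are the evaluations of polynomials at $\q=q$ for infinitely many prime powers $q$, and $\mathcal A$ is a domain.

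For the first two formulas ($e_B*e_A$, and $e_C*e_A$ with $h\ne n$) the computation is exactly the type A computation of \cite[Lemma~3.4]{BLM90} with $\q$ replaced by $\q^2$. The rows $h,h+1$ and, by central symmetry, the rows $N+1-h,N-h$ are disjoint from the middle row, and no isotropy constraint intervenes. So we can quote the standard $q$-binomial recursion $\left[\begin{smallmatrix}a+t\\t\end{smallmatrix}\right]$ together with the power $\q^{2\sum_{j>k}a_{hj}t_k}$ essentially verbatim. Alternatively, we can count directly: the subspaces $\mathfrak f_h\overset{R}{\subset}\mathfrak s\subset\mathfrak f_{h+1}$ in a given relative position with $\mathfrak f'$ are counted by a product of Gaussian binomials, as in the proof of \cite[Lemma~3.2]{BLM90}.

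The main obstacle is the case $h=n$. There the one-step formula has three kinds of terms, namely $k<n+1$, $k=n+1$ (with factor $\phi$) and $k>n+1$ (with an extra $\q$). Moreover, the columns $k$ and $N+1-k$ interact through $E^\theta_{n+1,k}$, because $E^\theta_{nk}$ changes the entries $a_{n+1,k}$ and $a_{n+1,N+1-k}$ simultaneously. Rather than an induction, we would do a direct point count, which is the cleaner route. We count isotropic $\mathfrak s$ with $\mathfrak f_n\overset{R}{\subset}\mathfrak s\subset\mathfrak f_n^\perp$ and prescribed jumps $t_k$ relative to $\mathfrak f'$. We fibre this set over the image $\mathfrak s+\mathfrak s^\perp\cap\mathfrak f'_\bullet$, as in the proof of Proposition~\ref{lem:characteristic_function_multiplication} and \cite[Lemma~3.3]{BKLW18}; this reduces the count to a flag of isotropic subspaces inside the nondegenerate Hermitian spaces $(\mathfrak f_n+\mathfrak f_n^\perp\cap\mathfrak f'_{k})/(\ldots)$.

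The middle block contributes the number of $t_{n+1}$-dimensional totally isotropic subspaces of a nondegenerate Hermitian space of dimension $a_{n+1,n+1}+2t_{n+1}$. By iterating Lemma~\ref{lem:count1} and dividing by the number of orderings of a basis, this equals $\prod_{i=1}^{t_{n+1}}\phi(2i-1+a_{n+1,n+1},\q)/[t_{n+1}]_{\q^2}!$. A pair of off-middle blocks $k$ and $N+1-k$ contributes the trinomial $[a_{n+1,k}+t_k+t_{N+1-k}]!/([a_{n+1,k}]![t_k]![t_{N+1-k}]!)$ in $\q^2$. The affine fibres contribute the monomials $\q^{2\sum_{j<k}a_{n+1,j}t_k}$ and the cross term $\q^{2\sum_{N+1-j<k<j}t_kt_j}$. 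Finally, for each $k>n+1$, the isotropic-minus-degenerate count \eqref{equ:choices_of_W_n} iterated $t_k$ times gives $\q^{t_k^2}$, in place of the single factor $\q$ appearing when $R=1$. Checking that these exponents assemble exactly to the stated ones is the delicate bookkeeping. As a sanity check, setting $R=1$ must recover Proposition~\ref{lem:characteristic_function_multiplication}.
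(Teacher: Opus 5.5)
For the first two formulas your argument is the paper's. There, all three identities are obtained by induction on $R$ from Proposition~\ref{lem:characteristic_function_multiplication} and the identity $e_{B_1}*e_{B_R}=[R+1]_{\q^2}e_{B_{R+1}}$, with the recursion referred to \cite[Proposition~3.3]{BKLW18}. One small correction: for $h=n$ in the first formula, rows $h,h+1$ do contain the middle row. The formula is still of type A because $\mathfrak f''_n\subset\mathfrak f_n$ is automatically isotropic, and because the coefficients involve only row $n$ of $A$, on which $E^\theta_{nk}$ acts only at position $(n,k)$.

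For $h=n$ you replace the induction by a direct count. This is a genuinely different route, and it does close; the clean version runs as follows.
\begin{itemize}
\item Set $S=\mathfrak s/\mathfrak f_n$. It is an $R$-dimensional totally isotropic subspace of $\mathcal V=\mathfrak f_n^\perp/\mathfrak f_n$ with jumps $t_k$ relative to the self-dual filtration $\mathcal V_j=(\mathfrak f_n+\mathfrak f_n^\perp\cap\mathfrak f'_j)/\mathfrak f_n$. The graded dimensions of this filtration are $m_j=a_{n+1,j}+t_j+t_{N+1-j}$.
\item First count $S\cap\mathcal V_n$; this is a type A count.
\item Next, the image of $S\cap\mathcal V_{n+1}$ in $\mathcal V_{n+1}/\mathcal V_n$ is a $t_{n+1}$-dimensional totally isotropic subspace of a nondegenerate space of dimension $a_{n+1,n+1}+2t_{n+1}$. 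Its lift into the radical is free.
\item Finally, the rest of $S$ is a graph over its projection $Y$ to $\mathcal V/\mathcal V_{n+1}$; counting $Y$ is type A with jumps $t_k$, $k>n+1$. Isotropy confines the $Y\times Y$ block of the graph map to a coset of the skew-Hermitian forms on $Y$.
\end{itemize}
The trinomials arise as $\left[\begin{smallmatrix}m_l\\ t_l\end{smallmatrix}\right]_{\q^2}\left[\begin{smallmatrix}m_l-t_l\\ t_{N+1-l}\end{smallmatrix}\right]_{\q^2}$.

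The powers $\q^{t_k^2}$ do not come from iterating \eqref{equ:choices_of_W_n} $t_k$ times; that would give ordered counts that must still be divided by $[t_k]_{\q^2}!$. They come from the skew-Hermitian count $\q^{(\sum_{k>n+1}t_k)^2}$, which also produces the cross terms $2t_kt_j$ with $n+1<k<j$. The cross terms with $k\le n+1<j$ appear only after rewriting $m_j$ in terms of the entries $a_{n+1,j}$ of $A$. With this bookkeeping the exponent comes out exactly as stated. However, that verification is the whole content of the $h=n$ case, and your proposal stops short of it.

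The trade-off is as follows. The paper's induction uses nothing beyond the $R=1$ formulas and a formal $q$-identity. For $h=n$, though, the recursion is opaque: each step changes $a_{n+1,k}$ and $a_{n+1,N+1-k}$ at once, and mixes the $\phi$-factor with the extra $\q$ for $k>n+1$. Your direct count explains each factor geometrically: mirror blocks give trinomials, the middle block gives an isotropic Grassmannian count, and transversality gives skew-Hermitian forms. It is also independent of the $R=1$ case. The cost is a careful analysis of isotropic subspaces relative to a self-dual partial flag.
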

\begin{proof}
        Using the formulas in Proposition~\ref{lem:characteristic_function_multiplication}, the above formulas can be obtained by induction on $R$. We omit the details, which is almost the same as the proof of \cite[Proposition~3.3]{BKLW18}. 
\end{proof}

%
    \subsection{Normalization for $\mathbf{S}_{n,d}^{\mathfrak{c},D}$}\label{subsec:normalization-D}
For $A=(a_{ij})\in\Theta_D^\mathfrak{c}$ and $(\mathfrak{f},\mathfrak{f}')\in\mathcal{O}_A$, let $G_{\mathfrak{f},\mathfrak{f}'}$ be the stabilizer of $(\mathfrak{f},\mathfrak{f}')$ in $\mathbf{U}_D$.

\begin{lem}\label{lem:d(A)}
Let $A=(a_{ij})\in\Theta_D^\mathfrak{c}$ and $(\mathfrak{f},\mathfrak{f}')\in\mathcal{O}_A$.
           \begin{itemize}
            \item [(1)] The dimension of the stabilizer $G_{\mathfrak{f},\mathfrak{f}'}$ (as a variety over $\mathbb{F}_q$) is
            \begin{equation}\label{eq:dimoff}
\dim G_{\mathfrak{f},\mathfrak{f}'}=\sum_{i\ge k, j\ge l}a_{ij}a_{kl}.
            \end{equation}
           
            \item[(2)] The dimension of $\boldsymbol{\mathcal{O}}_A$ (as a variety over $\mathbb{F}_q$), denoted by $d(A)$, is 
            $$d(A)=\sum_{i< k\ \text{or}\ j<l}a_{ij}a_{kl}.$$
        \end{itemize}
   \end{lem}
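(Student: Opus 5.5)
Dimensions are invariant under field extension, so we may compute everything after base change to $\mathbb{F}_{q^2}$, or even to $\overline{\mathbb{F}_q}$. There we use the identification $\mathbf{U}_D\times_{\mathbb{F}_q}\mathbb{F}_{q^2}\simeq\mathbf{GL}_{D,q^2}$ from \eqref{eq:Uq2=GLq2}. Under it, $\boldsymbol{\mathcal{O}}_A\times_{\mathbb{F}_q}\mathbb{F}_{q^2}\simeq\widetilde{\boldsymbol{\mathcal{O}}}_A$ by the base-change lemma of \S\ref{sec:basechange}. Likewise, the stabilizer $G_{\mathfrak{f},\mathfrak{f}'}$ base-changes to the stabilizer in $\mathbf{GL}_{D}$ of the pair $(\mathfrak{f},\mathfrak{f}')$, now regarded as a pair of ordinary $N$-step flags in $\overline{\mathbb{F}_q}^{D}$; for this we use the compatibility of stabilizers with base change already invoked in that proof. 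So both statements reduce to dimension counts for $\mathrm{GL}_D$ acting on pairs of ordinary partial flags, as in \cite[\S2]{BLM90}.

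For (1), we choose an adapted basis as in the surjectivity part of Lemma~\ref{lem:bijections}. This is a decomposition $\overline{\mathbb{F}_q}^{D}=\bigoplus_{i,j}B_{ij}$ with $\dim B_{ij}=a_{ij}$, such that $\mathfrak{f}_i=\bigoplus_{k\le i}B_{kl}$ and $\mathfrak{f}'_j=\bigoplus_{l\le j}B_{kl}$. An endomorphism $g$ preserves both flags if and only if, for each block $(k,l)$, $g(B_{kl})\subseteq \mathfrak{f}_k\cap\mathfrak{f}'_l=\bigoplus_{i\le k,\,j\le l}B_{ij}$. Hence the stabilizer in $\mathrm{GL}_D$ is the open subset of invertible elements in a linear space of dimension $\sum_{i\le k,\,j\le l}a_{ij}a_{kl}$. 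Its dimension is therefore this number, which after renaming indices is $\sum_{i\ge k,\,j\ge l}a_{ij}a_{kl}$, giving \eqref{eq:dimoff}. The one point needing care is that the stabilizer is taken in $\mathbf{U}_D$, not $\mathbf{GL}_D$. This is exactly handled by the base change: the geometric points of $G_{\mathfrak{f},\mathfrak{f}'}$ are the $\mathrm{GL}_D$-stabilizer of the pair, since $\mathbf{U}_D(\overline{\mathbb{F}_q})=\mathrm{GL}_D$ under the chosen isomorphism.

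For (2), we use $\boldsymbol{\mathcal{O}}_A\simeq\mathbf{U}_D/G_{\mathfrak{f},\mathfrak{f}'}$, so $d(A)=\dim\mathbf{U}_D-\dim G_{\mathfrak{f},\mathfrak{f}'}=D^2-\sum_{i\ge k,\,j\ge l}a_{ij}a_{kl}$. We have $D^2=(\sum a_{ij})^2=\sum_{(i,j),(k,l)}a_{ij}a_{kl}$ over all ordered pairs of positions. The ordered pairs with $i\ge k$ and $j\ge l$ are exactly the complement of those with $i<k$ or $j<l$. Hence $d(A)=\sum_{i<k\text{ or }j<l}a_{ij}a_{kl}$.

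There is no serious obstacle. The only subtle step is justifying that dimensions of the $\mathbb{F}_q$-varieties agree with those of the split $\mathrm{GL}_D$ objects over $\overline{\mathbb{F}_q}$. This follows because dimension is invariant under field extension, combined with the base-change identifications of orbits and stabilizers established in \S\ref{sec:basechange}. Notably, the isotropy conditions play no role beyond forcing the central symmetry of $A$.
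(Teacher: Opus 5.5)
Your proposal is correct and follows the same route as the paper: base change to $\overline{\mathbb{F}_q}$, identify $\mathbf{U}_D$ with $\mathbf{GL}_D$ there, and reduce to the type~A count of \cite[Lemma~2.2]{BLM90}. The only difference is that you carry out that count explicitly, whereas the paper simply cites it: the stabilizer is the open set of invertible elements in $\bigoplus_{(k,l)}\Hom(B_{kl},\mathfrak{f}_k\cap\mathfrak{f}'_l)$, and then $d(A)=D^2-\dim G_{\mathfrak{f},\mathfrak{f}'}$.
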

   \begin{proof}
       It suffices to compute the dimensions after the base change to $\overline{\mathbb{F}_q}$. Since $\mathbf{GL}_{D}\times_{\mathbb{F}_q}\overline{\mathbb{F}_q}\simeq \mathbf{U}_D\times_{\mathbb{F}_q}\overline{\mathbb{F}_q}$, the problem is reduced to the case of type $A$. Thus, the lemma follows by \cite[Lemma 2.2]{BLM90}.  
   \end{proof}
  
    \begin{rem}
    
   Following \cite[\S3.4]{BKLW18}, choose a decomposition
\[
\mathbb F_{q^2}^{D}=\bigoplus_{i,j=1}^{N}Z_{ij},
\qquad
\dim_{\mathbb F_{q^2}}Z_{ij}=a_{ij},
\]
such that
\[
\mathfrak f_r=\bigoplus_{i\le r,j}Z_{ij},
\qquad
\mathfrak f'_s=\bigoplus_{i,j\le s}Z_{ij}.
\]
Order the summands lexicographically. Choose bases such that
the given Hermitian form has a block anti-diagonal Gram
matrix $Q$, with identity blocks on the anti-diagonal.
Then the $\mathbb F_q$-Lie algebra of $G_{\mathfrak f,\mathfrak f'}$
consists of the $\mathbb F_{q^2}$-linear block maps
$x_{(i,j),(k,l)}:Z_{ij}\to Z_{kl}$ satisfying
    \begin{itemize}
        \item[(a)] $x_{(i,j),(k,l)} = 0$ unless $i\geq k$ and $j\geq l$;
        \item[(b)] $\bar{x}_{(i,j),(k,l)} = -x^T_{(N+1-k,N+1-l),(N+1-i,N+1-j)}$, $\forall i,j,k,l\in[1,N]$,
    \end{itemize} where $\bar x=(x_{st}^q)$ denotes entry-wise conjugation of $x=(x_{st})$
under the nontrivial automorphism of
$\mathbb F_{q^2}/\mathbb F_q$.
   Condition (b) can be expressed in matrix form as the standard condition $X^TQ+Q\bar{X}=O$ for the unitary Lie algebra. When $i+k =j+l = N+1$, Condition (b) implies $\bar{x}_{(i,j),(k,l)} = -x^T_{(i,j),(k,l)}$. Thus we obtain another formula for $\dim G_{\mathfrak{f},\mathfrak{f}'}$, distinct from \eqref{eq:dimoff}: 
    $$
      \dim G_{\mathfrak{f},\mathfrak{f}'}=  2\sum_{\substack{i\geq k,j\geq l\\i+k<N+1}} a_{ij}a_{kl} + 2\sum_{\substack{i\geq k,j\geq l\\i+k=N+1,j+l<N+1}} a_{ij}a_{kl}+\sum_{i\geq n+1,j\geq n+1}a_{ij}^2.
    $$ Although these two expressions look different, they are definitely equal. Essentially, this equality follows from the fact that $\dim_{\mathbb{F}_q}(\mathrm{Lie}(G_{\mathfrak{f},\mathfrak{f}'}))=\dim_{\overline{\mathbb{F}_q}}(G_{\mathfrak{f},\mathfrak{f}'}\times_{\mathbb{F}_q}\overline{\mathbb{F}_q})$, and the latter $G_{\mathfrak{f},\mathfrak{f}'}\times_{\mathbb{F}_q}\overline{\mathbb{F}_q}$ is the stabilizer of the base-changed pair $(\mathfrak{f}\otimes\overline{\mathbb{F}_q},\mathfrak{f}'\otimes\overline{\mathbb{F}_q})$ in $\mathbf{GL}_{D}\times_{\mathbb{F}_q}\overline{\mathbb{F}_q}\simeq \mathbf{U}_D\times_{\mathbb{F}_q}\overline{\mathbb{F}_q}$.
    \end{rem}

    For $A\in\Theta_D^\mathfrak{c}$, denote by $r(A)$ the dimension of the image of $\boldsymbol{\mathcal{O}}_A$ under the first projection $\pmb{\mathscr{F}}^\mathfrak{c}_D\times\pmb{\mathscr{F}}^\mathfrak{c}_D\to\pmb{\mathscr{F}}^\mathfrak{c}_D$. It equals $d(B)$ for diagonal $B$ with $b_{ii}=\sum_j a_{ij}$. Applying Lemma~\ref{lem:d(A)}, we have
    \begin{equation*}
        r(A) =\sum\limits_{\substack{1\le i<k\le N\\1\le j,l\le N}}a_{ij}a_{kl}.
    \end{equation*}
Hence
    \begin{equation}\label{d-a}
        d(A)-r(A) =\sum_{i\ge k,j<l}a_{ij}a_{kl}.
    \end{equation}
    
    Introduce the standard basis
    \begin{equation*}
        [A]=v^{-d(A)+r(A)}e_A,\qquad\forall A\in\Theta_D^\mathfrak{c}.
    \end{equation*}
    The set $\{[A]\mid A\in\Theta_D^\mathfrak{c}\}$ forms an $\mathcal{A}$-basis of $\mathbf{S}_{n,d}^{\mathfrak{c},D}$, called the standard basis of $\mathbf{S}_{n,d}^{\mathfrak{c},D}$.

 \subsection{Multiplication formulas for $\mathbf{S}_{n,d}^{\mathfrak{c},O}$}   
   We focus on the case $\mathfrak{c}=\jmath$, since $\mathbf{S}_{n,d}^{\imath,O}$ is a subalgebra of $\mathbf{S}_{n,d}^{\jmath,O}$.
   The multiplication formulas for $\mathbf{S}_{n,d}^{\jmath,O}$ differ only slightly from those for $\mathbf{S}_{n,d}^{\jmath,D}$ derived in \S\ref{subsec:mfD}. By Lemma \ref{1-isotrop D}, the size of $G'_{n+1}$ in \eqref{equ:G'_n+1_size} is replaced by
    $$
        \psi(1+a_{n+1,n+1})=\frac{q^{a_{n+1,n+1}+1}-1}{q-1}-q^{\frac{a_{n+1,n+1}}{2}},
    $$
the number of choices of $\mathfrak{e}_n$ in \eqref{equ:choices_of_W_n} is replaced by
    $$
        \psi(1+a_{n+1,n+1})-q\psi(-1+a_{n+1,n+1})-1 = q^{a_{n+1,n+1}},
    $$
    and the field $\mathbb{F}_{q^2}$ is replaced by $\mathbb{F}_q$. Therefore, we can obtain the following multiplication formulas similar to Proposition~\ref{lem:characteristic_function_multiplication} and Corollary~\ref{cor:characteristic_function_multiplication_R}.
    \begin{prop}\label{lem:characteristic_function_multiplication_O(D)}
        \begin{itemize}
            \item[(a)] For $A,B\in\Theta^\jmath_O$ such that $\mathrm{ro}(A) = \mathrm{co}(B)$ and $B-E_{h,h+1}^\theta$ is a diagonal matrix for some $h\in[1,n]$, we have
            $$
                e_B * e_A = \sum_{\substack{1\leq k\leq N}}
                \q^{\sum_{j>k}a_{hj}}\frac{\q^{1+a_{hk}}-1}{\q-1}e_{A+E_{hk}^\theta-E_{h+1,k}^\theta}.
            $$
            
            \item[(b)] For $A,C\in\Theta^\jmath_O$ such that $\mathrm{ro}(A) = \mathrm{co}(C)$ and $C-E_{h+1,h}^\theta$ is a diagonal matrix for some $h\in[1,n-1]$, we have
            $$
                e_C * e_A = \sum_{\substack{1\leq k\leq N}}
                \q^{\sum_{j<k}a_{h+1,j}}\frac{\q^{1+a_{h+1,k}}-1}{\q-1}e_{A-E_{hk}^\theta+E_{h+1,k}^\theta}.
            $$
    
            \item[(c)] For $A,C\in\Theta^\jmath_O$ such that $\mathrm{ro}(A) = \mathrm{co}(C)$ and $C-E_{n+1,n}^\theta$ is a diagonal matrix, we have
            \begin{align*}
                e_C * e_A = \sum_{\substack{1\leq k\leq N, k\neq n+1}}\q^{\sum_{j<k}a_{n+1,j}}\frac{\q^{1+a_{n+1,k}}-1}{\q-1}e_{A-E_{nk}^\theta+E_{n+1,k}^\theta}\\
                + \q^{\sum_{j<n+1}a_{n+1,j}}\psi(1+a_{n+1,n+1})e_{A-E_{n,n+1}^\theta+E_{n+1,n+1}^\theta}.
            \end{align*}
        \end{itemize}
    \end{prop}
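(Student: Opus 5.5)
The plan is to mirror the point-counting proof of Proposition~\ref{lem:characteristic_function_multiplication}, now over $\mathbb{F}_q$ with the quadratic form \eqref{def:quad-form}. Fix $A'\in\Theta^\jmath_O$ and $(\mathfrak{f},\mathfrak{f}')\in\mathcal{O}_{A'}$, and evaluate the convolution at $(\mathfrak{f},\mathfrak{f}')$. By Lemma~\ref{biject D}, the orbits are parametrized by matrices in $\Theta^\jmath_O$, and this parametrization does not depend on $q$. So it suffices to prove the identities at $\mathbf{q}=q$, and the structure constants are then polynomials in $q$.

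For (a), a flag $\mathfrak{f}''$ contributes exactly when it agrees with $\mathfrak{f}$ except that $\mathfrak{f}_h$ is replaced by a codimension-one subspace $\mathfrak{s}\subset\mathfrak{f}_h$ containing $\mathfrak{f}_{h-1}$. The complementary step $\mathfrak{f}_{N-h}$ is replaced by $\mathfrak{s}^\perp$.
\begin{itemize}
\item Since $h\le n$, the space $\mathfrak{s}$ is automatically isotropic, so no form condition enters.
\item Stratify the choices of $\mathfrak{s}$ by the first index $k$ at which $\mathfrak{s}\cap\mathfrak{f}'_k$ drops.
\item The count is the BLM count \cite[Lemma~3.2]{BLM90} with $\mathbb{F}_{q^2}$ replaced by $\mathbb{F}_q$. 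This gives $q^{\sum_{j>k}a_{hj}}[1+a_{hk}]_q$.
\item Central symmetry turns the resulting change of the matrix into $+E^\theta_{hk}-E^\theta_{h+1,k}$.
\end{itemize}
Part (b), for $h\le n-1$, is identical: one enlarges $\mathfrak{f}_h$ inside $\mathfrak{f}_{h+1}$, which is still an isotropic subspace, so again no form condition appears.

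Part (c) carries all the content. Following the proof of Proposition~\ref{lem:characteristic_function_multiplication}, enlarge $\mathfrak{f}_n$ to an isotropic $\mathfrak{s}$ with $\mathfrak{f}_n\overset{1}{\subset}\mathfrak{s}\subset\mathfrak{f}_n^\perp$, and define the sets $G_k$ exactly as there. Then
\[
e_C*e_A=\sum_{k}\#G_k\, e_{A-E^\theta_{nk}+E^\theta_{n+1,k}}.
\]
The cases $k\ne n+1$ split as follows.
\begin{itemize}
\item For $k<n+1$, the count is again the type A count over $\mathbb{F}_q$.
\item For $k>n+1$, reuse the maps $\Psi_k$, the sets $I_k$ and the decomposition $G'_k=\bigsqcup_{U\in I_k} G'_{k,U}$, with every $q^2$ replaced by $q$.
\item The key replacement is the number of admissible $\mathfrak{e}_n$. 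By Lemma~\ref{1-isotrop D}, it becomes $\psi(1+a_{n+1,n+1})-q\psi(-1+a_{n+1,n+1})-1$, which equals $q^{a_{n+1,n+1}}$ as noted before the statement.
\end{itemize}
Here $a_{n+1,n+1}$ is even by Lemma~\ref{ann} and central symmetry, so the quotient $U_n^\perp/U_n$ has even dimension $a_{n+1,n+1}+2$. Its form is nondegenerate of minus type, being the restriction of the minus-type form, and that is exactly what Lemma~\ref{1-isotrop D} requires.

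Multiplying the factors $q^{a_{n+1,1}}$ (fiber of $\Psi_k$), $\#I_k$ and $\#G'_{k,U}$ gives $q^{\sum_{j<k}a_{n+1,j}}[1+a_{n+1,k}]_q\cdot q^{a_{n+1,n+1}}/q^{a_{n+1,n+1}}$. This differs from the unitary case: the extra factor $q$ that appeared there is absent. That is consistent with the statement, which has no separate $k>n+1$ term with an extra power of $q$.

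For $k=n+1$, $\Psi_{n+1}$ maps $G_{n+1}$ onto the isotropic lines of a nondegenerate quotient of dimension $a_{n+1,n+1}+2$. Lemma~\ref{1-isotrop D}(1) counts these as $\psi(1+a_{n+1,n+1})$, and the fiber size is $q^{\sum_{j<n+1}a_{n+1,j}}$.

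I expect the main obstacle to be checking that the relevant quotients really carry forms of the type Lemma~\ref{1-isotrop D} covers. Concretely, one must show that each nondegenerate quotient $\mathfrak{f}_n^\perp/\mathfrak{f}_n$-type space is again of minus type, the discriminant being inherited by Witt cancellation. One must also show that the corank-one space is the radical extension of a minus-type space, and that the exponent bookkeeping produces $q^{a_{n+1,n+1}}$ rather than $q^{a_{n+1,n+1}+1}$.
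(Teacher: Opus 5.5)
Your proposal is correct and follows the paper's own argument. You rerun the point count of Proposition~\ref{lem:characteristic_function_multiplication} over $\mathbb{F}_q$, and Lemma~\ref{1-isotrop D} replaces $\#G'_{n+1}$ by $\psi(1+a_{n+1,n+1})$ and the number of admissible $\mathfrak{e}_n$ by $q^{a_{n+1,n+1}}$, which is exactly why the extra factor $q$ of the unitary $k>n+1$ terms disappears. The minus-type check you flag, which the paper leaves implicit, is settled by the Witt decomposition: $U^\perp/U$ for a totally isotropic $U$ in a minus-type space is again of minus type, and the corank-one space is $\ell^\perp$ for the isotropic line $\ell=(\mathfrak{f}_n+\mathfrak{f}_n^\perp\cap\mathfrak{f}'_n)/U_n$ in it.
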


    \begin{cor}\label{cor:characteristic_function_multiplication_R_O(D)}
        Take $1\leq h\leq n$ and $R\in\mathbb{N}$. If $A=(a_{ij}),B, C\in\Theta^\jmath_O$ satisfy that $\mathrm{ro}(A) = \mathrm{co}(B)=\mathrm{co}(C)$ and $B-RE_{h,h+1}^\theta, C-RE_{h+1,h}^\theta$ are diagonal, then
            \begin{align*}
           e_B * e_A &= \sum_{\bt}\q^{\sum_{j>u}a_{hj}t_u}\prod_{u=1}^N\left[\begin{array}{c}a_{h,u}+t_u\\t_u\end{array}\right]_{\q}e_{A+\sum_{u=1}^Nt_u(E_{hu}^\theta-E_{h+1,u}^\theta)}, \\
            \mathop{e_C*e_A}_{(h\neq n)}& = \sum_{\bt}\q^{\sum_{j<u}a_{h+1,j}t_u}\prod_{u=1}^N\left[\begin{array}{c}a_{h+1,u}+t_u\\t_u\end{array}\right]_{\q} e_{A-\sum_{u=1}^Nt_u(E_{hu}^\theta-E_{h+1,u}^\theta)},
            \\
             \mathop{e_C*e_A}_{(h=n)}  &= \sum_{\bt} \q^{\sum_{j<k} a_{n+1, j} t_k+\sum_{N+1-j<k<j} t_kt_j + \sum_{k>n+1} \frac{t_k(t_k-1)}{2}} \prod_{k=1}^n\frac{[a_{n+1,k}+t_k+t_{N+1-k}]_\q!}{[a_{n+1,k}]_\q![t_k]_\q![t_{N+1-k}]_\q!}\\
 &\qquad\qquad \cdot \frac{\prod_{i=1}^{t_{n+1}} \psi(2i-1+a_{n+1,n+1})}{[t_{n+1}]_\q!} e_{A- \sum_{k=1}^N t_k( E^{\theta}_{nk} - E^{\theta}_{n+1,k})},   
            \end{align*}
       where $\bt=(t_1,\cdots,t_N)$ runs over $\mathbb{N}^N$ with $t_1+\cdots+t_N= R$.     
    \end{cor}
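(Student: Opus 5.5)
The proof will be an induction on $R$, modelled on the proof of Corollary~\ref{cor:characteristic_function_multiplication_R} and of \cite[Proposition~3.3]{BKLW18}. The only new inputs are the one-step formulas of Proposition~\ref{lem:characteristic_function_multiplication_O(D)} in place of those of Proposition~\ref{lem:characteristic_function_multiplication}. For $R=0$ the statement is trivial, and for $R=1$ it is exactly Proposition~\ref{lem:characteristic_function_multiplication_O(D)}. Let $B_R$ (resp.\ $C_R$) denote the matrix with $B_R-RE^\theta_{h,h+1}$ (resp.\ $C_R-RE^\theta_{h+1,h}$) diagonal and with prescribed column sums. The point-counting argument used for the relation $e_A*e_B=\frac{\q^{2(1+R)}-1}{\q^2-1}e_C$ in the unitary case, now over $\mathbb F_q$, gives
\[
e_{B_1}*e_{B_R}=[R+1]_{\q}\,e_{B_{R+1}},\qquad e_{C_1}*e_{C_R}=\widetilde{c}_R\,e_{C_{R+1}}.
\]
For $h\neq n$ one has $\widetilde c_R=[R+1]_\q$. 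For $h=n$ the constant $\widetilde c_R$ comes from the case $A=C_R$ of Proposition~\ref{lem:characteristic_function_multiplication_O(D)}(c), possibly with a $\psi$-factor; I would read it off directly from that formula. It is invertible over $\mathbb Q(v)$, and the claimed formulas are identities between polynomials in $\q$, so it suffices to verify
\[
[R+1]_\q\, e_{B_{R+1}}*e_A=e_{B_1}*(e_{B_R}*e_A).
\]
The same holds for $C$.

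For the first two formulas I expand the right-hand side by the inductive hypothesis and then apply Proposition~\ref{lem:characteristic_function_multiplication_O(D)}(a) or (b). The resulting double sum is regrouped by the final vector $\bt$ with $|\bt|=R+1$. The coefficient identity to check is the standard $\q$-binomial recursion: for fixed $\bt$, summing over $u$ with $t_u\ge1$ the contributions from $\bt-\epsilon_u$ gives $[R+1]_\q$ times the claimed product. This is precisely the type-A computation of \cite[Lemma~3.4]{BLM90}, with $\q^2$ replaced by $\q$, so it can be cited essentially verbatim.

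The main obstacle is the case $h=n$, where the central column interacts with column $n+1$ through the $\psi$-term. Also, $E^\theta_{nk}$ and $E^\theta_{n,N+1-k}$ both feed the entry $a_{n+1,k}$, which accounts for the multinomial factors $[a_{n+1,k}+t_k+t_{N+1-k}]!/([a_{n+1,k}]![t_k]![t_{N+1-k}]!)$ and the cross term $\sum_{N+1-j<k<j}t_kt_j$. I would handle the central part separately. Adding one more step in column $n+1$ raises $a_{n+1,n+1}$ by $2$, so the successive factors are $\psi(1+a_{n+1,n+1}),\psi(3+a_{n+1,n+1}),\dots$, which explains $\prod_{i}\psi(2i-1+a_{n+1,n+1})$. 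Each ordering of $t_{n+1}$ central steps contributes equally, and dividing by the normalizing constant produces the denominator $[t_{n+1}]_\q!$. For the non-central columns the exponent is tracked as in \cite[Proposition~3.3]{BKLW18}. The term $\sum_{k>n+1}t_k(t_k-1)/2$, replacing $t_k^2$ in the unitary case, arises because over $\mathbb F_q$ the counts in \eqref{equ:choices_of_W_n} become $q^{a_{n+1,n+1}}$, that is, $\q^{a}$ with no extra $\q^{1}$.

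So for $h=n$ I would first prove the identity for $\bt$ supported in a single column, treating $k<n+1$, $k=n+1$ and $k>n+1$ separately. I would then assemble the general case by checking that the coefficient recursion in $\bt$ closes, using the elementary identity $\psi(m+2)=\q\,\psi(m)+\q^{m}+\dots$ coming from Lemma~\ref{1-isotrop D}. This coefficient bookkeeping is where errors are most likely, and I would double-check it against the small cases $R=2$ by hand.
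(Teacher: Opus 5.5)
Your proposal is correct and follows the argument the paper intends: induction on $R$ from the one-step formulas of Proposition~\ref{lem:characteristic_function_multiplication_O(D)}, as in \cite[Proposition~3.3]{BKLW18}, whose details the paper omits. The bookkeeping is lighter than you expect, for three reasons: $e_{C_1}*e_{C_R}=[R+1]_\q\, e_{C_{R+1}}$ with no $\psi$-factor (row $n$ of $C_R$ is diagonal, so only $k=n$ contributes in part (c)); in the recursion the step factors $[a_{n+1,u}+t_u+t_{N+1-u}]_\q$ and $\psi(2t_{n+1}-1+a_{n+1,n+1})$ are absorbed exactly into the claimed products, so no identity for $\psi$ is needed; and the exponents then reduce the whole check to $\sum_u[t_u]_\q\,\q^{\sum_{j<u}t_j}=[t_1+\cdots+t_N]_\q$.
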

\subsection{Normalization for $\mathbf{S}_{n,d}^{\mathfrak{c},O}$}\label{subsec:normalization-O}
   For $A=(a_{ij})\in\Theta^\mathfrak{c}_O$, let us compute the dimension of the $\mathbf{O}^-_{2d+2}$-orbit $\boldsymbol{\mathcal{O}}_A$. Let $(\mathfrak{f},\mathfrak{f}')\in\mathcal{O}_A$.
   \begin{lem}\label{lem:d(A) type D}
        \begin{itemize}
            \item [(1)] The dimension of the stabilizer $G_{\mathfrak{f},\mathfrak{f}'}$ of $(\mathfrak{f},\mathfrak{f}')$ in $\mathbf{O}^-_{2d+2}$ is $$\dim_{{\mathbb{F}_{q}}}G_{\mathfrak{f},\mathfrak{f}'}=\frac{1}{2}\bigg(\sum\limits_{i\ge k, j\ge l}a_{ij}a_{kl}-\sum\limits_{i,j\ge n+1}a_{ij}\bigg).$$
            \item[(2)] The dimension of $\boldsymbol{\mathcal{O}}_A$, denoted by $d(A)$, is given by 
            $$d(A)=\frac{1}{2}\bigg(\sum\limits_{i< k\ \text{or}\ j<l}a_{ij}a_{kl}-\sum\limits_{i< n+1\ \text{or}\ j<n+1}a_{ij}\bigg).$$
        \end{itemize}
   \end{lem}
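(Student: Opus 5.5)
Since dimensions are insensitive to base change, I will compute over $\overline{\mathbb{F}_q}$. By \eqref{eq:Oq2=Oq2}, $\mathbf{O}^-_{2d+2}\times_{\mathbb{F}_q}\overline{\mathbb{F}_q}\simeq\mathbf{O}_{2d+2}\times_{\mathbb{F}_q}\overline{\mathbb{F}_q}$, so $G_{\mathfrak f,\mathfrak f'}\times\overline{\mathbb{F}_q}$ is the stabilizer of a pair of isotropic flags for the split form \eqref{standarform}. Its dimension equals that of its Lie algebra, because in characteristic $p>3$ the stabilizer is smooth: it is an intersection of two parabolics, connected by Lemma~\ref{connected} up to the component group of $\mathrm{O}$, which does not affect dimension.

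\textbf{Lie algebra of the stabilizer.} Following \cite[\S3.4]{BKLW18} and the unitary remark after Lemma~\ref{lem:d(A)}, I choose a decomposition $V=\bigoplus_{i,j}Z_{ij}$ with $\dim Z_{ij}=a_{ij}$, $\mathfrak f_r=\bigoplus_{i\le r}Z_{ij}$ and $\mathfrak f'_s=\bigoplus_{j\le s}Z_{ij}$. The form pairs $Z_{ij}$ perfectly with $Z_{N+1-i,N+1-j}$, and $Z_{n+1,n+1}$ is a nondegenerate subspace. The Lie algebra of the stabilizer then consists of block maps $x_{(i,j),(k,l)}:Z_{ij}\to Z_{kl}$ that vanish unless $i\ge k$ and $j\ge l$ (the $\mathfrak{gl}$ stabilizer, of dimension $\sum_{i\ge k,j\ge l}a_{ij}a_{kl}$), subject to the skew condition $x^TQ+Qx=0$. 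This condition pairs the block $(i,j)\to(k,l)$ with the block $(N+1-k,N+1-l)\to(N+1-i,N+1-j)$, and the pairing preserves the triangularity condition. Generic block pairs are distinct, so each pair contributes one free block, that is, half of the $\mathfrak{gl}$ count. A block is self-paired exactly when $i+k=N+1$ and $j+l=N+1$, i.e.\ $(k,l)=(N+1-i,N+1-j)$. For these blocks the condition reads $x=-x^T$ in suitable bases, leaving $a_{ij}(a_{ij}-1)/2$ free parameters instead of half of $a_{ij}^2$. This deficit of $a_{ij}/2$ is exactly what produces the correction term.

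\textbf{Counting.} The self-paired blocks that satisfy the triangularity condition $i\ge N+1-i$ and $j\ge N+1-j$ are those with $i,j\ge n+1$. Summing, I get
\[
\dim=\tfrac12\Big(\sum_{i\ge k,j\ge l}a_{ij}a_{kl}-\sum_{i,j\ge n+1}a_{ij}^2\Big)+\sum_{i,j\ge n+1}\tfrac{a_{ij}(a_{ij}-1)}{2},
\]
which equals the formula in (1). I expect this bookkeeping to be the main obstacle. Two points need care: the central block $Z_{n+1,n+1}$, where the form is symmetric rather than split into a dual pair, which I will handle uniformly by choosing an orthonormal-type basis; and the claim that the self-pairing condition on blocks with $i,j\ge n+1$ is antisymmetry in every case. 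As a sanity check I will compare the result against the type D formula of \cite{FL15} or \cite{DLZ25}.

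\textbf{Part (2).} I use $\dim\boldsymbol{\mathcal O}_A=\dim\mathbf O_{2d+2}-\dim G_{\mathfrak f,\mathfrak f'}$ with $\dim\mathbf O_{2d+2}=\tfrac12\big((2d+2)^2-(2d+2)\big)$. Writing $(\sum a_{ij})^2=\sum_{i\ge k,j\ge l}a_{ij}a_{kl}+\sum_{i<k\text{ or }j<l}a_{ij}a_{kl}$ and $2d+2=\sum a_{ij}=\sum_{i,j\ge n+1}a_{ij}+\sum_{i<n+1\text{ or }j<n+1}a_{ij}$, the subtraction gives the stated formula directly.
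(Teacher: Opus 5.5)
Your proof is correct, but it reaches the formula by a different route from the paper. The paper makes the same base change to the split form and then simply invokes \cite[Lemma~4.5.1]{FL15}. You instead carry out the Lie-algebra count yourself, as the orthogonal counterpart of the paper's unitary remark after Lemma~\ref{lem:d(A)}.

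Your bookkeeping checks out, including the two points you flagged.
\begin{itemize}
\item The skew condition pairs $Z_{ij}\to Z_{kl}$ with $Z_{N+1-k,N+1-l}\to Z_{N+1-i,N+1-j}$, and by central symmetry both blocks have size $a_{ij}a_{kl}$.
\item For a self-paired block $Z_{ij}\to Z_{N+1-i,N+1-j}$ (triangular exactly when $i,j\ge n+1$), set $\beta(u,w)=(xu,w)$ on $Z_{ij}$. Because the ambient form is symmetric, $\beta(u,w)+\beta(w,u)=0$, so $\beta$ is alternating in every case, the central block included.
\item This alternating condition is precisely what produces the term $-\sum_{i,j\ge n+1}a_{ij}$. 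In the unitary remark the self-paired blocks are skew-Hermitian and contribute the full $a_{ij}^2$ over $\mathbb{F}_q$, so no correction appears there.
\end{itemize}

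What your route buys is a self-contained argument that explains where the correction term comes from and does not depend on matching the conventions of \cite{FL15}. The paper's route buys brevity.

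Two justifications should be tightened.
\begin{itemize}
\item Smoothness of the stabilizer has nothing to do with connectedness (Lemma~\ref{connected}). It holds because the scheme-theoretic intersection of two parabolics $P,Q$ of $\mathbf{SO}_{2d+2}$ containing a common maximal torus $T$ has tangent space $\mathfrak{t}\oplus\bigoplus_{\alpha\in\Phi(P)\cap\Phi(Q)}\mathfrak{g}_\alpha$. This has the same dimension as the reduced subgroup generated by $T$ and the root subgroups $U_\alpha$ for those $\alpha$. Also $\mathrm{Lie}(\mathbf{O})=\mathrm{Lie}(\mathbf{SO})$ in odd characteristic.
\item The form-adapted decomposition $\{Z_{ij}\}$ only needs to exist at one point of the orbit, since stabilizers along an orbit are conjugate. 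By the orbit classification over $\overline{\mathbb{F}_q}$, every pair with matrix $A$ is conjugate to the model pair built as in the surjectivity part of Lemma~\ref{biject D}, and that pair has such a decomposition.
\end{itemize}
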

   \begin{proof}
       Since $\mathbf{O}^-_{2d+2}\times_{\mathbb{F}_q}{\overline{\mathbb{F}_q}}\simeq\mathbf{O}_{2d+2}\times_{\mathbb{F}_q}{\overline{\mathbb{F}_q}}$, the problem is reduced to the case of the split type $D$, and the lemma then follows from \cite[Lemma 4.5.1]{FL15}.  
   \end{proof}
 For $A\in\Theta^\mathfrak{c}_O$, denote the dimension of the image of $\boldsymbol{\mathcal{O}}_A$ under the first projection $\pmb{\mathscr{F}}^\mathfrak{c}_O\times\pmb{\mathscr{F}}^\mathfrak{c}_O\to\pmb{\mathscr{F}}^\mathfrak{c}_O$ by $r(A)$, which is equal to $d(B)$ for the diagonal $B$ with $b_{ii}=\sum_j a_{ij}$. Applying Lemma~\ref{lem:d(A) type D}, we have
    \begin{equation*}
        r(A) =\frac{1}{2}\bigg(\sum\limits_{\substack{1\le i<k\le N\\1\le j,l\le N}}a_{ij}a_{kl}-\sum\limits_{\substack{1\le i<n+1\\1\le j\le N}}a_{ij}\bigg).
    \end{equation*}
Hence
    \begin{equation}\label{d-a type D}
        d(A)-r(A) =\frac{1}{2}\bigg(\sum\limits_{i\ge k,j<l}a_{ij}a_{kl}-\sum\limits_{i\ge n+1>j}a_{ij}\bigg).
    \end{equation}
    
    Introduce the standard basis
    \begin{equation*}
        [A]=v^{-d(A)+r(A)}e_A,\qquad\forall A\in\Theta^\mathfrak{c}_O.
    \end{equation*}
    The set $\{[A]\mid A\in\Theta^\mathfrak{c}_O\}$ forms an $\mathcal{A}$-basis of $\mathbf{S}_{n,d}^{\mathfrak{c},O}$, called the standard basis of $\mathbf{S}_{n,d}^{\mathfrak{c},O}$.

\subsection{Specialization}\label{subsec:specialization}
The set $\Xi_{n,d}$ of matrices used in \cite{LL21} coincides with our $\Theta^\jmath_{2d+1}$, up to the shift of the matrix indices from $\{-n,\ldots,n\}$ to $\{1, 2,\ldots, N=2n+1\}$. That is, $a_{ij}$ in \cite{LL21} corresponds to our $a_{n+i+1,n+j+1}$. To adapt this index shift, let us update some notation defined therein as follows:
    \begin{itemize}
    \item $I_\mathfrak{a}:=(\{n+1\}\times[n+2,N])\sqcup([n+2,N]\times[1,N])$;
    \item $I_\mathfrak{c}:=I_\mathfrak{a}\sqcup\{(n+1,n+1)\}$;
    \item for $A\in\Theta^\jmath_{2d+1}$, $$\hat{\ell}_\mathfrak{c}(A):=\frac{1}{2}\big(\sum_{\substack{n+1\le x\\n+1>y}}+\sum_{\substack{n+1\ge x\\n+1<y}}\big)a_{xy} \quad\text{and}\quad \hat{\ell}_\mathfrak{a}(A):=\frac{1}{2}\bigg(\sum_{(i,j)\in I_\mathfrak{c}}\big(\sum_{\substack{x\le i\\y>j}}+\sum_{\substack{x\ge i\\y<j}}\big)a_{ij}^{\natural\natural}a_{xy}\bigg),$$
where $a_{n+1,n+1}^{\natural\natural}=\frac{1}{2}(a_{n+1,n+1}-3)$ and $a_{ij}^{\natural\natural}=a_{ij}$ if $(i,j)\in I_{\mathfrak{a}}$.
\end{itemize}
With the above new notation, the standard basis elements of $\mathbb{S}_{n,d}^\jmath$, defined in \cite{LL21}, are $$[A]_{LL}:=\mathbf{u}^{-\hat{\ell}_\mathfrak{c}(A)}\mathbf{v}^{-\hat{\ell}_\mathfrak{a}(A)}e_A,\quad (A\in\Theta^\jmath_{2d+1}).$$
The following lemma is obtained by a straightforward computation.
\begin{lem}\label{eq=}
    For $A=(a_{ij})\in\Mat_{N\times N}(\mathbb{Z})$ with $a_{ij}=a_{N+1-i,N+1-j}$, we have $$\frac{1}{2}\big(\sum\limits_{\substack{x\le n+1\\y>n+1}}+\sum\limits_{\substack{x\ge n+1\\y<n+1}}\big)a_{n+1,n+1}a_{xy}+\sum\limits_{(i,j)\in I_\mathfrak{a}}\big(\sum\limits_{\substack{x\le i\\y>j}}+\sum\limits_{\substack{x\ge i\\y<j}}\big)a_{ij}a_{xy}=\sum\limits_{i\ge x,j<y}a_{ij}a_{xy}.$$ In particular, the above equation holds for any $A\in\Theta^\mathfrak{c}_\square$.
\end{lem}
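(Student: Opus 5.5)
The identity is a purely combinatorial symmetrization; no geometry or earlier results are needed beyond the central symmetry $a_{ij}=a_{N+1-i,N+1-j}$. The plan is to write the right-hand side as half of a sum over \emph{all} positions, and then fold that sum using the centre reversal $p\mapsto p^\ast$.

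For a position $p=(i,j)\in[1,N]^2$, set
\[
R(p):=\{(x,y)\mid x\le i,\ y>j\}\sqcup\{(x,y)\mid x\ge i,\ y<j\},\qquad S(p):=\sum_{(x,y)\in R(p)}a_{xy}.
\]
The first step is to show that
\[
\sum_{p}a_pS(p)=2\sum_{i\ge x,\,j<y}a_{ij}a_{xy}.
\]
Split $\sum_p a_pS(p)$ into the part coming from $x\le i,\ y>j$ and the part coming from $x\ge i,\ y<j$.
\begin{itemize}
\item In the first part, rename $(i,j)\leftrightarrow(x,y)$. It becomes $\sum_{x\ge i,\,y<j}a_{xy}a_{ij}$, which is the right-hand side with the letters renamed.
\item The second part is literally the right-hand side with the roles of the two positions interchanged.
\end{itemize}
Both parts are therefore equal to the right-hand side of the lemma, which gives the claim.

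The second step is to exploit the central symmetry. Let $p^\ast=(N+1-i,N+1-j)$. Applying $\ast$ to the inequalities $x\le i,\ y>j$ turns them into $x^\ast\ge i^\ast,\ y^\ast<j^\ast$, and conversely. Hence $R(p^\ast)=R(p)^\ast$. Since $a_{q^\ast}=a_q$ for every position $q$, this gives $S(p^\ast)=S(p)$, and therefore
\[
a_{p^\ast}S(p^\ast)=a_pS(p).
\]

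Next I check that the positions split as
\[
[1,N]^2=I_{\mathfrak a}\sqcup I_{\mathfrak a}^\ast\sqcup\{(n+1,n+1)\}.
\]
Here $I_{\mathfrak a}$ consists of row $n+1$ with columns $>n+1$, together with all rows $>n+1$. Its image $I_{\mathfrak a}^\ast$ consists of row $n+1$ with columns $<n+1$, together with all rows $<n+1$. The centre $(n+1,n+1)$ is fixed by $\ast$.

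Combining the two steps gives
\[
2\sum_{i\ge x,\,j<y}a_{ij}a_{xy}=2\sum_{p\in I_{\mathfrak a}}a_pS(p)+a_{n+1,n+1}S(n+1,n+1).
\]
Dividing by $2$ gives exactly the stated left-hand side: the second term is the central term, since $R(n+1,n+1)$ is the region $\{x\le n+1,\ y>n+1\}\sqcup\{x\ge n+1,\ y<n+1\}$ appearing there with the factor $\frac12$, and the first term is the sum over $I_{\mathfrak a}$.

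The final assertion follows because every $A\in\Theta^{\mathfrak c}_\square$ satisfies $a_{ij}=a_{N+1-i,N+1-j}$ by definition. The only point requiring care is the bookkeeping of the partition of positions and of the strict versus weak inequalities in $R(p)$ under $\ast$; beyond that there is no real obstacle.
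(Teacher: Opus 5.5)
Your proof is correct and complete. The paper gives no proof beyond calling the lemma ``a straightforward computation''. Your argument carries that computation out: write twice the right-hand side as $\sum_p a_pS(p)$ over all positions, use $a_{p^\ast}S(p^\ast)=a_pS(p)$ from the central symmetry, and fold along the partition $I_{\mathfrak a}\sqcup I_{\mathfrak a}^\ast\sqcup\{(n+1,n+1)\}$.

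One small point of presentation: in your first step, the part coming from $x\le i,\ y>j$ is already literally the right-hand side. Only the part coming from $x\ge i,\ y<j$ needs the interchange of the two positions. Your bullets reach the right conclusion, but with the roles described the other way round.
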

We denote both sides of the above equation by $L(A)$.

For $A\in \Theta^\mathfrak{c}_{2d+1}$, it is clear that $A-E_{n+1,n+1}\in \Theta^{\mathfrak{c}}_{2d}$ and $A+E_{n+1,n+1}\in \Theta^{\mathfrak{c}}_{O}$.

\begin{thm} \label{thm:specialofS}
\begin{itemize}
    \item [(1).] The algebra $\mathbf{S}^{\mathfrak{c},2d+1}_{n,d}$ is a specialization of $\mathbb{S}^\mathfrak{c}_{n,d}$ at $\mathbf{u}^2=\q^3$ and $\mathbf{v}^2=\q^2$. Moreover, the specialization is compatible with the standard bases in the sense that $[A]_{LL}\mapsto [A]$.
    \item[(2).]  The algebra $\mathbf{S}^{\mathfrak{c},2d}_{n,d}$ is a specialization of $\mathbb{S}^\mathfrak{c}_{n,d}$ at $\mathbf{u}^2=\q$ and $\mathbf{v}^2=\q^2$. Moreover, the specialization is compatible with the standard bases in the sense that $[A]_{LL}\mapsto [A-E_{n+1,n+1}]$.
    \item[(3).] The algebra $\mathbf{S}^{\mathfrak{c},O}_{n,d}$ is a specialization of $\mathbb{S}^\mathfrak{c}_{n,d}$ at $\mathbf{u}^2=\q^2$ and $\mathbf{v}^2=\q$. Moreover, the specialization is compatible with the standard bases in the sense that $[A]_{LL}\mapsto [A+E_{n+1,n+1}]$. 
\end{itemize}
\end{thm}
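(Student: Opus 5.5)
The specialization at the level of structure constants is already supplied by Theorem~\ref{thm:special}, together with Remark~\ref{rem:S=Special}. The basis $e^w_{\gamma\nu}$ of $\mathbb{S}^\mathfrak{c}_{n,d}$ is sent to the characteristic functions $e_{\gamma,w,\nu}$, which are our $e_A$. The orbits are indexed through Lemma~\ref{lem:bijections} and Lemma~\ref{biject D}, and the parameters are $(\mathbf u^2,\mathbf v^2)=(\q^3,\q^2)$ for Figure~1 ($D=2d+1$), $(\q,\q^2)$ for Figure~2 ($D=2d$), and $(\q^2,\q)$ for Figure~3 (type $O$).

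First I will check that the matrix relabelling is the correct one. Matrices in $\Xi_{n,d}=\Theta^\jmath_{2d+1}$ are sent to $A$, $A-E_{n+1,n+1}$ and $A+E_{n+1,n+1}$ respectively. This holds because the central entry only records $\dim\mathfrak f_{n+1}/\mathfrak f_n$ relative to the folded weight data. Since the double coset $W^F_\gamma w W^F_\nu$ does not see the middle block, the $e_A$ in \cite{LL21} maps to the $e$ of the shifted matrix. As a cross-check, the generator formulas in Proposition~\ref{lem:characteristic_function_multiplication} and Proposition~\ref{lem:characteristic_function_multiplication_O(D)} agree with the specialized formulas of \cite{LL21}. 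For example, $\phi(1+a_{n+1,n+1},\q)$ and $\psi$ match the two-parameter coefficient at the given values. So the algebra map $e_A\mapsto e_{A'}$ is the specialization.

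The remaining content is the compatibility of standard bases. I need to show that $\mathbf u^{-\hat\ell_\mathfrak c(A)}\mathbf v^{-\hat\ell_\mathfrak a(A)}$ specializes to $v^{-(d(A')-r(A'))}$, where $A'$ is the shifted matrix. I will write $\hat\ell_\mathfrak a(A)$ using $a^{\natural\natural}_{n+1,n+1}=\tfrac12(a_{n+1,n+1}-3)$ and apply Lemma~\ref{eq=}. This gives
\[
2\hat\ell_\mathfrak a(A)=L(A)-\tfrac12(a_{n+1,n+1}+3)\Big(\sum_{x\le n+1<y}+\sum_{x\ge n+1>y}\Big)a_{xy}.
\]
By central symmetry the two sums are equal, and together they equal $2\hat\ell_\mathfrak c(A)$. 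Hence $\hat\ell_\mathfrak a(A)=\tfrac12 L(A)-\tfrac12(a_{n+1,n+1}+3)\hat\ell_\mathfrak c(A)$, with $a_{n+1,n+1}=1$ for $A\in\Theta^\jmath_{2d+1}$.

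Next I substitute each case, taking $\mathbf u=v^{a}$ and $\mathbf v=v^{b}$.

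In case (1), $(a,b)=(3,2)$, and the exponent becomes $3\hat\ell_\mathfrak c+L-4\hat\ell_\mathfrak c=L(A)-\hat\ell_\mathfrak c(A)$. I will compare this with \eqref{d-a}, which gives $d(A)-r(A)=L(A)$. This needs care, because \eqref{d-a} counts dimension over $\mathbb F_q$ while $[A]$ uses $v$ with $v^2=\q$. I expect the discrepancy to be absorbed by the precise normalization of $\hat\ell$, and I will verify it on the generators $E^\theta_{h,h+1}$ plus a diagonal matrix. This convention check is the main obstacle, since the exponents have to be matched precisely.

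In case (2), $(a,b)=(1,2)$, and $A'=A-E_{n+1,n+1}$. Recomputing $L(A')$ removes the terms involving the central entry, which equals $\hat\ell_\mathfrak c$-type contributions; this should again give agreement.

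In case (3), $(a,b)=(2,1)$, and $A'=A+E_{n+1,n+1}$. Here I use \eqref{d-a type D}: $d-r=\tfrac12(L(A')-\sum_{i\ge n+1>j}a'_{ij})$. Expanding $L(A')=L(A)+\sum_{x\ge n+1>y}a_{xy}+\sum_{x\le n+1<y}a_{xy}$, the extra term cancels against $\hat\ell_\mathfrak c$.

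Once the exponents match on all of $\Theta$, the image of $[A]_{LL}$ is $[A']$, and this proves the compatibility claims in (1)–(3).
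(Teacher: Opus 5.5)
\textbf{There is a genuine gap.} The algebra statement is fine in outline: you reduce it to Theorem~\ref{thm:special}, whereas the paper checks the divided-power formulas of Corollaries~\ref{cor:characteristic_function_multiplication_R} and~\ref{cor:characteristic_function_multiplication_R_O(D)} directly against [LL21, Prop.~3.2.2]. The problem is the standard-basis compatibility, which is the real content of the theorem and rests on a miscomputed identity.

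In Lemma~\ref{eq=}, the central term of $L(A)$ is $\tfrac12 a_{n+1,n+1}\big(\sum_{x\le n+1<y}+\sum_{x\ge n+1>y}\big)a_{xy}=a_{n+1,n+1}\hat{\ell}_{\mathfrak{c}}(A)$. The central term of $2\hat{\ell}_{\mathfrak{a}}(A)$ is $\tfrac12(a_{n+1,n+1}-3)\cdot 2\hat{\ell}_{\mathfrak{c}}(A)$, and the $I_{\mathfrak{a}}$-parts of the two expressions coincide. Hence
\[
2\hat{\ell}_{\mathfrak{a}}(A)=L(A)-3\hat{\ell}_{\mathfrak{c}}(A)\qquad\text{for every centrally symmetric }A,
\]
independently of the central entry. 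Your coefficient $\tfrac12(a_{n+1,n+1}+3)$ comes from using $a_{n+1,n+1}$ instead of $\tfrac12 a_{n+1,n+1}$ for the central term of $L(A)$. Your substitution $a_{n+1,n+1}=1$ is also unjustified. On $\Theta^\jmath_{2d+1}$ the central entry can be any odd positive integer; it is forced to be $1$ only on $\Theta^\imath_{2d+1}$.

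The mismatch you found in (1), $L(A)-\hat{\ell}_{\mathfrak{c}}(A)$ versus $L(A)$, is produced by this slip. It does not come from any convention about $\mathbb{F}_q$-dimensions versus $v^2=\q$, so there is nothing to absorb, and an appeal to the normalization of $\hat{\ell}$ cannot repair it. With the correct identity all three cases close exactly:
\begin{enumerate}
\item[(1)] $3\hat{\ell}_{\mathfrak{c}}+2\hat{\ell}_{\mathfrak{a}}=L(A)=d(A)-r(A)$ by \eqref{d-a}.
\item[(2)] $\hat{\ell}_{\mathfrak{c}}+2\hat{\ell}_{\mathfrak{a}}=L(A)-2\hat{\ell}_{\mathfrak{c}}(A)=L(A-E_{n+1,n+1})$, because lowering the central entry by one lowers $L$ by $2\hat{\ell}_{\mathfrak{c}}(A)$.
\item[(3)] With $A''=A+E_{n+1,n+1}$, we get $2\hat{\ell}_{\mathfrak{c}}+\hat{\ell}_{\mathfrak{a}}=\tfrac12\big(L(A)+\hat{\ell}_{\mathfrak{c}}(A)\big)=\tfrac12\big(L(A'')-\hat{\ell}_{\mathfrak{c}}(A)\big)$. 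This is \eqref{d-a type D}, since $\sum_{i\ge n+1>j}a''_{ij}=\hat{\ell}_{\mathfrak{c}}(A)$.
\end{enumerate}
Your own formula, after the substitution $a_{n+1,n+1}=1$, instead leaves errors of $-\hat{\ell}_{\mathfrak{c}}$, $-\hat{\ell}_{\mathfrak{c}}$ and $-\tfrac12\hat{\ell}_{\mathfrak{c}}$ in (1)--(3). So your closing sentences for (2) and (3) do not follow from what you wrote.

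Finally, the test you propose on $\lambda+E^\theta_{h,h+1}$ cannot detect the issue, because $\hat{\ell}_{\mathfrak{c}}$ vanishes there and on diagonal matrices. The matrix $\lambda+E^\theta_{n+1,n}$ would have exposed it: there $\hat{\ell}_{\mathfrak{c}}=1$, $L=2\lambda_{n+1}+1$ and $\hat{\ell}_{\mathfrak{a}}=\lambda_{n+1}-1$.
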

\begin{proof}
We can directly check that the multiplication formulas given in Corollary \ref{cor:characteristic_function_multiplication_R} and Corollary \ref{cor:characteristic_function_multiplication_R_O(D)} above are specializations of those shown in \cite[Proposition 3.2.2]{LL21}. More precisely, 
\begin{align*}
\mathbb{S}^\mathfrak{c}_{n,d}&\twoheadrightarrow\mathbf{S}_{n,d}^{\mathfrak{c},2d+1}\quad\text{via}\ e_A\mapsto e_A,\ \mathbf{u}^2\mapsto \q^3,\ \mathbf{v}^2\mapsto \q^2,\\ \mathbb{S}^\mathfrak{c}_{n,d}&\twoheadrightarrow\mathbf{S}_{n,d}^{\mathfrak{c},2d}\quad\text{via}\ e_A\mapsto e_{A-E_{n+1,n+1}},\ \mathbf{u}^2\mapsto \q,\ \mathbf{v}^2\mapsto \q^2,\\
\mathbb{S}^\mathfrak{c}_{n,d}&\twoheadrightarrow\mathbf{S}_{n,d}^{\mathfrak{c},O}\quad\text{via}\ e_A\mapsto e_{A+E_{n+1,n+1}},\ \mathbf{u}^2\mapsto \q^2,\ \mathbf{v}^2\mapsto \q.
\end{align*}

Next, let us show that the above three specializations are compatible with the standard bases.
 For (1), by Lemma \ref{eq=}, we have $$2\hat{\ell}_\mathfrak{a}(A)=d(A)-r(A)-3\hat{\ell}_\mathfrak{c}(A),$$ and hence $[A]_{LL}$ maps to $[A]$ by definition. 
 For (2), let $A':=(a'_{ij})=A-E_{n+1,n+1}$, and note that $d(A')-r(A')=L(A')$. Using Lemma \ref{eq=}, we compute
\begin{align*}
2\hat{\ell}_{\mathfrak{a}}(A)&=L(A')-2\hat{\ell}_{\mathfrak{c}}(A)+\bigg(\sum_{\substack{(i,j)\in I_{\mathfrak{a}}\\j<n+1\le i}}a_{ij}+\sum_{\substack{(i,j)\in I_{\mathfrak{a}}\\i\le n+1<j}}a_{ij}\bigg)
\\&=d(A')-r(A')-2\hat{\ell}_\mathfrak{c}(A)+\hat{\ell}_\mathfrak{c}(A)=d(A')-r(A')-\hat{\ell}_\mathfrak{c}(A).
\end{align*}
Therefore, $[A]_{LL}$ maps to $[A-E_{n+1,n+1}]$ by definition.
 For (3), let $A'':=(a_{ij}'')=A+E_{n+1,n+1}$, and note that $d(A'')-r(A'')=\frac{1}{2}\big(L(A'')-\hat{\ell}_\mathfrak{c}(A)\big)$.
By \eqref{d-a type D} and Lemma~\ref{eq=}, we calculate 
\begin{align*}
\hat{\ell}_{\mathfrak{a}}(A)&=\frac{1}{2}L(A'')-2\hat{\ell}_{\mathfrak{c}}(A)-\frac{1}{2}\bigg(\sum_{\substack{(i,j)\in I_{\mathfrak{a}}\\j<n+1\le i}}a_{ij}+\sum_{\substack{(i,j)\in I_{\mathfrak{a}}\\i\le n+1<j}}a_{ij}\bigg)\\
    &=d(A'')-r(A'')-2\hat{\ell}_\mathfrak{c}(A).
\end{align*}
Hence, $[A]_{LL}$ maps to $[A+E_{n+1,n+1}]$ by definition. The proof is complete.
\end{proof}

\section{Canonical basis and integrality}
\label{sec:canonicalintegral}
In this section, we introduce the canonical basis of $\mathbf{S}^{\mathfrak{c},\square}_{n,d}$ via Kazhdan-Lusztig polynomials. We modify the extra generator $\mathbf{t}$ to be the shadow of the intersection cohomology complex on its support variety. This modified generator helps us obtain an integral form of $\mathbf{S}_{n,d}^{\mathfrak{c},\square}$.

\subsection{Intersection cohomology complex} \label{subsec:ICC}
Let $\mathbf{G}$ be a split connected reductive group over $\mathbb{F}_q$ with two parabolic subgroups $\mathbf{P}$ and $\mathbf{Q}$.
For a diagonal $\mathbf{G}$-orbit $\boldsymbol{\mathcal{O}}$ in $\mathbf{G}/\mathbf{P}\times\mathbf{G}/\mathbf{Q}$, let $\mathrm{IC}_{\boldsymbol{\mathcal{O}}}$ be the
$\ell$-adic intersection cohomology complex with coefficients in $\overline{\mathbb{Q}}_{\ell}$ on the closure of $\boldsymbol{\mathcal{O}}$.
For a $\mathbf{G}$-variety $\mathbf{X}$, we say a complex $\mathcal{F}\in D^b_{\mathrm{m},\mathbf{G}}(\mathbf{X})$ pointwise {\em $*$-even}, if for each geometric point $x_0$ of $\mathbf{X}$,  the $i$-th cohomology group $\mathsf{H}^i(\mathcal{F}_{x_0})=0$ unless $i$ is even; say {\em $*$-odd} if $\mathcal{F}[1]$ is $*$-even.
\begin{lem}\label{even tate}
    The intersection cohomology complex $\mathrm{IC}_{\boldsymbol{\mathcal{O}}}$ is pointwise
    pure, $*$-even or $*$-odd (depending on the parity of $\dim\boldsymbol{\mathcal{O}}$). The stalk of $\mathrm{IC}_{\boldsymbol{\mathcal{O}}}$ at each $\mathbb{F}_q$-rational point of $\overline{\boldsymbol{\mathcal{O}}}$ is of Tate type.
\end{lem}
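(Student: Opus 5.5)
The plan is to reduce everything to the classical parity and purity statements for Schubert varieties in the split group, using the standard resolution (Bott--Samelson / Demazure type) of orbit closures. Since $\mathbf{G}$ is split, every diagonal $\mathbf{G}$-orbit $\boldsymbol{\mathcal{O}}$ on $\mathbf{G}/\mathbf{P}\times\mathbf{G}/\mathbf{Q}$ corresponds, via the projection $\mathbf{G}/\mathbf{P}\times\mathbf{G}/\mathbf{Q}\to\mathbf{G}/\mathbf{P}$ with fiber $\mathbf{G}/\mathbf{Q}$ over the base point $\mathbf{P}$, to a $\mathbf{P}$-orbit $\mathbf{P}w\mathbf{Q}/\mathbf{Q}$ in $\mathbf{G}/\mathbf{Q}$. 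First I show that this projection is $\mathbf{G}$-equivariantly locally trivial (it is $\mathbf{G}\times^{\mathbf{P}}(\mathbf{G}/\mathbf{Q})\to \mathbf{G}/\mathbf{P}$), so that locally $\overline{\boldsymbol{\mathcal{O}}}\simeq U\times\overline{\mathbf{P}w\mathbf{Q}/\mathbf{Q}}$ with $U$ smooth of dimension $\dim \mathbf{G}/\mathbf{P}$. Hence $\mathrm{IC}_{\boldsymbol{\mathcal{O}}}$ restricted there is $\const{U}\boxtimes \mathrm{IC}(\overline{\mathbf{P}w\mathbf{Q}/\mathbf{Q}})$, and because of our normalization (constant sheaf in degree zero, no perverse shift) stalks of the two agree. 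This reduces all claims to the partial Schubert variety $\overline{\mathbf{P}w\mathbf{Q}/\mathbf{Q}}$; further pulling back along the smooth map $\mathbf{G}/\mathbf{B}\to\mathbf{G}/\mathbf{Q}$ reduces to $\mathbf{B}$-orbit closures $\overline{\mathbf{B}w'\mathbf{B}/\mathbf{B}}$, smooth pullback again preserving stalks.

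For Schubert varieties in $\mathbf{G}/\mathbf{B}$ the key steps are: (i) take a reduced expression of $w'$ and the Bott--Samelson resolution $\pi:Z\to\overline{\mathbf{B}w'\mathbf{B}/\mathbf{B}}$, which is proper, defined over $\mathbb{F}_q$, with $Z$ smooth projective; (ii) by the decomposition theorem (Beilinson--Bernstein--Deligne--Gabber) $\pi_*\const{Z}$ is pure and $\mathrm{IC}$ occurs as a direct summand (up to shift/twist); (iii) the fibers of $\pi$ over $\mathbb{F}_q$-points of each Schubert cell are paved by affine spaces defined over $\mathbb{F}_q$, so $\mathsf{H}^i(\pi^{-1}(x))$ vanishes for $i$ odd and is $\Ql(-i/2)$-isotypic for $i$ even. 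Thus stalks of $\pi_*\const Z$ are even and of Tate type, and these properties pass to the summand $\mathrm{IC}$. Pointwise purity follows from the same reasoning (Kazhdan--Lusztig's theorem): the stalk of a pure complex on a variety with an affine-paved, $\mathbb{F}_q$-rational stratification by cells whose fibers are affinely paved is pointwise pure; alternatively one uses the contracting $\mathbb{G}_m$-action on a transversal slice to the cell, so that the stalk equals the hypercohomology of the slice, which is pure by the Deligne weight argument.

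Finally the parity: with our normalization $\mathrm{IC}_{\boldsymbol{\mathcal{O}}}$ has stalk cohomology in even degrees only in the above convention, while the perverse-normalized version is shifted by $\dim\boldsymbol{\mathcal{O}}$, which produces the ``$*$-even or $*$-odd depending on the parity of $\dim\boldsymbol{\mathcal{O}}$'' dichotomy; I would state the result for the perverse normalization $\mathrm{IC}_{\boldsymbol{\mathcal{O}}}[\dim\boldsymbol{\mathcal{O}}]$ and then remark that the degree-zero normalization is always $*$-even. The main obstacle I expect is the pointwise purity, which is not formal from the decomposition theorem (purity of $\pi_*\const Z$ only gives weights $\le i$ on $\mathsf{H}^i$ stalks); I would handle it via the $\mathbb{G}_m$-contracting slice argument of Kazhdan--Lusztig, noting that since $p>3$ and $\mathbf{G}$ split, the required torus and slices are defined over $\mathbb{F}_q$, and Tate type of the stalks then follows from the affine paving count.
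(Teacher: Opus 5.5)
Your argument is correct and uses the same reduction as the paper. You pass from $\overline{\boldsymbol{\mathcal O}}\simeq\mathbf G\times^{\mathbf P}X$ to the generalized Schubert variety $X\subset\mathbf G/\mathbf Q$, and then pull back along the smooth map $\mathbf G/\mathbf B\to\mathbf G/\mathbf Q$ to an ordinary Schubert variety $Y$.

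The only real difference is the last step. The paper simply invokes \cite[Theorem~2.2.1]{dCHL18} for ordinary Schubert varieties, while you re-derive that case from a Bott--Samelson resolution $\pi\colon Z\to Y$. That buys self-containedness, but two remarks apply.

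First, the affine paving (iii) of the fibres carries the whole argument, and you assert it without proof. It is true. Take the Bia\l{}ynicki-Birula cells $C_\epsilon\simeq\mathbb A^{n_\epsilon}$ of $Z$ for a dominant regular cocharacter, with $\epsilon$ lying over a fixed point $y\in W$. Equivariance under the unipotent radical of $\mathbf B$ gives $C_\epsilon\simeq\mathbb A^{\ell(y)}\times\bigl(C_\epsilon\cap\pi^{-1}(y)\bigr)$. Hence each intersection is a smooth affine variety contracted by $\mathbb G_m$ to a single point, and so is an affine space. You need this argument or a reference.

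Second, once (iii) is granted, pointwise purity is not the obstacle you expect. The stalk of $\pi_*\const{Z}$ at $x$ is $H^\bullet(\pi^{-1}(x))$, which is already even and pure of Tate type. These properties pass to the Frobenius-stable summand $\mathrm{IC}_Y$, so the Kazhdan--Lusztig slice argument is redundant.

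Finally, for the Tate-type claim you should state two facts, as the paper does. Rational points lift along $\mathbf G/\mathbf B\to\mathbf G/\mathbf Q$. By Lang's theorem, every $\mathbb F_q$-point of $\overline{\boldsymbol{\mathcal O}}$ is $\mathbf G(\mathbb F_q)$-conjugate to one in the fibre over the base point.
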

\begin{proof}
    Consider the first projection $\boldsymbol{\mathcal{O}}\to \mathbf{G}/\mathbf{P}$. We have $\overline{\boldsymbol{\mathcal{O}}}\simeq\mathbf{G}\times^{\mathbf{P}} {X}$, where ${X}$ is some generalized Schubert variety in $\mathbf{G}/\mathbf{Q}$. Let $i:{X}\to\overline{\boldsymbol{\mathcal{O}}}, x\mapsto[e,x]$ be the closed embedding of the fiber on $e\mathbf{P}/\mathbf{P}$, where $e$ is the unit of $\mathbf{G}$.  Let $\mathrm{IC}_{X}$ be the intersection cohomology complex of ${X}$. By induction equivalence \cite[Proposition 6.2.12]{Ac21}, we have $i^*\mathrm{IC}_{\boldsymbol{\mathcal{O}}}\simeq\mathrm{IC}_X$ (up to a shift). By the $\mathbf{G}$-equivariance of $\mathrm{IC}_{\boldsymbol{\mathcal{O}}}$,
    it is enough to show that $\mathrm{IC}_X$ has the properties stated in the proposition.
    
Let $\mathbf{B}\subset\mathbf{Q}\subset\mathbf{G}$ be a $\mathbb{F}_q$-Borel subgroup.
Let $W$ and $W_Q$ be the Weyl groups of $\mathbf{G}$ and $\mathbf{Q}$, respectively. 
    Without loss of generality, we assume $X=\overline{\mathbf{B}\cdot w\mathbf{Q}/\mathbf{Q}}$, where $w\in W$ is the maximal representative of a right coset $wW_Q$. Put $Y=\overline{\mathbf{B}\cdot w\mathbf{B}/\mathbf{B}}$. We have a smooth fibration $\pi:Y\to X$, and hence $\pi^*(\mathrm{IC}_X)=\mathrm{IC}_Y$ up to a shift. Since $Y$ is an ordinary Schubert variety,
$\mathrm{IC}_Y$, with its canonical split Frobenius
structure, is pointwise pure,
$\ast$-even or $\ast$-odd and pointwise Tate at $\mathbb{F}_q$-rational points
(cf. \cite[Theorem~2.2.1]{dCHL18}).
For any $x\in X(\overline{\mathbb{F}_q})$, we can choose $y\in Y(\overline{\mathbb{F}_q})$ with $\pi(y)=x$.  Since $\pi$ is also surjective on the $\mathbb{F}_q$-rational points, for any $x_0\in X(\mathbb{F}_q)$, we can choose $y_0\in Y(\mathbb{F}_q)$ with $\pi(y_0)=x_0$. We have $(\mathrm{IC}_Y)_y=(\pi^*(\mathrm{IC}_X))_y=(\mathrm{IC}_X)_x$ (resp., $(\mathrm{IC}_Y)_{y_0}=(\pi^*(\mathrm{IC}_X))_{y_0}=(\mathrm{IC}_X)_{x_0}$) up to a shift, the lemma follows.
\end{proof}

As before, we write $\square$ for $D$ or $O$, and $\mathfrak{c}$ for $\imath$ or $\jmath$. For $A\in\Theta_\square^\mathfrak{c}$, recall that the orbit $\boldsymbol{\mathcal{O}}_A$ is the $\mathbb{F}_q$-variety associated with the geometric Frobenius $F$ on $\widetilde{\boldsymbol{\mathcal{O}}}_A$. Denote by $\boldsymbol{\mathcal{O}}'_A$ the $\mathbb{F}_q$-variety associated with the split Frobenius $F_q$ on $\widetilde{\boldsymbol{\mathcal{O}}}_A$.
The orbits $\widetilde{\boldsymbol{\mathcal{O}}}_A$, $\boldsymbol{\mathcal{O}}_A$ and $\boldsymbol{\mathcal{O}}'_A$ are all locally closed, smooth, geometrically connected. This is clear for the case of $\square=D$ since $\mathbf{GL}_D$ is geometrically connected. For the case of $\square=O$, it follows from the geometrically connectedness of $\mathbf{SO}_{2d+2}$ because these $\mathbf{O}$-orbits are all $\mathbf{SO}$-orbits by Lemmas~\ref{biject D} \& \ref{lem:2SO-orbits}. Therefore, we can define the intersection cohomology complexes $\widetilde{\mathrm{IC}}_A$, $\mathrm{IC}_A$ and $\mathrm{IC}'_A$ as follows. 

Let $\ell$ be a prime number coprime to $q$.
Let \begin{equation}\label{normal-IC}\widetilde{\mathrm{IC}}_A:=\mathrm{IC}(\widetilde{\boldsymbol{\mathcal{O}}}_A,\underline{\overline{\mathbb{Q}}_\ell})[-\dim\widetilde{\boldsymbol{\mathcal{O}}}_A]\end{equation} be the ($\ell$-adic) shifted intersection cohomology complex on the closure of $\widetilde{\boldsymbol{\mathcal{O}}}_A$, whose restriction to $\widetilde{\boldsymbol{\mathcal{O}}}_A$ is the constant sheaf $\underline{\overline{\mathbb{Q}}_\ell}_{\widetilde{\boldsymbol{\mathcal{O}}}_A}$. Similarly, set $$\mathrm{IC}_A:=\mathrm{IC}(\boldsymbol{\mathcal{O}}_A,\underline{\overline{\mathbb{Q}}_\ell})[-\dim\boldsymbol{\mathcal{O}}_A] \quad\text{and}\quad \mathrm{IC}_A':=\mathrm{IC}(\boldsymbol{\mathcal{O}}_A',\underline{\overline{\mathbb{Q}}_\ell})[-\dim\boldsymbol{\mathcal{O}}_A'],$$ whose base changes to $\mathbb F_{q^2}$ are canonically isomorphic to $\widetilde{\mathrm{IC}}_A$.
There are two canonical Weil structures $\vartheta:F^*(\widetilde{\mathrm{IC}}_A)\simeq\widetilde{\mathrm{IC}}_A$ and $\vartheta':F_q^*(\widetilde{\mathrm{IC}}_A)\simeq\widetilde{\mathrm{IC}}_A$ that correspond to $\mathrm{IC}_A$ and $\mathrm{IC}_A'$, respectively.

Applying Lemma~\ref{even tate} directly, we obtain the following proposition.
\begin{prop}\label{prop:ICC-stalk}
The intersection cohomology complex $\widetilde{\mathrm{IC}}_A$ is pointwise $*$-even, pure of weight $0$. The stalks of $\mathsf H^i(\mathrm{IC}'_A)$ at
$\mathbb F_q$-rational points are of Tate type.
\end{prop}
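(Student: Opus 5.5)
The plan is to reduce everything to Lemma~\ref{even tate}, applied to the split group over $\mathbb{F}_{q^2}$ and over $\mathbb{F}_q$.

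\emph{The complex $\widetilde{\mathrm{IC}}_A$.} Here $\widetilde{\boldsymbol{\mathcal{O}}}_A$ is an orbit of the split group $\mathbf{GL}_{D,q^2}$ (resp.\ $\mathbf{SO}_{2d+2,q^2}$, using Lemma~\ref{lem:2SO-orbits}) on a product of two partial flag varieties. Hence Lemma~\ref{even tate}, applied with $\mathbb{F}_{q^2}$ as base field, shows that $\mathrm{IC}(\widetilde{\boldsymbol{\mathcal{O}}}_A,\underline{\overline{\mathbb{Q}}_\ell})$ is pointwise pure and is $*$-even or $*$-odd according to the parity of $\dim\widetilde{\boldsymbol{\mathcal{O}}}_A$. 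The normalization \eqref{normal-IC} shifts by $-\dim\widetilde{\boldsymbol{\mathcal{O}}}_A$. This shift moves the nonzero stalk cohomology into even degrees, so $\widetilde{\mathrm{IC}}_A$ is $*$-even.

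For the weight statement, note that the perverse IC complex with its canonical Weil structure is pure of weight $\dim\widetilde{\boldsymbol{\mathcal{O}}}_A$ (Gabber). Under the paper's conventions, a shift $[-m]$ without Tate twist lowers the weight by $m$, so $\widetilde{\mathrm{IC}}_A$ is pure of weight $0$. Equivalently, its restriction to the open orbit is the constant sheaf, which has weight $0$.

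One subtlety needs care. Purity is a statement about a Weil structure, and $\widetilde{\mathrm{IC}}_A$ carries several: the $\mathbb{F}_{q^2}$ one, $\vartheta$ and $\vartheta'$. I would state weight $0$ for the $\mathbb{F}_{q^2}$-structure. Since $\vartheta^{2}$ and $\vartheta'^{2}$ both recover the canonical $\mathbb{F}_{q^2}$-structure (by uniqueness of the Weil structure extending the one on the constant sheaf of the open orbit), weights measured via $F$ or $F_q$ are then also $0$. Parity is a geometric statement and does not depend on this choice.

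\emph{Tate type for $\mathrm{IC}'_A$.} $\mathrm{IC}'_A$ is the IC complex of the orbit $\boldsymbol{\mathcal{O}}'_A$ of the split $\mathbb{F}_q$-form ($\mathbf{GL}_D$ or $\mathbf{SO}_{2d+2}$) acting on split flag varieties. Lemma~\ref{even tate}, applied over $\mathbb{F}_q$, therefore gives directly that its stalks at $\mathbb{F}_q$-rational points are of Tate type.

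To make this application legitimate, I must check that $\boldsymbol{\mathcal{O}}'_A$ really is such an orbit defined over $\mathbb{F}_q$. This requires $\widetilde{\boldsymbol{\mathcal{O}}}_A$ to be $F_q$-stable. For $\square=D$ this holds because the row/column data of $A$ is preserved by $F_q$. For $\square=O$ it holds because $F_q$ fixes each $\mathbf{SO}$-orbit, as used in the proof of Proposition~\ref{O-orbits}.

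\emph{Expected obstacle.} The only real difficulty is bookkeeping: matching the shift and weight normalization, and confirming that the Weil structures $\vartheta$, $\vartheta'$ are the canonical ones compatible with Lemma~\ref{even tate}. Everything else is a direct citation of that lemma.
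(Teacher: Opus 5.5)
Your proposal is correct and follows the paper's own argument: the paper obtains this proposition by applying Lemma~\ref{even tate} directly, over $\mathbb{F}_{q^2}$ for $\widetilde{\mathrm{IC}}_A$ and over $\mathbb{F}_q$ for the split form giving $\mathrm{IC}'_A$; your shift and weight bookkeeping and your check of $F_q$-stability of the orbits make its implicit steps explicit. One small imprecision: for $\square=D$, $\widetilde{\boldsymbol{\mathcal{O}}}_A$ is $F_q$-stable because $F_q$ preserves every intersection dimension $\dim(\mathfrak{f}_i\cap\mathfrak{f}'_j)$, i.e.\ the whole matrix $A$ (equivalently, $F_q$ acts trivially on $W$), not merely the row and column sums of $A$.
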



\subsection{KL polynomials and Canonical basis}\label{subsec:canonical-basis}
Take $A,B\in\Theta_\square^\mathfrak{c}$. Let $x_0\in\boldsymbol{\mathcal{O}}_{B}(\mathbb{F}_q)$ and $x_0'\in\boldsymbol{\mathcal{O}}_{B}'(\mathbb{F}_q)$ be two $\mathbb{F}_q$-rational points, which we regard as $\mathbb{F}_{q^2}$-rational points of $\widetilde{\boldsymbol{\mathcal{O}}}_{B}$ canonically. 

Each $\mathsf{H}^i(\widetilde{\mathrm{IC}}_A|_{\widetilde{\mathcal{O}}_{B}})$ is a geometrically constant sheaf. It is clear for the case of $\square=D$ since the stabilizer in $\mathrm{GL}_D$ of each geometric point in $\widetilde{\boldsymbol{\mathcal{O}}}_{B}$ is connected by Lemma~\ref{connected}. For the case of $\square=O$, we again use the fact that each $\widetilde{\boldsymbol{\mathcal{O}}}_B$ is also a $\mathbf{SO}_{2d+2,q^2}$-orbit, the stabilizer in $\mathrm{SO}_{2d+2}$ of $x\in\widetilde{\boldsymbol{\mathcal{O}}}_B(\overline{\mathbb{F}_q})$ is isomorphic to $P\cap w Q w^{-1}$ for two standard parabolic subgroups $P, Q$ of $\mathrm{SO}_{2d+2}$, which is also connected by Lemma~\ref{connected}.

We have explained in \S\ref{subsec:ICC} that $\widetilde{\boldsymbol{\mathcal{O}}}_{B}$ is geometrically connected. So we have canonical isomorphisms 
\begin{equation}\label{iso2}
\mathsf{H}^i(\widetilde{\mathrm{IC}}_A)_{x_0}\simeq\Gamma\big(\widetilde{\boldsymbol{\mathcal{O}}}_{B},\mathsf{H}^i(\widetilde{\mathrm{IC}}_A|_{\widetilde{\boldsymbol{\mathcal{O}}}_{B}})\big)\simeq\mathsf{H}^i(\widetilde{\mathrm{IC}}_A)_{x_0'}.
\end{equation}
Here and below, $\Gamma$ denotes geometric global sections, taken after base change to $\overline{\mathbb {F}_q}$.
Under the isomorphisms \eqref{iso2}, we identify the action $\vartheta_{x_0}:\mathsf{H}^i(\widetilde{\mathrm{IC}}_A)_{x_{0}}\to\mathsf{H}^i(\widetilde{\mathrm{IC}}_A)_{x_{0}}$ (resp., $\vartheta'_{x_0'}:\mathsf{H}^i(\widetilde{\mathrm{IC}}_A)_{x'_{0}}\to\mathsf{H}^i(\widetilde{\mathrm{IC}}_A)_{x'_{0}}$) with the action of $F$ (resp., $F_q$) on $\Gamma\big(\widetilde{\boldsymbol{\mathcal{O}}}_{B},\mathsf{H}^i(\widetilde{\mathrm{IC}}_A|_{\widetilde{\boldsymbol{\mathcal{O}}}_{B}})\big)$.
Introduce the Kazhdan-Lusztig polynomial \begin{align}\label{PA}
P_{B,A}:=&\sum\limits_{i\in\mathbb{Z}}(-1)^i\mathrm{tr}\bigg(F;\Gamma\big(\widetilde{\boldsymbol{\mathcal{O}}}_{B},\mathsf{H}^i(\widetilde{\mathrm{IC}}_{A}|_{\widetilde{\boldsymbol{\mathcal{O}}}_{B}})\big)\bigg)q^{-i/2}(-v)^i\cdot v^{-d(A)+d(B)}\\ \nonumber
=&\sum\limits_{i\in\mathbb{Z}}(-1)^i\mathrm{tr}\big(\vartheta_{x_0};\mathsf{H}^i(\widetilde{\mathrm{IC}}_{A})_{x_0}\big)q^{-i/2}(-v)^i\cdot v^{-d(A)+d(B)}.\end{align} 
Here we use geometric global sections to compare the $F$-action and $F_q$-action on a common vector space. 
We have $P_{A,A}=1$ because $\widetilde{\mathrm{IC}}_A$ descends to $\mathrm{IC}(\boldsymbol{\mathcal{O}}_A,\underline{\overline{\mathbb{Q}}_\ell})[-\dim{\boldsymbol{\mathcal{O}}}_A]$ on $\pmb{\sF}^\mathfrak{c}_\square\times\pmb{\sF}^\mathfrak{c}_\square$.
But unlike the case of equal parameters (cf. \cite[\S 3.6 (3.27)]{BKLW18}), we only have $P_{B,A}\in v^{-1}\mathbb{Z}[v^{-1}]$ if $B<A$. The reason is as follows.

Thanks to Proposition~\ref{prop:ICC-stalk}, the eigenvalues of $F_q$ on $\Gamma\big(\widetilde{\boldsymbol{\mathcal{O}}}_{B},\mathsf{H}^i(\widetilde{\mathrm{IC}}_{A}|_{\widetilde{\boldsymbol{\mathcal{O}}}_{B}})\big)$ are all equal to $q^{i/2}$, by which we have \begin{equation}\label{P_A,B}
P_{B,A}=\sum_{i\in\mathbb{Z}}\mathrm{tr}\bigg(\tau;\Gamma\big(\widetilde{\boldsymbol{\mathcal{O}}}_{B},\mathsf{H}^i(\widetilde{\mathrm{IC}}_{A}|_{\widetilde{\boldsymbol{\mathcal{O}}}_{B}})\big)\bigg)v^{i-d(A)+d(B)}.
\end{equation} Since $\tau$ is an involution, $P_{B,A}$ may have negative coefficients. Now, by \cite[\S 1.4]{BLM90} and \eqref{P_A,B}, we have $P_{B,A}\in v^{-1}\mathbb{Z}[v^{-1}]$ for $B<A$.

 For $A\in\Theta_\square^\mathfrak{c}$, we define $$\{A\}=\sum_{B\in\Theta_\square^\mathfrak{c},B\le A}P_{B,A}[B].$$
 Then $\mathbf{B}_{n,d}^{\mathfrak{c},\square}:=\{\{A\}\mid A\in\Theta_\square^\mathfrak{c}\}$ forms an $\mathcal{A}$-basis for $\mathbf{S}^{\mathfrak{c},\square}_{n,d}$, called the canonical basis.

\subsection{Generators}\label{generator}

\subsubsection{$\jmath$-version}\label{j-version}

For $i=1,2,\ldots,n$ and $\square=D,O$, we define $$\mathbf{e}_i=\sum_{B}[B]=\sum_B\{B\},\quad \mathbf{f}_i=\sum_{C}[C]=\sum_{C}\{C\}\in\mathbf{S}^{\jmath,\square}_{n,d},$$
where the sums run over $B,C\in\Theta_\square^\jmath$ such that $B-E^\theta_{i+1,i}$ and $C-E^\theta_{i,i+1}$ are diagonal. Note that $[B]=\{B\}$ and $[C]=\{C\}$ for these $B$ and $C$ because they are all minimal under the Bruhat order in $\Theta_\square^\jmath$ by Lemma~\ref{lem:order-minimal}.

Let $\Theta_{\square}^{\jmath,\mathrm{diag}}$ be the set of all diagonal matrices in $\Theta_\square^\jmath$. For $\lambda=\text{diag}\{\lambda_1,\cdots,\lambda_N\}\in\Theta_\square^{\jmath,\mathrm{diag}}$, we rewrite $1_\lambda=e_\lambda\in\mathbf{S}^{\jmath,\square}_{n,d}$. For $a=1,2,\ldots,n+1$, we define $$\mathbf{d}_a^{\pm1}=\sum_{\lambda\in\Theta_D^{\jmath,\mathrm{diag}}}v^{\mp2\lambda_a}1_{\lambda}\quad\text{if}\quad \square=D, \quad\text{and} \quad \sum_{\lambda\in\Theta_{O}^{\jmath,\mathrm{diag}}}v^{\mp\lambda_a}1_{\lambda}\quad\text{if}\quad \square=O.$$

The following lemma is an analog of \cite[Corollary~3.13]{BKLW18}.
\begin{lem}
The elements $\mathbf{e}_i, \mathbf{f}_i$ ($1\leq i\leq n$) and $\mathbf{d}_a$ ($1\leq a\leq n+1$) generate the $\mathbb{Q}(v)$-algebra ${}_{\mathbb{Q}}\mathbf{S}^{\jmath,\square}_{n,d}$, ($\square=D,O$).
\end{lem}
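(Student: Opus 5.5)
We will adapt the standard BLM triangularity argument (\cite[\S3]{BLM90}, \cite[Corollary~3.13]{BKLW18}) to the present setting. Over $\mathbb{Q}(v)$ the elements $\mathbf{d}_a^{\pm1}$ generate all idempotents $1_\lambda$. Indeed, for $\square=D$ the elements $\mathbf{d}_a$ act on $1_\lambda$ by $v^{-2\lambda_a}$, and for $\square=O$ by $v^{-\lambda_a}$. The map $\lambda\mapsto(\lambda_1,\dots,\lambda_{n+1})$ is injective on $\Theta^{\jmath,\mathrm{diag}}_\square$, since $\lambda_{N+1-a}=\lambda_a$ and $\lambda_{n+1}$ is determined by the total size. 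Hence the finitely many distinct characters separate the $1_\lambda$, and a Vandermonde/Lagrange interpolation expresses each $1_\lambda$ as a polynomial in the $\mathbf{d}_a^{\pm1}$. It therefore suffices to show that each $e_A$ (equivalently $[A]$) lies in the subalgebra generated by the $\mathbf{e}_i,\mathbf{f}_i,1_\lambda$.

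We argue by induction on the Bruhat order (or on $d(A)$). For $A\in\Theta^\jmath_\square$ we construct, as in \cite[\S3.9]{BLM90}, an ordered product $\mathcal{M}(A)$ of divided-power-type elements. The building blocks are $e_B$ with $B-RE^\theta_{h,h+1}$ or $B-RE^\theta_{h+1,h}$ diagonal, one factor for each off-diagonal entry $a_{ij}$ with $i<j$ (respectively $i>j$), taken in the standard BKLW order. Corollary~\ref{cor:characteristic_function_multiplication_R} (for $D$) and Corollary~\ref{cor:characteristic_function_multiplication_R_O(D)} (for $O$) then give
\[
\mathcal{M}(A)=c_A\, e_A+\sum_{A'<A}c_{A'}e_{A'},
\]
where $c_A$ is a nonzero polynomial in $\q$. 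This follows the proof of \cite[Theorem~3.10]{BKLW18}, since the leading term comes from the choice $\mathbf t$ concentrated in one position, exactly as in the split case.

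Each $e_B$ with $B-RE^\theta_{h,h\pm1}$ diagonal is, up to idempotents, a nonzero scalar multiple of $\mathbf{e}_h^R$ or $\mathbf{f}_h^R$. This is shown by induction on $R$ using the relation preceding Corollary~\ref{cor:characteristic_function_multiplication_R}: $e_Ae_B=\frac{\q^{2(1+R)}-1}{\q^2-1}e_C$ in type $D$, with $\q$ in place of $\q^2$ in type $O$. The coefficient $\phi$ or $\psi$ that appears when $h=n$ for $\mathbf{f}_n$ is also a nonzero polynomial, by the closed forms in Lemmas~\ref{lem:count1} and \ref{1-isotrop D}, and $[t]_\q$ is invertible in $\mathbb{Q}(v)$. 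Inverting the unitriangular system over $\mathbb{Q}(v)$ then yields every $e_A$.

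The main obstacle is verifying that the leading coefficient $c_A$ is nonzero and that all the other terms are strictly lower in the Bruhat order, in the unequal-parameter setting. The third formulas of the corollaries contain extra factors $\phi(2i-1+a_{n+1,n+1},\q)$ or $\psi(\cdot)$ and additional powers of $\q$. We would check that these factors only rescale terms and never create new leading matrices. The check should go through because the matrices appearing are the same as in the equal-parameter formulas of \cite{BKLW18}, and only the coefficients change by nonzero specialisations. Alternatively, one can specialize Theorem~\ref{thm:specialofS} from the generic generation result of \cite{LL21}, provided the generic leading coefficients stay nonzero at $\mathbf{u}^2,\mathbf{v}^2$ equal to powers of $\q$.
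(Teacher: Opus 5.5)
Your proposal is correct and follows essentially the same route as the paper. A BKLW-type triangularity (Theorem~3.10 of \cite{BKLW18}) reduces generation to the divided powers $[\lambda+RE^\theta_{i,i+1}]$ and $[\lambda+RE^\theta_{i+1,i}]$; the multiplication formulas show these are nonzero $\mathbb{Q}(v)$-multiples of products of single-step elements; and the single-step elements, together with the idempotents $1_\lambda$, come from $\mathbf{e}_i,\mathbf{f}_i,\mathbf{d}_a$ by a Vandermonde argument.

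One small inaccuracy, harmless for the argument, concerns iterating the generator attached to $E^\theta_{n+1,n}$ on an idempotent. In the paper's convention this generator is $\mathbf{e}_n$, not $\mathbf{f}_n$. Only the $k=n$ term of the third formula survives, so the coefficient is just $\frac{\q^{2(R+1)}-1}{\q^2-1}$ (resp.\ $\frac{\q^{R+1}-1}{\q-1}$), and no $\phi$ or $\psi$ factor occurs there.
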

\begin{proof}
Similarly to \cite[Theorem~3.10]{BKLW18}, we can prove that the elements $[\lambda+RE_{i,i+1}^\theta]$, $[\lambda+RE_{i+1,i}^\theta]$ ($R\in\mathbb{N}$, $1\leq i\leq n$) generate the whole $\A$-algebra $\mathbf{S}^{\jmath,\square}_{n,d}$, where each $\lambda=\mathrm{diag}(\lambda_1,\ldots,\lambda_N)$ is a diagonal matrix such that $\lambda+RE_{i,i+1}$ or $\lambda+RE_{i+1,i}^\theta\in\Theta^\jmath_\square$. 

Using multiplication formulas derived in Proposition~\ref{lem:characteristic_function_multiplication} \& \ref{lem:characteristic_function_multiplication_O(D)} with normalization, we can check directly that
$$[\lambda+(R-1)E_{i,i}^\theta+E_{i,i+1}^\theta][\lambda+E_{i+1,i+1}^\theta+(R-1)E_{i,i+1}^\theta]=k_1[\lambda+RE_{i,i+1}^\theta]+\ \text{lower terms}, \quad\text{and}$$
$$[\lambda+(R-1)E_{i+1,i+1}^\theta+E_{i+1,i}^\theta][\lambda+E_{i,i}^\theta+(R-1)E_{i+1,i}^\theta]=k_2[\lambda+RE_{i+1,i}^\theta]+\ \text{lower terms},$$ 
where $k_1,k_2\in\mathbb{Q}(v)$ are nonzero, and the lower terms are understood with respect to the Bruhat order defined in \S\ref{subsec:bruhatorder} (see \cite{LL21} for a direct description of the Bruhat order in terms of the entries of the matrices in $\Theta_{\square}^\jmath$).
Therefore, the elements $[\lambda+E_{i,i+1}^\theta]$ and $[\lambda+E_{i+1,i}^\theta]$ ($1\leq i\leq n$) are enough to generate the whole $\mathbb{Q}(v)$-algebra ${}_{\mathbb{Q}}\mathbf{S}^{\jmath,\square}_{n,d}$. Since $[\lambda+E_{i,i+1}^\theta]$ and $[\lambda+E_{i+1,i}^\theta]$ can be generated by $\mathbf{e}_i, \mathbf{f}_i$ ($1\leq i\leq n$) and $\mathbf{d}_a$ ($1\leq a\leq n+1$) via a standard Vandermonde determinant argument, the proof is complete.
\end{proof}

\begin{prop}
   The following relations hold in ${}_\mathbb{Q}\mathbf{S}^{\jmath,D}_{n,d}$: for $1\leq i,j\leq n$ and $1\leq a,b\leq n+1$,
\begin{gather}
\left\{\begin{aligned}\label{common}
&\mathbf{d}_a\mathbf{d}_a^{-1}=1=\mathbf{d}_a^{-1}\mathbf{d}_a, \quad \mathbf{d}_a\mathbf{d}_b=\mathbf{d}_b\mathbf{d}_a,\\
&\mathbf{d}_{n+1}\mathbf{e}_{i}\mathbf{d}_{n+1}^{-1}=v^{-4\delta_{n,i}}\mathbf{e}_i, \quad \mathbf{d}_{n+1}\mathbf{f}_{i}\mathbf{d}_{n+1}^{-1}=v^{4\delta_{n,i}}\mathbf{f}_i,\\
&\mathbf{d}_a\mathbf{e}_i\mathbf{d}_a^{-1}=v^{2\delta_{a,i}-2\delta_{a,i+1}}\mathbf{e}_i,\quad
\mathbf{d}_a\mathbf{f}_i\mathbf{d}_a^{-1}=v^{2\delta_{a,i+1}-2\delta_{a,i}}\mathbf{f}_i \quad(a\neq n+1),\\
&\mathbf{e}_i\mathbf{f}_j-\mathbf{f}_j\mathbf{e}_i=\delta_{i,j}\frac{\mathbf{d}_i\mathbf{d}_{i+1}^{-1}-\mathbf{d}_i^{-1}\mathbf{d}_{i+1}}{v^2-v^{-2}} \quad\big((i,j)\neq(n,n)\big), \\
&\mathbf{e}_i^2\mathbf{e}_j+\mathbf{e}_j\mathbf{e}_i^2=\llbracket2\rrbracket_{v^2}\mathbf{e}_i\mathbf{e}_j\mathbf{e}_i,\quad \mathbf{f}_i^2\mathbf{f}_j+\mathbf{f}_j\mathbf{f}_i^2=\llbracket2\rrbracket_{v^2}\mathbf{f}_i\mathbf{f}_j\mathbf{f}_i \quad (|i-j|=1),\\
&\mathbf{e}_i\mathbf{e}_j=\mathbf{e}_j\mathbf{e}_i,\quad \mathbf{f}_i\mathbf{f}_j=\mathbf{f}_j\mathbf{f}_i \quad (|i-j|>1),\end{aligned}\right.\\
\mathbf{e}_n^2\mathbf{f}_n+\mathbf{f}_n\mathbf{e}_n^2=\llbracket2\rrbracket_{v^2}\big(\mathbf{e}_n\mathbf{f}_n\mathbf{e}_n-\mathbf{e}_n(v^3\mathbf{d}_n\mathbf{d}_{n+1}^{-1}+v^{-3}\mathbf{d}_n^{-1}\mathbf{d}_{n+1})\big),\\
\mathbf{f}_n^2\mathbf{e}_n+\mathbf{e}_n\mathbf{f}_n^2=\llbracket2\rrbracket_{v^2}\big(\mathbf{f}_n\mathbf{e}_n\mathbf{f}_n-(v^3\mathbf{d}_n\mathbf{d}_{n+1}^{-1}+v^{-3}\mathbf{d}_n^{-1}\mathbf{d}_{n+1})\mathbf{f}_n\big).
\end{gather}
    \end{prop}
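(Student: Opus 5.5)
Two routes are available: check each relation directly from the multiplication formulas, or transport the known relations of the generic two-parameter iSchur algebra through Theorem~\ref{thm:specialofS}(1)--(2). I will use the direct route for the type A relations and the specialization as a cross-check for the two Serre-type relations at $i=n$.

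\textbf{Relations involving the $\mathbf{d}_a$.} These are immediate. The $1_\lambda$ are orthogonal idempotents summing to $1$, so the $\mathbf{d}_a^{\pm1}$ commute and are mutually inverse. For conjugation, note that $\mathbf{e}_i$ is a sum of $[B]$ with $B-E^\theta_{i+1,i}$ diagonal. Then $1_{\mathrm{ro}(B)}[B]1_{\mathrm{co}(B)}=[B]$, and $\mathrm{ro}(B)$ and $\mathrm{co}(B)$ differ by $E^\theta_{i+1,i}$ on the diagonal. Hence $\mathbf{d}_a\mathbf{e}_i\mathbf{d}_a^{-1}=v^{-2(\mathrm{ro}(B)_a-\mathrm{co}(B)_a)}\mathbf{e}_i$.
\begin{itemize}
\item For $a\ne n+1$, the index $a$ is hit by $E^\theta_{i,i}$ or $E^\theta_{i+1,i+1}$ only once, which gives $v^{2\delta_{a,i}-2\delta_{a,i+1}}$.
\item For $a=n+1$ and $i=n$, we have $E^\theta_{n+1,n+1}=2E_{n+1,n+1}$. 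The doubled entry produces the exponent $\mp4$.
\end{itemize}

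\textbf{Type A relations.} These are $[\mathbf{e}_i,\mathbf{f}_j]$ for $(i,j)\ne(n,n)$, the Serre relations for $|i-j|=1$, and commutation for $|i-j|>1$. Away from the middle rows they only involve the first two formulas of Proposition~\ref{lem:characteristic_function_multiplication}. These coincide with the BLM formulas with $\q$ replaced by $\q^2$, i.e. $v\mapsto v^2$, together with the normalization \eqref{d-a}, which is literally the type A normalization $\sum_{i\ge k,j<l}a_{ij}a_{kl}$. So I will evaluate both sides on $1_\lambda$, expand via the multiplication formulas applied to a general $[A]$, and compare coefficients exactly as in \cite[\S3]{BLM90} and \cite[Prop.~3.4]{BKLW18}. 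The only change is that every quantum integer becomes $\llbracket\cdot\rrbracket_{v^2}$.

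\textbf{The relations at $i=n$ (main obstacle).} I will compute $\mathbf{e}_n^2\mathbf{f}_n$, $\mathbf{e}_n\mathbf{f}_n\mathbf{e}_n$ and $\mathbf{f}_n\mathbf{e}_n^2$ on each $1_\lambda$. The products $\mathbf{e}_n^2$ and $\mathbf{f}_n$ are handled using Corollary~\ref{cor:characteristic_function_multiplication_R} with $R=2$. The third formula of Proposition~\ref{lem:characteristic_function_multiplication} brings in the middle-row coefficient $\phi(1+a_{n+1,n+1},\q)$ and the extra factor $\q$ for $k>n+1$. Every resulting term has the form $[\lambda'+aE^\theta_{n,n+1}+bE^\theta_{n+1,n}]$ with small $a,b$, so the identity reduces to a finite list of scalar identities in $v$ indexed by $\lambda_n$ and $\lambda_{n+1}$.

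The delicate case is the $[\,\cdot\,+E^\theta_{n,n+1}]$-term. Its coefficient involves
\[\phi(\lambda_{n+1}+1,\q)-\phi(\lambda_{n+1}-1,\q)\]
for $D=2d+1$, where $\lambda_{n+1}$ is odd. After normalization this difference should collapse to $\llbracket2\rrbracket_{v^2}(v^3\cdot v^{-2(\lambda_n-\lambda_{n+1})}+v^{-3}\cdot v^{2(\lambda_n-\lambda_{n+1})})$, matching the specialization $\mathbf{u}=v^3$, $\mathbf{v}=v^2$. I expect this simplification, together with tracking the normalization exponents $d(A)-r(A)$ for the middle row, to be the main difficulty.

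As a cross-check, instead of finishing by hand I will verify that $\mathbf{e}_i,\mathbf{f}_i,\mathbf{d}_a$ are images of the generic generators of $\mathbb{S}^\jmath_{n,d}$ under Theorem~\ref{thm:specialofS}(1)--(2), using $[A]_{LL}\mapsto[A]$ or $[A-E_{n+1,n+1}]$. I will then specialize the known relations of \cite{LL21} at $\mathbf{u}=v^{3}$ (resp. $v$), $\mathbf{v}=v^2$. The coefficient $v^{3}$ in the displayed relation corresponds to the odd case $D=2d+1$; for $D=2d$ I would recheck whether it should be $v$, and if so state the relation with $\mathbf{u}$ in place of $v^3$.

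The last relation, $\mathbf{f}_n^2\mathbf{e}_n+\mathbf{e}_n\mathbf{f}_n^2=\cdots$, follows from the $\mathbf{e}_n$-version by applying the anti-involution $[A]\mapsto[A^{\mathrm{tr}}]$ induced by swapping the two factors of $\mathscr{F}\times\mathscr{F}$. This anti-involution sends $\mathbf{e}_i\leftrightarrow\mathbf{f}_i$ and fixes each $\mathbf{d}_a$. I still need to check that it preserves the normalization, which holds since $d(A)-r(A)$ is transpose-compatible up to the row/column dimension terms that the $\mathbf{d}_a$ absorb.
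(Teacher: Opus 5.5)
Your approach is the paper's: the $\mathbf{d}_a$-relations are immediate, the other relations in \eqref{common} are referred to \cite{BLM90,BKLW18}, and the penultimate relation is proved by evaluating $\mathbf{e}_n^2\mathbf{f}_n$, $\mathbf{e}_n\mathbf{f}_n\mathbf{e}_n$ and $\mathbf{f}_n\mathbf{e}_n^2$ on each $1_\lambda$ with Proposition~\ref{lem:characteristic_function_multiplication}. The paper only says the last relation is ``similar''. Your use of $[A]\mapsto[A^{\mathrm{tr}}]$ there is a valid shortcut:
\begin{itemize}
\item swapping the two factors makes $e_A\mapsto e_{A^{\mathrm{tr}}}$ an anti-automorphism, and $d(A^{\mathrm{tr}})=d(A)$;
\item the discrepancy $v^{r(A^{\mathrm{tr}})-r(A)}$ depends only on $\mathrm{ro}(A)$ and $\mathrm{co}(A)$, so it telescopes in products;
\item the map swaps $\mathbf{e}_n\leftrightarrow\mathbf{f}_n$ and fixes the $\mathbf{d}_a$, so it carries the penultimate relation onto the last one.
\end{itemize}

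Your forecast of the middle-row computation is wrong, and with it your hedge about $D=2d$. No difference $\phi(\lambda_{n+1}+1,\q)-\phi(\lambda_{n+1}-1,\q)$ ever occurs. The only $\phi$-contributions are $v^{-4\lambda_{n+1}-2\lambda_n+2}(v^4+1)\phi(\lambda_{n+1}-1,\q)$ in $\mathbf{e}_n^2\mathbf{f}_n1_\lambda$ and the identical term in $\llbracket2\rrbracket_{v^2}\mathbf{e}_n\mathbf{f}_n\mathbf{e}_n1_\lambda$, and they cancel outright.

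After all cancellations, what survives is $-\llbracket2\rrbracket_{v^2}(v^{-2\lambda_n+2}+v^{-4\lambda_{n+1}+2\lambda_n-4})\,e_{\lambda-E^\theta_{nn}+E^\theta_{n+1,n}}$. It comes from the factor $\q^{1+2\lambda_{n+1}}$ of the $k=n+2$ term and from the $\q^2$-integers $[\lambda_n]$ and $[\lambda_n-1]$. The normalization $d(A)-r(A)=2\lambda_{n+1}+1$ then turns it into $-\llbracket2\rrbracket_{v^2}(v^{2\lambda_{n+1}-2\lambda_n+3}+v^{-2\lambda_{n+1}+2\lambda_n-3})\,\mathbf{e}_n1_\lambda$.

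The parity of $\lambda_{n+1}$, hence of $D$, never enters, so $v^3$ is correct for $D=2d$ as well. This $v^3$ is not the specialized $\mathbf{u}$. In \cite{LL21} the coefficient is $\mathbf{u}\mathbf{v}$, and the matching of $d_{n+1}$ with $\mathbf{d}_{n+1}$ shifts with the middle entry; compare Proposition~\ref{prop:surjqsch}, where $d_{n+1}\mapsto v^{-3}\mathbf{d}_{n+1}$ for both parities.

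Your proposed fallback would therefore replace a true relation by a false one. Take $D=4$, $n=1$, $\lambda=(2,0,2)$, $A_1=\sm{1&0&0\\1&0&1\\0&0&1}$ and $A_4=\sm{0&0&1\\1&0&1\\1&0&0}$, so that $[A_1]=\mathbf{e}_11_\lambda$. A direct count gives:
\begin{itemize}
\item $\mathbf{e}_1^2\mathbf{f}_11_\lambda=0$;
\item $\mathbf{f}_1\mathbf{e}_1^21_\lambda=\llbracket2\rrbracket_{v^2}(v^{-3}[A_1]+[A_4])$;
\item $\mathbf{e}_1\mathbf{f}_1\mathbf{e}_11_\lambda=(v+v^{-1}+v^{-3})[A_1]+[A_4]$.
\end{itemize}
A coefficient $c$ in place of $v^3$ must then satisfy $cv^{-4}+c^{-1}v^4=v+v^{-1}$. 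This holds for $c=v^3$ and fails for $c=v$.
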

\begin{proof}
The above relations except the last two are identical to the case of equal parameters. Hence, they can be verified as the same as in \cite{BLM90} and \cite{BKLW18}. 
In the following, we present the details for the penultimate relation, while the last one is similar.

Using Proposition~\ref{lem:characteristic_function_multiplication}, a direct computation brings us
        \begin{align*}
            \e_n^2\f_n 1_\lambda =& v^{-4\lambda_{n+1} - 2\lambda_n + 2}(v^4 + 1)\big(e_{\lambda - 2E_{nn}^\theta + 2E_{n+1,n}^\theta - E_{n+1, n+1}^\theta + E_{n, n+1}^\theta}\\
            &+ \frac{(v^{2\lambda_{n+1}} - (-1)^{\lambda_{n+1}})(v^{2\lambda_{n+1} - 2} - (-1)^{\lambda_{n+1} - 1})}{v^4 - 1} e_{\lambda - E_{nn}^\theta + E_{n+1, n}^\theta}\big),\\
            \f_n\e_n^2 1_\lambda =& v^{-4\lambda_{n+1} - 2\lambda_n - 2}(v^4 + 1)\big(e_{\lambda - 2E_{nn}^\theta + E_{n+1,n}^\theta + E_{n,n+2}^\theta}\\
            &+ e_{\lambda - 2E_{nn}^\theta + 2E_{n+1,n}^\theta - E_{n+1, n+1}^\theta + E_{n, n+1}^\theta} + \frac{v^{4\lambda_n - 4} - 1}{v^4 - 1}e_{\lambda - E_{nn}^\theta + E_{n+1, n}^\theta}\big),\\
            \e_n\f_n\e_n 1_\lambda =& v^{-4\lambda_{n+1} - 2\lambda_n}\big(e_{\lambda - 2E_{nn}^\theta + E_{n+1,n}^\theta + E_{n,n+2}^\theta} \\
            &+ (v^4 + 1)e_{\lambda - 2E_{nn}^\theta + 2E_{n+1,n}^\theta - E_{n+1, n+1}^\theta + E_{n, n+1}^\theta}\\
            &+ (v^{4\lambda_{n+1} + 2} + v^4\frac{(v^{2\lambda_{n+1}} - (-1)^{\lambda_{n+1}})(v^{2\lambda_{n+1} - 2} - (-1)^{\lambda_{n+1} - 1})}{v^4 - 1} \\
            &+ \frac{v^{4\lambda_n} - 1}{v^4 - 1}) e_{\lambda - E_{nn}^\theta + E_{n+1, n}^\theta}\big).
        \end{align*}
Thus,
        \begin{align*}
            (\e_n^2\f_n - \llbracket 2\rrbracket_{v^2} \e_n\f_n\e_n + \f_n\e_n^2) 1_\lambda &= - (v^2 + v^{-2})(v^{-2\lambda_n + 2} + v^{-4\lambda_{n+1} + 2\lambda_n - 4})e_{\lambda - E_{nn}^\theta + E_{n+1, n}^\theta}\\
            &= - \llbracket 2\rrbracket_{v^2}(v^{2\lambda_{n+1} - 2\lambda_n + 3} + v^{-2\lambda_{n+1} + 2\lambda_n - 3})\e_n1_\lambda \\
            &= - \llbracket 2\rrbracket_{v^2} \e_n (v^3\bd_n\bd_{n+1}^{-1} + v^{-3}\bd_n^{-1}\bd_{n+1})1_{\lambda},
        \end{align*}
        by which we get the penultimate relation since $\sum_{\lambda\in\Theta_{D}^{\jmath,\mathrm{diag}}}1_{\lambda}$ is the identity element in ${}_\mathbb{Q}\mathbf{S}_{n,d}^{\jmath,D}$. The proof is complete.
\end{proof}

Similarly, we can use the formulas in Proposition~\ref{lem:characteristic_function_multiplication_O(D)} to prove the following.
\begin{prop}
   The following relations hold in ${}_\mathbb{Q}\mathbf{S}^{\jmath,O}_{n,d}$: for $1\leq i,j\leq n$ and $1\leq a,b\leq n+1$,
\begin{gather}
\left\{
\begin{aligned}\label{common_D}
&\mathbf{d}_a\mathbf{d}_a^{-1}=1=\mathbf{d}_a^{-1}\mathbf{d}_a, \quad \mathbf{d}_a\mathbf{d}_b=\mathbf{d}_b\mathbf{d}_a,\\
&\mathbf{d}_{n+1}\mathbf{e}_{i}\mathbf{d}_{n+1}^{-1}=v^{-2\delta_{n,i}}\mathbf{e}_i, \quad \mathbf{d}_{n+1}\mathbf{f}_{i}\mathbf{d}_{n+1}^{-1}=v^{2\delta_{n,i}}\mathbf{f}_i,\\
&\mathbf{d}_a\mathbf{e}_i\mathbf{d}_a^{-1}=v^{\delta_{a,i}-\delta_{a,i+1}}\mathbf{e}_i,\quad
\mathbf{d}_a\mathbf{f}_i\mathbf{d}_a^{-1}=v^{\delta_{a,i+1}-\delta_{a,i}}\mathbf{f}_i \quad(a\neq n+1),\\
&\mathbf{e}_i\mathbf{f}_j-\mathbf{f}_j\mathbf{e}_i=\delta_{i,j}\frac{\mathbf{d}_i\mathbf{d}_{i+1}^{-1}-\mathbf{d}_i^{-1}\mathbf{d}_{i+1}}{v-v^{-1}} \quad\big((i,j)\neq(n,n)\big), \\
&\mathbf{e}_i^2\mathbf{e}_j+\mathbf{e}_j\mathbf{e}_i^2=\llbracket2\rrbracket_{v}\mathbf{e}_i\mathbf{e}_j\mathbf{e}_i,\quad \mathbf{f}_i^2\mathbf{f}_j+\mathbf{f}_j\mathbf{f}_i^2=\llbracket2\rrbracket_{v}\mathbf{f}_i\mathbf{f}_j\mathbf{f}_i \quad (|i-j|=1),\\
&\mathbf{e}_i\mathbf{e}_j=\mathbf{e}_j\mathbf{e}_i,\quad \mathbf{f}_i\mathbf{f}_j=\mathbf{f}_j\mathbf{f}_i \quad (|i-j|>1),\end{aligned}\right.\\
\mathbf{e}_n^2\mathbf{f}_n+\mathbf{f}_n\mathbf{e}_n^2=\llbracket2\rrbracket_{v}\big(\mathbf{e}_n\mathbf{f}_n\mathbf{e}_n-\mathbf{e}_n(v\mathbf{d}_n\mathbf{d}_{n+1}^{-1}+v^{-1}\mathbf{d}_n^{-1}\mathbf{d}_{n+1})\big),\\
\mathbf{f}_n^2\mathbf{e}_n+\mathbf{e}_n\mathbf{f}_n^2=\llbracket2\rrbracket_{v}\big(\mathbf{f}_n\mathbf{e}_n\mathbf{f}_n-(v\mathbf{d}_n\mathbf{d}_{n+1}^{-1}+v^{-1}\mathbf{d}_n^{-1}\mathbf{d}_{n+1})\mathbf{f}_n\big).
\end{gather}
\end{prop}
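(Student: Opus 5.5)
The plan follows the proof of the analogous proposition for ${}_\mathbb{Q}\mathbf{S}^{\jmath,D}_{n,d}$, with the multiplication formulas of Proposition~\ref{lem:characteristic_function_multiplication_O(D)} in place of Proposition~\ref{lem:characteristic_function_multiplication}. Two facts organise the work. First, every product $X1_\lambda$ lands in a finite span of characteristic functions $e_A$. Second, $\sum_{\lambda\in\Theta_O^{\jmath,\mathrm{diag}}}1_\lambda$ is the identity. So it suffices to verify each relation after right multiplication by an arbitrary idempotent $1_\lambda$, comparing coefficients of the $e_A$ on both sides.

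\textbf{The relations in \eqref{common_D}.} The $\mathbf{d}_a$-relations are immediate, since $1_\mu\,[B]\,1_\lambda$ is nonzero only when $\mu=\mathrm{ro}(B)$ and $\lambda=\mathrm{co}(B)$. For $B-E^\theta_{i+1,i}$ diagonal, the row sum of $B$ differs from its column sum in positions $i,i+1$ and their mirrors $N+1-i,N-i$. When $i=n$, the central entry changes by $2$, because the $(n+1,n+1)$-entry of $E^\theta_{n+1,n+1}$ equals $2$. Together with the normalization $\mathbf{d}_a=\sum v^{-\lambda_a}1_\lambda$, this gives the exponents $v^{\mp2\delta_{n,i}}$ for $\mathbf{d}_{n+1}$ and $v^{\pm(\delta_{a,i}-\delta_{a,i+1})}$ otherwise.

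The commutator relation for $(i,j)\neq(n,n)$ and the Serre relations among the $\mathbf{e}$'s and among the $\mathbf{f}$'s involve only formulas (a) and (b). These are verbatim the type A formulas of \cite[Lemma~3.2]{BLM90} with $\q=v^2$. Hence the argument of \cite{BLM90,BKLW18} applies, including the conversion from $e_A$ to $[A]$ via \eqref{d-a type D}, and yields the relations with the quantum integers $\llbracket2\rrbracket_v$ and denominator $v-v^{-1}$. The one point to check is that the extra linear term $-\tfrac12\sum_{i\ge n+1>j}a_{ij}$ in \eqref{d-a type D} does not spoil these relations. It only changes when the central row or column is touched, so it is relevant only for index $n$.

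\textbf{The two new relations.} For the penultimate relation I will compute $\mathbf{e}_n^2\mathbf{f}_n1_\lambda$, $\mathbf{f}_n\mathbf{e}_n^21_\lambda$ and $\mathbf{e}_n\mathbf{f}_n\mathbf{e}_n1_\lambda$ explicitly. I will use (a) with $h=n$ for $\mathbf{e}_n$, and (c) for $\mathbf{f}_n$; (c) contains the term $\psi(1+a_{n+1,n+1})$ with $\psi(m)=\frac{\q^m-1}{\q-1}-\q^{(m-1)/2}$. Each product is a combination of three characteristic functions:
\[
e_{\lambda-2E^\theta_{nn}+E^\theta_{n+1,n}+E^\theta_{n,n+2}},\qquad
e_{\lambda-2E^\theta_{nn}+2E^\theta_{n+1,n}-E^\theta_{n+1,n+1}+E^\theta_{n,n+1}},\qquad
e_{\lambda-E^\theta_{nn}+E^\theta_{n+1,n}}.
\]
On the first two, the coefficients should cancel in $\mathbf{e}_n^2\mathbf{f}_n+\mathbf{f}_n\mathbf{e}_n^2-\llbracket2\rrbracket_v\mathbf{e}_n\mathbf{f}_n\mathbf{e}_n$, exactly as in the unitary case. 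On the third, the $\psi$-terms and the $\frac{v^{2\lambda_n}-1}{v^2-1}$-terms should combine into $-\llbracket2\rrbracket_v(v^{\lambda_{n+1}-\lambda_n+1}+v^{-\lambda_{n+1}+\lambda_n-1})$. After rewriting $e_{\lambda-E^\theta_{nn}+E^\theta_{n+1,n}}$ as $\mathbf{e}_n1_\lambda$ up to its normalizing power, this becomes $-\llbracket2\rrbracket_v\,\mathbf{e}_n(v\mathbf{d}_n\mathbf{d}_{n+1}^{-1}+v^{-1}\mathbf{d}_n^{-1}\mathbf{d}_{n+1})1_\lambda$. The last relation follows by the symmetric computation, or by applying the transpose anti-involution $e_A\mapsto e_{A^t}$ induced by swapping the two factors of $\sF^\jmath_O\times\sF^\jmath_O$. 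This anti-involution interchanges $\mathbf{e}_n$ and $\mathbf{f}_n$ and fixes the $\mathbf{d}_a$, once one checks that it is compatible with the normalization $[A]$.

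The main obstacle is this bookkeeping. One must track the half-integer exponents coming from \eqref{d-a type D} together with the parity of the central entry, which is even and at least $2$ in $\Theta_O^\jmath$. The cancellation producing the exponent $v^{\pm1}$, rather than $v^{\pm3}$ as in type $D$, hinges on the identity $\psi(m+2)-\q\psi(m)-1=\q^{(m+1)/2}$ used earlier in place of \eqref{equ:choices_of_W_n}. I would first test the identity on small $\lambda$, for instance $\lambda_{n+1}=2$, to fix the signs and exponents before writing the general computation.
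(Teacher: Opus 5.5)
This is the paper's own route. The paper only says the orthogonal case follows ``similarly'' from Proposition~\ref{lem:characteristic_function_multiplication_O(D)}, i.e.\ by redoing the unitary expansion of $\mathbf{e}_n^2\mathbf{f}_n1_\lambda$, $\mathbf{f}_n\mathbf{e}_n^21_\lambda$ and $\mathbf{e}_n\mathbf{f}_n\mathbf{e}_n1_\lambda$ in the basis $e_A$. Your three characteristic functions are exactly the ones that occur, but you should fix two slips.

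\textbf{First slip.} With the paper's definitions, $\mathbf{e}_n=\sum[B]$ over $B$ with $B-E^\theta_{n+1,n}$ diagonal. So left multiplication by $\mathbf{e}_n$ is computed by (c), and left multiplication by $\mathbf{f}_n$ by (a) with $h=n$. You state the reverse, although your list of $e_A$'s matches the correct assignment.

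\textbf{Second slip.} The identity is $\psi(m+2)-\q\psi(m)-1=\q^{m+1}$; with $m=a_{n+1,n+1}-1$ this is the paper's $q^{a_{n+1,n+1}}$. The relation does not hinge on it, since it is already absorbed into (c).

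\textbf{What actually happens in the computation.} From \eqref{d-a type D} you get
$[\lambda-E^\theta_{nn}+E^\theta_{n+1,n}]=v^{-\lambda_{n+1}}e_{\lambda-E^\theta_{nn}+E^\theta_{n+1,n}}$ and
$[\lambda+E^\theta_{n,n+1}-E^\theta_{n+1,n+1}]=v^{-\lambda_n}e_{\lambda+E^\theta_{n,n+1}-E^\theta_{n+1,n+1}}$.
With these normalizations, the $\psi(\lambda_{n+1}-1)$-terms do not ``combine''. They cancel outright between $\mathbf{e}_n^2\mathbf{f}_n1_\lambda$ and $\llbracket2\rrbracket_v\mathbf{e}_n\mathbf{f}_n\mathbf{e}_n1_\lambda$, each contributing $v^{-2\lambda_{n+1}-\lambda_n}\q(1+\q)\psi(\lambda_{n+1}-1)$.

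The surviving coefficient of $e_{\lambda-E^\theta_{nn}+E^\theta_{n+1,n}}$ is
$-v^{-2\lambda_{n+1}-\lambda_n}(1+\q)(\q^{\lambda_n-1}+\q^{\lambda_{n+1}})$.
It has two sources:
\begin{itemize}
\item the difference $[\lambda_n-1]_\q-[\lambda_n]_\q=-\q^{\lambda_n-1}$;
\item the $k=n+2$ coefficient $\q^{\lambda_{n+1}}$ of (c). This term appears when $\mathbf{e}_n$ acts on $e_{\lambda-E^\theta_{nn}+E^\theta_{n,n+2}}$, and it lacks the extra factor $\q$ present in the unitary formula.
\end{itemize}
This coefficient equals $-\llbracket2\rrbracket_v(v^{\lambda_{n+1}-\lambda_n+1}+v^{\lambda_n-\lambda_{n+1}-1})v^{-\lambda_{n+1}}$, which is precisely what the relation requires.
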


\subsubsection{$\imath$-version} \label{subsec:t}
For $i=1,2,\ldots, n-1$ and $\square=D,O$, we denote $$\mathbf{e}_i=\sum_{B}[B]=\sum_{B}\{B\},\quad \mathbf{f}_i=\sum_{C}[C]=\sum_{C}\{C\}\in\mathbf{S}_{n,d}^{\imath,\square},$$
where the sums run over $B,C\in\Theta_\square^{\imath}$ such that $B-E^\theta_{i+1,i}$ and $C-E^\theta_{i,i+1}$ are diagonal.

Denote by $\Theta_{\square}^{\imath,\mathrm{diag}}$ the set of all diagonal matrices in $\Theta_\square^{\imath}$.
It is clear that for $\lambda=\text{diag}\{\lambda_1,\cdots,\lambda_{N}\}\in\Theta_{2d}^{\imath,\mathrm{diag}}$ (resp., $\Theta_{2d+1}^{\imath,\mathrm{diag}}$ and $\Theta_O^{\imath,\mathrm{diag}}$), the central entry $\lambda_{n+1}=0$ (resp., $1$ and $2$). 
For $a=1,2,\ldots,n$, define $$\mathbf{d}_a^{\pm1}:=\sum_{\lambda\in\Theta_D^{\imath,\mathrm{diag}}}v^{\mp2\lambda_a}1_\lambda\quad\text{if}\quad \square=D, \quad\text{and}\quad \sum_{\lambda\in\Theta_O^{\imath,\mathrm{diag}}}v^{\mp\lambda_a}1_\lambda\quad\text{if}\quad \square=O.$$

Finally, in the spirit of \cite[(7.3.25)]{LL21} we define
\begin{gather}\label{def:t}
\mathbf{t}:=
\left\{
\begin{array}{l}
\sum\limits_{\lambda\in \Theta_{2d+1}^{\imath,\mathrm{diag}}}\big([\lambda-E^\theta_{n,n}+E^\theta_{n,n+2}]+v^{-2\lambda_n}\frac{v^3-v^{-3}}{v^2-v^{-2}} 1_\lambda\big)\in{}_{\mathbb{Q}}\mathbf{S}^{\imath,2d+1}_{n,d},\\
\sum\limits_{\lambda\in\Theta_{2d}^{\imath,\mathrm{diag}}}\big([\lambda-E^\theta_{n,n}+E^\theta_{n,n+2}]+v^{-2\lambda_n}\frac{v-v^{-1}}{v^2-v^{-2}} 1_\lambda\big)\in{}_{\mathbb{Q}}\mathbf{S}^{\imath,2d}_{n,d},\\
\sum\limits_{\lambda\in\Theta_{O}^{\imath,\mathrm{diag}}}\big([\lambda-E^\theta_{n,n}+E^\theta_{n,n+2}]+v^{-\lambda_n}\frac{v^2-v^{-2}}{v-v^{-1}} 1_\lambda\big)\in\mathbf{S}^{\imath,O}_{n,d}.
\end{array}
\right.
\end{gather}

\begin{lem}
The elements $\mathbf{e}_i, \mathbf{f}_i$ ($1\leq i\leq n-1$), $\mathbf{t}$ and $\mathbf{d}_a$ ($1\leq a\leq n$) generate the $\mathbb{Q}(v)$-algebra ${}_{\mathbb{Q}}\mathbf{S}^{\imath,\square}_{n,d}$, ($\square=D,O$).
\end{lem}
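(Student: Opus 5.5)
The plan is to adapt the proof of the $\jmath$-version generator lemma above, with $\mathbf{t}$ playing the role of the missing Chevalley generators at index $n$. First, arguing as in \cite[Theorem~3.10]{BKLW18} and \cite[\S4]{BKLW18}, I will show that the whole $\mathcal A$-algebra $\mathbf{S}^{\imath,\square}_{n,d}$ is generated by the divided-power-type elements $[\lambda+RE^\theta_{i,i+1}]$ and $[\lambda+RE^\theta_{i+1,i}]$ ($1\le i\le n-1$, $R\in\mathbb N$), together with the elements $[\lambda+RE^\theta_{n,n+2}]$ ($R\in\mathbb N$). Here $\lambda$ runs over diagonal matrices making these lie in $\Theta^\imath_\square$. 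The matrices in $\Theta^\imath_\square$ have zero middle row and column apart from the centre, so they are essentially $2n\times 2n$ centrosymmetric matrices. Consequently the only off-diagonal moves across the middle are $E^\theta_{n,n+2}$ (equivalently $E^\theta_{n+2,n}$), and the usual triangularity argument applies. That argument writes a monomial in these elements as $[A]$ plus lower terms in the Bruhat order of \S\ref{subsec:bruhatorder}, using Corollaries~\ref{cor:characteristic_function_multiplication_R} and \ref{cor:characteristic_function_multiplication_R_O(D)}.

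Second, I will reduce from $R$ to $R=1$ over $\mathbb Q(v)$, as in the proof of the $\jmath$-lemma. For $i\le n-1$, the products $[\lambda+(R-1)E^\theta_{ii}+E^\theta_{i,i+1}][\lambda+E^\theta_{i+1,i+1}+(R-1)E^\theta_{i,i+1}]$ have a nonzero leading coefficient times $[\lambda+RE^\theta_{i,i+1}]$ plus lower terms, and the same holds for $E^\theta_{i+1,i}$. For the crossing element, I will compute $[\lambda'+E^\theta_{n,n+2}]\cdot[\lambda''+(R-1)E^\theta_{n,n+2}]$ via the convolution $e_{\lambda'+E^\theta_{n,n+2}}\ast e_A$. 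Its leading term is a nonzero multiple of $[\lambda+RE^\theta_{n,n+2}]$, and the remaining terms are lower (e.g. diagonal corrections). The leading coefficient is a product of $q$-integers in $\mathbf q^2$ or $\mathbf q$, so it is nonzero in $\mathbb Q(v)$.

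Third, I will recover the $R=1$ elements from the proposed generators. The $\mathbf d_a$ ($1\le a\le n$) separate the idempotents $1_\lambda$ by a Vandermonde argument, because in $\Theta^{\imath,\mathrm{diag}}_\square$ the central entry $\lambda_{n+1}$ is fixed and $\lambda$ is determined by $(\lambda_1,\dots,\lambda_n)$. Hence every $1_\lambda$ lies in the subalgebra generated by the $\mathbf d_a^{\pm1}$. Then $[\lambda+E^\theta_{i,i+1}]=\mathbf e_i1_{\mu}$ or $\mathbf f_i 1_\mu$ for suitable $\mu$, and
\[
[\lambda-E^\theta_{n,n}+E^\theta_{n,n+2}]=\mathbf t1_\lambda-c_\lambda 1_\lambda ,
\]
where $c_\lambda\in\mathbb Q(v)$ is the scalar in \eqref{def:t}. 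So these elements are generated as well.

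I expect the main obstacle to be the triangularity in the second step for the crossing generator. The convolution $e_{\lambda+E^\theta_{n,n+2}}\ast e_A$ is not covered by the stated Propositions, which only treat $E^\theta_{h,h+1}$ and $E^\theta_{h+1,h}$. I would first try to express $[\lambda+E^\theta_{n,n+2}]$ inside $\mathbf S^{\jmath,\square}_{n,d}$ as a combination of $\mathbf e_n\mathbf f_n 1_\lambda$ and $1_\lambda$ (up to the $\mathbf d$'s), and then multiply using the $h=n$ formulas of Proposition~\ref{lem:characteristic_function_multiplication}/\ref{lem:characteristic_function_multiplication_O(D)}. Failing that, I would directly count the isotropic subspaces with prescribed intersections, as in the proof of Proposition~\ref{lem:characteristic_function_multiplication}, checking only that the top coefficient is nonzero.
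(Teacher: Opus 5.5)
Your plan follows the paper's route: the paper cites \cite[Corollary~5.8]{BKLW18} for the fact that the elements $[\lambda+E^\theta_{i,i+1}]$, $[\lambda+E^\theta_{i+1,i}]$ $(1\le i\le n-1)$ and $[\lambda+E^\theta_{n,n+2}]$ generate ${}_{\mathbb Q}\mathbf S^{\imath,\square}_{n,d}$, and then applies the same Vandermonde argument. Your Steps 1--2 just unpack that citation in the style of the $\jmath$-case lemma. Two small points: only nonvanishing of the leading coefficients over $\mathbb Q(v)$ is needed, so the $\mathcal A$-level generation claim in Step 1 is stronger than necessary; and with the paper's conventions the product to use for the crossing element is $\mathbf f_n\mathbf e_n1_\lambda$ rather than $\mathbf e_n\mathbf f_n1_\lambda$, since $\mathbf f_n1_\lambda=0$ for every $\imath$-weight $\lambda$.
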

\begin{proof}
Similarly to \cite[Corollary~5.8]{BKLW18}, we know that $[\lambda+E_{i,i+1}^\theta], [\lambda+E_{i+1,i}^\theta], [\lambda+E_{n,n+2}^\theta]$ ($1\leq i\leq n-1$) generate the $\mathbb{Q}(v)$-algebra ${}_{\mathbb{Q}}\mathbf{S}^{\imath,\square}_{n,d}$, where each $\lambda$ is a diagonal matrix such that $\lambda+E_{i,i+1}^\theta, \lambda+E_{i+1,i}^\theta$ or $\lambda+E_{n,n+2}^\theta\in\Theta_\square^\imath$. Then a standard Vandermonde determinant argument gives us the desired result.
\end{proof}

\begin{prop}
 The following relations hold in ${}_\mathbb{Q}\mathbf{S}^{\imath,D}_{n,d}$: for $1\leq i,j\leq n-1$ and $1\leq a,b\leq n$, the relations \eqref{common} and the relations
 \begin{equation}\label{i_D}
\mathbf{d}_a\mathbf{t}=\mathbf{t}\mathbf{d}_a,\quad
\mathbf{e}_i\mathbf{t}=\mathbf{t}\mathbf{e}_i,\quad\mathbf{f}_i\mathbf{t}=\mathbf{t}\mathbf{f}_i \quad (i\neq n-1);
\end{equation}
\begin{equation}\label{it2}
\left\{\begin{aligned}
&\mathbf{e}^2_{n-1}\mathbf{t}-\llbracket2\rrbracket_{v^2}\mathbf{e}_{n-1}\mathbf{t}\mathbf{e}_{n-1}+\mathbf{t}\mathbf{e}^2_{n-1}=0,\quad \mathbf{f}^2_{n-1}\mathbf{t}-\llbracket2\rrbracket_{v^2}\mathbf{f}_{n-1}\mathbf{t}\mathbf{f}_{n-1}+\mathbf{t}\mathbf{f}^2_{n-1}=0;
\\
&\mathbf{t}^2\mathbf{e}_{n-1}-\llbracket2\rrbracket_{v^2}\mathbf{t}\mathbf{e}_{n-1}\mathbf{t}+\mathbf{e}_{n-1}\mathbf{t}^2=\mathbf{e}_{n-1},\quad \mathbf{t}^2\mathbf{f}_{n-1}-\llbracket2\rrbracket_{v^2}\mathbf{t}\mathbf{f}_{n-1}\mathbf{t}+\mathbf{f}_{n-1}\mathbf{t}^2=\mathbf{f}_{n-1}.
\end{aligned}\right.
\end{equation}
   \end{prop}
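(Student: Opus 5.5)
The relations fall into three groups, and I will treat them separately. The relations \eqref{common} with indices $i,j\le n-1$ and $a,b\le n$ only involve $\mathbf{e}_i,\mathbf{f}_i$ $(i\le n-1)$ and $\mathbf{d}_a$. Their multiplication formulas are the first two formulas of Proposition~\ref{lem:characteristic_function_multiplication}, restricted to $\Theta^\imath_D$. These are literally the equal-parameter (type A with $v\mapsto v^2$) formulas. So these relations are verified exactly as in the $\jmath$-case, i.e.\ as in \cite{BLM90,BKLW18}, using that $\mathbf{S}^{\imath,D}_{n,d}\subset\mathbf{S}^{\jmath,D}_{n,d}$ is a subalgebra containing these elements. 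The commutation $\mathbf{d}_a\mathbf{t}=\mathbf{t}\mathbf{d}_a$ is immediate, since both summands of $\mathbf{t}$ in \eqref{def:t} preserve row and column sums. For $|i-(n-1)|\ge 1$, $i\le n-2$, the support variety of $[\lambda-E^\theta_{nn}+E^\theta_{n,n+2}]$ only changes the $n$-th step, while $\mathbf{e}_i,\mathbf{f}_i$ only change the $i$-th step. So the convolution products in either order are a single orbit with the same point count, and $\mathbf{t}$ commutes with $\mathbf{e}_i,\mathbf{f}_i$.

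The real content is \eqref{it2}. Here I would not do a fresh point count. Instead I would use Theorem~\ref{thm:specialofS}(1),(2): the algebra $\mathbf{S}^{\imath,D}_{n,d}$ is the specialization of $\mathbb{S}^\imath_{n,d}$ at $\mathbf{u}^2=\q^3$ (resp.\ $\q$) and $\mathbf{v}^2=\q^2$, compatibly with standard bases. In \cite{LL21} the two-parameter iSchur algebra carries generators $e_i,f_i,d_a,t$, with $t$ defined by \cite[(7.3.25)]{LL21}, and they satisfy the iSerre relations with parameter $\mathbf{v}$:
\[
t^2e_{n-1}-\llbracket2\rrbracket_{\mathbf{v}}te_{n-1}t+e_{n-1}t^2=e_{n-1},
\]
and similarly for the other three relations. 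So the plan is to check that our $\mathbf{t}$ is the image of the LL21 $t$ under the specialization. This uses the compatibility $[A]_{LL}\mapsto[A]$ (resp.\ $[A-E_{n+1,n+1}]$). For the diagonal correction term, the LL21 coefficient is of the form $\mathbf{v}^{-\lambda_n}\frac{\mathbf{u}\mathbf{v}^{-1}-\mathbf{u}^{-1}\mathbf{v}}{\mathbf{v}-\mathbf{v}^{-1}}$ or similar. It should specialize to $v^{-2\lambda_n}\frac{v^3-v^{-3}}{v^2-v^{-2}}$ (resp.\ $\frac{v-v^{-1}}{v^2-v^{-2}}$). I will match this coefficient precisely, keeping track of the index shift $a_{ij}\leftrightarrow a_{n+i+1,n+j+1}$. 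Since $\mathbf{v}\mapsto v^2$, the bracket $\llbracket2\rrbracket_{\mathbf{v}}$ becomes $\llbracket2\rrbracket_{v^2}$, which gives \eqref{it2}.

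The main obstacle is this identification of $\mathbf{t}$ with the specialized $t$, including its diagonal term. The square-root choices $\mathbf{u}=v^{3}$ or $v$ need care, as does the normalization of $[\lambda-E^\theta_{nn}+E^\theta_{n,n+2}]$ via \eqref{d-a}. If that matching proves awkward, my fallback is a direct verification of the relation $\mathbf{t}^2\mathbf{e}_{n-1}-\llbracket2\rrbracket_{v^2}\mathbf{t}\mathbf{e}_{n-1}\mathbf{t}+\mathbf{e}_{n-1}\mathbf{t}^2=\mathbf{e}_{n-1}$ on each $1_\lambda$, in the style of the proof of the previous proposition. For this I first compute $[\lambda-E^\theta_{nn}+E^\theta_{n,n+2}]\ast e_A$ for the relevant $A$. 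I would obtain it by writing $[\lambda-E^\theta_{nn}+E^\theta_{n,n+2}]$ as a combination of $\mathbf{e}_n\mathbf{f}_n$-type products in $\mathbf{S}^{\jmath,D}_{n+?,d}$, or directly by counting isotropic lines via Lemma~\ref{lem:count1}. I would then compare coefficients of the three or four orbits that appear. The relations involving $\mathbf{e}_{n-1}^2\mathbf{t}$ are easier: there the diagonal part of $\mathbf{t}$ cancels in the Serre combination, leaving a type-A-like count.
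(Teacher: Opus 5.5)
Your proposal is correct and follows the route the paper relies on. The paper states this proposition without proof: it defines $\mathbf{t}$ after \cite[(7.3.25)]{LL21} and invokes the specialization of Theorem~\ref{thm:specialofS}, later recorded as the surjection $t\mapsto\mathbf{t}$ in Proposition~\ref{prop:surjqsch}; your fallback is the same kind of direct count the paper does for the $\jmath$-relations.

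One correction for the matching step. Your tentative coefficient $\mathbf{v}^{-\lambda_n}\frac{\mathbf{u}\mathbf{v}^{-1}-\mathbf{u}^{-1}\mathbf{v}}{\mathbf{v}-\mathbf{v}^{-1}}$ would give $v^{-2\lambda_n}\frac{v-v^{-1}}{v^2-v^{-2}}$ for $D=2d+1$, which is wrong. To see what is needed, write the diagonal part of $\mathbf{t}$ as $\mathbf{v}^{-\lambda_n}c\,1_\lambda$, with $(\mathbf{u},\mathbf{v})=(v^3,v^2)$ or $(v,v^2)$. Take $n=2$, $d=1$ and $A=\lambda-E^\theta_{22}+E^\theta_{24}$ with $\lambda_2=1$. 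Then $[A]^2=1+(\mathbf{u}-\mathbf{u}^{-1})[A]$, and the coefficient of $[A]\mathbf{e}_1$ in $\mathbf{t}^2\mathbf{e}_1-(\mathbf{v}+\mathbf{v}^{-1})\mathbf{t}\mathbf{e}_1\mathbf{t}+\mathbf{e}_1\mathbf{t}^2$ is $(\mathbf{u}-\mathbf{u}^{-1})-(\mathbf{v}-\mathbf{v}^{-1})c$. This forces $c=\frac{\mathbf{u}-\mathbf{u}^{-1}}{\mathbf{v}-\mathbf{v}^{-1}}$, which reproduces exactly the paper's constants.
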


\begin{prop}
 The following relations hold in ${}_\mathbb{Q}\mathbf{S}^{\imath,O}_{n,d}$: for $1\leq i,j\leq n-1$ and $1\leq a,b\leq n$, the relations \eqref{common_D} and the relations 
 \begin{gather}
 \left\{\begin{aligned}\label{i_O}
&\mathbf{d}_a\mathbf{t}=\mathbf{t}\mathbf{d}_a,\quad \mathbf{e}_i\mathbf{t}=\mathbf{t}\mathbf{e}_i,\quad\mathbf{f}_i\mathbf{t}=\mathbf{t}\mathbf{f}_i \quad (i\neq n-1);\\
&\mathbf{e}^2_{n-1}\mathbf{t}-\llbracket2\rrbracket_{v}\mathbf{e}_{n-1}\mathbf{t}\mathbf{e}_{n-1}+\mathbf{t}\mathbf{e}^2_{n-1}=0,\quad \mathbf{f}^2_{n-1}\mathbf{t}-\llbracket2\rrbracket_{v}\mathbf{f}_{n-1}\mathbf{t}\mathbf{f}_{n-1}+\mathbf{t}\mathbf{f}^2_{n-1}=0;\\
    &\mathbf{t}^2\mathbf{e}_{n-1}-\llbracket2\rrbracket_{v}\mathbf{t}\mathbf{e}_{n-1}\mathbf{t}+\mathbf{e}_{n-1}\mathbf{t}^2=\mathbf{e}_{n-1},\quad \mathbf{t}^2\mathbf{f}_{n-1}-\llbracket2\rrbracket_{v}\mathbf{t}\mathbf{f}_{n-1}\mathbf{t}+\mathbf{f}_{n-1}\mathbf{t}^2=\mathbf{f}_{n-1}.\end{aligned}\right.
    \end{gather}
\end{prop}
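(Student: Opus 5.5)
The plan is to deduce the relations from the two-parameter iSchur algebra $\mathbb{S}^\imath_{n,d}$ of \cite{LL21}, using the specialization of Theorem~\ref{thm:specialofS}(3), instead of redoing every computation by point counting. Two facts make this work. First, $\mathbf{S}^{\imath,O}_{n,d}\subset\mathbf{S}^{\jmath,O}_{n,d}$ is a subalgebra. Second, the surjection $\mathbb{S}^\imath_{n,d}\twoheadrightarrow\mathbf{S}^{\imath,O}_{n,d}$, given by $e_A\mapsto e_{A+E_{n+1,n+1}}$, $\mathbf{u}^2\mapsto\q^2$, $\mathbf{v}^2\mapsto\q$, is an algebra homomorphism. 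It sends $[A]_{LL}\mapsto[A+E_{n+1,n+1}]$. So it suffices to (i) identify the images of the generators of \cite{LL21}, and (ii) specialize the relations proved there (the $\imath$-version in \cite[\S7.3]{LL21}).

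Step (i). Since $\mathbf{v}^2\mapsto \q=v^2$, the quantum parameter $\mathbf{v}$ becomes $v$. The Chevalley generators $e_i,f_i$ ($1\le i\le n-1$) of \cite{LL21} are sums of standard basis elements $[\lambda+E^\theta_{i+1,i}]_{LL}$, respectively $[\lambda+E^\theta_{i,i+1}]_{LL}$. They therefore map to our $\mathbf{e}_i,\mathbf{f}_i$, after the index shift $\lambda\mapsto\lambda+E_{n+1,n+1}$, which leaves the entries $\lambda_a$ with $a\le n$ unchanged. Consequently the generic $d_a=\sum\mathbf{v}^{-\lambda_a}1_\lambda$ maps to $\sum v^{-\lambda_a}1_\lambda=\mathbf{d}_a$. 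This explains why here, unlike the unitary case, the exponent is $\lambda_a$ rather than $2\lambda_a$. For $\mathbf{t}$, I would compare with \cite[(7.3.25)]{LL21}, where $t=\sum_\lambda\bigl([\lambda-E^\theta_{nn}+E^\theta_{n,n+2}]_{LL}+\mathbf{v}^{-\lambda_n}\tfrac{\mathbf{u}-\mathbf{u}^{-1}}{\mathbf{v}-\mathbf{v}^{-1}}1_\lambda\bigr)$. Under $\mathbf{u}\mapsto v^2$, $\mathbf{v}\mapsto v$, the scalar becomes $\tfrac{v^2-v^{-2}}{v-v^{-1}}$. This matches the third line of \eqref{def:t}, so $t\mapsto\mathbf{t}$.

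Step (ii). The relations of \cite{LL21} among $e_i,f_i,d_a,t$ ($i\le n-1$) are the $\mathfrak{gl}$-type relations \eqref{common_D}, with $\llbracket2\rrbracket_{\mathbf v}$ and $\mathbf{v}-\mathbf{v}^{-1}$, together with the $\imath$Serre relations between $t$ and $e_{n-1},f_{n-1}$, whose right-hand side is $e_{n-1}$ (respectively $f_{n-1}$). None of these involve $\mathbf{u}$ except through $t$ itself. Applying the homomorphism therefore yields \eqref{common_D} and \eqref{i_O} verbatim. The relation $\mathbf{d}_{n+1}\mathbf{e}_i\mathbf{d}_{n+1}^{-1}=v^{-2\delta_{n,i}}\mathbf{e}_i$ from \eqref{common_D} is vacuous here, since $i\le n-1$ and there is no $\mathbf{d}_{n+1}$ in the $\imath$-setting. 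The commutation relations in the first line of \eqref{i_O} can also be seen directly: the supports of $\mathbf{e}_i,\mathbf{f}_i$ for $i<n-1$ and of $\mathbf{t}$ change disjoint rows, and $\mathbf{t}$ preserves weights.

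The main obstacle is the bookkeeping in step (i) for $\mathbf{t}$. The identification $[A]_{LL}\mapsto[A+E_{n+1,n+1}]$ relies on the normalization identity in the proof of Theorem~\ref{thm:specialofS}(3). For $A=\lambda-E^\theta_{nn}+E^\theta_{n,n+2}$ one has to check that the power $v^{-d(A)+r(A)}$ from \eqref{d-a type D} agrees with $\mathbf{u}^{-\hat\ell_\mathfrak{c}}\mathbf{v}^{-\hat\ell_\mathfrak{a}}$, and that the diagonal scalar uses $\lambda_n$ with the correct power. If the convention of \cite{LL21} for $t$ differs by a diagonal factor, the fallback is a direct verification of the two $t$–Serre relations. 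That verification would use Corollary~\ref{cor:characteristic_function_multiplication_R_O(D)} with $h=n-1$ and the case $h=n$, applied to $e_{\lambda-E^\theta_{nn}+E^\theta_{n,n+2}}$ acting on $1_\lambda$. It would proceed by computing $\mathbf{t}^2\mathbf{e}_{n-1}1_\lambda$, $\mathbf{t}\mathbf{e}_{n-1}\mathbf{t}1_\lambda$ and $\mathbf{e}_{n-1}\mathbf{t}^21_\lambda$ coefficientwise in the basis $e_A$, exactly as for the penultimate relation in the $\jmath$-case, and observing that the leftover term is $\mathbf{e}_{n-1}1_\lambda$.
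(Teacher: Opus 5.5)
The paper states this proposition without a proof. Your specialization argument is correct, and it is the mechanism the paper relies on elsewhere. Remark~\ref{rem:S=Special}, Theorem~\ref{thm:specialofS}(3) and Proposition~\ref{prop:surjqsch} all transport structure from \cite{LL21}, and Proposition~\ref{prop:surjqsch}, with $t\mapsto\mathbf t$, already implies the present relations. The proofs the paper does write out for the companion $\jmath$-relations go the other way: they compute directly from the point-count formulas of Proposition~\ref{lem:characteristic_function_multiplication_O(D)}, which is your fallback. Your route is short and explains why the diagonal scalar in $\mathbf t$ is $v^{-\lambda_n}(v+v^{-1})$. Its cost is that it depends on the exact normalization of $t$ in \cite{LL21} and on the standard-basis compatibility in Theorem~\ref{thm:specialofS}. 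The direct computation is self-contained in the geometric setting, but laborious.

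Two remarks. First, the normalization check you single out as the main obstacle is already contained in Theorem~\ref{thm:specialofS}(3), and it does go through for $A_\lambda=\lambda-E^\theta_{nn}+E^\theta_{n,n+2}$ with central entry $1$. One gets $\hat\ell_{\mathfrak c}(A_\lambda)=1$ and $\hat\ell_{\mathfrak a}(A_\lambda)=\lambda_n-1$, so $\mathbf u^{-1}\mathbf v^{1-\lambda_n}\mapsto v^{-\lambda_n-1}$. On the other side, \eqref{d-a type D} gives $d-r=\lambda_n+1$ for $A_\lambda+E_{n+1,n+1}$, so the two normalizations agree.

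Second, the specialization $\mathbb Z[\mathbf u^{\pm1},\mathbf v^{\pm1}]\to\mathcal A$ does not extend to $\mathbb Q(\mathbf u,\mathbf v)$, so you cannot simply apply it to relations over $\mathbb Q(\mathbf u,\mathbf v)$. First clear the denominators, which are powers of $\mathbf v-\mathbf v^{-1}$, to get identities in the free $\mathbb Z[\mathbf u^{\pm1},\mathbf v^{\pm1}]$-module $\mathbb S^\imath_{n,d}$. Then specialize with the sign choice $\mathbf u\mapsto v^2$, $\mathbf v\mapsto v$, which is forced by $[A]_{LL}\mapsto[A+E_{n+1,n+1}]$. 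Finally divide by the nonzero images of the denominators in $\mathbb Q(v)$.
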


\subsection{Support of $\mathbf{t}$}
We have seen that for the iSchur algebras ${}_{\mathbb{Q}}\mathbf{S}^{\imath,\square}_{n,d}$, there exists an extra generator $\mathbf{t}$, which behaves very differently from the Chevalley generators $\mathbf{e}_i$ and $\mathbf{f}_i$ in the geometric construction. In the case of equal parameters, \cite[\S 5.2 \& 5.3]{BKLW18} pointed out that $\mathbf{t}$ is the shadow of a perverse sheaf supported on a closed subvariety (called the support variety) in the double $N$-step flag variety, and each connected component of this support variety is rationally smooth. Thus, each idempotent component of $\mathbf{t}$ was defined as the shadow of a constant sheaf (up to a shift) on the corresponding connected component of the support variety.

Things become very different and more subtle in the case of unequal parameters, because the support variety of $\mathbf{t}$ is no longer rationally smooth in general. 
The $\mathbb{F}_q$-rational points of the support variety of $\mathbf{t}\in\mathbf{S}_{n,d}^{\imath,\square}$ is
\begin{equation}\label{s(t)}
S^\square(\mathbf{t}):=\{(\mathfrak{f},\mathfrak{f}')\in\sF^\imath_\square\times\sF^\imath_\square\mid\dim(\mathfrak{f}_n\cap\mathfrak{f}'_n)\ge d-1, \mathfrak{f}_j=\mathfrak{f}'_j,\forall j\in[1,n-1]\}.
\end{equation}
Denote by $\boldsymbol{S}^\square(\mathbf{t})$ the corresponding support variety of $\mathbf{t}$ over $\mathbb{F}_q$.
Every $\mathbb{F}_q$-rational singular point of $\boldsymbol{S}^\square(\mathbf{t})$ lies on the union of diagonals of $\sF^\imath_\square\times\sF^\imath_\square$.
From a categorification perspective, the best choice for the definition of $\mathbf{t}$ should be the shadow of the intersection cohomology complex (up to a shift) on $\boldsymbol{S}^\square(\mathbf{t})$. However, we will see later that the element $\mathbf{t}$ defined in \S\ref{subsec:t} fails to meet this requirement in the case of $\square=D$.


\subsection{Computation via Frobenius trace}
To adjust the element $\mathbf{t}$, we need to determine the Kazhdan-Lusztig polynomials of the IC complexes on $\boldsymbol{S}^\square(\mathbf{t})$.
Unlike the case of equal parameters (and also the split case in geometric perspective), we need to additionally calculate the traces of the Frobenius morphism on the cohomologies of stalks of the IC complexes at the $\mathbb{F}_q$-rational points of the singular locus of $\boldsymbol{S}^\square(\mathbf{t})$. The eigenvalues of the Frobenius morphism on these cohomologies of stalks are $\pm q^{i/2}$ (involving the diagram involution), while in the split case, these eigenvalues are $q^{i/2}$ precisely. So it is more difficult to compute Frobenius traces in the case of unequal parameters.

We first explain that we can reduce the calculation to the case of $n=1$.
Let us focus on $\boldsymbol{S}^{2d}(\mathbf{t})$ to explain the reduction, while the cases of $\boldsymbol{S}^{2d+1}(\mathbf{t})$ and $\boldsymbol{S}^O(\mathbf{t})$ are similar.
For $\mathbf{\lambda}=(\lambda_1,\cdots,\lambda_n)\in\mathbb{N}^n$ with $\sum_{1\le i\le n}\lambda_i=d$, let $\pmb{\sF}_{2d}^{\imath}(\mathbf{\lambda})$ be the  connected component of $\pmb{\sF}_{2d}^{\imath}$. The set of its $\mathbb{F}_q$-rational points, denoted by $\mathscr{F}^\imath_{2d}(\lambda)$, consists of those $\mathfrak{f}=(\mathfrak{f}_i)_{0\le i\le N}$ such that $\dim\mathfrak{f}_i/\mathfrak{f}_{i-1}=\lambda_i$ for each $1\le i\le n$. 
Denote $$\boldsymbol{S}^{2d}_{\mathbf{\lambda}}(\mathbf{t})=S^{2d}(\mathbf{t})\cap(\pmb{\sF}_{2d}^{\imath}(\mathbf{\lambda})\times\pmb{\sF}_{2d}^{\imath}(\mathbf{\lambda})),$$ 
which is a connected component of $\boldsymbol{S}^{2d}(\mathbf{t})$.
Fix ${\mathfrak{f}}=({\mathfrak{f}}_i)_{0\le i\le N}\in\sF_{2d}^{\imath}(\mathbf{\lambda})$. Consider the projection of the first factor $\boldsymbol{S}^{2d}_{\mathbf{\lambda}}(\mathbf{t})\to\pmb{\sF}_{2d}^{\imath}(\mathbf{\lambda})$. Denote fiber of $\mathfrak{f}$ by $\pmb{\mathscr{S}}(\mathfrak{f})$, whose $\mathbb{F}_q$-rational points is $$\mathscr{S}(\mathfrak{f}):=\{V\mid \dim (V\cap\mathfrak{f}_n)\ge d-1, \mathfrak{f}_{n-1}\subset V\subset\mathfrak{f}_{n+1}, V\ \text{is maximal isotropic}\}.$$
Let $\mathbf{P}_\mathfrak{f}\subset \mathbf{U}_{2d}$ be the parabolic subgroup that stabilizes $\mathfrak{f}$.
 We have an isomorphism of $\mathbb{F}_q$-varieties: $$\boldsymbol{S}^{2d}_{\mathbf{\lambda}}(\mathbf{t})\simeq\mathbf{U}_{2d}\times^{\mathbf{P}_\mathfrak{f}}\pmb{\mathscr{S}}(\mathfrak{f}).$$ By the induction equivalence \cite[Proposition 6.2.12]{Ac21} and the $\mathbf{U}_{2d}$-equivariance of $\mathrm{IC}_{\boldsymbol{S}^{2d}_{\mathbf{\lambda}}(\mathbf{t})}$, we only need to calculate the intersection cohomology and the Frobenius traces of the fiber $\pmb{\mathscr{S}}(\mathfrak{f})$. Noting that $\pmb{\mathscr{S}}(\mathfrak{f})$ depends only on $\mathcal{V}=\mathfrak{f}_{n+1}/\mathfrak{f}_{n-1}$ and the restriction of the nondegenerate Hermitian form on $\mathfrak{f}_{N}=\mathbb{F}_{q^2}^{2d}$ to $\mathcal{V}=\mathbb{F}_{q^2}^{2\lambda_n}$ is also a nondegenerate Hermitian form, and we have 
$$\mathscr{S}(\mathfrak{f})\simeq\{V\mid V\subset\mathcal{V}\ \text{is maximal isotropic}, \dim V\cap(\mathfrak{f}_{n}/\mathfrak{f}_{n-1})\ge\lambda_{n}-1\}.$$
Hence the problem can be reduced to the case of $n=1$.
\subsubsection{Calculation for $\boldsymbol{S}^{2d}(\mathbf{t})$}\label{cal U-2d}
For $\lambda_{n}=1$, the variety $\boldsymbol{S}_\lambda^{2d}(\mathbf{t})$ is smooth.
In this subsection, we assume $n=1$ and $d=\lambda_n\ge2$, so that the $\mathbb{F}_q$-rational points of $\boldsymbol{S}_\lambda^{2d}(\mathbf{t})$ is
$$S^{2d}(\mathbf{t}):=\{(V,V')\mid \dim V=\dim V'=d,\dim(V\cap V')\ge d-1, V,V'\subset\mathbb{F}_{q^2}^{2d}\ \text{are isotropic}\}.$$
Denote $$\widetilde{S}^{2d}(\mathbf{t})=\boldsymbol{S}^{2d}(\mathbf{t})\times_{\mathbb{F}_q}\overline{\mathbb{F}_q}\subset\mathrm{Gr}(d,2d)\times\mathrm{Gr}(d,2d),$$ we have $$\widetilde{S}^{2d}(\mathbf{t})=\{(V,V')\mid \dim V=\dim V'=d,\dim(V\cap V')\ge d-1, V,V'\subset\overline{\mathbb{F}_{q}}^{2d}\}.$$

Recall the Frobenius morphism defined in \S\ref{ex:U_D}:
$$F:\mathrm{GL}_{2d}\to\mathrm{GL}_{2d}, \quad g\mapsto JF_q(g^\mathrm{-tr})J.$$
Let $\mathbb{V}=\overline{\mathbb{F}_q}^{2d}$ and  let $\mathrm{Gr}_{2d}:=\bigsqcup_{0\le k\le 2d}\mathrm{Gr}(k,2d)$ be the variety of all linear subspaces of $\mathbb{V}$, which is also regarded as the variety of all maximal parabolic subgroups of $\mathrm{GL}_{2d}$. The Frobenius morphism $F$ induces a Frobenius morphism on $\mathrm{Gr}_{2d}$, which we denote by $F$ again. 

Equip $\mathbb{V}$ with the $q$-sesquilinear form
$$
\langle v,v'\rangle=v^{\mathrm{tr}}JF_q(v'),
$$
which is linear in the first factor, $q$-semilinear in the second, and restricts to the Hermitian form \eqref{form1} on $\mathbb{F}_{q^2}^{2d}$. For $g\in\mathrm{GL}_{2d}$, put $g'=JF_q(g^{\mathrm{tr}})J$. Then
$$
\langle g'(v),v'\rangle=\langle v,g(v')\rangle.
$$
For a linear subspace $V$ of $\mathbb{V}$, define its left orthogonal complement by
$$
V^{\perp}:=\{v\in\mathbb{V}\mid\langle v,V\rangle=0\}.
$$
For subspaces defined over $\mathbb{F}_{q^2}$, this agrees with the usual Hermitian orthogonal complement.
\begin{lem}\label{F-map}
    If $V\in\mathrm{Gr}_{2d}$ is a linear subspace of $\mathbb{V}$, then $F(V)=V^{\perp}$.
\end{lem}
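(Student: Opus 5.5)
The plan is to identify $\mathrm{Gr}_{2d}$ with the variety of maximal parabolic subgroups of $\mathrm{GL}_{2d}$, exactly as in the proof of Proposition~\ref{F act on O_D}. There the statement $F(\mathfrak f)=\mathfrak f^{\vartheta}$ was proved at the level of $\mathbb F_{q^2}$-points. Here we redo the argument over $\overline{\mathbb F_q}$, using the $q$-sesquilinear form $\langle v,v'\rangle=v^{\mathrm{tr}}JF_q(v')$ and the adjunction $\langle g'(v),v'\rangle=\langle v,g(v')\rangle$ with $g'=JF_q(g^{\mathrm{tr}})J$. The induced Frobenius on $\mathrm{Gr}_{2d}$ is characterised by $\operatorname{Stab}(F(V))=F(\operatorname{Stab}(V))$. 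A subspace is determined by its stabilizer, a maximal parabolic. So it suffices to prove
\[
F(P_V)=P_{V^\perp},\qquad\text{where } P_V=\{g\in\mathrm{GL}_{2d}\mid gV=V\}.
\]

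First, check that $V^\perp$ is a linear subspace with $\dim V^\perp=2d-\dim V$. This holds because $\langle\cdot,V\rangle$ is linear in the first variable and the form is nondegenerate, $J$ being invertible. Concretely, $v\in V^\perp$ if and only if $v^{\mathrm{tr}}J$ annihilates the subspace $F_q(V)$, which has the same dimension as $V$. Second, show that $g\in P_V$ if and only if $g'\in P_{V^\perp}$. If $gV=V$ and $v\in V^\perp$, then $\langle g'(v),V\rangle=\langle v,gV\rangle=0$, so $g'V^\perp\subseteq V^\perp$, and equality follows by dimension. For the converse, note that $(V^\perp)^{\perp'}=V$, where $\perp'$ denotes the right orthogonal $\{w\mid\langle V^\perp,w\rangle=0\}$; this again follows from dimension counting. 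Applying the adjunction the other way then recovers $gV=V$.

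Finally, relate $g'$ to $F(g)$. We have $F(g)=JF_q(g^{-\mathrm{tr}})J=(g')^{-1}$, since $J^2=1$ and $F_q$ commutes with transpose and inverse. A subgroup is closed under inverses, so $F(g)\in P_{V^\perp}$ if and only if $g\in P_V$. Hence $F(P_V)=P_{V^\perp}$, and therefore $F(V)=V^\perp$.

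The main obstacle is purely bookkeeping. One must be careful that over $\overline{\mathbb F_q}$ the form is only $q$-semilinear and not Hermitian, so left and right orthogonals differ. The converse direction and the dimension statements should therefore be phrased via the explicit matrix description $V^\perp=\{v\mid v^{\mathrm{tr}}JF_q(V)=0\}$. One should also confirm the convention that the Frobenius on $\mathrm{Gr}_{2d}$ is the one induced through stabilizers, so that it agrees with the paper's geometric Frobenius.
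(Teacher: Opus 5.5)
Your argument is correct, but it follows the template of the proof of Proposition~\ref{F act on O_D} (comparing stabilizers at every $V$) rather than the paper's proof of this lemma.

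The paper never compares stabilizers at a general $V$. Instead it proceeds as follows:
\begin{itemize}
\item It uses $F(g)=(g')^{-1}$ and the adjunction to get the twisted equivariance $(gV)^\perp=F(g)(V^\perp)$.
\item It notes that the induced Frobenius on $\mathrm{Gr}_{2d}$ is twisted-equivariant in the same way, $F(gV)=F(g)F(V)$.
\item It checks $F(E_r)=E_{2d-r}=E_r^\perp$ only at the standard subspace $E_r$, by computing $F$ of the standard parabolic.
\item Writing $V=gE_r$, i.e.\ using transitivity of $\mathrm{GL}_{2d}$ on $\mathrm{Gr}(r,2d)$, finishes the proof.
\end{itemize}
That route needs only the one-sided inclusion $F(g)(V^\perp)\subseteq (gV)^\perp$ plus a dimension count. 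It therefore never has to confront the left/right asymmetry of the $q$-sesquilinear form.

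Your route works pointwise for every $V$ and needs no representative $g$. The price is proving both directions of $g\in P_V\iff g'\in P_{V^\perp}$. The converse genuinely requires the right orthogonal and the identity $(V^\perp)^{\perp'}=V$; the left double orthogonal would instead give $F_{q^2}(V)$, as the remark following the lemma records. You handled this correctly.

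Two points are implicit in your write-up and are easily filled:
\begin{itemize}
\item Passing from the equivalence $F(g)\in P_{V^\perp}\iff g\in P_V$ to the equality $F(P_V)=P_{V^\perp}$ uses that $F$ is bijective on $\mathrm{GL}_{2d}(\overline{\mathbb{F}_q})$.
\item ``A subspace is determined by its stabilizer'' holds for proper nonzero subspaces. The cases $\dim V=0$ and $\dim V=2d$ are trivial.
\end{itemize}
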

\begin{proof}
Since $F(g)=(g')^{-1}$, the preceding identity gives
$$
\langle F(g)(v),g(v')\rangle=\langle v,v'\rangle.
$$
Consequently, $(gV)^\perp=F(g)(V^\perp)$. Let $E_r$ be the span of the first $r$ standard basis vectors. The action of $F$ on the standard parabolic subgroup associated with $E_r$ gives $F(E_r)=E_{2d-r}=E_r^\perp$. Writing $V=gE_r$, we obtain
$$
F(V)=F(g)F(E_r)=F(g)(E_r^\perp)=(gE_r)^\perp=V^\perp.
$$
\end{proof}
\begin{rem}
Since $F^2=F_{q^2}$, Lemma~\ref{F-map} gives $(V^\perp)^\perp=F_{q^2}(V)$. Thus $(V^\perp)^\perp=V$ if and only if $V$ is defined over $\mathbb{F}_{q^2}$.
\end{rem}
Fix a maximal isotropic subspace $\mathring{V}\subset\mathbb{F}^{2d}_{q^2}$. Let $V=\mathring{V}\otimes_{\mathbb{F}_{q^2}}\overline{\mathbb{F}_q}$ so that $F(V)=V^{\perp}=V$.
Consider the projection of the first factor $p_1:\widetilde{S}^{2d}(\mathbf{t})\to\mathrm{Gr}(d,2d)$, the fiber of $V$ is $$\mathscr{S}(V):=\{W\in\mathrm{Gr}(d,2d)\mid \dim(W\cap V)\ge d-1\}.$$ Denote the $\mathbb{F}_q$-rational model of $\mathscr{S}(V)$ relating to $F$ by $\pmb{\mathscr{S}}(\mathring{V})$, whose $\mathbb{F}_q$-rational points  is $$\mathscr{S}(\mathring{V}):=\{\mathring{W}\ \text{is maximal isotropic in}\  \mathbb{F}^{2d}_{q^2}\mid \dim(\mathring{W}\cap\mathring{V})\ge d-1\}.$$
Let $P=\{g\in \mathrm{GL}_{2d}~|~gV=V\}$ be the parabolic subgroup that stabilizes $V$. There is an isomorphism $$\mathrm{GL}_{2d}\times^{P}\mathscr{S}(V)\simeq \widetilde{S}^{2d}(\mathbf{t}),$$ 
which is exactly defined over $\mathbb{F}_q$, i.e., $\mathbf{U}_{2d}\times^{\mathring{\mathbf{P}}}\pmb{\mathscr{S}}(\mathring{V})\simeq \boldsymbol{S}^{2d}(\mathbf{t})$. Here $\mathring{\mathbf{P}}$ is the parabolic subgroup of $\mathbf{U}_{2d}$ that stabilizes $\mathring{V}$.

We now construct a resolution of singularities of $\mathscr{S}(V)$.
Denote $$\widetilde{\mathscr{S}}(V)=\{(L\overset{1}{\subset}W\overset{1}{\subset}L')\mid W\in\mathscr{S}(V), L\subseteq W\cap V, (W+V)\subseteq L'\}.$$ Using $(W\cap V)^\perp=W^\perp+V$ and Lemma \ref{F-map}, the Frobenius morphism on $\mathrm{Gr}_{2d}$ induces an endomorphism \begin{equation}\label{tilde-F}
\widetilde{F}:\widetilde{\mathscr{S}}(V)\to\widetilde{\mathscr{S}}(V),\quad (L\subset W\subset L')\mapsto ((L')^\perp\subset W^\perp\subset L^\perp),\end{equation}
which gives an $\mathbb{F}_q$-rational structure on $\widetilde{\mathscr{S}}(V)$. Denote by $\widetilde{\pmb{\mathscr{S}}}(\mathring{V})$ the corresponding $\mathbb{F}_q$-form of $\widetilde{\mathscr{S}}(V)$, whose $\mathbb{F}_q$-rational points is $$\{({L}\overset{1}{\subset}{W}\overset{1}{\subset}{{L}}^\perp)\mid {W}\in\mathscr{S}(\mathring{V}), {L}\subseteq{W}\cap\mathring{V}\},$$ 
where $L^\perp$ denotes the orthogonal complement with respect to the nondegenerate Hermitian form on $\mathbb{F}^{2d}_{q^2}$.
Let \begin{equation}
    \pi:\widetilde{\mathscr{S}}(V)\to\mathscr{S}(V),\quad (L\subset W\subset L')\mapsto W
\end{equation} be the obvious projection. We have the following commutative diagram: \begin{equation}\label{2d-F}
\begin{tikzcd}
\widetilde{\mathscr{S}}(V) \arrow[r, "\pi"] \arrow[d, "\widetilde{F}"'] & \mathscr{S}(V) \arrow[d, "F"] \\
\widetilde{\mathscr{S}}(V) \arrow[r, "\pi"]                             & \mathscr{S}(V)               
\end{tikzcd}.
\end{equation}
The variety ${\mathscr{S}}(V)$ has a stratification with two strata $O_0$ and $O_1$, where
$O_0:=\{V\}$ is the unique singularity of ${\mathscr{S}}(V)$
and $$O_1:=\{W\in\mathrm{Gr}(d,2d)\mid \dim(W\cap V)=d-1\}=\mathscr{S}(V)\backslash O_0$$ is an open stratum.
\begin{lem}\label{lem:piresolution}
    The map $\pi$ is a stratified resolution of $\mathscr{S}(V)$, whose restriction to $\pi^{-1}(O_1)$ is an isomorphism. Moreover, the fiber over $\{V\}$ is isomorphic to $\mathbb{P}^{d-1}\times\mathbb{P}^{d-1}$.
\end{lem}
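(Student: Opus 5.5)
We argue directly with linear algebra on $\mathbb V=\overline{\mathbb F_q}^{2d}$ and the explicit description of $\widetilde{\mathscr S}(V)$. First we show that $\widetilde{\mathscr S}(V)$ is smooth and irreducible of dimension $\dim\mathscr S(V)$. Consider the map $\widetilde{\mathscr S}(V)\to\mathrm{Gr}(d-1,V)\times\mathrm{Gr}(d+1,\mathbb V)$, $(L\subset W\subset L')\mapsto(L,L')$. The conditions $L\subseteq V$ and $V\subseteq L'$ already force $L\subset V\subset L'$ with $\dim L'/L=2$.

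The target is therefore the set of pairs $(L,L')$ with $L\overset{1}{\subset}V\overset{1}{\subset}L'$, which is $\mathbb P^{d-1}\times\mathbb P^{d-1}$ (hyperplanes of $V$ times lines in $\mathbb V/V$). Conversely, any $d$-dimensional $W$ with $L\subset W\subset L'$ automatically satisfies $\dim(W\cap V)\ge d-1$, $L\subseteq W\cap V$ and $W+V\subseteq L'$, so the constraint $W\in\mathscr S(V)$ imposes nothing extra. Hence $\widetilde{\mathscr S}(V)$ is the $\mathbb P^1$-bundle $\mathbb P(L'/L)$ over $\mathbb P^{d-1}\times\mathbb P^{d-1}$. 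In particular it is smooth, projective and irreducible of dimension $2d-1$.

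For comparison, $O_1$ is open in $\mathscr S(V)$ and fibres over $\mathbb P(V^*)\cong\mathbb P^{d-1}$ via $W\mapsto W\cap V$. The fibre over a given $U=W\cap V$ consists of $d$-spaces $W\supset U$ meeting $V$ exactly in $U$, i.e.\ lines in $\mathbb V/U$ not lying in $V/U$. This is $\mathbb P^{d}\setminus\mathbb P^{d-1}\cong\mathbb A^d$, so $\dim\mathscr S(V)=2d-1$.

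Next we compute fibres of $\pi$. If $W\in O_1$, then $W\cap V$ is a hyperplane of $W$ and $W+V$ has dimension $d+1$. The conditions $L\overset{1}{\subset}W$ with $L\subseteq W\cap V$ force $L=W\cap V$, and similarly $L'=W+V$. So $\pi^{-1}(W)$ is a single reduced point, and the inverse map $W\mapsto(W\cap V,W,W+V)$ is a morphism on $O_1$, because $W\cap V$ and $W+V$ have constant dimension there. Thus $\pi|_{\pi^{-1}(O_1)}:\pi^{-1}(O_1)\to O_1$ is an isomorphism. If $W=V$, the conditions reduce to $L\overset{1}{\subset}V\overset{1}{\subset}L'$ with no further constraint, giving $\pi^{-1}(V)\cong\mathbb P(V^*)\times\mathbb P(\mathbb V/V)\cong\mathbb P^{d-1}\times\mathbb P^{d-1}$.

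Finally, $\pi$ is proper, since its source is projective, and it is birational because $O_1$ is dense. The density of $O_1$ follows from the fact that $\mathscr S(V)$ is the image of the irreducible $\widetilde{\mathscr S}(V)$ and $O_0$ is a point of dimension less than $2d-1$. So $\pi$ is a resolution of singularities. It is stratified in the sense that $\pi$ is a locally trivial fibration over each stratum: this is trivial over $O_0$, and over $O_1$ it is an isomorphism.

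The only delicate point is that the isomorphism over $O_1$ holds scheme-theoretically and not merely bijectively. We handle this by exhibiting the explicit inverse morphism, which is legitimate because intersection and sum are morphisms on loci of constant rank. Compatibility with $\widetilde F$ and $F$ in \eqref{2d-F} is not needed for this lemma.
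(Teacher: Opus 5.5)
Your proposal is correct and follows essentially the paper's proof. You use the same forgetful map $(L\subset W\subset L')\mapsto(L,L')$ to make $\widetilde{\mathscr S}(V)$ a $\mathbb P^1$-bundle over $\mathbb P^{d-1}\times\mathbb P^{d-1}$, the same forced choice $L=W\cap V$, $L'=W+V$ over $O_1$, and the same description of $\pi^{-1}(V)$; your explicit inverse morphism and density argument just fill in details the paper leaves implicit.

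One harmless slip in your dimension aside: since $\dim V/U=1$, the fibre of $O_1\to\mathbb P(V^*)$ over $U$ is $\mathbb P(\mathbb V/U)\cong\mathbb P^{d}$ minus the single point $W=V$, not $\mathbb P^d\setminus\mathbb P^{d-1}$. The count $\dim\mathscr S(V)=2d-1$ is unaffected.
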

\begin{proof}
    Consider the forgetful map $$f:\widetilde{\mathscr{S}}(V)\to\mathrm{Gr}(d-1,V)\times\mathrm{Gr}(1,\mathbb{V}/V),\ (L\subset W\subset L')\mapsto (L,L'/V),$$ which is a fibration. The fiber over $(L,L'/V)$ is $$\{W\in\mathrm{Gr}(d,2d)\mid L\overset{1}{\subset} W\overset{1}{\subset} L'\}\simeq\mathrm{Gr}(1,L'/L)\simeq\mathbb{P}^1.$$
    We have that $\widetilde{\mathscr{S}}(V)$ is a smooth projective variety of dimension $2d-1$. When $W\in O_1$, we have $\dim(W\cap V)=d-1$ and $\dim(W+V)=d+1$, which makes the choice of $(L,L')$ unique, and hence $\pi|_{\pi^{-1}(O_1)}$ is an isomorphism.
    Since $\mathscr{S}(V)$ is also projective (in fact it is isomorphic to some generalized Schubert variety), the map $\pi$ is a resolution of singularity of $\mathscr{S}(V)$. 
    The fiber over $\{V\}$ is $$\{(L,L')\mid L\overset{1}{\subset} V, V\overset{1}{\subset}L'\subset\mathbb{V}\}\simeq\mathrm{Gr}(d-1,V)\times\mathrm{Gr}(1,\mathbb{V}/V)\simeq\mathbb{P}^{d-1}\times\mathbb{P}^{d-1}.$$ 
    This completes the proof.
\end{proof}

\begin{lem}\label{Fq form}
For $d\geq 2$, the $\mathbb{F}_q$-forms of $\mathbb{P}^{d-1}_{\overline{\mathbb{F}_q}}\times
\mathbb{P}^{d-1}_{\overline{\mathbb{F}_q}}$ are
$$
\mathbb{P}^{d-1}_{\mathbb{F}_q}\times\mathbb P^{d-1}_{\mathbb{F}_q}
\quad\text{and}\quad
\operatorname{Res}_{\mathbb{F}_{q^2}/\mathbb{F}_q}
\bigl(\mathbb{P}^{d-1}_{\mathbb{F}_{q^2}}\bigr).
$$
 The latter is the Weil restriction form. After base change to $\overline{\mathbb{F}_q}$, the Frobenius swaps the two geometric factors of $\operatorname{Res}_{\mathbb{F}_{q^2}/\mathbb{F}_q}
\bigl(\mathbb{P}^{d-1}_{\mathbb{F}_{q^2}}\bigr)\times_{\mathbb{F}_q}\overline{\mathbb{F}_q}$. 
\end{lem}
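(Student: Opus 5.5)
Twisted forms of $X_0=\mathbb{P}^{d-1}\times\mathbb{P}^{d-1}$ over $\mathbb{F}_q$ are classified by Galois descent: by \cite[Theorem~3.43]{Mi17} (Galois descent as used in \S\ref{subsec:frobenius-revisited}), they correspond to $H^1(\mathrm{Gal}(\overline{\mathbb{F}_q}/\mathbb{F}_q),\mathrm{Aut}(X_{\overline{\mathbb{F}_q}}))$, where $X_{\overline{\mathbb{F}_q}}=X_0\times_{\mathbb{F}_q}\overline{\mathbb{F}_q}$. The Galois group is procyclic with topological generator the Frobenius, and $X_0$ is projective, so the classification of forms is effective. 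The first step is to describe the automorphism group. For $d\ge 2$ we have $\mathrm{Pic}(X_{\overline{\mathbb{F}_q}})=\mathbb{Z}h_1\oplus\mathbb{Z}h_2$, and any automorphism preserves the nef cone generated by $h_1,h_2$. Hence it either fixes or swaps the two projections, which gives
$$
\mathrm{Aut}(X_{\overline{\mathbb{F}_q}})=(\mathrm{PGL}_d\times\mathrm{PGL}_d)\rtimes\mathbb{Z}/2,
$$
where $\mathbb{Z}/2$ swaps the factors. Here $d\ge2$ is used so that the two rulings are positive-dimensional projective spaces; when $d=1$ the variety is a point.

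The second step is to compute $H^1$. There is an exact sequence $1\to \mathrm{PGL}_d^2\to\mathrm{Aut}\to\mathbb{Z}/2\to1$, with trivial Galois action on $\mathbb{Z}/2$. Since $\mathbb{F}_q$ is finite, $H^1(\mathbb{F}_q,\mathbb{Z}/2)=\mathrm{Hom}(\hat{\mathbb{Z}},\mathbb{Z}/2)=\mathbb{Z}/2$. For each class $c\in H^1(\mathbb{F}_q,\mathbb{Z}/2)$ we determine the fiber of $H^1(\mathrm{Aut})\to H^1(\mathbb{Z}/2)$ over $c$ by twisting:

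\emph{Trivial class.} The fiber is the image of $H^1(\mathbb{F}_q,\mathrm{PGL}_d\times\mathrm{PGL}_d)$. This group vanishes by Lang's theorem for the connected group $\mathrm{PGL}_d^2$, or equivalently because $\mathrm{Br}(\mathbb{F}_q)=0$.

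\emph{Nontrivial class.} The twisted form of $\mathrm{PGL}_d^2$ is $\mathrm{Res}_{\mathbb{F}_{q^2}/\mathbb{F}_q}\mathrm{PGL}_d$, which is again connected. By Lang's theorem its $H^1$ vanishes, so the fiber is a single point.

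Thus there are exactly two forms. The split form gives $\mathbb{P}^{d-1}_{\mathbb{F}_q}\times\mathbb{P}^{d-1}_{\mathbb{F}_q}$. For the nontrivial cocycle we exhibit a concrete form, namely the Weil restriction $R=\mathrm{Res}_{\mathbb{F}_{q^2}/\mathbb{F}_q}\mathbb{P}^{d-1}_{\mathbb{F}_{q^2}}$. Its base change is $\prod_{\sigma\in\mathrm{Gal}(\mathbb{F}_{q^2}/\mathbb{F}_q)}\mathbb{P}^{d-1}$, indexed by the two embeddings $\mathbb{F}_{q^2}\hookrightarrow\overline{\mathbb{F}_q}$, and Frobenius permutes these embeddings. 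So the descent datum of $R$ swaps the two factors and $R$ represents the nontrivial class. This also proves the final assertion: after base change, the $q$-Frobenius of $R$ maps $(x,y)$ to $(F_q y,F_q x)$, swapping the geometric factors.

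The main obstacle is the first step, the structure of $\mathrm{Aut}$, specifically that every automorphism is a product of automorphisms of the factors up to the swap. I would argue via the action on $\mathrm{Pic}$ and the nef cone, and then use that each projection is determined by its line bundle $h_i$. The vanishing of $H^1$ for the connected groups follows directly from Lang's theorem, as in Proposition~\ref{prop:rational-orbits}.
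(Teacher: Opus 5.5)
Your proposal is correct and follows the same route as the paper. Both arguments compute $\mathrm{Aut}$ as $(\mathrm{PGL}_d\times\mathrm{PGL}_d)\rtimes\mathfrak{S}_2$, map $H^1$ onto the two-element set $H^1(\mathbb{F}_q,\mathfrak{S}_2)$, and apply Lang's theorem to the twisted connected group ($\mathrm{PGL}_d^2$ or $\mathrm{Res}_{\mathbb{F}_{q^2}/\mathbb{F}_q}\mathrm{PGL}_d$) to get a unique lift of each class; your nef-cone justification of the automorphism group and your explicit check that the Weil restriction has Frobenius $(x,y)\mapsto(F_q y,F_q x)$ supply details the paper only asserts.
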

\begin{proof}
Write $\overline{X}=\mathbb{P}^{d-1}_{\overline{\mathbb{F}_q}}\times
\mathbb{P}^{d-1}_{\overline{\mathbb{F}_q}}$. Its automorphism group is 
$$
\operatorname{Aut}(\overline{X})
\simeq
(\operatorname{PGL}_d(\overline{\mathbb{F}_q})\times\operatorname{PGL}_d(\overline{\mathbb{F}_q}))\rtimes\mathfrak{S}_2,
$$
where $\mathfrak{S}_2$ permutes the two factors. The $\mathbb{F}_q$-forms of
$\overline{X}$ are classified by the first Galois cohomology $H^1\!\left(\mathbb{F}_q,\operatorname{Aut}(\overline{X})\right)$.

The projection to $\mathfrak S_2$ induces
$H^1\!\left(\mathbb F_q,\operatorname{Aut}(\overline X)\right)
\rightarrow H^1(\mathbb F_q,\mathfrak S_2)$.
The cohomology $H^1(\mathbb F_q,\mathfrak S_2)$ has exactly two elements, corresponding to the standard and twisted Frobenius homomorphisms. 

For either class, the corresponding twisted identity component is a connected algebraic group over $\mathbb{F}_q$, i.e.,
$$
\operatorname{PGL}_{d,\mathbb{F}_{q}}\times\operatorname{PGL}_{d,\mathbb{F}_{q}}
\quad\text{or}\quad
\operatorname{Res}_{\mathbb{F}_{q^2}/\mathbb{F}_q}(\operatorname{PGL}_{d,\mathbb{F}_{q^2}}),
$$
whose first Galois cohomology is trivial by Lang's theorem. Thus each of the two
classes in $H^1(\mathbb F_q,\mathfrak S_2)$ has a unique lift.
Therefore, there are exactly two $\mathbb{F}_q$-forms: the split form $\mathbb{P}^{d-1}_{\mathbb{F}_q}\times\mathbb P^{d-1}_{\mathbb{F}_q}$
and the non-split form $\operatorname{Res}_{\mathbb F_{q^2}/\mathbb{F}_q}
\bigl(\mathbb{P}^{d-1}_{\mathbb F_{q^2}}\bigr)$ for which the Frobenius swaps the two geometric factors.
\end{proof}

Let $\sigma$ be the morphism of $\mathbb{P}^{d-1}\times\mathbb{P}^{d-1}$ that swaps the two factors. Let $F_q$ be the standard Frobenius morphism of $\mathbb{P}^{d-1}\times\mathbb{P}^{d-1}$ associated with $\mathbb{P}^{d-1}_{\mathbb{F}_q}\times\mathbb{P}^{d-1}_{\mathbb{F}_q}$.
\begin{lem}\label{F action}
    Under the isomorphism $\pi^{-1}(\{V\})\simeq\mathbb{P}^{d-1}\times\mathbb{P}^{d-1}$, the restriction of $\widetilde{F}$ on this fiber is just $F_q\circ\sigma$.
\end{lem}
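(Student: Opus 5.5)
The plan is to make the isomorphism $\pi^{-1}(\{V\})\simeq\mathrm{Gr}(d-1,V)\times\mathrm{Gr}(1,\mathbb{V}/V)\simeq\mathbb{P}^{d-1}\times\mathbb{P}^{d-1}$ of Lemma~\ref{lem:piresolution} explicit in coordinates adapted to the sesquilinear form $\langle v,v'\rangle=v^{\mathrm{tr}}JF_q(v')$. I would then compute $\widetilde F$ on this fiber directly from its definition \eqref{tilde-F}, namely $(L\subset V\subset L')\mapsto((L')^\perp\subset V^\perp\subset L^\perp)$.

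First, I reduce to a standard $V$. All maximal isotropic subspaces of $\mathbb{F}_{q^2}^{2d}$ are conjugate under $\mathrm{U}_{2d}(q)=\mathrm{GL}_{2d}^F$, and a $g\in\mathrm{GL}_{2d}^F$ commutes with $F$ and hence with $\widetilde F$. Moreover, by the proof of Lemma~\ref{F-map}, such a $g$ satisfies $(gW)^\perp=g(W^\perp)$. So we may assume $\mathring V=E_d=\mathrm{Span}\{v_1,\dots,v_d\}$. This subspace is isotropic for \eqref{form1} and satisfies $E_d^\perp=E_d$.

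Second, I fix coordinates on the two factors. A hyperplane $L\subset V$ is the kernel of a linear form $\sum_{i\le d}a_iv_i^*$, and I send it to $[a_1:\cdots:a_d]$. A line $L'/V\subset\mathbb{V}/V$ is spanned by the class of some $u=\sum_{j>d}u_jv_j$, and I send it to $[u_{2d}:u_{2d-1}:\cdots:u_{d+1}]$. This gives the isomorphism with $\mathbb{P}^{d-1}\times\mathbb{P}^{d-1}$. Both coordinate systems are defined over $\mathbb{F}_q$, so the standard Frobenius $F_q$ of $\mathbb{P}^{d-1}_{\mathbb{F}_q}\times\mathbb{P}^{d-1}_{\mathbb{F}_q}$ is the entrywise $q$-th power.

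Third, I compute the two components of $\widetilde F(L,L')$.
\begin{itemize}
\item For the first component: $(L')^\perp=(V+\langle u\rangle)^\perp=V\cap u^\perp$. For $v=\sum_{i\le d}x_iv_i$, the condition $\langle v,u\rangle=0$ reads $\sum_{i\le d}x_iu_{2d+1-i}^q=0$. So $(L')^\perp\subset V$ is the hyperplane with coordinates $[u_{2d}^q:\cdots:u_{d+1}^q]$.
\item For the second component: $L^\perp$ consists of the $w$ with $\sum_i w_{2d+1-i}\,l_i^q=0$ for all $l\in L$. It contains $V$ (the $w$ with $w_j=0$ for $j>d$), and modulo $V$ the condition forces $(w_{2d},\dots,w_{d+1})$ to be proportional to $(a_1^q,\dots,a_d^q)$. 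A dimension count then confirms that $L^\perp/V$ is exactly this line.
\end{itemize}
Hence $\widetilde F([a],[u])=([u^q],[a^q])=F_q\circ\sigma([a],[u])$, which is the claim.

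The main obstacle is bookkeeping rather than anything conceptual. The form is only $q$-semilinear on $\mathbb{V}$, and one must get the index reversal by $J$ right: the coordinates on the second factor must be ordered as $u_{2d},\dots,u_{d+1}$, so that the induced identification is genuinely $F_q\circ\sigma$ and not $F_q\circ\sigma$ composed with some fixed projective linear map. If a different ordering were forced, I would absorb that $\mathbb{F}_q$-rational linear change of coordinates into the isomorphism. This is consistent with Lemma~\ref{Fq form}, which shows the form is in any case the Weil restriction $\operatorname{Res}_{\mathbb F_{q^2}/\mathbb F_q}(\mathbb{P}^{d-1}_{\mathbb F_{q^2}})$.
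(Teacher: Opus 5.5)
Your proof is correct. With $\mathring V=E_d$ one has $E_d^\perp=E_d$. Both of your computations check out:
\begin{itemize}
\item $(V+\langle u\rangle)^\perp=V\cap u^\perp$ is the hyperplane with coordinates $[u_{2d}^q:\cdots:u_{d+1}^q]$;
\item $L^\perp=V\oplus\langle\sum_i a_i^q v_{2d+1-i}\rangle$.
\end{itemize}
So in your coordinates $\widetilde F([a],[u])=([u^q],[a^q])$ holds on the nose. The reduction to $E_d$ is also legitimate, because $(gW)^\perp=g(W^\perp)$ for $F$-fixed $g$.

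The paper argues differently and much more briefly. From Lemma~\ref{F-map} it extracts only that $\widetilde F$ sends $(L,L'/V)$ to $((L')^\perp,L^\perp/V)$. In other words, $\widetilde F$ interchanges the two geometric factors $\mathrm{Gr}(d-1,V)$ and $\mathrm{Gr}(1,\mathbb V/V)$. It then invokes the classification of $\mathbb F_q$-forms in Lemma~\ref{Fq form}, which uses $H^1(\mathbb F_q,\operatorname{Aut})$ and Lang's theorem. This forces the fiber to be $\operatorname{Res}_{\mathbb F_{q^2}/\mathbb F_q}(\mathbb P^{d-1}_{\mathbb F_{q^2}})$, whose Frobenius is $F_q\circ\sigma$ after a suitable identification.

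The two routes buy different things:
\begin{itemize}
\item Your route avoids Galois cohomology and produces an explicit identification, namely the product of projective-linear isomorphisms $\mathbb P(V^*)\simeq\mathbb P^{d-1}$ and $\mathbb P(\mathbb V/V)\simeq\mathbb P^{d-1}$. This pins down which isomorphism the statement refers to, which the paper leaves implicit. It also gives directly the interchange of the two hyperplane classes used in Corollary~\ref{trace}.
\item The paper's route is coordinate-free and needs no bookkeeping of the $J$-reversal. It shows the conclusion holds for any $\mathbb F_q$-structure whose Frobenius swaps the two factors.
\end{itemize}
Your closing hedge about absorbing a different ordering into the isomorphism is unnecessary. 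Your computation already shows that the ordering $u_{2d},\dots,u_{d+1}$ works exactly.
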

\begin{proof}
    This follows from Lemma~\ref{F-map} and Lemma~\ref{Fq form}.
\end{proof}

We write $Y_{d-1}=\mathbb{P}^{d-1}\times\mathbb{P}^{d-1}$ for short. 
\begin{cor}\label{trace}
   For $0\le i\le 2d-2$, the trace of $\widetilde{F}$ acting on $H^{2i}_{\mathrm{\acute{e}t}}(Y_{d-1})$ is $q^i$ for even $i$, and $0$ for odd $i$.
\end{cor}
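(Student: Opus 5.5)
We compute via the Künneth formula together with the description of $\widetilde{F}$ from Lemma~\ref{F action}. Over $\overline{\mathbb{F}_q}$,
\[
H^{2i}_{\mathrm{\acute{e}t}}(Y_{d-1})\simeq\bigoplus_{a+b=i,\ 0\le a,b\le d-1} H^{2a}(\mathbb{P}^{d-1})\otimes H^{2b}(\mathbb{P}^{d-1}).
\]
Each summand $H^{2a}(\mathbb{P}^{d-1})\otimes H^{2b}(\mathbb{P}^{d-1})$ is one-dimensional, spanned by $h_1^a h_2^b$. Here $h_1,h_2$ are the pullbacks of the hyperplane classes under the two projections, and they are Tate twists $\overline{\mathbb{Q}}_\ell(-1)$.

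By Lemma~\ref{F action}, $\widetilde{F}=F_q\circ\sigma$ on the fiber. The pullback $F_q^*$ acts on $H^{2i}$ by the scalar $q^i$, since the cohomology of the split form $\mathbb{P}^{d-1}_{\mathbb{F}_q}\times\mathbb{P}^{d-1}_{\mathbb{F}_q}$ is pure Tate. The swap $\sigma^*$ sends $h_1^ah_2^b\mapsto h_1^bh_2^a$, and it commutes with $F_q^*$ because $\sigma$ is defined over $\mathbb{F}_q$. Therefore
\[
\mathrm{tr}(\widetilde{F};H^{2i})=q^i\cdot\mathrm{tr}(\sigma^*;H^{2i}).
\]
The trace of $\sigma^*$ on $H^{2i}$ is the trace of a permutation matrix on the basis $\{h_1^ah_2^b\}_{a+b=i}$. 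It therefore equals the number of fixed basis vectors, namely those with $a=b=i/2$. This number is $1$ if $i$ is even (note $i/2\le d-1$ holds because $i\le 2d-2$) and $0$ if $i$ is odd. This gives the claimed values.

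The one point requiring care is the identification of the Frobenius action on the fiber as $F_q^*\circ\sigma^*$ with the correct normalization, so that no extra sign appears. I would confirm this using Lemma~\ref{Fq form}: since $\widetilde{F}^2=F_{q^2}$ and $\sigma$ is an involution defined over $\mathbb{F}_q$, the eigenvalues of $\widetilde{F}$ on the $\sigma$-invariant and $\sigma$-anti-invariant parts are $\pm q^i$. A quick check against point counts confirms the answer: the Lefschetz trace formula gives $\#\mathrm{Res}_{\mathbb{F}_{q^2}/\mathbb{F}_q}(\mathbb{P}^{d-1})(\mathbb{F}_q)=\#\mathbb{P}^{d-1}(\mathbb{F}_{q^2})=\sum_{j=0}^{d-1}q^{2j}$, which matches the traces $q^{2j}$ in degrees $4j$.
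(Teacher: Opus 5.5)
Your proof is correct and follows essentially the same route as the paper: the Künneth basis of monomials $h_1^a h_2^b$ in $H^{2i}_{\mathrm{\acute{e}t}}(Y_{d-1})$, the factor $F_q$ acting by the scalar $q^i$, and the swap $\sigma$ permuting these monomials with exactly one fixed monomial precisely when $i$ is even. Your point-count check via $\#\mathbb{P}^{d-1}(\mathbb{F}_{q^2})=\sum_{j=0}^{d-1}q^{2j}$ is a correct consistency check, though the argument does not need it.
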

\begin{proof}
We have
$$
H^{\bullet}_{\mathrm{\acute{e}t}}(\mathbb{P}^{d-1})
\simeq
\overline{\mathbb{Q}}_\ell[\alpha]/(\alpha^d),
$$
where $\alpha\in H^{2}_{\mathrm{\acute{e}t}}(\mathbb{P}^{d-1})$ is the hyperplane class. By the Künneth formula,
$$
H^{\bullet}_{\mathrm{\acute{e}t}}(Y_{d-1})
\simeq
\overline{\mathbb{Q}}_\ell[\alpha,\beta]/(\alpha^d,\beta^d),
$$
where $\alpha$ and $\beta$ are the pullbacks of the hyperplane class via the projections onto the first and second factors, respectively. Lemma~\ref{F action} gives $\sigma(\alpha)=\beta$. Hence $H^{2i}_{\mathrm{\acute{e}t}}(Y_{d-1})$ has a basis $\{\alpha^k\beta^{i-k}\mid\max(0,i-d+1)\le k\le\min(i,d-1)\}$, and $F_q$ acts on each basis vector by multiplication by $q^i$. The trace is therefore $q^i$ if $i$ is even, and $0$ otherwise.
\end{proof}

We carry out the following calculations over $\overline{\mathbb F_q}$.
The commutative diagram~\eqref{2d-F} shows that $\pi$ descends
to an $\mathbb F_q$-morphism
\[
\widetilde{\pmb{\mathscr S}}(\mathring V)
\longrightarrow
\pmb{\mathscr S}(\mathring V).
\]
The constant sheaf on $\widetilde{\mathscr S}(V)$ is obtained
by base change from the constant sheaf on
$\widetilde{\pmb{\mathscr S}}(\mathring V)$.
Similarly, the IC complex on $\mathscr S(V)$ is obtained
by base change from the IC complex on
$\pmb{\mathscr S}(\mathring V)$.
We equip both complexes with the canonical Weil structures.
All stalks and fiber cohomology groups are over $\overline{\mathbb{F}_q}$.
Proper base change and Verdier duality are compatible
with these Weil structures.
In particular, self-duality is understood in the category
of Weil complexes.
The same convention applies to the calculations in
\S\ref{cal-2d+1} and \S\ref{cal-O}.

Consider the intersection cohomology complex $\mathrm{IC}_{\widetilde{\mathscr{S}}(V)}(\frac{2d-1}{2})=\underline{\overline{\mathbb{Q}}_\ell}_{\widetilde{\mathscr{S}}(V)}[2d-1](\frac{2d-1}{2})$ on $\widetilde{\mathscr{S}}(V)$. It is self-dual and pure of weight $0$.
By Lemma~\ref{lem:piresolution} and BBDG Decomposition Theorem, we have $$\pi_*\underline{\overline{\mathbb{Q}}_\ell}_{\widetilde{\mathscr{S}}(V)}[2d-1](\frac{2d-1}{2})\simeq\mathrm{IC}_{\mathscr{S}(V)}(\frac{2d-1}{2})\oplus\mathcal{S},$$ where the first summand is the intersection cohomology complex on $\mathscr{S}(V)$ which is self-dual and pure of weight 0, and $\mathcal{S}$ is a self-dual semisimple complex, pure of weight $0$, supported on $O_0$.
By proper base change, we have
$$(\pi_*\underline{\overline{\mathbb{Q}}_\ell}_{\widetilde{\mathscr{S}}(V)}[2d-1](\frac{2d-1}{2}))|_{O_0}\simeq \mathrm{R}\Gamma(\underline{\overline{\mathbb{Q}}_\ell}_{\mathbb{P}^{d-1}\times\mathbb{P}^{d-1}}[2d-1](\frac{2d-1}{2})).$$
The following table gives the stalks on each stratum, where rows are indexed by strata, and columns by cohomological degree.
$$\begin{tabular}{|c|c|c|c|c|c|c|}
       \hline
    \text{fiber} & \text{stratum}&$-2d+1$&$\cdots$&$-2d+1+2i$&$\cdots$&$2d-3$ \\
    \hline
     $\{\mathrm{pt}\}$&$O_1$&$\overline{\mathbb{Q}}_\ell(\frac{2d-1}{2})$&$\cdots$&$0$&$\cdots$&$0$\\
     $Y_{d-1}$&$O_0$&$H_{\mathrm{\acute{e}t}}^{0}(Y_{d-1})(\frac{2d-1}{2})$&$\cdots$&$H_{\mathrm{\acute{e}t}}^{2i}(Y_{d-1})(\frac{2d-1}{2})$&$\cdots$&$H_{\mathrm{\acute{e}t}}^{4d-4}(Y_{d-1})(\frac{2d-1}{2})$\\
     \hline
\end{tabular}$$ 

From the table, we read that for $ 0\le j\le d-2$,
\begin{equation}\label{iso-int} 
\mathsf{H}^{2j+1}(\mathrm{IC}_{\mathscr{S}(V)}|_{O_0})=0,\ \mathsf{H}^{2j+1}(\mathcal{S}|_{O_0})=H_{\mathrm{\acute{e}t}}^{2d+2j}(Y_{d-1})(\frac{2d-1}{2}).\end{equation} 

\begin{prop}
    For $0\le j\le d-1$, we have 
    $$H^{2d-2j-2}_{\mathrm{\acute{e}t}}(Y_{d-1})(\frac{2d-1}{2})\simeq\mathsf{H}^{-2j-1}(\mathrm{IC}_{\mathscr{S}(V)}(\frac{2d-1}{2})|_{O_0})\oplus {H}_{\mathrm{\acute{e}t}}^{2d+2j}(Y_{d-1})^\vee(-\frac{2d-1}{2}).$$ Moreover,
$$\mathrm{tr}\big({F};\mathsf{H}^{-2j-1}(\mathrm{IC}_{\mathscr{S}(V)}(\frac{2d-1}{2})|_{O_0})\big)=(-1)^{d+j-1}q^{-\frac{2j+1}{2}}.$$
\end{prop}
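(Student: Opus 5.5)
The plan is to read off the stalk at $O_0$ of the decomposition $\pi_*\underline{\overline{\mathbb{Q}}_\ell}[2d-1](\frac{2d-1}{2})\simeq\mathrm{IC}_{\mathscr{S}(V)}(\frac{2d-1}{2})\oplus\mathcal{S}$ in degree $-2j-1$, and then compute traces. By proper base change the left side has stalk $H^{2d-2j-2}_{\mathrm{\acute{e}t}}(Y_{d-1})(\frac{2d-1}{2})$ in degree $-2j-1$. So the task is to identify the $\mathcal{S}$-contribution in degree $-2j-1$.

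$\mathcal{S}$ is supported on the point $O_0$, so it is a graded Weil vector space, and it is self-dual: $\mathsf{H}^{-k}(\mathcal{S})\simeq \mathsf{H}^{k}(\mathcal{S})^\vee$, with compatible Frobenius actions. By \eqref{iso-int}, for $0\le j\le d-2$ the IC summand vanishes in positive odd degrees $2j+1$; this is the IC support condition, since $\dim O_0=0<\dim\mathscr{S}(V)$. Hence
\[
\mathsf{H}^{2j+1}(\mathcal{S})=H^{2d+2j}_{\mathrm{\acute{e}t}}(Y_{d-1})(\tfrac{2d-1}{2}).
\]
Dualizing gives
\[
\mathsf{H}^{-2j-1}(\mathcal{S})\simeq H^{2d+2j}_{\mathrm{\acute{e}t}}(Y_{d-1})^\vee(-\tfrac{2d-1}{2}).
\]
For $j=d-1$ the term $H^{4d-2}(Y_{d-1})$ is zero, so the formula holds trivially. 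Splitting the stalk of the direct sum in degree $-2j-1$ yields the first isomorphism.

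For the trace, subtract:
\[
\mathrm{tr}(F;\mathsf{H}^{-2j-1}(\mathrm{IC}))=\mathrm{tr}\big(F;H^{2d-2j-2}(Y)\big)\,q^{-\frac{2d-1}{2}}-\mathrm{tr}\big(F;H^{2d+2j}(Y)^\vee\big)\,q^{\frac{2d-1}{2}}.
\]
Here the Tate twist $(m)$ multiplies Frobenius eigenvalues by $q^{-m}$. Apply Corollary~\ref{trace}, with $F$ acting through $\widetilde{F}$ on the fibre as in \eqref{2d-F} and Lemma~\ref{F action}.

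On $H^{2(d-j-1)}(Y)$ the trace is $q^{d-j-1}$ if $d-j-1$ is even and $0$ otherwise. On $H^{2(d+j)}(Y)$ the trace is $q^{d+j}$ if $d+j$ is even and $0$ otherwise; on the dual it is $q^{-(d+j)}$ in the even case. This is because the eigenvalues are $\pm q^{d+j}$: $\widetilde F$ permutes the basis $\alpha^k\beta^{i-k}$ up to the scalar $q^i$, so its dual has the inverse permutation matrix scaled by $q^{-i}$, with the same trace pattern.

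Note that $d-j-1$ and $d+j$ have opposite parity, so exactly one term survives.
\begin{itemize}
\item If $d+j$ is odd, the first term survives and equals $q^{d-j-1-\frac{2d-1}{2}}=q^{-\frac{2j+1}{2}}$, with sign $+1=(-1)^{d+j-1}$.
\item If $d+j$ is even, the result is $-q^{-(d+j)+\frac{2d-1}{2}}=-q^{-\frac{2j+1}{2}}$, with sign $-1=(-1)^{d+j-1}$.
\end{itemize}

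The main point of care is the Weil-compatible self-duality of $\mathcal{S}$, and in particular correctly transporting the twisted action $F_q\circ\sigma$ to the dual, whose trace is taken with inverted eigenvalues. I would justify this using the fact that Verdier duality on a point is linear duality with the contragredient Weil structure, together with the self-duality of $\pi_*$ of the shifted and twisted constant sheaf on the smooth proper $\widetilde{\mathscr{S}}(V)$.
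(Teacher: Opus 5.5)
Your proposal is correct and follows essentially the same route as the paper: take the degree $-2j-1$ stalk of $\pi_*\underline{\overline{\mathbb{Q}}_\ell}[2d-1](\frac{2d-1}{2})$ at $O_0$ via proper base change, and identify the $\mathcal{S}$-part with $H^{2d+2j}_{\mathrm{\acute{e}t}}(Y_{d-1})^\vee(-\frac{2d-1}{2})$ using the self-duality of $\mathcal{S}$ together with \eqref{iso-int}. Then subtract the two traces computed from Corollary~\ref{trace}, using the opposite parities of $d-j-1$ and $d+j$. Your extra remarks are fine elaborations of steps the paper leaves implicit: the IC support condition behind \eqref{iso-int}, the case $j=d-1$, and the contragredient action on the dual.
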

\begin{proof}
    By Poincar\'e-Verdier duality and \eqref{iso-int}, we have 
\begin{align*}
    H^{2d-2j-2}_{\mathrm{\acute{e}t}}(Y_{d-1})(\frac{2d-1}{2})&\simeq\mathsf{H}^{-2j-1}(\pi_*\underline{\overline{\mathbb{Q}}_\ell}_{\widetilde{\mathscr{S}}(V)}[2d-1](\frac{2d-1}{2})|_{O_0})
    \\&\simeq\mathsf{H}^{-2j-1}(\mathrm{IC}_{\mathscr{S}(V)}(\frac{2d-1}{2})|_{O_0})\oplus \mathsf{H}^{-2j-1}(\mathcal{S}|_{O_0})\\&\simeq
    \mathsf{H}^{-2j-1}(\mathrm{IC}_{\mathscr{S}(V)}(\frac{2d-1}{2})|_{O_0})\oplus \mathsf{H}^{2j+1}(\mathcal{S}|_{O_0})^\vee\\&\simeq
    \mathsf{H}^{-2j-1}(\mathrm{IC}_{\mathscr{S}(V)}(\frac{2d-1}{2})|_{O_0})\oplus {H}_{\mathrm{\acute{e}t}}^{2d+2j}(Y_{d-1})^\vee(-\frac{2d-1}{2}).
\end{align*}
Note that the possible eigenvalues of $\widetilde{F}$ are only $\pm q^{k/2}$. By Corollary \ref{trace}, we have $\mathrm{tr}\big(\widetilde{F};H^{2d+2j}_{\mathrm{\acute{e}t}}(Y_{d-1})^\vee(-\frac{2d-1}{2})\big)=\delta_{d+j,even}\cdot q^{-\frac{2j+1}{2}}$ and $\mathrm{tr}\big(\widetilde{F};H^{2d-2j-2}_{\mathrm{\acute{e}t}}(Y_{d-1})(\frac{2d-1}{2})\big)=\delta_{d-j-1,even}\cdot q^{-\frac{2j+1}{2}}$.
By the above formula, we have 
\begin{align*}
&\mathrm{tr}\big({F};\mathsf{H}^{-2j-1}(\mathrm{IC}_{\mathscr{S}(V)}(\frac{2d-1}{2})|_{O_0})\big)\\&=\mathrm{tr}\big(\widetilde{F};H^{2d-2j-2}_{\mathrm{\acute{e}t}}(Y_{d-1})(\frac{2d-1}{2})\big)-\mathrm{tr}\big(\widetilde{F};H^{2d+2j}_{\mathrm{\acute{e}t}}(Y_{d-1})^\vee(-\frac{2d-1}{2})\big)\\&=(\delta_{d-j-1,even}-\delta_{d+j,even})\cdot q^{-\frac{2j+1}{2}}=(-1)^{d+j-1}q^{-\frac{2j+1}{2}}.
\end{align*}
\end{proof}

From the above trace computations, we define the normalized Kazhdan-Lusztig polynomial
\begin{equation}\label{KL-2d}f_{2d,\mathbf{\lambda}}(v)=\sum\limits_{j=0}^{\lambda_n-1}(-1)^{\lambda_n+j-1}v^{-2j-1}=v^{-2\lambda_n+1}-\frac{v^{-2\lambda_{n}+3}+(-1)^{\lambda_{n}}v}{v^2+1}.\end{equation}

\subsubsection{Calculation for $\boldsymbol{S}^{2d+1}(\mathbf{t})$}\label{cal-2d+1}
Similarly to $\boldsymbol{S}^{2d}(\mathbf{t})$, the calculation for $\boldsymbol{S}^{2d+1}(\mathbf{t})$ can also be reduced to the case of $n=1$. 
Therefore, we still fix $n=1$ in this subsection. Now, the $\mathbb{F}_q$-rational points of $\boldsymbol{S}^{2d+1}(\mathbf{t})$ is
 \begin{align*}
 &S^{2d+1}(\mathbf{t})=\\
 &\{(V\subset V^\perp,V'\subset{V'}^\perp)\mid \dim V=\dim V'=d,\dim(V\cap V')\ge d-1, V,V'\subset\mathbb{F}^{2d+1}_{q^2}\ \},
 \end{align*}
Note that $\boldsymbol{S}^{2d+1}(\mathbf{t})$ is not smooth unless $d=1$. In the following, we assume $d=\lambda_n\ge2$.

Denote $$\mathcal{F}l_{d,d+1}:=\mathrm{Fl}(d,d+1;\overline{\mathbb{F}_q}^{2d+1})=\{\mathcal{F}=(0\subset F_1\subset F_2\subset\overline{\mathbb{F}_q}^{2d+1})\mid \dim F_1=d,\dim F_2=d+1\}$$ and
$$\widetilde{S}^{2d+1}(\mathbf{t}):=S^{2d+1}(\mathbf{t})\times_{\mathbb{F}_q}\overline{\mathbb{F}_q}\subset\mathcal{F}l_{d,d+1}\times\mathcal{F}l_{d,d+1},$$
we have \begin{align*}
  \widetilde{S}^{2d+1}(t)
={}&\bigl\{(\mathcal F,\mathcal F')
\in \mathcal{F}l_{d,d+1}\times\mathcal{F}l_{d,d+1}
\;\bigm|
\dim(F_1\cap F'_1)\ge d-1,\quad
\dim(F_2\cap F'_2)\ge d
\bigr\}.
 \end{align*}
Fix a maximal isotropic subspace $\mathring{V}$ of $\mathbb{F}_{q^2}^{2d+1}$, and set $\mathring{\mathcal{F}}=(0\subset \mathring{V}\subset\mathring{V}^\perp\subset\mathbb{F}^{2d+1}_{q^2})$.
Let $\mathcal{F}=(0\subset V\subset V^\perp\subset\overline{\mathbb{F}_q}^{2d+1})$ be the lift of $\mathring{\mathcal{F}}$ to $\mathcal{F}l_{d,d+1}$, and $P_\mathcal{F}$ be the corresponding parabolic subgroup in $\mathrm{GL}_{2d+1}$ that stabilizes $\mathcal{F}$. Consider the projection of the first factor $\widetilde{S}^{2d+1}(\mathbf{t})\to\mathcal{F}l_{d,d+1}$. The fiber over $\{\mathcal{F}\}$ is $$\mathscr{S}(\mathcal{F})=\{(W\subset W')\mid \dim W=d=\dim W'-1, \dim W\cap V\ge d-1,\dim W'\cap V^\perp\ge d\}.$$ 
We have an $\mathbb{F}_q$-rational isomorphism of varieties 
$\mathrm{GL}_{2d+1}\times^{P_\mathcal{F}}\mathscr{S}(\mathcal{F})\simeq\widetilde{S}^{2d+1}(\mathbf{t})$, which descends to an isomorphism of $\mathbb{F}_q$-varieties $\mathbf{U}_{2d+1}\times^{\mathbf{P}_{\mathring{\mathcal{F}}}}\pmb{\mathscr{S}}(\mathring{\mathcal{F}})\simeq\boldsymbol{S}^{2d+1}(\mathbf{t})$, where $\pmb{\mathscr{S}}(\mathring{\mathcal{F}})$ is the $\mathbb{F}_q$-form of $\mathscr{S}(\mathcal{F})$ and $\mathbf{P}_{\mathring{\mathcal{F}}}$ is the parabolic subgroup of $\mathbf{U}_{2d+1}$ that stabilizes $\mathring{\mathcal{F}}$. As before, we only need to deal with $\mathscr{S}(\mathcal{F})$.
Denote $$\widetilde{\mathscr{S}}(\mathcal{F})=\{(L\overset{1}{\subset} W\subset W'\overset{1}{\subset} L')\mid (W\subset W')\in\mathscr{S}(\mathcal{F}),L\subseteq W\cap V,(W'+V^\perp)\subseteq L'\}.$$
Let $F$ be the Frobenius morphisms on $\mathscr{S}(\mathcal{F})$, and $\widetilde{F}$ be the Frobenius of $\widetilde{\mathscr{S}}(\mathcal{F})$ defined similar to \eqref{tilde-F}.
Let \begin{equation}
    \pi:\widetilde{\mathscr{S}}(\mathcal{F})\to\mathscr{S}(\mathcal{F}),\quad (L\subset W\subset W'\subset L')\mapsto (W\subset W')
\end{equation} be the obvious projection. We have a commutative diagram: \begin{equation}
\begin{tikzcd}
\widetilde{\mathscr{S}}(\mathcal{F}) \arrow[r, "\pi"] \arrow[d, "\widetilde{F}"'] & \mathscr{S}(\mathcal{F}) \arrow[d, "F"] \\
\widetilde{\mathscr{S}}(\mathcal{F}) \arrow[r, "\pi"]                             & \mathscr{S}(\mathcal{F})       
\end{tikzcd}.
\end{equation}
Denote
\begin{align*}
O_{0,0}&:=\{\mathcal{F}\},\\
O_{1,1}&:=\{(W\subset W')\in\mathscr{S}(\mathcal{F})\mid\dim W\cap V=d-1, \dim W'\cap V^{\perp}=d\},\\
O_{0,1}&:=\{(V\subset W')\in\mathscr{S}(\mathcal{F})\mid \dim W'\cap V^\perp=d\},\\
O_{1,0}&:=\{(W\subset V^\perp)\in\mathscr{S}(\mathcal{F})\mid \dim W\cap V=d-1\}=F(O_{0,1}).
\end{align*}
Reset $\mathbb{V}:=\overline{\mathbb{F}_q}^{2d+1}$.
We compute the Zariski closures 
\begin{align*}
\overline{O_{0,1}}=O_{0,1}\sqcup O_{0,0}\simeq\mathrm{Gr}(1,\mathbb{V}/V)\simeq\mathbb{P}^{d} \quad \text{and}\quad
\overline{O_{1,0}}=O_{1,0}\sqcup O_{0,0}\simeq\mathrm{Gr}(d,V^\perp)\simeq\mathbb{P}^d.
\end{align*}
Therefore, we obtain a stratification of the variety $\mathscr{S}(\mathcal{F})$ with four strata $O_{1,1}$, $O_{0,1}$, $O_{1,0}$ and $O_{0,0}$.

\begin{lem}
 The map $\pi$ is a stratified resolution of $\mathscr{S}(\mathcal{F})$, whose restriction to $\pi^{-1}(O_{1,1})$ is an isomorphism. Moreover, the fiber over $\{\mathcal{F}\}$ is isomorphic to $\mathbb{P}^{d-1}\times\mathbb{P}^{d-1}$; the restrictions of $\pi$ to $\pi^{-1}(O_{0,1})$ and $\pi^{-1}(O_{1,0})$ are both fibrations with fibers isomorphic to $\mathbb{P}^{d-1}$.   
\end{lem}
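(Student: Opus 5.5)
I will model the argument on Lemma~\ref{lem:piresolution}, replacing the Grassmannian fibration by a two-step fibration. The plan is to use the forgetful map
\[
f:\widetilde{\mathscr{S}}(\mathcal{F})\to \mathrm{Gr}(d-1,V)\times\mathrm{Gr}(1,\mathbb{V}/V^\perp),\qquad (L\subset W\subset W'\subset L')\mapsto (L,L'/V^\perp).
\]
Since $\dim\mathbb{V}/V^\perp=d$, the target is $\mathbb{P}^{d-1}\times\mathbb{P}^{d-1}$. The conditions $L\subseteq V$ and $V^\perp\subseteq L'$ are automatic for $(L,L')$ in this target, because $V\subset V^\perp$. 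So the fiber of $f$ over $(L,L')$ is the space of pairs $W\subset W'$ with $L\overset{1}{\subset}W\overset{1}{\subset}W'\overset{1}{\subset}L'$. Here $\dim L'/L=3$, so this fiber is a partial flag variety $\mathrm{Fl}(1,2;3)$, which is smooth and projective of dimension $3$. It follows that $\widetilde{\mathscr{S}}(\mathcal{F})$ is smooth and projective of dimension $2(d-1)+3=2d+1$.

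Next I will check that $\pi$ lands in $\mathscr{S}(\mathcal{F})$ and is surjective. For a tuple upstairs, $W\supseteq L$ gives $\dim W\cap V\ge d-1$, and $W'\subseteq L'$ with $\dim L'=d+2$ gives $\dim W'\cap V^\perp\ge d$. Conversely, for $(W\subset W')\in\mathscr{S}(\mathcal{F})$, the admissible choices are:
\begin{itemize}
\item[(i)] $L$ ranges over the hyperplanes of $V$ contained in $W\cap V$;
\item[(ii)] $L'$ ranges over the subspaces with $W'+V^\perp\subseteq L'$ and $\dim L'/V^\perp=1$.
\end{itemize}
Note that $\dim(W'+V^\perp)=d+2$ exactly when $\dim W'\cap V^\perp=d$, and $W'+V^\perp=V^\perp$ exactly when $W'=V^\perp$. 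Hence on $O_{1,1}$ (where $W\ne V$ and $W'\ne V^\perp$), both $L=W\cap V$ and $L'=W'+V^\perp$ are forced, so $\pi$ is bijective there. To upgrade this to an isomorphism, I will show that the inverse $W\mapsto(W\cap V,W'+V^\perp)$ is a morphism. This holds because intersection and sum have locally constant dimension on this stratum, so they define morphisms to Grassmannians.

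Over $O_{0,1}$ we have $W=V$, so $L$ is an arbitrary hyperplane of $V$ (a $\mathbb{P}^{d-1}$ of choices) while $L'$ is forced. Symmetrically, over $O_{1,0}$ the space $L'$ ranges over a $\mathbb{P}^{d-1}=\mathbb{P}(\mathbb{V}/V^\perp)$ while $L$ is forced. Over $O_{0,0}$ both choices are free, giving $\mathbb{P}^{d-1}\times\mathbb{P}^{d-1}$. Local triviality of $\pi$ over $O_{0,1}$ and $O_{1,0}$ follows because each preimage is a product of the stratum with a fixed $\mathbb{P}^{d-1}$: over $O_{0,1}$ the preimage is $\{L\}\times\{W'\}$ with $L$ free in $\mathrm{Gr}(d-1,V)$, and over $O_{1,0}$ it is the analogous product with $L'$ free.

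The main obstacle is the claim that $\pi$ is birational onto an irreducible variety of the same dimension. Properness is automatic, since both varieties are projective, and $\pi$ is surjective by the fiber description above. I will argue irreducibility by computing $\dim O_{1,1}=2d+1$. Indeed, $\mathscr{S}(\mathcal{F})$ is the closure of the open $P_{\mathcal{F}}$-orbit $O_{1,1}$, since it is a generalized Schubert variety. The smaller strata have dimension at most $d$, so $O_{1,1}$ is dense and $\pi$ is a resolution. Finally, $\pi$ is stratified because all fibers over each stratum are isomorphic.
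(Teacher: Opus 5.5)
Your proposal is correct and follows essentially the paper's proof: the same forgetful map $f$ to $\mathrm{Gr}(d-1,V)\times\mathrm{Gr}(1,\mathbb{V}/V^\perp)$ with $\mathrm{Fl}(1,2;3)$-fibers, followed by the same analysis of the admissible $(L,L')$ over each stratum. Your constant-rank argument on $O_{1,1}$ and the explicit product structure over $O_{0,1}$ and $O_{1,0}$ are more careful than the paper. One small correction: $O_{1,1}$ is not a single $P_{\mathcal{F}}$-orbit, since it also contains the loci $V\subset W'$ and $W\subset V^\perp$. Moreover, irreducibility of $\mathscr{S}(\mathcal{F})$, and hence density of the open set $O_{1,1}$, follows directly from surjectivity of $\pi$ from the irreducible $\widetilde{\mathscr{S}}(\mathcal{F})$; the dimension count does not give it.
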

\begin{proof}
    Consider the forgetful map $$f:\widetilde{\mathscr{S}}(\mathcal{F})\to\mathrm{Gr}(d-1,V)\times\mathrm{Gr}(1,\mathbb{V}/V^\perp),\quad (L\subset W\subset W'\subset L')\mapsto (L,L'/V^\perp),$$ which is a fibration. The fiber over $(L,L'/V^\perp)$ is $\{(W\overset{1}{\subset} W')\mid L\overset{1}{\subset} W\overset{1}{\subset} W'\overset{1}{\subset}L'\}$, which is isomorphic to the complete flag variety of type $A_2$ and whose dimension is $3$. We find that $\widetilde{\mathscr{S}}(\mathcal{F})$ is a smooth projective variety of dimension $2d+1$. When $(W\subset W')\in O_{1,1}$, the dimensions $\dim(W\cap V)=d-1$ and $\dim(W'+V^\perp)=d+2$ force the choice of $(L,L')$ to be unique, and hence $\pi|_{\pi^{-1}(O_{1,1})}$ is an isomorphism. The fibers of $\pi$ over $O_{0,1}$ and $O_{1,0}$ are isomorphic to $\mathrm{Gr}(d-1,V)\simeq\mathbb{P}^{d-1}$ and $\mathrm{Gr}(1,\mathbb{V}/V^\perp)\simeq\mathbb{P}^{d-1}$, respectively.
    Since $\mathscr{S}(\mathcal{F})$ is also projective (in fact it is isomorphic to some generalized Schubert variety), the map $\pi$ is a resolution of the singularity $\{\mathcal{F}\}$ of $\mathscr{S}(\mathcal{F})$. 
    Finally, the fiber over $\{\mathcal{F}\}$ is $\{(L,L')\mid L\overset{1}{\subset} V, V^\perp\overset{1}{\subset}L'\subset\mathbb{V}\}\simeq\mathrm{Gr}(d-1,V)\times\mathrm{Gr}(1,\mathbb{V}/V^\perp)\simeq\mathbb{P}^{d-1}\times\mathbb{P}^{d-1}$. The proof is complete.
\end{proof}

Consider the intersection cohomology complex $\mathrm{IC}_{\widetilde{\mathscr{S}}(\mathcal{F})}(\frac{2d+1}{2})=\underline{\overline{\mathbb{Q}}_\ell}_{\widetilde{\mathscr{S}}(\mathcal{F})}[2d+1](\frac{2d+1}{2})$ on $\widetilde{\mathscr{S}}(\mathcal{F})$, which is pure of weight $0$ and self-dual.
By BBDG Decomposition Theorem, we have 
$$\pi_*\mathrm{IC}_{\widetilde{\mathscr{S}}(\mathcal{F})}(\frac{2d+1}{2})\simeq\mathrm{IC}_{\mathscr{S}(\mathcal{F})}(\frac{2d+1}{2})\oplus\mathcal{S}_{0,1}\oplus\mathcal{S}_{1,0}\oplus\mathcal{S}_{0,0}.$$ Here $\mathrm{IC}_{\mathscr{S}(\mathcal{F})}(\frac{2d+1}{2})$ is the intersection cohomology complex on $\mathscr{S}(\mathcal{F})$, which is pure of weight $0$ and self-dual; $\mathcal{S}_{0,1}$ (resp., $\mathcal{S}_{1,0}$ and $\mathcal{S}_{0,0}$) is the semisimple complex supported on $\overline{O_{0,1}}$ (resp., $\overline{O_{1,0}}$ and $\{\mathcal{F}\}$). In particular, $\mathcal{S}_{0,0}$ is pure of weight $0$ and self-dual.
By proper base change, we have 
$$\big(\pi_*\mathrm{IC}_{\widetilde{\mathscr{S}}(\mathcal{F})}(\frac{2d+1}{2})\big)|_{O_{0,0}}\simeq\mathrm{R}\Gamma\big(\underline{\overline{\mathbb{Q}}_\ell}_{Y_{d-1}}[2d+1](\frac{2d+1}{2})\big).$$
Since $F(O_{0,1})=O_{1,0}$, we have $F^*(\mathcal{S}_{0,1})=\mathcal{S}_{1,0}.$ The map $F$ swaps the two factors of $\mathcal{S}_{0,1}|_{O_{0,0}}\oplus\mathcal{S}_{1,0}|_{O_{0,0}}$, so $\mathcal{S}_{0,1}\oplus\mathcal{S}_{1,0}$ does not contribute to the Frobenius trace on $\mathsf{H}^\bullet((\pi_*\mathrm{IC}_{\widetilde{\mathscr{S}}(\mathcal{F})}(\frac{2d+1}{2}))|_{O_{0,0}})$. The following table gives some information on the cohomology of stalks, concerning only the strata $O_{1,1}$ and $O_{0,0}$:
$$\begin{tabular}{|c|c|c|c|c|c|c|}
    \hline
    \text{fiber} & \text{stratum} &$-2d-1$&$\cdots$&$-2d-1+2i$&$\cdots$&$2d-5$ \\
    \hline
     $\{\mathrm{pt}\}$&$O_{1,1}$&$\overline{\mathbb{Q}}_\ell(\frac{2d+1}{2})$&$\cdots$&$0$&$\cdots$&$0$\\
     $Y_{d-1}$&$O_{0,0}$&$H_{\mathrm{\acute{e}t}}^{0}(Y_{d-1})(\frac{2d+1}{2})$&$\cdots$&$H_{\mathrm{\acute{e}t}}^{2i}(Y_{d-1})(\frac{2d+1}{2})$&$\cdots$&$H_{\mathrm{\acute{e}t}}^{4d-4}(Y_{d-1})(\frac{2d+1}{2})$\\
     \hline
\end{tabular}.$$ 

The table shows that for $1\le j\le d-2$,
\begin{equation}
    \mathsf{H}^{2j-1}(\mathrm{IC}_{\mathscr{S}(\mathcal{F})}(\frac{2d+1}{2})|_{O_{0,0}})=0,\ \mathsf{H}^{2j-1}(\mathcal{S}_{0,0}\oplus\mathcal{S}_{0,1}\oplus\mathcal{S}_{1,0})|_{O_{0,0}}=H^{2d+2j}_{\mathrm{\acute{e}t}}(Y_{d-1})(\frac{2d+1}{2}).
\end{equation}
\begin{prop}
    For $0\le j\le d$ with $j\neq d-1$, we have $$\mathrm{tr}\big(F;\mathsf{H}^{-2j-1}(\mathrm{IC}_{{\mathscr{S}}(\mathcal{F})}(\frac{2d+1}{2})|_{O_{0,0}})\big)=(-1)^{d+j}q^{-\frac{2j+1}{2}}.$$ 
    For the remaining
 case $j=d-1$, we have $$\mathrm{tr}\big(F;\mathsf{H}^{-2d+1}(\mathrm{IC}_{{\mathscr{S}}(\mathcal{F})}(\frac{2d+1}{2})|_{O_{0,0}})\big)=0.$$
\end{prop}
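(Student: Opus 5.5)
The plan is to run the same argument as for $\boldsymbol{S}^{2d}(\mathbf{t})$ in \S\ref{cal U-2d}, with one new ingredient. We read off the stalk of $\pi_*\mathrm{IC}_{\widetilde{\mathscr{S}}(\mathcal{F})}(\frac{2d+1}{2})$ at $O_{0,0}$ from the fiber $Y_{d-1}$ and then subtract the contributions of the summands other than $\mathrm{IC}_{\mathscr{S}(\mathcal{F})}$. The new ingredient is that the summands $\mathcal{S}_{0,1}$ and $\mathcal{S}_{1,0}$ are exchanged by $F$, since $F^*\mathcal{S}_{0,1}\simeq\mathcal{S}_{1,0}$. Consequently, in every degree $k$, $F$ acts on $\mathsf{H}^k\big((\mathcal{S}_{0,1}\oplus\mathcal{S}_{1,0})|_{O_{0,0}}\big)$ by a block-off-diagonal matrix, and its trace vanishes. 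So only $\mathrm{IC}_{\mathscr{S}(\mathcal{F})}$ and the self-dual summand $\mathcal{S}_{0,0}$ contribute to traces at $O_{0,0}$.

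\emph{Step 1: the stalk of the push-forward.} By proper base change, $\mathsf{H}^{-2j-1}$ of the push-forward at $O_{0,0}$ is $H^{2(d-j)}_{\mathrm{\acute{e}t}}(Y_{d-1})(\frac{2d+1}{2})$. The fiber $\pi^{-1}(\mathcal{F})\simeq Y_{d-1}$ carries $\widetilde{F}=F_q\circ\sigma$; this needs an analogue of Lemma~\ref{F action}, proved from Lemma~\ref{F-map} since $\widetilde F$ sends $(L,L')$ to $((L')^\perp,L^\perp)$. Corollary~\ref{trace} then gives trace $\delta_{d-j,\mathrm{even}}\,q^{-\frac{2j+1}{2}}$, using that the Tate twist contributes $q^{-\frac{2d+1}{2}}$.

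\emph{Step 2: splitting off the correction.} Since $O_{0,0}$ is a point stratum, the support condition for the IC complex forces $\mathsf{H}^{k}(\mathrm{IC}_{\mathscr{S}(\mathcal{F})}|_{O_{0,0}})=0$ for $k\ge 0$. Hence in positive degrees the push-forward stalk equals the stalk of $\mathcal{S}_{0,0}\oplus\mathcal{S}_{0,1}\oplus\mathcal{S}_{1,0}$. Because $\mathcal{S}_{0,0}$ is a self-dual complex of weight $0$ supported on a point, $\mathsf{H}^{-2j-1}(\mathcal{S}_{0,0})\simeq\mathsf{H}^{2j+1}(\mathcal{S}_{0,0})^\vee$ Frobenius-equivariantly, after the appropriate twist. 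Combining this with the vanishing of the swapped summands' traces, we get
\[
\mathrm{tr}\big(F;\mathsf{H}^{-2j-1}(\mathrm{IC}|_{O_{0,0}})\big)=\mathrm{tr}\big(\widetilde F;H^{2(d-j)}(Y_{d-1})(\tfrac{2d+1}{2})\big)-\mathrm{tr}\big(\widetilde F;H^{2(d+j+1)}(Y_{d-1})^\vee(-\tfrac{2d+1}{2})\big).
\]
By Corollary~\ref{trace}, the second trace is $\delta_{d+j+1,\mathrm{even}}\,q^{-\frac{2j+1}{2}}$ when $d+j+1\le 2d-2$, and $0$ otherwise.

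\emph{Step 3: case bookkeeping.} If $j\le d-3$, both terms are present, and the difference $\delta_{d-j,\mathrm{even}}-\delta_{d+j+1,\mathrm{even}}$ equals $(-1)^{d+j}$, giving $(-1)^{d+j}q^{-\frac{2j+1}{2}}$. For $j=d-2$ and $j=d$, the correction term is absent. In these cases $d-j$ is even, so we again get $+q^{-\frac{2j+1}{2}}=(-1)^{d+j}q^{-\frac{2j+1}{2}}$. For $j=d-1$, the correction term is also absent, but $d-j=1$ is odd. The stalk is then $H^2(Y_{d-1})$, spanned by $\alpha$ and $\beta$, which $\sigma$ swaps, so the trace is $0$. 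This is exactly the exceptional case in the statement.

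The step I expect to be the main obstacle is the rigorous justification that the $\mathcal{S}_{0,1}\oplus\mathcal{S}_{1,0}$ contribution has zero trace in every degree. This needs the Weil structure on $\pi_*$ to restrict to an isomorphism $F^*\mathcal{S}_{0,1}\xrightarrow{\sim}\mathcal{S}_{1,0}$ compatible with the decomposition; the decomposition is not canonical, so one must isolate the isotypic parts supported on $\overline{O_{0,1}}$ and $\overline{O_{1,0}}$. I would first try to obtain these parts canonically via the perverse filtration and restriction to the open strata $O_{0,1}$ and $O_{1,0}$. A further check is needed that no summand supported on $O_{0,0}$ is hidden inside these parts.
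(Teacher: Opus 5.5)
Your proposal is correct and follows essentially the same route as the paper. You read off the stalk of $\pi_*$ at $O_{0,0}$ from $Y_{d-1}$, discard the $F$-exchanged pair $\mathcal{S}_{0,1}\oplus\mathcal{S}_{1,0}$ for trace purposes, recover the $\mathcal{S}_{0,0}$ correction in degree $-2j-1$ from degree $2j+1$ via the point-stratum support condition and self-duality, and split into $j\in\{d-2,d-1,d\}$ (no correction) versus $0\le j\le d-3$ using Corollary~\ref{trace}, exactly as the paper does. Your additional care about the swap argument only fills in a step the paper states in one line: isolating the $\overline{O_{0,1}}$- and $\overline{O_{1,0}}$-supported parts through the perverse filtration, and noting that the inverse Frobenius is also block off-diagonal, so the dual traces vanish as well.
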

\begin{proof}
By the above table and the self-duality of $\mathcal{S}_{0,0}$, we have $\mathsf{H}^{k}(\mathcal{S}_{0,0})=0$, for $k=-2d-1,-2d+1,-2d+3$.
For $j=d,d-1,d-2$, we have 
$$\mathrm{tr}\big(F;\mathsf{H}^{-2j-1}(\mathrm{IC}_{{\mathscr{S}}(\mathcal{F})}(\frac{2d+1}{2})|_{O_{0,0}})\big)=\mathrm{tr}\big(\widetilde{F};H^{2d-2j}_{\mathrm{\acute{e}t}}(Y_{d-1})(\frac{2d+1}{2})\big)=\delta_{d-j,even}q^{-\frac{2j+1}{2}}.$$
 Note that the only possible eigenvalues of $\widetilde{F}$ on $H^i_{\mathrm{\acute{e}t}}(Y_{d-1})$ are $\pm q^{\frac{i}{2}}$. For $0\le j\le d-3$, by Poincar\'e-Verdier duality and Corollary \ref{trace}, we have
\begin{align*}
&\mathrm{tr}\big(F;\mathsf{H}^{-2j-1}((\mathcal{S}_{0,0}\oplus\mathcal{S}_{0,1}\oplus\mathcal{S}_{1,0})|_{O_{0,0}})\big)=\mathrm{tr}\big(F;\mathsf{H}^{-2j-1}(\mathcal{S}_{0,0}|_{O_{0,0}})\big)\\&=\mathrm{tr}\big(F;\mathsf{H}^{2j+1}(\mathcal{S}_{0,0}|_{O_{0,0}})^\vee\big)=\mathrm{tr}\big(\widetilde{F};H^{2d+2j+2}_{\mathrm{\acute{e}t}}(Y_{d-1})^\vee(-\frac{2d+1}{2})\big)=\delta_{d+j+1,even}\cdot q^{-\frac{2j+1}{2}}.
\end{align*}
Consequently, \begin{align*}
&\mathrm{tr}\big(F;\mathsf{H}^{-2j-1}(\mathrm{IC}_{\mathscr{S}(\mathcal{F})}(\frac{2d+1}{2})|_{O_{0,0}})\big)\\=&\mathrm{tr}\big(\widetilde{F};{H}_{\mathrm{\acute{e}t}}^{2d-2j}(Y_{d-1})(\frac{2d+1}{2})\big)-\mathrm{tr}\big(F;\mathsf{H}^{-2j-1}((\mathcal{S}_{0,0}\oplus\mathcal{S}_{0,1}\oplus\mathcal{S}_{1,0})|_{O_{0,0}})\big)\\=&\delta_{d-j,even}\cdot q^{-\frac{2j+1}{2}}-\delta_{d+j+1,even}\cdot q^{-\frac{2j+1}{2}}=(-1)^{d+j}q^{-\frac{2j+1}{2}}.
\end{align*}
\end{proof}

From the above trace computations, we define the normalized Kazhdan-Lusztig polynomial \begin{equation}\label{KL-2d+1}f_{2d+1,\mathbf{\lambda}}(v)=v^{-2\lambda_n-1}+\sum\limits_{j=0}^{\lambda_{n}-2}(-1)^{\lambda_n-j}v^{-2j-1}=v^{-2\lambda_n-1}+\frac{v^{-2\lambda_n+3}+(-1)^{\lambda_n}v}{v^2+1}.\end{equation}

\subsubsection{Calculation for $\boldsymbol{S}^O(\mathbf{t})$}\label{cal-O}
As before, we assume $n=1$ and first consider $d=\lambda_n>1$, so that the $\mathbb{F}_q$-rational points of $\boldsymbol{S}^O(\mathbf{t})$ is  \begin{align*}
S^O(\mathbf{t})=\left\{(V,V')\middle | \begin{array}{c}
\dim V=\dim V'=d, \dim(V\cap V')\ge d-1, \\
V,V'\subset\mathbb{F}^{2d+2}_q\ \text{are maximal isotropic}
\end{array}\right\}.
\end{align*}
Denote $\widetilde{S}^O(\mathbf{t})=\boldsymbol{S}^O(\mathbf{t})\times_{\mathbb{F}_q}\overline{\mathbb{F}_q}\subset\mathrm{OGr}(d,2d+2)\times\mathrm{OGr}(d,2d+2)$. Fix a maximal isotropic subspace $V\subset\mathbb{F}^{2d+2}_q$, and denote by ${E}$ its lift in $\mathrm{OGr}(d,2d+2)$. Let $P_{{E}}$ be the corresponding parabolic subgroup  of ${E}$ in $\mathrm{O}_{2d+2}$, and $\mathscr{S}({E})$ the fiber over $\{{E}\}$ of the first projection $\widetilde{S}^O(\mathbf{t})\to\mathrm{OGr}(d,2d+2)$. We have an $\mathbb{F}_q$-rational isomorphism of varieties $\mathrm{O}_{2d+2}\times^{P_{{E}}}\mathscr{S}({E})\simeq\widetilde{S}^O(\mathbf{t})$.
It suffices to deal with the fiber over $\{E\}$
$$\mathscr{S}({E})=\{W\in\mathrm{OGr}(d,2d+2)\mid \dim W\cap E\ge d-1\}.$$
Denote $$\widetilde{\mathscr{S}}(E)=\{(L\overset{1}{\subset}W)\mid W\in\mathscr{S}(E)\mid L\subset W\cap E\}.$$
Let $F$ and $\widetilde{F}$ be the Frobenius morphisms of $\mathscr{S}(E)$ and $\widetilde{\mathscr{S}}(E)$ associated with the obvious $\mathbb{F}_q$-rational structures, respectively.
Let \begin{equation}    
\pi:\widetilde{\mathscr{S}}(E)\to\mathscr{S}(E),\quad (L\subset W)\mapsto W
\end{equation} be the obvious projection, which satisfies the following commutative diagram \begin{equation}
\begin{tikzcd}
\widetilde{\mathscr{S}}(E) \arrow[r, "\pi"] \arrow[d, "\widetilde{F}"'] & \mathscr{S}(E) \arrow[d, "F"] \\
\widetilde{\mathscr{S}}(E) \arrow[r, "\pi"]                             & \mathscr{S}(E)       
\end{tikzcd}.
\end{equation}
The variety $\mathscr{S}(E)$ has a stratification with two strata $O_{1}$ and $O_0$, where $O_{0}$ is the single point $\{E\}$, and $O_{1}=\mathscr{S}(E)\backslash O_0$.

\begin{lem}
    The map $\pi$ is a stratified resolution of $\mathscr{S}(E)$. The restriction of $\pi$ to $\pi^{-1}(O_1)$ is an isomorphism. Moreover, the fiber over $\{E\}$ is isomorphic to $\mathbb{P}^{d-1}$.
\end{lem}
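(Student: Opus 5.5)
We will follow the proofs of the two analogous lemmas for $\mathscr{S}(V)$ and $\mathscr{S}(\mathcal{F})$. The variety $\widetilde{\mathscr{S}}(E)$ will be realized as a fibration over a smooth base via the forgetful map
$$f:\widetilde{\mathscr{S}}(E)\to \mathrm{Gr}(d-1,E)\simeq\mathbb{P}^{d-1},\qquad (L\subset W)\mapsto L .$$

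\textbf{Step 1: identify the fibres of $f$.} Fix $L$. The condition $L\overset{1}{\subset}W$ with $W$ isotropic of dimension $d$ forces $L\subset W\subset L^\perp$. Since $\dim L^\perp=d+3$, the quotient $L^\perp/L$ is a nondegenerate quadratic space of dimension $4$. So $W/L$ ranges over the isotropic lines in $L^\perp/L$.

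Over $\overline{\mathbb{F}_q}$ this is a smooth quadric surface $Q_L\simeq\mathbb{P}^1\times\mathbb{P}^1$ of dimension $2$. The condition $\dim W\cap E\ge d-1$ is automatic, since $L\subset W\cap E$. Hence $f$ is a locally trivial fibration with smooth projective fibres. It is $\mathrm{Stab}(E)$-equivariant, and the stabilizer acts transitively on the base. Therefore $\widetilde{\mathscr{S}}(E)$ is smooth and projective of dimension $(d-1)+2=d+1$.

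\textbf{Step 2: $\pi$ is an isomorphism over $O_1$.} If $W\in O_1$, then $\dim W\cap E=d-1$, so $L=W\cap E$ is the only possible choice. Thus $\pi$ is bijective over $O_1$.

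To upgrade this to an isomorphism, we use the fact that $O_1$ is a single orbit of $\mathrm{Stab}(E)$, which we would argue via Witt's theorem. Then $\pi|_{\pi^{-1}(O_1)}$ is an equivariant bijection between homogeneous spaces. It is an isomorphism provided the stabilizers agree scheme-theoretically. This holds because the two stabilizers are the same subgroup, $\mathrm{Stab}(E)\cap\mathrm{Stab}(W)$, which also stabilizes $W\cap E$. Alternatively, the map $W\mapsto W\cap E$ is a morphism on $O_1$ and provides an inverse. Together with properness and Step 1, this shows $\pi$ is a resolution of singularities, stratified with respect to $O_1\sqcup O_0$.

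\textbf{Step 3: the fibre over $E$.} The fibre over $\{E\}$ is $\{L\mid L\overset{1}{\subset}E\}=\mathrm{Gr}(d-1,E)\simeq\mathbb{P}^{d-1}$.

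\textbf{Main obstacle.} The delicate point is the claim that $\mathscr{S}(E)$ has exactly these two strata, that is, that $\mathrm{Stab}(E)$ acts transitively on $O_1$. This matters because $\mathrm{OGr}(d,2d+2)$ for the split form is irreducible: $d$ is not maximal, since maximal isotropic subspaces have dimension $d+1$. Even so, one must check that $O_1$ is irreducible and that no further orbit splitting occurs.

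We would handle this as follows. Fix $L=W\cap E$. Then $W/L$ and $E/L$ are non-intersecting isotropic lines in the quadric $Q_L$. The pairs of such lines, up to $\mathrm{O}(L^\perp/L)$, form a single orbit determined by whether the lines are orthogonal. If they are orthogonal, then $W+E$ is isotropic of dimension $d+1$. If not, it is not.

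This means $O_1$ might in fact split into two pieces. If so, we would refine to $O_1=O_1'\sqcup O_1''$, with $O_1''$ open. We would still deduce the bijectivity and isomorphism statement on each piece as in Step 2, because $W\cap E$ is determined in either case. The lemma's claim only concerns $\pi^{-1}(O_1)$, so the argument of Step 2 suffices regardless of this finer orbit structure.
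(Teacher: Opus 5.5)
This is essentially the paper's own argument. It uses the forgetful map $f$ to $\mathrm{Gr}(d-1,E)$ with quadric-surface fibres $\mathrm{OGr}(1,L^\perp/L)\simeq\mathbb{P}^1\times\mathbb{P}^1$, uniqueness of $L=W\cap E$ over $O_1$, and $\mathrm{Gr}(d-1,E)\simeq\mathbb{P}^{d-1}$ as the fibre over $E$. Your inverse morphism $W\mapsto(W\cap E\subset W)$ is a morphism because $\dim(W\cap E)$ is constant on $O_1$, and it justifies the isomorphism more cleanly than the paper's ``the choice of $L$ is unique''.

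Discard the homogeneity route in Step~2, though, because its premise is false. $O_1$ is not a single $\mathrm{Stab}(E)$-orbit: the locus where $W+E$ is isotropic, i.e.\ where $W$ lies in one of the two maximal isotropic subspaces containing $E$, is a proper closed subset of $O_1$. Isomorphisms obtained separately on these pieces would not glue, so it is the inverse morphism defined on all of $O_1$ that carries the claim.
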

\begin{proof}    
Consider the forgetful map $$f:\widetilde{\mathscr{S}}(E)\to\mathrm{Gr}(d-1,E),\quad (L\subset W)\mapsto L,$$ which is a fibration. The fiber over $L$ is $$\{W\in\mathrm{OGr}(d,2d+2)\mid L\subset W\}\simeq\mathrm{OGr}(1,L^{\perp}/L)\simeq\mathbb{P}^1\times\mathbb{P}^1.$$ So we know that $\widetilde{\mathscr{S}}(E)$ is a smooth projective variety of dimension $d+1$. When $W\in O_1$, we have $\dim(W\cap E)=d-1$, which makes the choice of $L$ unique. Hence, $\pi|_{\pi^{-1}(O_1)}$ is an isomorphism.
Since $\mathscr{S}(E)$ is also projective (in fact it is isomorphic to some generalized Schubert variety), the map $\pi$ is a resolution of $\mathscr{S}(E)$.
The fiber over $\{E\}$ is $\{L\mid L\overset{1}{\subset} E\}\simeq\mathrm{Gr}(d-1,E)\simeq\mathbb{P}^{d-1}.$
\end{proof}
Consider the intersection cohomology complex $\mathrm{IC}_{\widetilde{\mathscr{S}}(E)}(\frac{d+1}{2})=\underline{\overline{\mathbb{Q}}_\ell}_{\widetilde{\mathscr{S}}(E)}[d+1](\frac{d+1}{2})$ on $\widetilde{\mathscr{S}}(E)$, which is pure of weight $0$ and self-dual. By BBDG Decomposition Theorem, we have
$$\pi_*\mathrm{IC}_{\widetilde{\mathscr{S}}(E)}(\frac{d+1}{2})\simeq\mathrm{IC}_{\mathscr{S}(E)}(\frac{d+1}{2})\oplus \mathcal{S},$$ where $\mathrm{IC}_{\mathscr{S}(E)}(\frac{d+1}{2})$ is the intersection cohomology complex on $\mathscr{S}(E)$, pure of weight $0$; $\mathcal{S}$ is a semisimple complex supported on $O_0$, which is also pure of weight $0$ and self-dual.
By proper base change, we have $$\big(\pi_*\mathrm{IC}_{\widetilde{\mathscr{S}}(E)}(\frac{d+1}{2})\big)|_{O_0}\simeq\mathrm{R}\Gamma\big(\underline{\overline{\mathbb{Q}}_\ell}_{\mathbb{P}^{d-1}}[d+1](\frac{d+1}{2})\big).$$
The fiber is $\mathrm{Gr}(d-1,V)\simeq\mathbb{P}^{d-1}_{\mathbb{F}_q}$ over $\mathbb{F}_q$, so $\widetilde{F}$ acts on its cohomology as the standard Frobenius.
The following table gives us information about the cohomology of stalks:
$$\begin{tabular}{|c|c|c|c|c|c|c|}
\hline
    \text{fiber} & \text{stratum} &$-d-1$&$\cdots$&$-d-1+2i$&$\cdots$&$d-3$ \\
    \hline
     $\{\mathrm{pt}\}$&$O_{1}$&$\overline{\mathbb{Q}}_\ell(\frac{d+1}{2})$&$\cdots$&$0$&$\cdots$&$0$\\
     $\mathbb{P}^{d-1}$&$O_{0}$&$\overline{\mathbb{Q}}_\ell(\frac{d+1}{2})$&$\cdots$&$\overline{\mathbb{Q}}_\ell(\frac{d+1-2i}{2})$&$\cdots$&$\overline{\mathbb{Q}}_\ell(-\frac{d-3}{2})$\\
     \hline
\end{tabular}.$$ 
Here $0\le i\le d-1$, and the other cohomological degrees vanish. Moreover, we have $\mathsf{H}^i(\mathrm{IC}_{\mathscr{S}(E)})|_{O_0}=0$ for $i\ge0$.
By the self-duality of $\mathcal{S}$, we can read from the above table that $$\mathsf{H}^i(\mathcal{S}|_{O_{0}})=\overline{\mathbb{Q}}_\ell(\frac{-i}{2})\quad \text{for}\quad  i=-d+3,-d+5,\ldots,d-3,$$ and $\mathsf{H}^i(\mathcal{S}|_{O_0})=0$ for other $i$. Thus, we get the following result.
\begin{prop}
    For $d\ge2$, we have $\mathsf{H}^{i}\big(\mathrm{IC}_{\mathscr{S}(E)}(\frac{d+1}{2})|_{O_0}\big)=\overline{\mathbb{Q}}_\ell(-\frac{i}{2})$ for $i=-d-1,-d+1$, and these stalk cohomology groups vanish in all other degrees. Moreover, for $i=-d-1,-d+1$, $$\mathrm{tr}\big(F;\mathsf{H}^{i}(\mathrm{IC}_{\mathscr{S}(E)}(\frac{d+1}{2})|_{O_0})\big)=q^{\frac{i}{2}}.$$
\end{prop}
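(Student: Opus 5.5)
The plan is to read off the stalks of $\mathrm{IC}_{\mathscr{S}(E)}$ at $O_0$ from the decomposition
$$\pi_*\mathrm{IC}_{\widetilde{\mathscr{S}}(E)}(\tfrac{d+1}{2})\simeq\mathrm{IC}_{\mathscr{S}(E)}(\tfrac{d+1}{2})\oplus\mathcal{S},$$
combined with the table of stalks on $O_0$. These are $\overline{\mathbb{Q}}_\ell(\frac{d+1-2i}{2})$ in degrees $-d-1+2i$ for $0\le i\le d-1$, coming from $H^{2i}_{\mathrm{\acute{e}t}}(\mathbb{P}^{d-1})=\overline{\mathbb{Q}}_\ell(-i)$ twisted by $(\frac{d+1}{2})$. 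In every one of these degrees the Tate twist is $(-\frac{k}{2})$ with $k=-d-1+2i$ the degree. Since the fiber is the split variety $\mathrm{Gr}(d-1,V)\simeq\mathbb{P}^{d-1}_{\mathbb{F}_q}$ with $V$ defined over $\mathbb{F}_q$, $\widetilde F$ acts by the standard Frobenius, and the (geometric) Frobenius acts on the degree-$k$ piece by $q^{k/2}$. This already gives the trace assertion once the stalk cohomology of the IC summand is identified.

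To split the total stalk between the two summands, I would use three constraints. First, the support condition for the IC complex at the point stratum $O_0$ of the $(d+1)$-dimensional variety $\mathscr{S}(E)$: $\mathsf{H}^i(\mathrm{IC}_{\mathscr{S}(E)}|_{O_0})=0$ for $i\ge -\dim O_0=0$. Second, $\mathcal{S}$ is a semisimple complex supported on a point, so it is a direct sum of shifted skyscrapers. Third, $\mathcal{S}$ is self-dual and pure of weight $0$, so $\mathsf{H}^i(\mathcal{S})\simeq\mathsf{H}^{-i}(\mathcal{S})^\vee$ with compatible Frobenius. The total stalk is concentrated in degrees $-d-1,-d+1,\dots,d-3$, so the nonnegative degrees $0,\ldots,d-3$ with the right parity must lie entirely in $\mathcal{S}$. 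Duality then forces $\mathcal{S}$ to contain the negative degrees $-(d-3),\dots,-1$ (or $-2$) as well, giving $\mathsf{H}^i(\mathcal{S}|_{O_0})=\overline{\mathbb{Q}}_\ell(-\frac{i}{2})$ for $i=-d+3,\dots,d-3$. Each class in the total stalk is one-dimensional, so subtracting leaves exactly the degrees $-d-1$ and $-d+1$ for the IC summand. These are the two lowest degrees, which lie outside the range $[-(d-3),d-3]$ of $\mathcal{S}$.

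The hard part is justifying that $\mathcal{S}$ is exactly symmetric in this way, and not, for example, absorbing the bottom degrees. Degree $-d-1$ is the generic rank and belongs to the IC complex, since $\mathrm{IC}$ restricts to $\overline{\mathbb{Q}}_\ell[d+1]$ on $O_1$ and the stalk in degree $-\dim$ is one-dimensional. For degree $-d+1$, $\mathcal{S}$ cannot contain it without also containing degree $d-1$, which is absent from the table. So the argument amounts to a short degree bookkeeping using the top nonzero degree $d-3<d-1$.

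The small cases $d=2,3$ are checked the same way: there $\mathcal{S}$ is $0$ or a single term in degree $0$. Finally, the Frobenius on the IC summand is the restriction of the Frobenius on the total stalk under the Weil-compatible decomposition, which gives trace $q^{i/2}$ for $i=-d-1,-d+1$.
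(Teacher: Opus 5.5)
Your proposal is correct and follows the paper's own argument. Both use the decomposition theorem and proper base change to get the stalk table on $O_0$, the support condition $\mathsf{H}^i(\mathrm{IC}_{\mathscr{S}(E)}|_{O_0})=0$ for $i\ge 0$, and the self-duality of $\mathcal{S}$ to place $\mathcal{S}$ in degrees $-d+3,\dots,d-3$, which leaves degrees $-d-1,-d+1$ with twists $(-\frac{i}{2})$ for the IC summand. Your separate generic-rank remark for degree $-d-1$ is not needed: since degree $d+1$ is absent from the table, duality already excludes degree $-d-1$ from $\mathcal{S}$.
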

Finally, when $\lambda_n=1$, the reduction gives $d=1$. In this case $\pi$ is an isomorphism and $\mathscr{S}(E)=\mathrm{OGr}(1,4)\simeq\mathbb{P}^1\times\mathbb{P}^1$ is smooth. Its normalized IC stalk at $E$ is $\overline{\mathbb{Q}}_\ell(1)$ in degree $-2$, with Frobenius trace $q^{-1}$, so the normalized Kazhdan-Lusztig coefficient is $v^{-2}$.

For the definition of $\mathbf{t}'$ in the next subsection, we retain the coefficient
\begin{equation}\label{KL-O}g_{2d+2,\mathbf{\lambda}}(v)=v^{-\lambda_{n}}(v+v^{-1}).\end{equation}
This is the normalized Kazhdan-Lusztig coefficient when $\lambda_n\ge2$. When $\lambda_n=1$, it is $1+v^{-2}$, whereas the IC coefficient is $v^{-2}$. Thus, writing $A_\lambda=\lambda-E^\theta_{n,n}+E^\theta_{n,n+2}$, in this case we have
$$[A_\lambda]+g_{2d+2,\lambda}(v)1_\lambda=\{A_\lambda\}+1_\lambda.$$
Hence this component is realized by the direct sum of the normalized orbit-closure IC object and the diagonal object, rather than by a single IC object.

\subsection{Integral form}\label{subsec:integral-form}
Using the coefficients $f_{D,\lambda}(v)$ and $g_{2d+2,\lambda}(v)$ defined in \eqref{KL-2d}, \eqref{KL-2d+1} and \eqref{KL-O}, we introduce
\begin{gather}\label{def:tprime}
\mathbf{t}':=\left\{
\begin{array}{l}
\sum\limits_{\lambda\in \Theta_{D}^{\imath,\mathrm{diag}}}\big([\lambda-E^\theta_{n,n}+E^\theta_{n,n+2}]+f_{D,\lambda}(v)1_\lambda\big)\in\mathbf{S}^{\imath,D}_{n,d},\\ \sum\limits_{\lambda\in\Theta_{O}^{\imath,\mathrm{diag}}}\big([\lambda-E^\theta_{n,n}+E^\theta_{n,n+2}]+g_{2d+2,\lambda}(v)1_\lambda\big)\in\mathbf{S}^{\imath,O}_{n,d}.
\end{array}\right.
\end{gather}

Recall the generator $\mathbf{t}$ in \eqref{def:t}. We check directly that
\begin{align}\label{eq:difftt}
\mathbf{t}'=
\left\{\begin{array}{ll}
\mathbf{t}+\sum\limits_{\lambda\in\Theta^{\imath,\mathrm{diag}}_{2d+1}}(-1)^{\lambda_n}(v+v^{-1})^{-1}1_\lambda, &\text{for}\quad\mathbf{S}^{\imath,2d+1}_{n,d},\\
\mathbf{t}-\sum\limits_{\lambda\in\Theta^{\imath,\mathrm{diag}}_{2d}}(-1)^{\lambda_n}(v+v^{-1})^{-1}1_\lambda, &\text{for}\quad\mathbf{S}^{\imath,2d}_{n,d},\\
\mathbf{t}, &\text{for}\quad \mathbf{S}^{\imath,O}_{n,d}.
\end{array}\right.
\end{align}
Thus, $\mathbf{t}'\in\mathbf{S}^{\imath,D}_{n,d}$ satisfies the relations \eqref{i_D} (after changing $\mathbf{t}$ to $\mathbf{t}'$), but does not satisfy the relations \eqref{it2}; while $\mathbf{t}'=\mathbf{t}\in\mathbf{S}^{\imath,O}_{n,d}$ definitely satisfies the relations \eqref{i_O}.

\begin{rem}\label{rem:tprime-relations}
For the case of $\mathbf{S}^{\imath,D}_{n,d}$, assume $n\ge2$ and set
$$
\mathbf{j}=\sum_{\lambda\in\Theta_D^{\imath,\mathrm{diag}}}(-1)^{\lambda_n}1_\lambda.
$$
The following relations hold:
\begin{align*}
\mathbf{e}_{n-1}^2\mathbf{t}'-\llbracket2\rrbracket_{v^2}\mathbf{e}_{n-1}\mathbf{t}'\mathbf{e}_{n-1}+\mathbf{t}'\mathbf{e}_{n-1}^2
&=(-1)^{D+1}(v+v^{-1})\mathbf{e}_{n-1}^2\mathbf{j},\\
\mathbf{f}_{n-1}^2\mathbf{t}'-\llbracket2\rrbracket_{v^2}\mathbf{f}_{n-1}\mathbf{t}'\mathbf{f}_{n-1}+\mathbf{t}'\mathbf{f}_{n-1}^2
&=(-1)^{D+1}(v+v^{-1})\mathbf{f}_{n-1}^2\mathbf{j},\\
\mathbf{e}_{n-1}\mathbf{t}'^2-\llbracket2\rrbracket_{v^2}\mathbf{t}'\mathbf{e}_{n-1}\mathbf{t}'+\mathbf{t}'^2\mathbf{e}_{n-1}
&=(-1)^{D+1}(v+v^{-1})(\mathbf{e}_{n-1}\mathbf{t}'-\mathbf{t}'\mathbf{e}_{n-1})\mathbf{j},\\
\mathbf{f}_{n-1}\mathbf{t}'^2-\llbracket2\rrbracket_{v^2}\mathbf{t}'\mathbf{f}_{n-1}\mathbf{t}'+\mathbf{t}'^2\mathbf{f}_{n-1}
&=(-1)^{D+1}(v+v^{-1})(\mathbf{f}_{n-1}\mathbf{t}'-\mathbf{t}'\mathbf{f}_{n-1})\mathbf{j}.
\end{align*}
In fact, the formula \eqref{eq:difftt} gives $\mathbf{t}'=\mathbf{t}+(-1)^{D+1}(v+v^{-1})^{-1}\mathbf{j}$. Expanding \eqref{it2} with $$\mathbf{j}\mathbf{e}_{n-1}=-\mathbf{e}_{n-1}\mathbf{j}, \quad\mathbf{j}\mathbf{f}_{n-1}=-\mathbf{f}_{n-1}\mathbf{j},\quad \mathbf{j}\mathbf{t}=\mathbf{t}\mathbf{j},\quad \mathbf{j}^2=1$$ yields these identities.
\end{rem}

\begin{lem}
    The following two sets 
    $$\mathscr{G}_D:=\{\mathbf{e}_i,\mathbf{f}_i\ (1\le i\le n-1), \mathbf{t}, \mathbf{d}_a\ (1\le a\le n)\}\subset{}_{\mathbb{Q}}\mathbf{S}^{\imath,D}_{n,d},$$
    $$\mathscr{G}'_D:=\{\mathbf{e}_i,\mathbf{f}_i\ (1\le i\le n-1), \mathbf{t}', \mathbf{d}_a\ (1\le a\le n)\}\subset{}_{\mathbb{Q}}{}\mathbf{S}^{\imath,D}_{n,d}$$
    generate the same $\mathbb{Q}(v)$-algebra $_{\mathbb{Q}}\mathbf{S}^{\imath,D}_{n,d}$.
\end{lem}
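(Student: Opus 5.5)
The plan is to show that each of $\mathbf{t}$ and $\mathbf{t}'$ lies in the $\mathbb{Q}(v)$-subalgebra generated by the other family. We then combine this with the earlier lemma that $\mathscr{G}_D$ generates ${}_{\mathbb{Q}}\mathbf{S}^{\imath,D}_{n,d}$.

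The key input is \eqref{eq:difftt}, recast as in Remark~\ref{rem:tprime-relations}:
$$\mathbf{t}'=\mathbf{t}+(-1)^{D+1}(v+v^{-1})^{-1}\mathbf{j},\qquad \mathbf{j}=\sum_{\lambda\in\Theta_D^{\imath,\mathrm{diag}}}(-1)^{\lambda_n}1_\lambda .$$
Since $\mathbf{e}_i,\mathbf{f}_i,\mathbf{d}_a$ are common to both sets, the whole statement reduces to one claim. We must show that $\mathbf{j}$ lies in the $\mathbb{Q}(v)$-subalgebra generated by $\mathbf{d}_1,\ldots,\mathbf{d}_n$ alone. Then $\mathbf{t}'$ lies in the algebra generated by $\mathscr{G}_D$, and $\mathbf{t}=\mathbf{t}'-(-1)^{D+1}(v+v^{-1})^{-1}\mathbf{j}$ lies in the algebra generated by $\mathscr{G}'_D$.

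To prove the claim we use the standard Vandermonde argument, as in the generation lemmas above. Let $\mathcal{D}$ be the commutative subalgebra generated by the $\mathbf{d}_a^{\pm1}$. For $\lambda\in\Theta_D^{\imath,\mathrm{diag}}$, write $\lambda_{(n)}=(\lambda_1,\ldots,\lambda_n)$. The central entry is $\lambda_{n+1}=D-2d$, and $\lambda$ is centrally symmetric, so $\lambda$ is determined by $\lambda_{(n)}$. Hence the characters
$$\chi_\lambda:\ \mathbf{d}_a\mapsto v^{-2\lambda_a}\quad(1\le a\le n)$$
are pairwise distinct for distinct $\lambda$, because $v^{-2k}$ are distinct in $\mathbb{Q}(v)$ for distinct integers $k$. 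The elements $1_\lambda$ are orthogonal idempotents summing to $1$, and each $\mathbf{d}_a$ acts on $1_\lambda$ by the scalar $v^{-2\lambda_a}$. We can therefore form Lagrange-interpolation products. Since $\lambda\ne\mu$, we may choose $a=a(\mu)$ with $\lambda_a\neq\mu_a$, and set
$$1_\lambda=\prod_{\mu\neq\lambda}\frac{\mathbf{d}_{a(\mu)}-v^{-2\mu_{a(\mu)}}}{v^{-2\lambda_{a(\mu)}}-v^{-2\mu_{a(\mu)}}}.$$
Each factor kills $1_\mu$ and fixes $1_\lambda$. Multiplying the factors, and using that the $1_\nu$ sum to $1$, shows this product equals $1_\lambda$. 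So every $1_\lambda\in\mathcal{D}$, and in particular $\mathbf{j}=\sum_\lambda(-1)^{\lambda_n}1_\lambda\in\mathcal{D}$.

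The only step needing care is injectivity of $\lambda\mapsto\lambda_{(n)}$ on $\Theta_D^{\imath,\mathrm{diag}}$, since $\mathbf{d}_{n+1}$ is not among the $\imath$-generators. Central symmetry handles the entries $\lambda_{n+2},\ldots,\lambda_N$, and the center $\lambda_{n+1}$ is fixed by the definition of $\Theta_D^{\imath}$. Beyond this the argument is routine.
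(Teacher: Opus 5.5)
Your proof is correct and follows the same route as the paper: the difference $\mathbf{t}'-\mathbf{t}$ is a $\mathbb{Q}(v)$-linear combination of the idempotents $1_\lambda$, and the standard Vandermonde argument shows that these idempotents lie in the subalgebra generated by $\mathbf{d}_1,\ldots,\mathbf{d}_n$. Your explicit Lagrange-interpolation formula, together with the check that $\lambda\in\Theta_D^{\imath,\mathrm{diag}}$ is determined by $(\lambda_1,\ldots,\lambda_n)$ (central symmetry and the fixed central entry $D-2d$), fills in the details the paper leaves implicit.
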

\begin{proof}
    The only difference between $\mathscr{G}_D$ and $\mathscr{G}_D'$ are the elements $\mathbf{t}$ and $\mathbf{t}'$. These two elements differ by some scale 
    of $1_\lambda\ (\lambda\in\Theta^{\imath,\mathrm{diag}}_D)$ with coefficients in $\mathbb{Q}(v)$. By a standard Vandermonde determinant argument, $\mathbf{d}_a\ (1\le a\le n)$ can generate $\mathbb{Q}(v)1_\lambda\ (\lambda\in\Theta^{\imath,\mathrm{diag}}_D)$ over $\mathbb{Q}(v)$. 
\end{proof}
We also define $\mathscr{G}_O':=\{\mathbf{e}_i, \mathbf{f}_i\ (1\le i\le n-1), \mathbf{t}'(=\mathbf{t}), \mathbf{d}_a\ (1\le a\le n)\}\subset\mathbf{S}^{\imath,O}_{n,d}$.

By \S\ref{generator}, the elements $\mathbf{e}_i$, $\mathbf{f}_i$ and $\mathbf{d}_a^{\pm1}$
belong to $\mathbf{S}^{\imath,\square}_{n,d}$ for $\square=D,O$ and
the same holds for $\mathbf{t}'$ by our construction. Thus, we obtain an integral form of ${}_{\mathbb{Q}}\mathbf{S}^{\imath,\square}_{n,d}$ in the following proposition. In the next section, we will see that these generators behave well in the sense of categorification.
\begin{prop}
The algebra generated by $\mathscr{G}_\square'$ over $\mathcal{A}$ is an integral form of ${}_{\mathbb{Q}}\mathbf{S}^{\imath,\square}_{n,d}$.
\end{prop}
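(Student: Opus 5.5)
The statement asks for two things. First, the $\mathcal A$-subalgebra ${}_{\mathcal A}\langle\mathscr G'_\square\rangle$ generated by $\mathscr{G}'_\square$ must be a free $\mathcal A$-lattice in ${}_{\mathbb Q}\mathbf{S}^{\imath,\square}_{n,d}$. Second, it must span this algebra over $\mathbb Q(v)$, i.e.\ $\mathbb Q(v)\otimes_{\mathcal A}{}_{\mathcal A}\langle\mathscr G'_\square\rangle={}_{\mathbb Q}\mathbf{S}^{\imath,\square}_{n,d}$. The plan is to sandwich the generated algebra between the geometric $\mathcal A$-form $\mathbf{S}^{\imath,\square}_{n,d}$ and its rationalization.

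\emph{Step 1: containment.} Each generator already lies in $\mathbf{S}^{\imath,\square}_{n,d}$, which is free over $\mathcal A$ with basis $\{[A]\}$.
\begin{itemize}
\item For $\mathbf{e}_i,\mathbf{f}_i$ this holds because they are finite sums of standard basis elements $[B]$.
\item For $\mathbf{d}_a^{\pm1}$ this holds because they are combinations of the idempotents $1_\lambda$ with coefficients $v^{\mp2\lambda_a}$ or $v^{\mp\lambda_a}$, which lie in $\mathcal A$.
\item For $\mathbf{t}'$ this holds because the coefficients $f_{D,\lambda}(v)$ in \eqref{KL-2d} and \eqref{KL-2d+1}, and $g_{2d+2,\lambda}(v)$ in \eqref{KL-O}, are finite Laurent sums in $v$ (the closed forms are only rewritings of the finite sums).
\end{itemize}
Since $\mathbf{S}^{\imath,\square}_{n,d}$ is an $\mathcal A$-algebra under convolution (\S\ref{flag F_D}), the $\mathcal A$-algebra ${}_{\mathcal A}\langle\mathscr G'_\square\rangle$ generated by these elements is contained in it. A submodule of a finitely generated free module over the Noetherian domain $\mathcal A$ is finitely generated and torsion-free.

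\emph{Step 2: freeness.} This is the main obstacle, because $\mathcal A=\mathbb Z[v^{\pm1}]$ has global dimension $2$, so a torsion-free submodule need not be free. I would try two routes in order.
\begin{itemize}
\item First route: show that ${}_{\mathcal A}\langle\mathscr G'_\square\rangle$ is spanned by a subset of the canonical basis $\mathbf{B}^{\imath,\square}_{n,d}$, using the positivity and categorification statements announced for the next section.
\item Fallback: use the weaker meaning of ``integral form'' as an $\mathcal A$-subalgebra that is a full $\mathcal A$-lattice. Freeness then follows from a triangularity argument: show that the generated algebra contains elements of the form $c_A[A]+(\text{lower terms})$ for every $A\in\Theta^\imath_\square$, with $c_A$ a unit of $\mathcal A$. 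This unit condition is exactly where the difficulty sits; products of $\mathbf{e}_i,\mathbf{f}_i,\mathbf{t}'$ produce leading coefficients that are quantum integers rather than units, so one needs divided powers. I expect this step is where the statement must be read as ``full lattice'' rather than ``free''.
\end{itemize}

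\emph{Step 3: spanning.} Compute $\mathbb Q(v)\otimes_{\mathcal A}{}_{\mathcal A}\langle\mathscr G'_\square\rangle$.
\begin{itemize}
\item For $\square=O$: here $\mathbf{t}'=\mathbf{t}$ by \eqref{eq:difftt}, so the generating set $\mathscr G'_O$ coincides with the one in the $\imath$-version generation lemma of \S\ref{subsec:t}. That lemma shows it generates ${}_{\mathbb Q}\mathbf{S}^{\imath,O}_{n,d}$ over $\mathbb Q(v)$.
\item For $\square=D$: the lemma immediately preceding the statement shows that $\mathscr{G}_D$ and $\mathscr{G}'_D$ generate the same $\mathbb Q(v)$-algebra. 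Combined with the $\imath$-version generation lemma, this equals ${}_{\mathbb Q}\mathbf{S}^{\imath,D}_{n,d}$.
\end{itemize}
In both cases the rationalization is the whole algebra.

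Together with Step 1, the natural map $\mathbb Q(v)\otimes_{\mathcal A}{}_{\mathcal A}\langle\mathscr G'_\square\rangle\to{}_{\mathbb Q}\mathbf{S}^{\imath,\square}_{n,d}$ is injective and surjective. Injectivity holds because localization is exact and ${}_{\mathcal A}\langle\mathscr G'_\square\rangle$ sits inside the free module $\mathbf{S}^{\imath,\square}_{n,d}$; surjectivity is Step 3. This gives the integral form.
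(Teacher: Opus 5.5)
Your Steps 1 and 3 are exactly the paper's argument. The paper observes that $\mathbf e_i,\mathbf f_i,\mathbf d_a^{\pm1}$ and $\mathbf t'$ lie in the $\mathcal A$-form $\mathbf S^{\imath,\square}_{n,d}$, then invokes the generation lemmas over $\mathbb Q(v)$; it uses ``integral form'' in precisely this sense and never claims freeness. So Step 2 can be dropped. Both of its routes would also prove too much: spanning by canonical basis elements, or containing a unit multiple of every $[A]$ modulo lower terms, would force every idempotent $1_\lambda$ into the generated algebra, whereas the paper remarks that this algebra is in general a proper subalgebra of $\mathbf S^{\imath,\square}_{n,d}$ that does not contain the $1_\lambda$.
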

This integral form is not the iSchur algebra $\mathbf{S}^{\imath,\square}_{n,d}$, but a proper subalgebra of $\mathbf{S}^{\imath,\square}_{n,d}$. The idempotents corresponding to diagonal matrices cannot be obtain from these generators, since the Vandermonde determinant argument no longer makes sense over $\mathbb Z$.

\section{Categorification} \label{sec:categorification}

In this section, we provide a monoidal categorification of $\mathbf{S}^{\mathfrak{c},\square}_{n,d}$ by the category $\mathrm{Semis}^{\mathfrak{c},W}_\square$ of all descent data of the $F$-fixed semisimple complexes. Then we apply Verdier duality to get a bar involution on $\mathbf{S}^{\mathfrak{c},\square}_{n,d}$.

\subsection{Revisit $\mathbf{SO}_{2d+2,q^2}$-orbits}\label{subsec:so-orbits}
In order to give a unified treatment of the categorifications of $\mathbf{S}_{n,d}^{\mathfrak{c},D}$ and $\mathbf{S}_{n,d}^{\mathfrak{c},O}$, we take $\mathbf{G}=\mathbf{U}_D$ or $\mathbf{SO}^{-}_{2d+2}$ (NOT $\mathbf{O}^{-}_{2d+2}$ as before), defined over $\mathbb{F}_q$. Thus, in this section, we shall write $$\square=D \text{ or } SO \quad \text{(NOT $O$ as before)}.$$

Let us clarify some notions for the case of $\mathbf{SO}^{-}_{2d+2}$. All notions for $\square=O$ are still valid for $SO$ (cf. Lemma~\ref{biject D}) except $\widetilde{\Theta}^\jmath_{O}$ (cf. Lemma~\ref{lem:2SO-orbits}). That is, we rewrite 
$$
\sF_{SO}^{\mathfrak{c}}:=\sF_O^{\mathfrak{c}},\quad \pmb{\sF}_{SO}^{\mathfrak{c}}:=\pmb{\sF}_O^{\mathfrak{c}}, \quad \Theta_{SO}^\mathfrak{c}:=\Theta_{O}^\mathfrak{c} ,\quad
\widetilde{\sF}_{SO}^{\mathfrak{c}}:=\widetilde{\sF}_O^{\mathfrak{c}},\quad \widetilde{\pmb{\sF}}_{SO}^{\mathfrak{c}}:=\widetilde{\pmb{\sF}}_O^{\mathfrak{c}}, \quad
\mathbf{S}_{n,d}^{\mathfrak{c},SO}:=\mathbf{S}_{n,d}^{\mathfrak{c},O},
$$ but denote
\begin{align}\label{def:tildeThetaSO}
 \widetilde{\Theta}^\mathfrak{c}_{SO}&:=\{A,A^\vartheta \mid A\in \widetilde{\Theta}^\mathfrak{c}_O \} \neq \widetilde{\Theta}^\mathfrak{c}_{O},
 \end{align}
where we use $A$ and $A^\vartheta$ to label the two $\mathrm{SO}_{2d+2}(q^2)$ (resp., $\mathbf{SO}_{2d+2,q^2}$)-orbits in the $\mathrm{O}_{2d+2}(q^2)$ (resp., $\mathbf{O}_{2d+2,q^2}$)-orbit $\widetilde{\mathcal{O}}_A$ (resp., $\widetilde{\boldsymbol{\mathcal{O}}}_A$) if $A\in\widetilde{\Theta}^\mathfrak{c}_O\setminus \Theta^\mathfrak{c}_O$, and adopt the convention that $A=A^\vartheta$ if $A\in\Theta^\mathfrak{c}_O$.

By abuse of notation, we let $\widetilde{\mathcal{O}}_A$ (resp., $\widetilde{\boldsymbol{\mathcal{O}}}_A$) denote to the corresponding $\mathrm{SO}_{2d+2}(q^2)$ (resp., $\mathbf{SO}_{2d+2,q^2}$)-orbit if $A$ is regarded as an element of $\widetilde{\Theta}^\mathfrak{c}_{SO}$. Note that when $A\in\widetilde{\Theta}^\mathfrak{c}_O\setminus \Theta^\mathfrak{c}_O$, it does not coincide with the corresponding $\mathrm{O}_{2d+2}(q^2)$ (resp., $\mathbf{O}_{2d+2,q^2}$)-orbit denoted by the same notation. For $A\in\widetilde{\Theta}^\mathfrak{c}_{SO}$, we also write $\widetilde{\mathrm{IC}}_A$ for the intersection
complex of its closure, normalized as in \eqref{normal-IC}.

Moreover, let us regard $\vartheta$ as an involution of $\widetilde{\Theta}^\mathfrak{c}_{SO}$, i.e. $(A^\vartheta)^\vartheta=A$ for $A\in\widetilde{\Theta}^\mathfrak{c}_{SO}$. 
Then we can rewrite the known results (i.e., Propositions~\ref{F act on O_D} \& \ref{O-orbits} and Corollaries~\ref{F-var_D}~\&~\ref{F-var_O}) about $F$-action on obits in a unified form as follows.
\begin{prop}
The Frobenius $F$ satisfies
$F(\widetilde{\boldsymbol{\mathcal{O}}}_A)=\widetilde{\boldsymbol{\mathcal{O}}}_{A^\vartheta}$, and hence
$F(\widetilde{\mathcal{O}}_A)=\widetilde{\mathcal{O}}_{A^\vartheta}$ for $A\in\widetilde{\Theta}^\mathfrak{c}_\square$.
\end{prop}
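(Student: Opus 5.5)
The statement is a repackaging of Corollaries~\ref{F-var_D} and \ref{F-var_O}, and I will prove it by splitting into the two cases $\square=D$ and $\square=SO$. In both cases the second assertion follows from the first by taking $\mathbb{F}_{q^2}$-points. The reason is that $F$ commutes with the $F_{q^2}$-Frobenius, since $F^2=F_{q^2}$. Hence $F$ maps $\widetilde{\boldsymbol{\mathcal{O}}}_A(\mathbb{F}_{q^2})$ bijectively onto $\widetilde{\boldsymbol{\mathcal{O}}}_{A^\vartheta}(\mathbb{F}_{q^2})$, and the latter is $\widetilde{\mathcal{O}}_{A^\vartheta}$. So I concentrate on the variety statement.

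\emph{Case $\square=D$.} Here $\vartheta$ is central reversal of matrices, and Corollary~\ref{F-var_D} states exactly $F(\widetilde{\boldsymbol{\mathcal{O}}}_A)=\widetilde{\boldsymbol{\mathcal{O}}}_{A^\vartheta}$. Its proof runs as follows. Pick an $\mathbb{F}_{q^2}$-point $x$ of the orbit. Use \eqref{eq:Ff=ftheta}, namely $F(\mathfrak{f})=\mathfrak{f}^\vartheta$, together with the rank computation \eqref{eq:rij=rij}, to see that $F(x)$ has associated matrix $A^\vartheta$. Finally use the equivariance $F(g\cdot x)=F(g)\cdot F(x)$, which holds since $F$ is a Frobenius on the group, so that the whole orbit maps onto an orbit. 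Nothing new is needed.

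\emph{Case $\square=SO$.} There are two subcases. If $A\in\Theta^\mathfrak{c}_O$, then by convention $A^\vartheta=A$. By Lemma~\ref{lem:2SO-orbits}, $\widetilde{\boldsymbol{\mathcal{O}}}_A$ is simultaneously an $\mathbf{O}_{2d+2,q^2}$- and an $\mathbf{SO}_{2d+2,q^2}$-orbit, and it is $F$-stable by Corollary~\ref{F-var_O}(1). If $A\in\widetilde{\Theta}^\mathfrak{c}_O\setminus\Theta^\mathfrak{c}_O$, then by definition \eqref{def:tildeThetaSO} the labels $A$ and $A^\vartheta$ denote the two $\mathbf{SO}_{2d+2,q^2}$-orbits inside one $\mathbf{O}_{2d+2,q^2}$-orbit. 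Corollary~\ref{F-var_O}(2), which rests on Proposition~\ref{O-orbits} (the key point being $F=\mathrm{Ad}_g\circ F_q$ with $\det g=-1$), says $F$ swaps them. That is precisely $F(\widetilde{\boldsymbol{\mathcal{O}}}_A)=\widetilde{\boldsymbol{\mathcal{O}}}_{A^\vartheta}$.

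The only point needing care is that the labelling in \eqref{def:tildeThetaSO} is consistent and involutive. One must check that $\vartheta$ is well defined on $\widetilde{\Theta}^\mathfrak{c}_{SO}$ and that the unified formula is independent of which of the two $\mathbf{SO}$-orbits is called $A$. This holds because $F$ swaps the two components and the roles of $A$ and $A^\vartheta$ are symmetric. I do not expect any real obstacle; the proof amounts to citing the corollaries and taking $\mathbb{F}_{q^2}$-points.
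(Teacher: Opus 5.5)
Your proposal is correct and is essentially the paper's own argument. The paper gives no separate proof: it presents the statement as a unified restatement of Propositions~\ref{F act on O_D}, \ref{O-orbits} and Corollaries~\ref{F-var_D}, \ref{F-var_O}, exactly the results you cite case by case. (The paper actually proves the rational-point versions first and then deduces the variety versions, but your passage from varieties to $\mathbb{F}_{q^2}$-points via $F^2=F_{q^2}$ is equally valid.)
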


\subsection{Semisimple complexes}\label{subsec:semisimple-complexes}

Recall $\mathbf{G}_{q^2}:=\mathbf{G}\times_{\mathbb{F}_q}\mathbb{F}_{q^2}$.  Let $\mathrm{Semis}^{\mathfrak{c}}_\square$ be the subcategory of semisimple complexes of $D^b_{\mathrm{m},\mathbf{G}_{q^2}}(\widetilde{\pmb{\sF}}_\square^\mathfrak{c}\times\widetilde{\pmb{\sF}}_\square^\mathfrak{c})$ generated by $\widetilde{\mathrm{IC}}_A\langle n\rangle$ ($A\in\widetilde{\Theta}_\square^\mathfrak{c}$ and $n\in\mathbb{Z}$). In particular, any object in $\mathrm{Semis}^{\mathfrak{c}}_\square$ is pure of weight $0$.
The Frobenius $F$ acts naturally on $\mathrm{Semis}^{\mathfrak{c}}_\square$. We denote by $\mathrm{Semis}^{\mathfrak{c},F}_\square$ the full subcategory of $\mathrm{Semis}^{\mathfrak{c}}_\square$ consisting of the objects fixed by $F$, where an object $\mathcal{F}$ is said to be fixed by $F$ if $F^*(\mathcal{F})\simeq \mathcal{F}$.
Let $\mathrm{Semis}^{\mathfrak{c},W}_\square$ be the category of all descent data $(\mathcal{F},\varphi)$ where $\mathcal{F}\in\mathrm{Semis}^{\mathfrak{c},F}_\square$ and $\varphi:F^*(\mathcal{F})\xrightarrow{\sim} \mathcal{F}$ is an isomorphism satisfying the cocycle condition $\varphi\circ F^*(\varphi)=\Psi_{\mathcal{F}}$, where $\Psi_{\mathcal{F}}:(F^*)^2(\mathcal{F})\simeq\mathcal{F}$ is the canonical $\mathbb{F}_{q^2}$-Weil structure of $\mathcal{F}$. 
A morphism $u:(\mathcal{F},\varphi)\to(\mathcal{F}',\varphi')$ is a descent-compatible morphism $u:\mathcal{F}\to\mathcal{F}'$ in $\mathrm{Semis}^{\mathfrak{c}}_\square$, saying $u\circ\varphi=\varphi'\circ F^*(u)$.

The following lemma shows that descent data can be split according to perverse degrees.
\begin{lem}\label{prop:descent-splitting}
Every $(K,\varphi)\in\mathrm{Semis}^{\mathfrak{c},W}_\square$ admits an isomorphism
$$
  (K,\varphi)\simeq\bigoplus_i
  \bigl({}^p\!H^i(K)[-i],{}^p\!H^i(\varphi)[-i]\bigr).
$$
In particular, if $M\in\mathrm{Semis}^{\mathfrak{c}}_\square$ lies in a single perverse degree and $m\neq n$, any descent on $M\langle m\rangle\oplus M\langle n\rangle$ splits as a direct sum of descents:
$$
  \bigl(M\langle m\rangle\oplus M\langle n\rangle,\varphi\bigr)
  \simeq (M\langle m\rangle,\varphi_m)
  \oplus(M\langle n\rangle,\varphi_n).
$$
\end{lem}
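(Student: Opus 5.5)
The splitting of $K$ into its perverse cohomologies exists because $K$ is semisimple. The real content is to choose the isomorphism so that it is compatible with $\varphi$. Perverse cohomology ${}^p\!H^i$ is a functor, and $F^*$ is t-exact for the perverse t-structure: $F$ is a finite radicial universal homeomorphism, or we may view $F^*$ as pullback along an automorphism of the $\mathbb{F}_{q^2}$-scheme up to Frobenius twist. So ${}^p\!H^i(\varphi):F^*{}^p\!H^i(K)\simeq {}^p\!H^i(F^*K)\to {}^p\!H^i(K)$ is an isomorphism. It also satisfies the cocycle condition, because $\Psi$ is natural and ${}^p\!H^i$ commutes with it. Hence each $({}^p\!H^i(K),{}^p\!H^i(\varphi))$ is a descent datum, and its shift by $[-i]$ lies in $\mathrm{Semis}^{\mathfrak{c},W}_\square$.

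To build a descent-compatible isomorphism, the plan is to use the perverse truncation triangles. For each $i$ we have the canonical triangle ${}^p\tau_{<i}K\to{}^p\tau_{\le i}K\to {}^p\!H^i(K)[-i]\xrightarrow{+1}$. Every term is $F$-equivariant in a canonical way, since truncations are functorial and commute with $F^*$. So $\varphi$ induces descent data on all truncations, and the canonical maps are descent-compatible. The goal is to show that each such triangle splits in the descent category. The splitting maps $s_i:{}^p\!H^i(K)[-i]\to{}^p\tau_{\le i}K$ form a torsor under $\mathrm{Hom}({}^p\!H^i(K)[-i],{}^p\tau_{<i}K)$, and this group is nonempty because $K$ is semisimple. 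The operator $s\mapsto \varphi\circ F^*(s)\circ\varphi^{-1}$ acts on this torsor. The degree-zero part of the $\mathrm{Hom}$ group vanishes for degree reasons, because of the pure weight-$0$ shifted-perverse structure. Indeed, $\mathrm{Hom}(P[-i],Q[-j])=\mathrm{Ext}^{i-j}(P,Q)$ with $j<i$, so this is an Ext in positive degree between semisimple objects.

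The key step, and the main obstacle, is to show that this torsor has a unique fixed point, or at least one. The approach is to note that the geometric Hom spaces, taken after base change to $\overline{\mathbb{F}_q}$, carry the Frobenius action coming from $\varphi$, with weights forced to be nonzero. The Hom spaces in $D^b_{\mathrm{m}}$ over $\mathbb{F}_{q^2}$ fit into the exact sequence $0\to \mathrm{Hom}^{-1}_{\bar{}}(\cdot)_{\mathrm{Frob}}\to\mathrm{Hom}\to\mathrm{Hom}_{\bar{}}^{\mathrm{Frob}}\to0$. Since $\mathrm{Ext}^{k}_{\overline{\mathbb{F}_q}}$ between pure weight-$0$ objects is pure of weight $k$, the invariants and coinvariants of $F$, with $F^2$ equal to the $q^2$-Frobenius, vanish for $k\ne 0$. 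A fixed point therefore exists and is unique, by the usual argument that $1-\varphi F^*$ is invertible on a space with no eigenvalue $1$. Inducting on the perverse amplitude then gives the descent-compatible decomposition $(K,\varphi)\simeq\bigoplus_i({}^p\!H^i(K)[-i],{}^p\!H^i(\varphi)[-i])$.

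For the second assertion, apply the first with $K=M\langle m\rangle\oplus M\langle n\rangle$. Because $m\ne n$, the summands $M\langle m\rangle$ and $M\langle n\rangle$ lie in the distinct perverse degrees $-m+c$ and $-n+c$. Each perverse cohomology object is then a single summand, and the induced descents ${}^p\!H^{i}(\varphi)$ give $\varphi_m$ and $\varphi_n$.
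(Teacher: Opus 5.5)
Your overall structure matches the paper's proof. Perverse truncations inherit descent data because $F^*$ is perverse $t$-exact; you split off the top perverse cohomology $N$ by finding a descent-compatible section of $p\colon K\to N$ in the nonempty affine space of sections, and then induct. The second assertion follows as you say.

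\textbf{The gap: your weight count is wrong.}
\begin{itemize}
\item Both $N$ and ${}^p\tau_{<i}K$ are pure of weight $0$. The perverse sheaf ${}^p\!H^i(K)$ has weight $i$, not $0$; equivalently, $\widetilde{\mathrm{IC}}_A\langle m\rangle=\widetilde{\mathrm{IC}}_A[m](m/2)$ carries a Tate twist.
\item Hence the geometric group $\bigoplus_{j<i}\mathrm{Ext}^{i-j}\bigl({}^p\!H^i(K),{}^p\!H^j(K)\bigr)$ has weights $\ge 0$, and here it is of weight $0$, not $i-j$. The positive Ext degree is exactly cancelled by the weight difference.
\item So Frobenius has eigenvalue $1$ on this group, and the involution induced by $F$ can have eigenvalue $+1$.
\end{itemize}
For example, take $(K,\varphi)=\widetilde{\mathrm{IC}}^+_\lambda\oplus\widetilde{\mathrm{IC}}^+_\lambda\langle-2\rangle$ with $\lambda$ diagonal and the corresponding diagonal component $\Delta$ positive-dimensional. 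The sections then form a torsor under $H^2_{\mathbf G}(\Delta)(1)$. The equivariant first Chern class of the anticanonical bundle is defined over $\mathbb F_q$, so it is a nonzero weight-zero class fixed by $F$.

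Consequently descent-compatible sections are not unique, $1-\varphi F^*$ is not invertible, and your argument establishes no existence. If your vanishing claim held, it would even make the splitting of $K$ over $\mathbb F_{q^2}$ unique, which is false.

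\textbf{The missing idea needs no weights.} The operator $T(s)=\varphi\circ F^*(s)\circ\varphi_N^{-1}$ is an involution on $\mathrm{Hom}(N,K)$:
\begin{itemize}
\item A morphism $s$ of $\mathbb F_{q^2}$-Weil complexes satisfies $\Psi_K\circ(F^*)^2(s)=s\circ\Psi_N$.
\item Combined with the cocycle conditions, this gives $T^2(s)=\Psi_K\circ(F^*)^2(s)\circ\Psi_N^{-1}=s$.
\item Since $p$ is descent-compatible, $T$ preserves the set of sections of $p$.
\end{itemize}
Because $\Ql$ has characteristic $0$, the average $\tfrac12\bigl(s+T(s)\bigr)$ is then a descent-compatible section of $p$. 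This averaging argument is the paper's proof, and it gives the existence you need.
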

\begin{proof}
Since $F^*$ is perverse $t$-exact, the perverse truncations inherit descent data. Let $b$ be the largest perverse degree of $K$, put $N={}^p\!H^b(K)[-b]$, and denote its induced descent datum by $\varphi_N$. The truncation map $p:K\to N$ is descent-compatible and, by semisimplicity, has a section $s:N\to K$ in the underlying $\mathbb{F}_{q^2}$-Weil category. The cocycle condition and naturality of the prescribed Weil structures make
$$
  s\mapsto\varphi\circ F^*(s)\circ\varphi_N^{-1}
$$
an involution on $\Hom(N,K)$ in this underlying category. Thus
$$
  s_{\mathrm{av}}=\tfrac12\bigl(s+\varphi\circ F^*(s)\circ\varphi_N^{-1}\bigr)
$$
is descent-compatible. Moreover, $p\circ\varphi=\varphi_N\circ F^*(p)$ and $p\circ s=\operatorname{id}_N$ give
$$
  p\circ s_{\mathrm{av}}
  =\tfrac12\bigl(\operatorname{id}_N+\varphi_N\circ F^*(p\circ s)\circ\varphi_N^{-1}\bigr)
  =\operatorname{id}_N.
$$
Thus $s_{\mathrm{av}}$ is still a section of $p$. The truncation triangle therefore splits compatibly with descent. Induction on the number of nonzero perverse cohomology sheaves proves the first assertion. The second follows since the two shifted summands lie in distinct perverse degrees.
\end{proof}

Now we can give a classification of descent data in the following proposition.
\begin{prop}\label{prop:main}
Up to normalized shifts of each summand, write
$$
  M=\bigoplus_{A\in \widetilde{\Theta}_\square^\mathfrak{c}}\widetilde{\mathrm{IC}}_A^{\oplus m_A}
$$
for an object of $\mathrm{Semis}^{\mathfrak{c}}_\square$. All direct sums are finite, and the multiplicities and the condition in \textup{(1)} below are understood separately for each normalized shift.

\medskip
\noindent
\textup{(1)} The object $M$ admits an $F$-descent if and only if
$$
  m_A=m_{A^{\vartheta}}\quad\text{for all }A\in \widetilde{\Theta}_\square^\mathfrak{c}.
$$

\medskip
\noindent
\textup{(2)} Suppose $A\in\Theta_\square^\mathfrak{c}$. For every
$m\geq1$, the isomorphism classes of $F$-descents of $\widetilde{\mathrm{IC}}_A^{\oplus m}$ are
indexed by $r=0,1,\ldots,m$.  
If $\phi_+:F^*(\widetilde{\mathrm{IC}}_A)\xrightarrow{\sim}\widetilde{\mathrm{IC}}_A$ satisfies $\mathrm{tr}(\phi_+,\widetilde{\mathrm{IC}}_A|_{x_0})=1$ for $x_0\in{\mathcal{O}}_A$, then 
the class indexed by $r$ is
represented by $$
  \Phi_r=\phi_+\otimes\operatorname{diag}(I_r,-I_{m-r}),
$$
and the descended object is isomorphic to
$$
  (\widetilde{\mathrm{IC}}_A^+)^{\oplus r}\oplus(\widetilde{\mathrm{IC}}_A^-)^{\oplus(m-r)},
$$
where $\widetilde{\mathrm{IC}}_A^+=(\widetilde{\mathrm{IC}}_A,\phi_+)$ and $\widetilde{\mathrm{IC}}_A^-=(\widetilde{\mathrm{IC}}_A,-\phi_+)$.

In particular, an $F$-invariant IC complex of an individual orbit closure has exactly two nonisomorphic descents.

\medskip
\noindent
\textup{(3)} Let $A\in\widetilde{\Theta}_\square^\mathfrak{c}$ with $A\not\in\Theta_\square^\mathfrak{c}$.
For $m\geq1$, the sum $\widetilde{\mathrm{IC}}_A^{\oplus m}\oplus \widetilde{\mathrm{IC}}_{A^{\vartheta}}^{\oplus m}$
has a unique $F$-descent up to an isomorphism. Once we choose compatible
identifications $F^*(\widetilde{\mathrm{IC}}_A)\cong \widetilde{\mathrm{IC}}_{A^{\vartheta}}$, this unique $F$-descent is represented
by the swap descent
$$
  \Phi_{\mathrm{orb}}=
  \begin{pmatrix}
    0&\Psi_{\widetilde{\mathrm{IC}}_A}\otimes I_m\\
    I_m&0
  \end{pmatrix},  
$$where $\Psi_{\widetilde{\mathrm{IC}}_A}: (F^*)^2(\widetilde{\mathrm{IC}}_A)\to \widetilde{\mathrm{IC}}_A$ is the standard $\mathbb{F}_{q^2}$-Weil structure of $\widetilde{\mathrm{IC}}_A$.

For $m=1$, the descended object
$$
  P_{\{A,A^{\vartheta}\}}
  :=\left(\widetilde{\mathrm{IC}}_A\oplus \widetilde{\mathrm{IC}}_{A^{\vartheta}},
  \begin{pmatrix}0&\Psi_{\widetilde{\mathrm{IC}}_A}\\1&0\end{pmatrix}\right)
$$
is indecomposable in $\mathrm{Semis}^{\mathfrak{c},W}_\square$. For any $m\geq1$, the
unique descent is $P_{\{A,A^{\vartheta}\}}^{\oplus m}$. 

\medskip
\noindent
\textup{(4)} Every descended object has a decomposition of the following form, with normalized shifts suppressed as above:
$$
  \bigoplus_{A\in\Theta_\square^\mathfrak{c}}
  \left((\widetilde{\mathrm{IC}}_A^+)^{\oplus a_A}\oplus
        (\widetilde{\mathrm{IC}}_A^-)^{\oplus b_A}\right)
  \;\oplus\!
  \bigoplus_{\substack{\{A,A^{\vartheta}\}\\A\in\widetilde{\Theta}_\square^\mathfrak{c}\backslash\Theta_\square^\mathfrak{c}}}
  P_{\{A,A^{\vartheta}\}}^{\oplus c_{\{A,A^{\vartheta}\}}}.
$$
The second sum runs over the two-element $F$-orbits, with each orbit counted once (so $\{A,A^{\vartheta}\}$ and $\{A^{\vartheta},A\}$ count as one term). After recording the normalized shift of each summand, all multiplicities are uniquely determined.
\end{prop}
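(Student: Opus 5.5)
The plan is to reduce everything to linear algebra on multiplicity spaces. Each $\widetilde{\mathrm{IC}}_A$ is simple in the Weil category over $\mathbb{F}_{q^2}$, and $\mathrm{End}(\widetilde{\mathrm{IC}}_A)=\overline{\mathbb Q}_\ell$. Moreover, $\Hom(\widetilde{\mathrm{IC}}_A\langle m\rangle,\widetilde{\mathrm{IC}}_B\langle n\rangle)=0$ unless $A=B$ and $m=n$. This vanishing follows from purity, from the fact that the orbit closures differ when $A\ne B$, and from Lemma~\ref{prop:descent-splitting} for distinct shifts. By Lemma~\ref{prop:descent-splitting} I will first reduce to a single normalized shift. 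Then $M=\bigoplus_A \widetilde{\mathrm{IC}}_A\otimes V_A$ with $\dim V_A=m_A$, and $\mathrm{End}(M)=\prod_A\End(V_A)$.

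For (1), I will use the Proposition preceding \S\ref{subsec:semisimple-complexes}, namely $F(\widetilde{\boldsymbol{\mathcal O}}_A)=\widetilde{\boldsymbol{\mathcal O}}_{A^\vartheta}$. It gives $F^*\widetilde{\mathrm{IC}}_A\simeq\widetilde{\mathrm{IC}}_{A^\vartheta}$, since $F^*$ of an IC complex is the IC complex of the preimage orbit closure. Hence $F^*M\simeq M$ if and only if $m_A=m_{A^\vartheta}$, which gives necessity. Sufficiency follows by exhibiting descents on each isotypic block: the block $\widetilde{\mathrm{IC}}_A^{\oplus m}$ when $A=A^\vartheta$, and the block $\widetilde{\mathrm{IC}}_A^{\oplus m}\oplus\widetilde{\mathrm{IC}}_{A^\vartheta}^{\oplus m}$ otherwise. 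These are the descents constructed in (2) and (3).

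For (2), with $A\in\Theta^\mathfrak c_\square$ the orbit descends to $\boldsymbol{\mathcal O}_A$. The canonical Weil structure $\vartheta$ of \S\ref{subsec:ICC}, coming from $\mathrm{IC}_A$, gives $\phi_+$ with trace $1$ on the constant stalk at $x_0\in\mathcal O_A$. It satisfies $\phi_+\circ F^*\phi_+=\Psi$ because $\mathrm{IC}_A$ base changes to $\widetilde{\mathrm{IC}}_A$. Any other $F$-descent on $\widetilde{\mathrm{IC}}_A\otimes V$ then has the form $\phi_+\otimes g$ with $g\in\GL(V)$. The cocycle condition becomes $g^2=1$, because $F^*$ acts trivially on the scalar $g$. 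Isomorphisms of descents $h\in\GL(V)$ act by conjugation, $g\mapsto hgh^{-1}$. Involutions up to conjugacy are classified by the dimension $r$ of the $+1$-eigenspace, which gives $\Phi_r$ and the stated decomposition. The case $m=1$ yields exactly $\pm\phi_+$, and these are non-isomorphic since the only automorphisms are scalars, which commute with everything.

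For (3), I will fix the identification $\alpha:F^*\widetilde{\mathrm{IC}}_A\xrightarrow{\sim}\widetilde{\mathrm{IC}}_{A^\vartheta}$ and take $F^*\widetilde{\mathrm{IC}}_{A^\vartheta}\to\widetilde{\mathrm{IC}}_A$ to be $\Psi\circ F^*(\alpha^{-1})$. A descent $\varphi$ is off-diagonal, with blocks $X,Y\in\GL_m$ after these identifications. The cocycle condition forces $XY$ to be the identity, up to the $\Psi$ bookkeeping. Conjugating by $\mathrm{diag}(I,h)$ with $h=Y^{-1}$ normalizes $Y=I$, which gives uniqueness. Indecomposability of $P_{\{A,A^\vartheta\}}$ holds because a nontrivial summand would be a descent on $\widetilde{\mathrm{IC}}_A$ alone, contradicting (1). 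Part (4) then assembles (1)--(3) blockwise. Uniqueness of the multiplicities is read off as follows: $c$ and $a+b$ are determined by the underlying object, and $a-b$ by the trace of the descent on the multiplicity space (equivalently $\mathrm{tr}$ at $x_0$); alternatively, Krull--Schmidt applies, since endomorphism rings are finite dimensional.

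I expect the main obstacle to be verifying in (3) that the choice of compatible identifications makes $\Psi$ appear exactly as written, and that the cocycle computation closes up. This requires careful use of the naturality of the canonical $\mathbb F_{q^2}$-Weil structures under $F^*$. I would attempt it first with $m=1$.
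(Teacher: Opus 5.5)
Your proposal is correct and follows the paper's proof: reduce via Lemma~\ref{prop:descent-splitting} to one perverse degree and one $F$-orbit $\{A,A^\vartheta\}$ of constituents, classify descents $\phi_+\otimes g$ by $g^2=1$ up to conjugacy, and normalize the off-diagonal swap datum by $\mathrm{diag}(I_m,P)$. Your indecomposability argument via (1) and your Krull--Schmidt/trace argument for uniqueness in (4) are harmless variants of the paper's $\End=\overline{\mathbb{Q}}_\ell$ argument.

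One side claim is false and should be dropped. In the mixed category, $\Hom(\widetilde{\mathrm{IC}}_A\langle m\rangle,\widetilde{\mathrm{IC}}_B\langle n\rangle)$ can be nonzero when the two objects lie in different perverse degrees. For example, for diagonal $\lambda$ on a positive-dimensional component, a Chern class gives a nonzero map $\widetilde{\mathrm{IC}}_\lambda\to\widetilde{\mathrm{IC}}_\lambda\langle 2\rangle$. Lemma~\ref{prop:descent-splitting} does not assert any such vanishing; it splits descents by averaging a section. So reduce to a single perverse degree rather than a single normalized shift, and use Hom-vanishing only there. That is all your argument actually needs.
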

\begin{proof}
By Lemma~\ref{prop:descent-splitting}, we may work separately in each perverse degree and with each $F$-orbit of simple constituents. The assertion is the multiplicity-space form of the descent formalism of Lusztig--Vogan~\cite[\S2.1-\S2.2]{LV14}; we record the short linear-algebra argument.

Assume $F^*(\widetilde{\mathrm{IC}}_A)\simeq \widetilde{\mathrm{IC}}_A$ and fix the positive descent $\phi_{\widetilde{\mathrm{IC}}_A}:F^*(\widetilde{\mathrm{IC}}_A)\xrightarrow{\sim}\widetilde{\mathrm{IC}}_A$. Since $\widetilde{\mathrm{IC}}_A$ is a shift of a simple perverse sheaf, we have
$$
\End(\widetilde{\mathrm{IC}}_A)=\Ql.
$$
Write $\widetilde{\mathrm{IC}}_A^{\oplus m}=\widetilde{\mathrm{IC}}_A\otimes V$ with $\dim V=m$. Every descent datum is therefore of the form
$$
(\widetilde{\mathrm{IC}}_A^{\oplus m},\phi_{\widetilde{\mathrm{IC}}_A}\otimes P),\quad P\in\mathrm{GL}_m(\overline{\mathbb{Q}}_\ell).$$ 
The descent condition is equivalent to $P^2=1$, which implies that $P$ is semisimple Since $\Ql$ has characteristic zero, and
$$
V=V^+\oplus V^-, 
$$where $V^\pm$ are the eigen-spaces of $P$ with eigenvalues $\pm1$, respectively.
Hence
$$
(\widetilde{\mathrm{IC}}_A^{\oplus m},\Phi)\cong (\widetilde{\mathrm{IC}}_A^+)^{\oplus \dim V^+}\oplus(\widetilde{\mathrm{IC}}_A^-)^{\oplus \dim V^-},
$$
which proves (2).

Now suppose that $F^*(\widetilde{\mathrm{IC}}_A)\not\simeq \widetilde{\mathrm{IC}}_A$, and identify $\widetilde{\mathrm{IC}}_{A^{\vartheta}}=F^*\widetilde{\mathrm{IC}}_A$. Since $\widetilde{\mathrm{IC}}_A$ and $F^*\widetilde{\mathrm{IC}}_A$ are nonisomorphic shifts of simple perverse sheaves in the same perverse degree, any descent datum on $\widetilde{\mathrm{IC}}_A^{\oplus m}\oplus(F^*\widetilde{\mathrm{IC}}_A)^{\oplus m}$ has the form
$$
\Phi=\begin{pmatrix}0&\theta\otimes P\\ \operatorname{id}_{F^*\widetilde{\mathrm{IC}}_A}\otimes Q&0\end{pmatrix},\qquad P,Q\in\GL_m(\Ql),
$$
where $\theta=\Psi_{\widetilde{\mathrm{IC}}_A}:(F^*)^2\widetilde{\mathrm{IC}}_A\to \widetilde{\mathrm{IC}}_A$. The cocycle condition gives $PQ=QP=I_m$, and $T=\operatorname{diag}(I_m,P)$ satisfies $T\Phi=\Phi_{\mathrm{orb}}F^*(T)$. Hence every such descent is isomorphic to the stated swap descent. For $m=1$, its endomorphism ring is $\Ql$, so it is indecomposable. This proves (3).

The permutation of IC constituents under $F^*$ and the constructions above prove (1). Applying (2) and (3) in each perverse degree proves (4), including uniqueness of the multiplicities at each normalized shift.
\end{proof}

\subsection{Convolution} \label{subsec:convolution}
Let $X_1=X_2=X_3=\widetilde{\pmb{\sF}}_\square^\mathfrak{c}$, and let $\widetilde{p}_{ij}:X_{1}\times X_2\times X_3\to X_i\times X_j$ be the obvious projections.
Introduce the convolution
\begin{align*}\star:D^b_{\mathrm{m},\mathbf{G}_{q^2}}(\widetilde{\pmb{\sF}}_\square^\mathfrak{c}\times\widetilde{\pmb{\sF}}_\square^\mathfrak{c})\times D^b_{\mathrm{m},\mathbf{G}_{q^2}}(\widetilde{\pmb{\sF}}_\square^\mathfrak{c}\times\widetilde{\pmb{\sF}}_\square^\mathfrak{c}) &\to D^b_{\mathrm{m},\mathbf{G}_{q^2}}(\widetilde{\pmb{\sF}}_\square^\mathfrak{c}\times\widetilde{\pmb{\sF}}_\square^\mathfrak{c}),\\
(\mathcal{F},\mathcal{F}') &\mapsto\mathcal{F}\star\mathcal{F}':=R\widetilde{p}_{13*}\big(R\widetilde{p}_{12}^*(\mathcal{F})\otimes^{\mathbb{L}} R\widetilde{p}_{23}^*(\mathcal{F}')\big).
\end{align*}
The following result may be known to experts. We give a proof in Appendix~\ref{app:mixed-convolution} for self-contain.
\begin{lem}\label{prop:semis-convolution}
   The subcategory $\mathrm{Semis}^{\mathfrak{c}}_\square$ is closed under the convolution $\star$. 
\end{lem}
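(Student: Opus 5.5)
We argue in the standard way for Schur-type convolution categories: first rewrite $\star$ as a proper pushforward from a twisted product, then apply the BBDG decomposition theorem together with the pointwise parity and Tate properties from Proposition~\ref{prop:ICC-stalk}. Since $\star$ is biadditive and commutes with normalized shifts, it suffices to show that $\widetilde{\mathrm{IC}}_A\star\widetilde{\mathrm{IC}}_B$ lies in $\mathrm{Semis}^{\mathfrak{c}}_\square$ for all $A,B\in\widetilde\Theta^{\mathfrak c}_\square$. Moreover, $\widetilde{\pmb{\sF}}^{\mathfrak c}_\square$ is a finite disjoint union of partial flag varieties $\mathbf G_{q^2}/\mathbf P_\gamma$, so we may fix components $\mathbf G/\mathbf P_1$, $\mathbf G/\mathbf P_2$ and $\mathbf G/\mathbf P_3$, where $\mathbf G=\mathbf{GL}_{D,q^2}$ or $\mathbf{SO}_{2d+2,q^2}$ (this is the split situation over $\mathbb F_{q^2}$).

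\textbf{Step 1 (convolution as pushforward).} The complex $\widetilde p_{12}^*\widetilde{\mathrm{IC}}_A\otimes\widetilde p_{23}^*\widetilde{\mathrm{IC}}_B$ is supported on
\[
Z=\{(x,y,z)\mid (x,y)\in\overline{\widetilde{\boldsymbol{\mathcal O}}}_A,\ (y,z)\in\overline{\widetilde{\boldsymbol{\mathcal O}}}_B\}.
\]
The projection $Z\to X_2$ is a $\mathbf G$-equivariant fibration with fiber $\overline{\widetilde{\boldsymbol{\mathcal O}}}_{A,y}\times\overline{\widetilde{\boldsymbol{\mathcal O}}}_{B,y}$. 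Hence $Z\simeq \mathbf G\times^{\mathbf P_2}(X'\times X'')$, where $X'\subset\mathbf G/\mathbf P_1$ and $X''\subset \mathbf G/\mathbf P_3$ are generalized Schubert varieties. By the induction equivalence \cite[Proposition 6.2.12]{Ac21}, the tensor product restricted to $Z$ is, up to shift and twist, the twisted external product $\mathrm{IC}_{X'}\widetilde\boxtimes\mathrm{IC}_{X''}$, and this is the IC complex of $Z$. Since $\widetilde p_{13}|_Z$ is proper, $\widetilde{\mathrm{IC}}_A\star\widetilde{\mathrm{IC}}_B\simeq R\widetilde p_{13*}\mathrm{IC}_Z$ up to a normalized shift.

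\textbf{Step 2 (decomposition theorem).} $\mathrm{IC}_Z$ is pure of weight $0$ with its canonical Weil structure, so by Deligne's weight theorem and BBDG the pushforward is pure and, over $\overline{\mathbb F_q}$, a direct sum of shifted $\mathbf G$-equivariant simple perverse sheaves on $X_1\times X_3$. These simple sheaves are IC complexes of orbit closures with constant coefficients, because all stabilizers are connected (Lemma~\ref{connected}, applied to $\mathbf{SO}$-orbits in the orthogonal case). They are therefore among the $\widetilde{\mathrm{IC}}_C$ with $C\in\widetilde\Theta^{\mathfrak c}_\square$.

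\textbf{Step 3 (mixed splitting; the main obstacle).} The difficulty is that a pure complex in the mixed category need not split as a direct sum of $\widetilde{\mathrm{IC}}_C\langle m\rangle$ with their canonical Weil structures. Two things can go wrong: the Frobenius may act on multiplicity spaces by a non-Tate twist, and pure complexes need not be semisimple in $D^b_{\mathrm m}$. For the first, we use Proposition~\ref{prop:ICC-stalk} and Lemma~\ref{even tate}. Stalks of $\mathrm{IC}_Z$ are pointwise $*$-even and Tate, and the fibers of $\widetilde p_{13}|_Z$ over a point are stratified by affine-space pieces (Bruhat cells intersected with Schubert varieties). Hence the stalks of the pushforward are Tate with Frobenius eigenvalues exactly $q^{i}$ (relative to $\mathbb F_{q^2}$) in even degrees after normalization. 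Peeling off summands by induction on orbit closures, starting from the top-dimensional orbits in the support, forces each multiplicity space to carry Frobenius acting by the appropriate power of $q$, i.e.\ to be a sum of $\overline{\mathbb Q}_\ell(-m/2)$. For the second, we split the perverse filtration using parity vanishing. Any extension between $\widetilde{\mathrm{IC}}_C\langle m\rangle$ and $\widetilde{\mathrm{IC}}_{C'}\langle m'\rangle$ of the same weight is governed by $\mathrm{Ext}^1$ in the mixed category. The group $\mathrm{Ext}^1$ in the mixed category surjects from Frobenius coinvariants of $\mathrm{Hom}^0$ and $\mathrm{Hom}^1$ over $\overline{\mathbb F_q}$. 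The $\mathrm{Hom}^1$ contribution vanishes by $*$-evenness of the IC complexes and the standard parity argument on the orbit stratification. The $\mathrm{Hom}^0$ contribution has Frobenius eigenvalue $1$ only when the weights differ, which is excluded by purity. I expect the most care to be needed in this step, specifically in making the parity-vanishing argument work on the non-smooth orbit closures equivariantly. The details are deferred to Appendix~\ref{app:mixed-convolution}.
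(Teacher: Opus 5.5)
Your Steps 1 and 2 are fine, and they parallel the paper's reduction: the paper passes to the single-flag model $*_{\mathbf P}$ through the unshifted equivalence $\mathsf R_{\mathbf H,\mathbf J}$, identifies the twisted product with an IC complex, and pushes forward along a proper map. The gap is Step 3, which carries all the content of the lemma, and the argument you sketch there is incorrect.

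\textbf{The weight argument fails.} Your claim that ``the $\mathrm{Hom}^0$ contribution has Frobenius eigenvalue $1$ only when the weights differ'' is backwards. The relevant sequence is $0\to\mathrm{Hom}^0(K,L)_F\to\mathrm{Ext}^1_{\mathrm m}(K_0,L_0)\to\mathrm{Hom}^1(K,L)^F\to0$. For two objects pure of weight $0$, purity only forces $\mathrm{Hom}^0(K,L)$ to have weights $\geq0$, and its weight-zero part is exactly what survives in the coinvariants.
\begin{itemize}
\item The simplest instance is $K_0=L_0=\widetilde{\mathrm{IC}}_C$. Then $\mathrm{Ext}^1_{\mathrm m}\supseteq\mathrm{End}(\widetilde{\mathrm{IC}}_C)_F=\overline{\mathbb Q}_\ell\neq0$. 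This class is realized by $\widetilde{\mathrm{IC}}_C$ tensored with a rank-two Weil module on which Frobenius acts by a unipotent Jordan block. The result is pure of weight $0$ and geometrically $\widetilde{\mathrm{IC}}_C^{\oplus2}$, but it is not an object of $\mathrm{Semis}^{\mathfrak c}_\square$.
\item Similarly, $\mathrm{Hom}^0(\widetilde{\mathrm{IC}}_C\langle m\rangle,\widetilde{\mathrm{IC}}_{C'}\langle m'\rangle)=\mathrm{Ext}^{m'-m}(\widetilde{\mathrm{IC}}_C,\widetilde{\mathrm{IC}}_{C'})(\tfrac{m'-m}{2})$ can have a weight-zero part when $m'>m$. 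So even splitting across perverse degrees is not automatic in the mixed category.
\end{itemize}
No abstract weight or parity vanishing can give the splitting. You need specific information that Frobenius acts semisimply, by exactly the right power of $q$, on the multiplicity spaces of this particular pushforward.

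\textbf{The stalk argument does not supply that information.} The fibers of $\widetilde p_{13}|_Z$ are intersections of two generalized Schubert varieties for parabolics in relative position. The pieces ``Bruhat cell $\cap$ Schubert variety'' are in general open Richardson-type varieties, not affine spaces (already $\mathbb G_m\subset\mathbb P^1$ occurs). Moreover, $\mathrm{IC}_Z$ need not be constant along them. Even granting a filtration of the stalk cohomology with Tate graded pieces, you would only control the eigenvalues. An iterated extension of $\overline{\mathbb Q}_\ell(-m/2)$ by itself can be non-split, so ``eigenvalues equal to $q^{m/2}$'' does not yield ``a direct sum of copies of $\overline{\mathbb Q}_\ell(-m/2)$''.

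\textbf{What the paper supplies.} Your peeling-off idea is exactly what the paper uses: each multiplicity space is a Weil-module direct summand of the stalk at a rational point of the corresponding open orbit. But its input must be that these stalks are \emph{semisimple} Tate Weil modules. The paper obtains this as follows.
\begin{itemize}
\item It starts from the Frobenius semisimplicity theorem of de Cataldo--Haines--Li for convolutions of Schubert IC complexes on the full flag variety \cite[Theorem~2.2.7]{dCHL18}.
\item It transports this to partial flags via $\pi_{\mathbf H}^*R\pi_{\mathbf H*}K\simeq K*_{\mathbf B}\underline{\overline{\mathbb Q}_\ell}_{\mathbf H/\mathbf B}$.
\item For the general twisted product, it uses the proper surjection $\alpha\colon\boldsymbol{\mathcal Y}\to\boldsymbol{\mathcal Z}$ together with \cite[Theorem~2.1.2]{dCHL18}.
\end{itemize}
That external Frobenius-semisimplicity input is the missing idea. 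Deferring ``the details'' to the appendix leaves the lemma unproved.
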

\begin{proof}
See Appendix~\ref{app:proof-semis-convolution}.
\end{proof}

Since $F$ commutes with $\widetilde{p}_{ij}$, we have the following corollary.
\begin{cor}\label{closed conv}
    The pullback functor $F^*$ is monoidal with respect to the convolution $\star$. That is, for any $\mathcal{F},\mathcal{F}'\in\mathrm{Semis}^{\mathfrak{c}}_\square,$ we have a canonical isomorphism $$F^*(\mathcal{F}\star\mathcal{F}')\simeq F^{*}(\mathcal{F})\star F^*(\mathcal{F}').$$
    Moreover, the subcategory $\mathrm{Semis}^{\mathfrak{c},F}_\square$ is closed under the convolution $\star$.
\end{cor}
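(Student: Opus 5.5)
The plan has two parts: first build the monoidal isomorphism $F^*(\mathcal{F}\star\mathcal{F}')\simeq F^*(\mathcal{F})\star F^*(\mathcal{F}')$ from the standard compatibilities of the six functors with pullback along the Frobenius, and then deduce closure of $\mathrm{Semis}^{\mathfrak{c},F}_\square$ from Lemma~\ref{prop:semis-convolution}. I regard $F$ as the morphism of $\mathbb{F}_{q^2}$-varieties on $\widetilde{\pmb{\sF}}_\square^\mathfrak{c}$ induced by the twisted Frobenius, acting diagonally on products. It is compatible with the $\mathbf{G}_{q^2}$-action up to the automorphism $F$ of $\mathbf{G}_{q^2}$, so $F^*$ preserves equivariant mixed complexes.

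The key geometric input is that the diagonal Frobenius on $X_1\times X_2\times X_3$ satisfies $\widetilde{p}_{ij}\circ F=F\circ\widetilde{p}_{ij}$. Each square formed by $\widetilde{p}_{ij}$ and the two copies of $F$ is then cartesian. Indeed, $F$ is a universal homeomorphism, and after base change to $\overline{\mathbb{F}_q}$ it differs from an automorphism only by the relative Frobenius. Alternatively, one notes that $F$ on a product is the product of the $F$'s.

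The isomorphism is then assembled in four steps.
\begin{enumerate}
\item Pullback commutes with pullback: $F^*R\widetilde{p}_{ij}^*\simeq R\widetilde{p}_{ij}^*F^*$.
\item Pullback is symmetric monoidal for $\otimes^{\mathbb{L}}$.
\item Proper base change applies to the cartesian square for $\widetilde{p}_{13}$: since $\widetilde{p}_{13}$ is proper (its fibre is the projective variety $X_2$), we get $F^*R\widetilde{p}_{13*}\simeq R\widetilde{p}_{13*}F^*$.
\item Composing the three gives
\[F^*(\mathcal{F}\star\mathcal{F}')\simeq R\widetilde{p}_{13*}\bigl(R\widetilde{p}_{12}^*F^*\mathcal{F}\otimes^{\mathbb{L}} R\widetilde{p}_{23}^*F^*\mathcal{F}'\bigr)=F^*\mathcal{F}\star F^*\mathcal{F}'.\]
\end{enumerate}
I will also check that this isomorphism is canonical, meaning compatible with the associativity constraints and the Weil structures. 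This follows because every step is a canonical base-change isomorphism, and those commute with the canonical Weil structures $\Psi$.

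For the second assertion, take $\mathcal{F},\mathcal{F}'\in\mathrm{Semis}^{\mathfrak{c},F}_\square$. Lemma~\ref{prop:semis-convolution} gives $\mathcal{F}\star\mathcal{F}'\in\mathrm{Semis}^{\mathfrak{c}}_\square$. Using the isomorphism above and choosing $F^*\mathcal{F}\simeq\mathcal{F}$ and $F^*\mathcal{F}'\simeq\mathcal{F}'$, we obtain $F^*(\mathcal{F}\star\mathcal{F}')\simeq\mathcal{F}\star\mathcal{F}'$, which is exactly the fixed-point condition.

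The main obstacle is step 3, the base change for the non-$\mathbb{F}_{q^2}$-linear Frobenius. I would handle it by factoring $F$ as $\tau$ (an $\mathbb{F}_{q^2}$-automorphism, or a Galois twist) composed with $F_q$. For $\tau$ the compatibility is trivial because it is an isomorphism. For $F_q$ it is the standard compatibility of pullback along the relative Frobenius with proper pushforward.
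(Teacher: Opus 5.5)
Your proposal is correct and follows the paper's argument: the paper justifies the isomorphism only by $\widetilde{p}_{ij}\circ F=F\circ\widetilde{p}_{ij}$, with compatibility of pullback with $\otimes^{\mathbb L}$ and base change for $R\widetilde{p}_{13*}$ left implicit, and then uses Lemma~\ref{prop:semis-convolution} to get closure of $\mathrm{Semis}^{\mathfrak{c},F}_\square$. One small correction: if $F$ is the relative $q$-Frobenius, the squares are not literally cartesian, since the comparison map is $\mathrm{id}\times F_{X_2}$, a universal homeomorphism rather than an isomorphism; this is harmless, because universal homeomorphisms induce equivalences of \'etale sites, which is exactly what your final reduction to the standard Frobenius compatibility uses.
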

For $(\mathcal{F},\varphi),(\mathcal{F}',\varphi')\in\mathrm{Semis}^{\mathfrak{c},W}_\square$, we define $$\varphi\star\varphi':=R\widetilde{p}_{13*}\big(R\widetilde{p}_{12}^*(\varphi)\otimes^{\mathbb{L}} R\widetilde{p}_{23}^*(\varphi')\big)\in\Hom(F^{*}(\mathcal{F}\star\mathcal{F}'),\mathcal{F}\star\mathcal{F}'),$$ 
which is clearly an isomorphism.

\begin{prop}\label{conv}
    There is a well-defined convolution map $$\diamond:\mathrm{Semis}^{\mathfrak{c},W}_\square\times\mathrm{Semis}^{\mathfrak{c},W}_\square\to\mathrm{Semis}^{\mathfrak{c},W}_\square \quad \text{via} \quad (\mathcal{F},\varphi)\diamond(\mathcal{F}',\varphi'):=(\mathcal{F}\star\mathcal{F}',\varphi\star\varphi').$$
\end{prop}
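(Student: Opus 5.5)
To show that $\diamond$ is well defined, I will check three things in turn. First, $\mathcal{F}\star\mathcal{F}'$ lies in $\mathrm{Semis}^{\mathfrak{c},F}_\square$. Second, $\varphi\star\varphi'$ is an isomorphism $F^*(\mathcal{F}\star\mathcal{F}')\xrightarrow{\sim}\mathcal{F}\star\mathcal{F}'$. Third, $\varphi\star\varphi'$ satisfies the cocycle condition $(\varphi\star\varphi')\circ F^*(\varphi\star\varphi')=\Psi_{\mathcal{F}\star\mathcal{F}'}$.

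The first point is immediate. Lemma~\ref{prop:semis-convolution} puts $\mathcal{F}\star\mathcal{F}'$ in $\mathrm{Semis}^{\mathfrak{c}}_\square$, and Corollary~\ref{closed conv} gives $F^*(\mathcal{F}\star\mathcal{F}')\simeq F^*\mathcal{F}\star F^*\mathcal{F}'\simeq\mathcal{F}\star\mathcal{F}'$.

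For the second point, I will make precise how $\varphi\star\varphi'$ is read as a map out of $F^*(\mathcal{F}\star\mathcal{F}')$. Functoriality of $R\widetilde p_{13*}(R\widetilde p_{12}^*(-)\otimes^{\mathbb L}R\widetilde p_{23}^*(-))$ in both variables gives an isomorphism $F^*\mathcal{F}\star F^*\mathcal{F}'\to\mathcal{F}\star\mathcal{F}'$. I precompose it with the canonical monoidal isomorphism
\[
c_{\mathcal{F},\mathcal{F}'}:F^*(\mathcal{F}\star\mathcal{F}')\xrightarrow{\sim}F^*\mathcal{F}\star F^*\mathcal{F}'
\]
of Corollary~\ref{closed conv}. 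That isomorphism is built from the base change isomorphism $F^*R\widetilde p_{13*}\simeq R\widetilde p_{13*}F^*$, which is valid since $\widetilde p_{13}$ is proper and $F$ commutes with it, together with the compatibilities $F^*\widetilde p_{ij}^*\simeq\widetilde p_{ij}^*F^*$ and $F^*(-\otimes-)\simeq F^*(-)\otimes F^*(-)$. The composite is an isomorphism because $\varphi$ and $\varphi'$ are.

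The third point, the cocycle condition, is the main step. Expanding the left side gives
\[
(\varphi\star\varphi')\circ c\circ F^*(\varphi\star\varphi')\circ F^*(c),
\]
and by naturality of $c$ this equals $\bigl((\varphi\circ F^*\varphi)\star(\varphi'\circ F^*\varphi')\bigr)\circ c^{(2)}$. Here $c^{(2)}:(F^*)^2(\mathcal{F}\star\mathcal{F}')\simeq(F^*)^2\mathcal{F}\star(F^*)^2\mathcal{F}'$ is the iterated monoidal isomorphism. The cocycle conditions for $\varphi$ and $\varphi'$ reduce this to $(\Psi_{\mathcal{F}}\star\Psi_{\mathcal{F}'})\circ c^{(2)}$.

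It remains to identify this with $\Psi_{\mathcal{F}\star\mathcal{F}'}$, the canonical $\mathbb{F}_{q^2}$-Weil structure. Since $(F^*)^2=F_{q^2}^*$, the required identification is the standard fact that the Weil structure on $R\widetilde p_{13*}(\widetilde p_{12}^*\mathcal{F}\otimes\widetilde p_{23}^*\mathcal{F}')$ is the one induced from the Weil structures on $\mathcal{F}$ and $\mathcal{F}'$ through the same base change, pullback and tensor compatibilities. I regard this as the paper's convention for the mixed convolution in Appendix~\ref{app:mixed-convolution}.

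The point needing care is that $c^{(2)}$, formed by composing $c$ with $F^*(c)$, agrees with the monoidal isomorphism attached to $F_{q^2}^*$. This is the coherence of base change isomorphisms under composition of the morphisms $F\circ F$. I would verify it by working directly with the proper base change diagram for $\widetilde p_{13}$, where base change maps compose correctly under stacking of cartesian squares.

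Finally, functoriality of $\diamond$ on morphisms is straightforward. If $u,u'$ are descent-compatible, then $u\star u'$ is descent-compatible by naturality of $c$.
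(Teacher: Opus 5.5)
Your proposal is correct and follows the paper's own argument. The paper gets membership in $\mathrm{Semis}^{\mathfrak c,F}_\square$ from Lemma~\ref{prop:semis-convolution} and Corollary~\ref{closed conv}. It then checks the cocycle condition by bifunctoriality of $\star$, which reduces $(\varphi\star\varphi')\circ F^*(\varphi\star\varphi')$ to $\Psi_{\mathcal F}\star\Psi_{\mathcal F'}$, and identifies this with $\Psi_{\mathcal F\star\mathcal F'}$ through the canonical isomorphism $(F^*)^2\mathcal F\star(F^*)^2\mathcal F'\simeq(F^*)^2(\mathcal F\star\mathcal F')$. Your explicit tracking of the monoidal isomorphism $c$, and of the coherence of $c^{(2)}$ with the $F_{q^2}^*$-structure, simply makes precise what the paper's chain of equalities leaves implicit.
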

\begin{proof}
    By Corollary \ref{closed conv}, it remains to show that $\varphi\star\varphi'$ satisfies the cocycle condition.
    We give a straightforward calculation:
    \begin{align*}
        &\varphi\star\varphi'\circ F^*(\varphi\star\varphi')\\&=R\widetilde{p}_{13*}\big(R\widetilde{p}_{12}^*(\varphi)\otimes^{\mathbb{L}} R\widetilde{p}_{23}^*(\varphi')\big)\circ F^*\bigg(R\widetilde{p}_{13*}\big(R\widetilde{p}_{12}^*(\varphi)\otimes^{\mathbb{L}} R\widetilde{p}_{23}^*(\varphi')\big)\bigg)\\
    &=R\widetilde{p}_{13*}\big(R\widetilde{p}_{12}^*(\varphi)\otimes^{\mathbb{L}} R\widetilde{p}_{23}^*(\varphi')\big)\circ R\widetilde{p}_{13*}\big(R\widetilde{p}_{12}^*(F^*(\varphi))\otimes^{\mathbb{L}} R\widetilde{p}_{23}^*(F^*(\varphi'))\big)\\&=R\widetilde{p}_{13*}\big(R\widetilde{p}_{12}^*(\varphi\circ F^*(\varphi))\otimes^{\mathbb{L}} R\widetilde{p}_{23}^*(\varphi'\circ F^*(\varphi'))\big)\\
    &=R\widetilde{p}_{13*}\big(R\widetilde{p}_{12}^*(\Psi_\mathcal{F})\otimes^{\mathbb{L}} R\widetilde{p}_{23}^*(\Psi_{\mathcal{F}'})\big)\\
        &=\Psi_{\mathcal{F}\star\mathcal{F}'},
    \end{align*}
    where we use the canonical isomorphism $(F^{*})^2(\mathcal{F})\star(F^*)^2(\mathcal{F}')\simeq(F^*)^2(\mathcal{F}\star\mathcal{F}')$ for the last equality.
    The proof is complete.
\end{proof}
\begin{cor}
    The category $(\mathrm{Semis}^{\mathfrak{c},W}_\square,\diamond)$ is an additive monoidal category. Its indecomposable objects, up to normalized shifts and isomorphisms, are $\widetilde{\mathrm{IC}}^{\pm}_A$ and $P_{\{B,B^{\vartheta}\}}$, where $A\in\Theta_\square^\mathfrak{c}$ and $\{B,B^\vartheta\}$ runs over the two-element $F$-orbits in $\widetilde{\Theta}_\square^\mathfrak{c}\setminus\Theta_\square^\mathfrak{c}$. Every object admits a finite direct-sum decomposition into these irreducible objects, uniquely up to permutations and isomorphisms.
\end{cor}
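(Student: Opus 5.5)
The plan is to assemble the corollary from Proposition~\ref{conv}, Lemma~\ref{prop:descent-splitting} and Proposition~\ref{prop:main}.

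\emph{Additivity.} Direct sums of descent data are formed componentwise: $(\mathcal F,\varphi)\oplus(\mathcal F',\varphi')=(\mathcal F\oplus\mathcal F',\varphi\oplus\varphi')$. The cocycle condition holds because $F^*$ and $\Psi$ are additive. Hom-sets are $\Ql$-subspaces of Hom-sets in $\mathrm{Semis}^{\mathfrak c}_\square$, since descent-compatibility is a linear condition, and composition is bilinear.

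\emph{Monoidal structure.} The product is the bifunctor $\diamond$ of Proposition~\ref{conv}. On a pair of morphisms $(u,u')$ it is defined by $u\star u'$, which is descent-compatible by functoriality of $R\widetilde p_{13*}$, $R\widetilde p_{ij}^*$ and $\otimes^{\mathbb L}$ together with the monoidality of $F^*$ from Corollary~\ref{closed conv}.
\begin{itemize}
\item The associativity constraint is the usual one for convolution, built from base change on $X_1\times X_2\times X_3\times X_4$ and the projection formula. It is natural in all variables, so it commutes with $\varphi\star\varphi'\star\varphi''$, because these isomorphisms commute with $F^*$.
\item The unit is the constant sheaf on the diagonal, that is, $\bigoplus_\lambda\widetilde{\mathrm{IC}}_\lambda$ over diagonal $\lambda\in\Theta^{\mathfrak c}_\square$, with its canonical descent $\phi_+$. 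The diagonal orbits are $F$-stable since diagonal matrices satisfy $\lambda=\lambda^\vartheta$.
\item Pentagon and triangle are inherited from $\mathrm{Semis}^{\mathfrak c}_\square$, because the forgetful functor is faithful.
\item Biadditivity of $\diamond$ follows from additivity of the derived functors involved.
\end{itemize}

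\emph{Indecomposables and Krull--Schmidt.} Let $(K,\varphi)$ be a descended object. By Lemma~\ref{prop:descent-splitting}, it splits into its perverse-degree pieces. Proposition~\ref{prop:main}(4) then gives a finite decomposition into copies of $\widetilde{\mathrm{IC}}^{\pm}_A\langle m\rangle$ and $P_{\{B,B^\vartheta\}}\langle m\rangle$.

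These summands are indecomposable:
\begin{itemize}
\item For $\widetilde{\mathrm{IC}}^\pm_A$, the endomorphism ring lies inside $\End(\widetilde{\mathrm{IC}}_A)=\Ql$, so it is local.
\item For $P_{\{B,B^\vartheta\}}$, indecomposability is stated in Proposition~\ref{prop:main}(3). Its endomorphisms are pairs $(a,b)\in\Ql^2$, and descent-compatibility forces $a=b$.
\end{itemize}
They are pairwise nonisomorphic. By Proposition~\ref{prop:main}(2), the two descents $\widetilde{\mathrm{IC}}^+_A$ and $\widetilde{\mathrm{IC}}^-_A$ are distinct classes. The underlying objects distinguish the remaining cases.

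Uniqueness of the decomposition up to permutation and isomorphism is the multiplicity uniqueness of Proposition~\ref{prop:main}(4). Alternatively, it follows from Krull--Schmidt: Hom spaces are finite-dimensional, so idempotents split, and every indecomposable summand has a local endomorphism ring.

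\emph{Main obstacle.} The hardest step is checking that the associativity and unit isomorphisms of $\star$ are compatible with the descent morphisms $\varphi\star\varphi'$. The approach is to write each constraint as a composite of canonical base-change, projection-formula and Künneth isomorphisms, each commuting with pullback along the Frobenius. This works because $F$ commutes with all the projections $\widetilde p_{ij}$.
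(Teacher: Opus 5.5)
Your proposal is correct and takes the same route as the paper, which deduces the corollary directly from Proposition~\ref{prop:main} (via Lemma~\ref{prop:descent-splitting}) and Proposition~\ref{conv}; you additionally spell out the unit, associator and descent-compatibility checks that the paper leaves implicit. One small caveat: finite-dimensional Hom spaces do not by themselves make idempotents split, but that alternative route is unnecessary, since uniqueness already follows from the multiplicity statement in Proposition~\ref{prop:main}(4), or from Azumaya's theorem once every object is a finite direct sum of objects with local endomorphism rings.
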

\begin{proof}
    It follows from Propositions~\ref{prop:main} \& \ref{conv}.
\end{proof}
\subsection{Monoidal categorification}\label{subsec:monoidal-categorification}

For an additive category $\mathcal{C}$, let $K^\oplus_0(\mathcal{C})$ denote its split Grothendieck group.

The convolution on $\mathrm{Semis}^{\mathfrak{c},W}_\square$ gives a convolution map on $K_0^\oplus(\mathrm{Semis}^{\mathfrak{c},W}_\square)$, which is also written by $\diamond$.
For $(\mathcal{F},\varphi)$, we use the sheaf-function correspondence to give a function $\chi_{\mathcal{F},\varphi}$ on $\sF_\square^\mathfrak{c}\times\sF_\square^\mathfrak{c}$ as follows.
For $x_0\in\sF_\square^\mathfrak{c}\times\sF_\square^\mathfrak{c}$, 
we have a map $\varphi_{x_0}:(F^{*}(\mathcal{F}))_{x_{0}}\to \mathcal{F}_{x_0}$. Since $F(x_0)=x_0$, this map becomes $\varphi_{x_0}:\mathcal{F}_{x_0}\to\mathcal{F}_{x_0}$. For $(\mathcal{F},\varphi)\in\mathrm{Semis}^{\mathfrak{c},W}_{\square}$, we define $$\chi_{(\mathcal{F},\varphi)}:\sF_\square^\mathfrak{c}\times\sF_\square^\mathfrak{c}\to\mathbb{Z}[q^{\pm1/2}],\ \text{via}\quad\chi_{(\mathcal{F},\varphi)}(x_{0})=\sum\limits_{i\in\mathbb{Z}}(-1)^{i}\mathrm{tr}(\varphi_{x_0};\mathsf{H}^{i}(\mathcal{F})_{x_0}).$$
\begin{lem}\label{A}
    For $A\in\Theta^\mathfrak{c}_{\square}$, we have
    $\chi_{\widetilde{\mathrm{IC}}^+_A\langle d(A)-r(A)\rangle}=\{A\}|_{v=-q^{1/2}}$.
\end{lem}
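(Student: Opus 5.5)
The plan is to compare both sides pointwise on each rational orbit $\mathcal{O}_B$ with $B\in\Theta^{\mathfrak c}_\square$. Both sides are $\mathrm{G}^F$-invariant: the left because $\widetilde{\mathrm{IC}}_A$ is $\mathbf{G}_{q^2}$-equivariant and $\phi_+$ is compatible with the equivariant structure. Every point of $\sF^{\mathfrak c}_\square\times\sF^{\mathfrak c}_\square$ lies in some $\mathcal{O}_B$. So it suffices to evaluate $\chi$ at a single point $x_0\in\mathcal{O}_B$. There we should obtain $v^{-d(B)+r(B)}P_{B,A}|_{v=-q^{1/2}}$ for $B\le A$, and $0$ otherwise, since $\{A\}=\sum_{B\le A}P_{B,A}v^{-d(B)+r(B)}e_B$. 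If $B\not\le A$, then $\mathcal{O}_B$ misses the closure of $\mathcal{O}_A$, the stalk vanishes, and the value is $0$.

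For $B\le A$, I first unwind the normalized shift. Since $\langle m\rangle=[m](m/2)$, the degree-$i$ stalk of $\widetilde{\mathrm{IC}}_A\langle m\rangle$ is $\mathsf{H}^{i+m}(\widetilde{\mathrm{IC}}_A)_{x_0}(m/2)$, where $m=d(A)-r(A)$. The twist $(m/2)$ multiplies Frobenius eigenvalues by $q^{-m/2}$, so
\[
\chi(x_0)=(-1)^m q^{-m/2}\sum_i (-1)^{i}\mathrm{tr}\bigl(\phi_{+,x_0};\mathsf{H}^{i}(\widetilde{\mathrm{IC}}_A)_{x_0}\bigr).
\]
The key identification is that the positive descent $\phi_+$ agrees with the canonical Weil structure $\vartheta$ attached to $\mathrm{IC}_A$ (the IC complex of the $F$-form $\boldsymbol{\mathcal{O}}_A$). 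Indeed, $\vartheta$ restricts to the identity on the constant sheaf in degree $0$ over $\mathcal{O}_A$, so its trace there is $1$. By Proposition~\ref{prop:main}(2) the descent is determined up to sign, and the normalization $\mathrm{tr}(\phi_+,\widetilde{\mathrm{IC}}_A|_{x_0})=1$ at $x_0\in\mathcal{O}_A$ fixes the sign. Hence $\phi_{+,x_0}=\vartheta_{x_0}$, and the sum is exactly the trace appearing in \eqref{PA}.

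Then I plug in \eqref{PA}. At $v=-q^{1/2}$, the factor $q^{-i/2}(-v)^i$ becomes $1$, so
\[
P_{B,A}|_{v=-q^{1/2}}=(-q^{1/2})^{-d(A)+d(B)}\sum_i(-1)^i\mathrm{tr}(\vartheta_{x_0};\mathsf{H}^i(\widetilde{\mathrm{IC}}_A)_{x_0}).
\]
Comparing with the expression for $\chi(x_0)$ reduces the claim to the exponent identity
\[
(-1)^m q^{-m/2}=(-q^{1/2})^{-d(B)+r(B)}(-q^{1/2})^{d(A)-d(B)}\cdot(\text{correction}).
\]
This is where I expect the main obstacle. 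The powers of $-q^{1/2}$ on the two sides differ by $d(A)-r(A)$ versus $-d(B)+r(B)+d(A)-d(B)$, and these do not obviously match. I suspect the intended convention is that the function attached to the orbit-supported piece is normalized so that $e_B$ contributes $(-q^{1/2})^{\ldots}$ exactly as in $[B]$. One point to use here is that $r(A)=r(B)$ whenever $e_B$ and $e_A$ lie in the same idempotent block, since $\mathrm{ro}(A)=\mathrm{ro}(B)$.

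I would carefully recheck the sign conventions for $\langle m\rangle$ against the paper's convention $v[M]=[M\langle-1\rangle]$. My expectation is that, with $v\mapsto -q^{1/2}$, the shift $\langle m\rangle$ contributes $(-q^{1/2})^{-m}$, and then the identity reduces to $-d(A)+r(A)+(d(A)-d(B))=-d(B)+r(B)$. This holds using $r(A)=r(B)$. The remaining work is bookkeeping of the half-twists, the parity of stalk degrees (Proposition~\ref{prop:ICC-stalk} guarantees pointwise even stalks, so only signs from the shift appear), and verifying this $r(A)=r(B)$ step.
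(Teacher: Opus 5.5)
Your argument is correct and is essentially the paper's proof. You evaluate at $x_0\in\mathcal{O}_B\subset\overline{\mathcal{O}_A}$, pull the normalized shift out as $(-q^{1/2})^{-d(A)+r(A)}$, specialize \eqref{PA} at $v=-q^{1/2}$, and match exponents using $r(A)=r(B)$. That equality holds because $\overline{\mathcal{O}_A}$ stays inside a single component $\pmb{\sF}_\gamma\times\pmb{\sF}_\delta$, so $\mathrm{ro}(B)=\mathrm{ro}(A)$; your explicit check that $\phi_+=\vartheta$ also fills in a point the paper uses silently.

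The ``main obstacle'' you flag is only a sign slip in your intermediate display. The factor there should be $(-q^{1/2})^{-d(A)+d(B)}$, not $(-q^{1/2})^{d(A)-d(B)}$, so no ``correction'' term is needed: both sides equal $(-q^{1/2})^{-d(A)+r(A)}=(-q^{1/2})^{-d(A)+r(B)}$ times the same alternating trace.
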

\begin{proof}
We need to check the value at the points in $\overline{\mathcal{O}_A}$. Suppose $x_0\in\mathcal{O}_{A'}\subset\overline{\mathcal{O}_A}$, we have $r(A)=r(A')$.
    One has \begin{align*}
        \chi_{\widetilde{\mathrm{IC}}^+_A\langle d(A)-r(A)\rangle}&=\sum\limits_{i\in\mathbb{Z}}(-1)^i\mathrm{tr}\big(\varphi_{x_0};\mathsf{H}^i(\widetilde{\mathrm{IC}}^+_A\langle d(A)-r(A)\rangle)_{x_0}\big)\\&=\sum\limits_{i\in\mathbb{Z}}(-1)^i\mathrm{tr}\big(\varphi_{x_0};\mathsf{H}^i(\widetilde{\mathrm{IC}}^+_A)_{x_0}\big)(-q^{1/2})^{-d(A)+r(A)}.
    \end{align*}
    On the other hand, since $[A'](x_0)|_{v=-q^{1/2}}=(-q^{1/2})^{-d(A')+r(A')}=(-q^{1/2})^{-d(A')+r(A)}$, by \eqref{PA}, we have \begin{align*}
        \{A\}(x_0)|_{v=-q^{1/2}}&=P_{A',A}|_{v=-q^{1/2}}\cdot [A'](x_0)|_{v=-q^{1/2}}\\&=\sum_{i\in\mathbb{Z}}(-1)^i\mathrm{tr}\big(\varphi_{x_0};\mathsf{H}^i(\widetilde{\mathrm{IC}}^+_A)_{x_0}\big)(-q^{1/2})^{-d(A)+r(A)}.
    \end{align*}
   Therefore, $\chi_{\widetilde{\mathrm{IC}}^+_A\langle d(A)-r(A)\rangle}(x_0)=\{A\}(x_0)|_{v=-q^{1/2}}$, and the lemma follows.
\end{proof}
\begin{prop}\label{Sf}
    The map $$\chi:K_0^{\oplus}(\mathrm{Semis}^{\mathfrak{c},W}_\square)\to\mathbf{S}^{\mathfrak{c},\square}_{n,d}|_{v=-q^{1/2}},\quad [(\mathcal{F},\varphi)]\mapsto \chi_{(\mathcal{F},\varphi)}$$ is a surjective algebra homomorphism.
\end{prop}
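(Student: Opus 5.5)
The plan is to establish four things about $\chi$ in turn: that it is well defined, that its values lie in $\mathbf{S}^{\mathfrak{c},\square}_{n,d}|_{v=-q^{1/2}}$, that it is multiplicative, and that it is surjective.

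\emph{Well-definedness and additivity.} Isomorphic descent data give conjugate Frobenius actions on stalks, so their traces coincide. Direct sums of descent data give additive traces. Hence $\chi$ factors through $K_0^{\oplus}(\mathrm{Semis}^{\mathfrak{c},W}_\square)$ as a group homomorphism.

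\emph{Values in the Schur algebra.} The datum $\varphi$ is $\mathbf{G}_{q^2}$-equivariant and compatible with $F$. Therefore $\chi_{(\mathcal F,\varphi)}$ is invariant under $\mathrm{G}^F=\mathbf{G}(\mathbb F_q)$, and by Proposition~\ref{prop:rational-orbits} it is constant on each $\mathcal O_A$, $A\in\Theta^{\mathfrak c}_\square$. It remains to see that each value is a polynomial in $q^{\pm1/2}$ of the right shape. By Proposition~\ref{prop:main}(4), $[(\mathcal F,\varphi)]$ is a $\mathbb Z[v^{\pm1}]$-combination, via normalized shifts, of the classes $[\widetilde{\mathrm{IC}}^{\pm}_A]$ and $[P_{\{B,B^\vartheta\}}]$. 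Lemma~\ref{A} gives $\chi(\widetilde{\mathrm{IC}}^+_A\langle d(A)-r(A)\rangle)=\{A\}|_{v=-q^{1/2}}$, and $\chi(\widetilde{\mathrm{IC}}^-_A)=-\chi(\widetilde{\mathrm{IC}}^+_A)$. For $P_{\{B,B^\vartheta\}}$, I expect $\chi=0$: a rational point $x_0$ has $F(x_0)=x_0$, and the swap descent exchanges the stalks of the two summands, so it acts by an off-diagonal block matrix with trace zero. This needs a little care when $x_0$ lies in the closure of both orbits, but the off-diagonal form of $\Phi_{\mathrm{orb}}$ persists stalkwise. A shift $\langle1\rangle$ multiplies the trace by $-q^{-1/2}\cdot(-1)$ conventions, matching $v\mapsto -q^{1/2}$. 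Thus $\chi$ lands in the specialized algebra, and surjectivity follows at once because the canonical basis $\{A\}$ spans $\mathbf S^{\mathfrak c,\square}_{n,d}$ (\S\ref{subsec:canonical-basis}).

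\emph{Multiplicativity.} This is the main step: we need $\chi_{(\mathcal F,\varphi)\diamond(\mathcal F',\varphi')}=\chi_{(\mathcal F,\varphi)}*\chi_{(\mathcal F',\varphi')}$. I would apply Grothendieck's trace formula to the $F$-descended data, which are Weil complexes over $\mathbb F_q$ on the $\mathbb F_q$-form $\pmb{\sF}^{\mathfrak c}_\square$. Pullback $R\widetilde p_{12}^*$ and tensor product are compatible with the Frobenius traces, giving the pointwise product. For the proper pushforward $R\widetilde p_{13*}$, the Lefschetz trace formula turns the trace at $(x,z)$ into $\sum_{y\in\sF^{\mathfrak c}_\square}\chi(x,y)\chi'(y,z)$. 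The key points are that the twisted Frobenius $F$ on $\widetilde{\pmb{\sF}}$ is the $q$-Frobenius of the descended $\mathbb F_q$-variety, whose rational points are $\sF^{\mathfrak c}_\square$, and that $\varphi\star\varphi'$ is exactly the Weil structure obtained from the descended convolution $\odot$. The latter identification, $\varphi\star\varphi'$ versus the descended pushforward, is the main obstacle. I would handle it using the descent equivalence between $F$-descent data and Weil complexes for $F$ together with base-change compatibility, possibly citing Appendix~\ref{app:mixed-convolution} for the convolution models.
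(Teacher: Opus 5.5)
Your proposal is correct and follows essentially the same route as the paper. Both arguments get $\mathrm{G}^F$-invariance from equivariance, and both get multiplicativity by proper base change to the fiber $\{\mathfrak f\}\times\widetilde{\pmb{\sF}}^{\mathfrak c}_\square\times\{\mathfrak f''\}$, followed by the Grothendieck--Lefschetz trace formula for the twisted Frobenius structure. Surjectivity comes from Lemma~\ref{A} in both. Your extra checks, well-definedness on $K_0^{\oplus}$ and the vanishing of $\chi$ on the exchange objects $P_{\{B,B^\vartheta\}}$, are sound details the paper leaves implicit.

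One bookkeeping fix: the shift $\langle 1\rangle=[1](1/2)$ multiplies the trace function by $-q^{-1/2}=(-q^{1/2})^{-1}$, i.e.\ by $v^{-1}$ at $v=-q^{1/2}$. This is exactly what is compatible with $v[M]=[M\langle-1\rangle]$, so your phrase ``$-q^{-1/2}\cdot(-1)$'' should simply read $-q^{-1/2}$.
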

\begin{proof}
For $(\mathcal{F},\varphi)\in\mathrm{Semis}^{\mathfrak{c},W}_\square$, since $\mathcal{F}$ is $\mathbf{G}_{q^2}$-equivariant, the image $\chi_{(\mathcal{F},\varphi)}$ is a $\mathbf{G}(\mathbb{F}_q)$-equivariant function on $\sF_\square^\mathfrak{c}\times\sF_\square^\mathfrak{c}$. 
    For $x_0=(\mathfrak{f},\mathfrak{f}'')\in\sF_\square^\mathfrak{c}\times\sF_\square^\mathfrak{c}$, we fix an isomorphism $\widetilde{\pmb{\sF}}_\square^\mathfrak{c}\simeq\{\mathfrak{f}\}\times\widetilde{\pmb{\sF}}_\square^\mathfrak{c}\times\{\mathfrak{f}''\}$. By proper base change, Grothendieck-Lefschetz trace formula and the functoriality of sheaf-function correspondence, we compute 
    \begin{align*}
&\chi_{(\mathcal{F},\varphi)\diamond(\mathcal{F}',\varphi')}(\mathfrak{f},\mathfrak{f}'')=\sum\limits_{i\in\mathbb{Z}}(-1)^i\mathrm{tr}\big(\varphi\star\varphi'|_{x_0};\mathsf{H}^i(\mathcal{F}\star\mathcal{F}')|_{x_0}\big)\\=&\sum\limits_{i\in\mathbb{Z}}(-1)^i\mathrm{tr}\bigg(\mathrm{R}\Gamma\big(\widetilde{p}_{12}^*(\varphi)|_{\widetilde{\pmb{\sF}}_\square^\mathfrak{c}}\otimes^\mathbb{L} \widetilde{p}_{23}^*(\varphi')|_{\widetilde{\pmb{\sF}}_\square^\mathfrak{c}}\big);H^i_{{\mathrm{\acute{e}t}}}\big(\widetilde{p}_{12}^*(\mathcal{F})|_{\widetilde{\pmb{\sF}}_\square^\mathfrak{c}}\otimes^\mathbb{L} \widetilde{p}_{23}^*(\mathcal{F}')|_{\widetilde{\pmb{\sF}}_\square^\mathfrak{c}}\big)\bigg)\\=&\sum\limits_{\mathfrak{f}'\in{\sF}_\square^\mathfrak{c}}\chi_{(\widetilde{p}^*_{12}(\mathcal{F})|_{\widetilde{\pmb{\sF}}_\square^\mathfrak{c}}\otimes^\mathbb{L} \widetilde{p}_{23}^*(\mathcal{F}')|_{\widetilde{\pmb{\sF}}_\square^\mathfrak{c}},\widetilde{p}_{12}^*(\varphi)|_{\widetilde{\pmb{\sF}}_\square^\mathfrak{c}}\otimes^\mathbb{L} \widetilde{p}_{23}^*(\varphi')|_{\widetilde{\pmb{\sF}}_\square^\mathfrak{c}})}(\mathfrak{f},\mathfrak{f}',\mathfrak{f}'')\\=&\sum\limits_{\mathfrak{f}'\in{\sF}^\mathfrak{c}_\square}\chi_{(\widetilde{p}_{12}^*(\mathcal{F})|_{\widetilde{\pmb{\sF}}_\square^\mathfrak{c}},\widetilde{p}_{12}^*(\varphi)|_{\widetilde{\pmb{\sF}}_\square^\mathfrak{c}})}(\mathfrak{f},\mathfrak{f}',\mathfrak{f}'')\cdot\chi_{(\widetilde{p}_{23}^*(\mathcal{F}')|_{\widetilde{\pmb{\sF}}_\square^\mathfrak{c}},\widetilde{p}_{23}^*(\varphi')|_{\widetilde{\pmb{\sF}}_\square^\mathfrak{c}})}(\mathfrak{f},\mathfrak{f}',\mathfrak{f}'')\\=&\sum\limits_{\mathfrak{f}'\in{\sF}^\mathfrak{c}_\square}\chi_{(\mathcal{F},\varphi)}(\mathfrak{f},\mathfrak{f}')\cdot\chi_{(\mathcal{F}',\varphi')}(\mathfrak{f}',\mathfrak{f}'')=\big(\chi_{(\mathcal{F},\varphi)}*\chi_{(\mathcal{F}',\varphi')}\big)(\mathfrak{f},\mathfrak{f}'')=\big(\chi_{(\mathcal{F},\varphi)}*\chi_{(\mathcal{F}',\varphi')}\big)(x_0).
    \end{align*}
    Hence, the map $\chi: K_0^{\oplus}(\mathrm{Semis}^{\mathfrak{c},W}_\square)\to\mathbf{S}^{\mathfrak{c},\square}_{n,d}|_{v=-q^{1/2}}$ is an algebra homomorphism.
    Moreover, by a shift version of Lemma~\ref{A}, we have $\chi_{\widetilde{\mathrm{IC}}^+_A}=v^{d(A)-r(A)}\{A\}|_{v=-q^{1/2}}$ for $A\in\Theta_\square^\mathfrak{c}$, which implies the surjectivity of $\chi$. The proof is complete.
\end{proof}

For $s\geq1$, put
$$
v_s=-(q^{1/2})^s.
$$
For $M\in\mathrm{Semis}^{\mathfrak{c},W}_\square$, write $M_{\mathrm{sp}}$
for the corresponding complex with its canonical split $F_q$-Weil
structure. Let $M_s$ denote the analogue of $M$ with respect to the
Frobenius $F_q^{\,s}\circ\tau$. This notation extends to direct sums,
normalized shifts and convolutions. Write $\chi_s$ for the
corresponding Frobenius trace map.

\begin{thm}\label{cat-main}
    There is a unique surjective algebra homomorphism, called the characteristic homomorphism,
    $$\mathrm{ch}:K_0^{\oplus}(\mathrm{Semis}^{\mathfrak{c},W}_\square)\to\mathbf{S}^{\mathfrak{c},\square}_{n,d}$$ such that
    $$\mathrm{ch}([\widetilde{\mathrm{IC}}^+_{A}\langle n_A\rangle])=-\mathrm{ch}([\widetilde{\mathrm{IC}}^-_{A}\langle n_A\rangle])=v^{-n_A}\mathrm{ch}([\widetilde{\mathrm{IC}}^+_{A}])=v^{d(A)-r(A)-n_A}\{A\}$$
    $$\text{and}\quad \mathrm{ch}([P_{\{B,B^\vartheta\}}\langle n\rangle])=0,$$
    where $A\in\Theta_\square^\mathfrak{c}$, $B\in\widetilde{\Theta}_\square^\mathfrak{c}\backslash\Theta_\square^\mathfrak{c}$ and $n_A,n\in\mathbb Z$.
    Moreover, the kernel is generated by $[\widetilde{\mathrm{IC}}^+_A\oplus\widetilde{\mathrm{IC}}^-_A]$ and $[P_{\{B,B^\vartheta\}}]$ up to some normalized shifts.
\end{thm}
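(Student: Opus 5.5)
The plan is to pass from the specialized trace maps $\chi_s$ to a generic statement over $\mathcal A$. By Proposition~\ref{prop:main} and the corollary following Proposition~\ref{conv}, $K_0^{\oplus}(\mathrm{Semis}^{\mathfrak{c},W}_\square)$ is a free $\mathbb{Z}$-module with basis the classes $[\widetilde{\mathrm{IC}}^{\pm}_A\langle m\rangle]$ and $[P_{\{B,B^\vartheta\}}\langle m\rangle]$. We make it an $\mathcal A$-module by setting $v[M]=[M\langle -1\rangle]$. The map $\mathrm{ch}$ is then forced: we \emph{define} it as the $\mathbb{Z}$-linear map prescribed on this basis. This gives uniqueness immediately. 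It also gives $\mathcal A$-linearity, because $\langle -1\rangle$ multiplies the image by $v$. Surjectivity is clear as well, since the canonical basis $\{A\}$ of \S\ref{subsec:canonical-basis} is hit by $v^{-d(A)+r(A)}$-multiples of the $[\widetilde{\mathrm{IC}}_A^+]$.

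The key step is multiplicativity. We compare $\mathrm{ch}$ with $\chi_s$ for all $s\ge1$. Running the arguments of Lemma~\ref{A} and Proposition~\ref{Sf} with $F$ replaced by $F_q^{\,s}\circ\tau$ gives the following values at $v=v_s$:
\begin{itemize}
\item $\chi_s(\widetilde{\mathrm{IC}}^{\pm}_A\langle n\rangle)=\pm v_s^{d(A)-r(A)-n}\{A\}|_{v=v_s}$;
\item $\chi_s$ is an algebra homomorphism to $\mathbf{S}^{\mathfrak c,\square}_{n,d}|_{v=v_s}$.
\end{itemize}
Two points need checking here. First, the KL polynomials $P_{B,A}$ in \eqref{P_A,B} are independent of $s$, because the Frobenius $F_q^s\tau$ acts on stalk cohomology by $q^{si/2}$ times the action of $\tau$. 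Second, $\chi_s(P_{\{B,B^\vartheta\}})=0$: the swap descent has zero trace on every stalk at an $F$-fixed point. Indeed, such a point lies in neither $\widetilde{\mathcal O}_B$ nor $\widetilde{\mathcal O}_{B^\vartheta}$ alone, and on stalks the swap matrix is off-diagonal.

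With these in hand, we take two basis objects $M,M'$ and expand $M\diamond M'$ in the basis using Proposition~\ref{prop:main}(4). This gives $\mathrm{ch}(M\diamond M')$ and $\mathrm{ch}(M)\mathrm{ch}(M')$ as two elements of $\mathbf S^{\mathfrak c,\square}_{n,d}$, with coefficients in $\mathcal A$ with respect to the basis $\{A\}$. Their specializations at $v=v_s$ agree for every $s$, since both equal $\chi_s(M\diamond M')=\chi_s(M)\chi_s(M')$. The multiplicities appearing in the decomposition of $M\diamond M'$ are fixed: they do not depend on $s$, because the object itself is unchanged and only the Weil structure varies. Infinitely many values $v_s$ therefore determine each Laurent coefficient, and $\mathrm{ch}$ is multiplicative.

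The main obstacle is making precise that the ``same'' descended object $M$ gives rise to $M_s$ with the analogous descent, compatibly with convolution. We plan to handle this through the notation $M_s$ introduced before the theorem: build $M_s$ from $M_{\mathrm{sp}}$ by twisting with the diagram involution, which acts on multiplicity spaces exactly as $\varphi$ does. Since convolution commutes with $F_q^s$ and with $\tau$ (as in Corollary~\ref{closed conv}), we get $(M\diamond M')_s\simeq M_s\diamond M'_s$, and the sign of each $\widetilde{\mathrm{IC}}^\pm_A$ summand is preserved.

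For the kernel, we use that $\{A\}$ is an $\mathcal A$-basis. The images of $[\widetilde{\mathrm{IC}}^+_A\langle m\rangle]$ are then independent over $\mathbb{Z}$. The kernel is therefore spanned by $[\widetilde{\mathrm{IC}}^+_A\oplus\widetilde{\mathrm{IC}}^-_A]$ and $[P_{\{B,B^\vartheta\}}]$, together with their shifts.
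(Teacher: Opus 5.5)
Your proposal follows the paper's proof essentially step for step. You define $\mathrm{ch}$ on the basis $[E_A^\pm]$, $[P_\omega]$ (which gives uniqueness, $\mathcal A$-linearity and surjectivity), prove multiplicativity by comparing with the trace maps $\chi_s$ for $F_q^{\,s}\circ\tau$ at infinitely many $v_s$, and read off the kernel by comparing coefficients in the canonical basis. The one point to state explicitly, rather than saying ``only the Weil structure varies'', is that $F_q$ acts trivially on the normalized multiplicity spaces $V_{C,n}$ of $M_{\mathrm{sp}}\star N_{\mathrm{sp}}$; this is the split Tate decomposition behind Lemma~\ref{prop:semis-convolution}, and it is what makes $F_q^{\,s}\circ\tau$ act on these spaces by the same involution $T_{C,n}$ for every $s$, so that the signed multiplicities $a^+_{C,n}-a^-_{C,n}$ are independent of $s$.
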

\begin{proof}
Put $E_A^\pm=\widetilde{\mathrm{IC}}_A^\pm\langle d(A)-r(A)\rangle$. With $v[L]=[L\langle-1\rangle]$, Proposition~\ref{prop:main} gives an $\mathcal{A}$-basis of $K_0^\oplus(\mathrm{Semis}^{\mathfrak{c},W}_\square)$ (regarded as an $\mathcal A$-algebra) consisting of
$$
[E_A^\pm]\quad\text{and}\quad [P_\omega],
$$
where $A$ ranges over $\Theta_\square^\mathfrak{c}$ and $\omega=\{B,B^{\vartheta}\}$ ranges over the two-element $F$-orbits. Define the surjective $\mathcal{A}$-linear map
$$
\mathrm{ch}:K_0^\oplus(\mathrm{Semis}^{\mathfrak{c},W}_\square)\to\mathbf{S}_{n,d}^{\mathfrak{c},\square},\qquad
[E_A^\pm]\mapsto\pm\{A\},\quad
[P_\omega]\mapsto0.
$$
This gives the stated formulas on normalized shifts.

To prove multiplicativity, take $M,N$ of the form $E_A^\pm$ or $P_\omega$.
By Propositions~\ref{prop:main} \& \ref{conv}, write
$$
M\diamond N\simeq
\bigoplus_{C,n,\varepsilon=\pm}
\big(E_C^\varepsilon\langle n\rangle\big)^{\oplus a^\varepsilon_{C,n}}
{}\oplus\bigoplus_{\omega,n}
\big(P_\omega\langle n\rangle\big)^{\oplus b_{\omega,n}}.
$$
The split convolution decomposes as
\begin{equation*}
M_{\mathrm{sp}}\star N_{\mathrm{sp}}\simeq
\bigoplus_{C,n}(E_C^+)_{\mathrm{sp}}\langle n\rangle\otimes V_{C,n}
{}\oplus\bigoplus_{\omega,n}
\big((P_\omega)_{\mathrm{sp}}\langle n\rangle\big)^{\oplus b_{\omega,n}}.
\end{equation*}
In this decomposition, denote the induced action of $\tau$ on $V_{C,n}$ by $T_{C,n}$, while the induced action $\Phi_{C,n}$ of $F_q$ on $V_{C,n}$ is trivial. Keeping the $\tau$-structures
fixed as $s$ varies, the twisted Frobenius
$F_q^{\,s}\tau$ induces $\Phi_{C,n}^sT_{C,n}=T_{C,n}$ on $V_{C,n}$.
Thus, after factoring out $v_s^{-n}$, its coefficient under $\chi_s$ is
$\operatorname{tr}(T_{C,n};V_{C,n})=a^+_{C,n}-a^-_{C,n}$, independent of $s$.
Exchange summands have zero trace.
Putting $a_C(v)=\sum_n\operatorname{tr}(T_{C,n};V_{C,n})v^{-n}$,
Lemma~\ref{A} with $q$ replaced by $q^s$ and Proposition~\ref{Sf} give
\begin{align*}
\mathrm{ch}([M\diamond N])|_{v=v_s}
&=\sum_C a_C(v_s)\{C\}|_{v=v_s}
=\chi_s(M_s\diamond N_s)\\
&=\chi_s(M_s)*\chi_s(N_s)
=\bigl(\mathrm{ch}([M])\mathrm{ch}([N])\bigr)|_{v=v_s}.
\end{align*}
Write
$$
\mathrm{ch}([M\diamond N])-\mathrm{ch}([M])\mathrm{ch}([N])
=\sum_C h_C(v)\{C\}, \quad \text{with}\quad h_C(v)\in\mathcal A.
$$
The specialized canonical basis is
unitriangular over the standard basis, so $h_C(v_s)=0$ for every
$C$ and $s$. Infinitely many distinct $v_s$ force $h_C=0$
(cf.~\cite[Lemma~4.6]{LV14}). Multiplicativity follows by
$\mathcal A$-bilinearity.

Finally, comparing coefficients in the canonical basis gives
$$
\ker(\mathrm{ch})
=\sum_A\mathcal{A}([E_A^+]+[E_A^-])
+\sum_\omega\mathcal{A}[P_\omega],
$$
as asserted (cf.~\cite[\S2.3]{LV14}). Uniqueness follows since the
stated formulas prescribe the images of all indecomposable classes.
\end{proof}

\subsection{Refined covering and positivity}
\label{subsec:covering-refinement}

To keep track of the signs of descent data, we introduce a parameter
$\varpi$ with $\varpi^2=1$ and put
$$\mathcal A^\varpi=\mathcal A[\varpi]/(\varpi^2-1).$$
The sign twist
\[
\Pi(M,\varphi)=(M,-\varphi)
\]
is an additive autoequivalence commuting with shifts, and
$\Pi^2=\mathrm{id}$. Since the descent datum of a convolution is
induced by those of its factors, we have
\[
\Pi(M\diamond N)
\simeq(\Pi M)\diamond N
\simeq M\diamond(\Pi N).
\]
Thus $K_0^\oplus(\mathrm{Semis}^{\mathfrak c,W}_\square)$ is an $\mathcal A^\varpi$-algebra via
\[
v[M]=[M\langle-1\rangle],
\qquad
\varpi[M]=[\Pi M].
\]

For $A\in\Theta^\mathfrak c_\square$, set
\[
E_A^\pm
=\widetilde{\mathrm{IC}}_A^\pm
 \langle d(A)-r(A)\rangle,
\]
The sign twist exchanges $E_A^+$ and $E_A^-$ and fixes the
isomorphism class of $P_\omega$.
Hence
\[
[E_A^-]=\varpi [E_A^+],
\qquad
\varpi [P_\omega]=[P_\omega].
\]

With these notions, we can rewrite some results on the generating set of $K_0^\oplus(\mathrm{Semis}^{\mathfrak c,W}_\square)$ (cf. Proposition~\ref{prop:main}) and the characteristic homomorphism $\mathrm{ch}$ (cf. Theorem~\ref{cat-main}) as follows.
\begin{lem}
As an $\mathcal A^\varpi$-module,
\begin{equation}\label{eq:decompofK}
K_0^\oplus(\mathrm{Semis}^{\mathfrak c,W}_\square)\simeq
\bigoplus_{A\in\Theta^\mathfrak c_\square}
\mathcal A^\varpi [E_A^+]
\;\oplus\;
\bigoplus_\omega
\bigl(\mathcal A^\varpi/(\varpi-1)\bigr)[P_\omega].
\end{equation}
Moreover, the map $\mathrm{ch}$ induces an isomorphism of
$\mathcal A$-algebras
$$
K_0^\oplus(\mathrm{Semis}^{\mathfrak c,W}_\square)\Big/\mathrm{Ker}(\mathrm{ch})
\xrightarrow{\ \sim\ }
\mathbf S_{n,d}^{\mathfrak c,\square},
\qquad
[E_A^+]\longmapsto\{A\},
$$
with $\mathrm{Ker}(\mathrm{ch})=(\varpi+1)K_0^\oplus(\mathrm{Semis}^{\mathfrak c,W}_\square)+
      \sum_\omega\mathcal A [P_\omega]$.
\end{lem}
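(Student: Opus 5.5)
The plan is to deduce both assertions from the classification in Proposition~\ref{prop:main} and the description of $\mathrm{ch}$ in Theorem~\ref{cat-main}, rewritten in the $\mathcal A^\varpi$-language.

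\emph{Module decomposition.} By Proposition~\ref{prop:main}(4), $K_0^\oplus(\mathrm{Semis}^{\mathfrak c,W}_\square)$ is a free $\mathbb Z$-module with basis the classes of $E_A^\pm\langle n\rangle$ and $P_\omega\langle n\rangle$, for $A\in\Theta^\mathfrak c_\square$, two-element $F$-orbits $\omega$ and $n\in\mathbb Z$. Uniqueness of multiplicities at each normalized shift gives linear independence. The $v$-action shifts $n$, so as an $\mathcal A$-module the group is free on $[E_A^+]$, $[E_A^-]$ and $[P_\omega]$.

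Next I bring in $\varpi$. Since $\Pi$ exchanges $E_A^+$ and $E_A^-$, we have $[E_A^-]=\varpi[E_A^+]$. Hence $\mathcal A[E_A^+]\oplus\mathcal A[E_A^-]$ is identified with $\mathcal A^\varpi[E_A^+]$, which is free over $\mathcal A^\varpi$ of rank one, because $\mathcal A^\varpi$ is $\mathcal A$-free on $\{1,\varpi\}$. For the exchange objects, I need to check that $\Pi P_\omega\simeq P_\omega$. This holds by the uniqueness in Proposition~\ref{prop:main}(3): $(\widetilde{\mathrm{IC}}_B\oplus\widetilde{\mathrm{IC}}_{B^\vartheta},-\Phi_{\mathrm{orb}})$ is again a descent of the same object, so it is isomorphic to $P_\omega$. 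Alternatively, conjugating by $\mathrm{diag}(1,-1)$ gives the isomorphism directly. So $\varpi$ acts trivially on $\mathcal A[P_\omega]$, which is therefore $\mathcal A^\varpi/(\varpi-1)$. Together these give \eqref{eq:decompofK}.

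\emph{Quotient isomorphism.} By Theorem~\ref{cat-main}, $\mathrm{ch}$ is a surjective $\mathcal A$-algebra homomorphism with $[E_A^+]\mapsto\{A\}$, $[E_A^-]\mapsto-\{A\}$ and $[P_\omega]\mapsto0$. It therefore induces an $\mathcal A$-algebra isomorphism from the quotient by its kernel onto $\mathbf S^{\mathfrak c,\square}_{n,d}$.

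It remains to describe the kernel. Theorem~\ref{cat-main} gives
\[
\mathrm{Ker}(\mathrm{ch})=\sum_A\mathcal A([E_A^+]+[E_A^-])+\sum_\omega\mathcal A[P_\omega].
\]
Since $[E_A^+]+[E_A^-]=(1+\varpi)[E_A^+]$, the first sum equals $(\varpi+1)\bigoplus_A\mathcal A^\varpi[E_A^+]$. Moreover, $(\varpi+1)[P_\omega]=2[P_\omega]$ already lies in $\sum_\omega\mathcal A[P_\omega]$. Hence
\[
(\varpi+1)K_0^\oplus+\sum_\omega\mathcal A[P_\omega]=\mathrm{Ker}(\mathrm{ch}).
\]
The inclusion $\supseteq$ is immediate from this computation. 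For $\subseteq$, I write an element of the kernel in the basis above and compare coefficients against the canonical basis $\{A\}$, which is linearly independent. This forces the coefficients of $[E_A^+]$ and $[E_A^-]$ to agree.

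\emph{Main obstacle.} The only delicate point is justifying that $\varpi[P_\omega]=[P_\omega]$ is compatible with the $\mathcal A^\varpi$-module structure. This rests on the uniqueness of the exchange descent, and I would cite Proposition~\ref{prop:main}(3) for it.
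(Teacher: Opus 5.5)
Your proof is correct and follows the paper's route. The paper gives no separate argument: the lemma is Proposition~\ref{prop:main}(4) and Theorem~\ref{cat-main} rewritten via the sign twist $\Pi$, using $[E_A^-]=\varpi[E_A^+]$, $\varpi[P_\omega]=[P_\omega]$ and $(\varpi+1)(a+b\varpi)=(a+b)(\varpi+1)$ to turn $\sum_A\mathcal A([E_A^+]+[E_A^-])$ into $(\varpi+1)\bigoplus_A\mathcal A^\varpi[E_A^+]$. Your explicit check that $\Pi P_\omega\simeq P_\omega$, by conjugating with $\mathrm{diag}(1,-1)$, fills in a step the paper only asserts. Your labels for the two kernel inclusions are swapped, but both directions are covered.
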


The generating set $\{[E_A^+]\}_A\cup\{[P_\omega]\}_\omega$ of $K_0^\oplus(\mathrm{Semis}^{\mathfrak c,W}_\square)$ admits a positivity property in the following sense.
\begin{thm}[Positivity]\label{thm:positivity}
For any $x,y\in\{[E_A^+]\}_A\cup\{[P_\omega]\}_\omega$, the product has the form
\[
xy=\sum_A h^\varpi_{x,y;A}(v)\,[E_A^+]
   +\sum_\omega r_{x,y;\omega}(v)\,[P_\omega],
\]
where $h^\varpi_{x,y;C}(v)
\in\mathbb N[v,v^{-1}]+\varpi\,\mathbb N[v,v^{-1}]$ and $r_{x,y;\omega}(v)\in\mathbb N[v,v^{-1}]$.

Consequently, for $A,B\in\Theta^\mathfrak c_\square$,
\[
\{A\}\{B\}
=\sum_C h^\varpi_{[E_A^+],[E_B^+];C}(v)\big|_{\varpi=-1}\,\{C\}.
\]
\end{thm}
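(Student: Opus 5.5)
The plan is to read off the coefficients of $xy$ directly from the decomposition of the convolution $M\diamond N$ given by Proposition~\ref{prop:main}, and then pass to the quotient by $\mathrm{Ker}(\mathrm{ch})$.

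Take $x=[M]$ and $y=[N]$ with $M,N\in\{E_A^+\}\cup\{P_\omega\}$. By Proposition~\ref{conv}, $M\diamond N$ is an object of $\mathrm{Semis}^{\mathfrak c,W}_\square$, so Proposition~\ref{prop:main}(4) gives a decomposition
\[
M\diamond N\simeq\bigoplus_{C,n}\Bigl((E_C^+\langle n\rangle)^{\oplus a^+_{C,n}}\oplus(E_C^-\langle n\rangle)^{\oplus a^-_{C,n}}\Bigr)\oplus\bigoplus_{\omega,n}(P_\omega\langle n\rangle)^{\oplus b_{\omega,n}},
\]
with all multiplicities $a^\pm_{C,n},b_{\omega,n}\in\mathbb N$ uniquely determined. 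We then pass to $K_0^\oplus$. There $[E_C^-]=\varpi[E_C^+]$ and $[L\langle n\rangle]=v^{-n}[L]$, so
\[
h^\varpi_{x,y;C}=\sum_n\bigl(a^+_{C,n}+\varpi a^-_{C,n}\bigr)v^{-n}\in\mathbb N[v^{\pm1}]+\varpi\mathbb N[v^{\pm1}],
\qquad
r_{x,y;\omega}=\sum_n b_{\omega,n}v^{-n}\in\mathbb N[v^{\pm1}].
\]
These expansions are expressed in the basis of \eqref{eq:decompofK}. The coefficients are well defined because the multiplicities in Proposition~\ref{prop:main}(4) are unique at each normalized shift, and because $[P_\omega]$ absorbs $\varpi$ via $\varpi[P_\omega]=[P_\omega]$. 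For general $x,y$ in the generating set, the $\varpi$-linearity of $\diamond$ (coming from $\Pi(M\diamond N)\simeq\Pi M\diamond N$) keeps all coefficients in the stated positive cones.

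For the consequence, we apply $\mathrm{ch}$, which is an algebra homomorphism by Theorem~\ref{cat-main}. It sends $[E_A^+]\mapsto\{A\}$, $\varpi\mapsto-1$ and $[P_\omega]\mapsto0$; this reading is justified by the kernel description $(\varpi+1)K_0^\oplus+\sum_\omega\mathcal A[P_\omega]$. Therefore
\[
\{A\}\{B\}=\mathrm{ch}\bigl([E_A^+][E_B^+]\bigr)=\sum_C h^\varpi_{[E_A^+],[E_B^+];C}(v)\big|_{\varpi=-1}\{C\}.
\]

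The only delicate point is the first step: $M\diamond N$ must genuinely lie in $\mathrm{Semis}^{\mathfrak c,W}_\square$, so that its class is a nonnegative combination of indecomposables rather than a virtual one. This requires the closure result Lemma~\ref{prop:semis-convolution}, together with the descent splitting of Lemma~\ref{prop:descent-splitting}, so that the descent datum $\varphi\star\varphi'$ decomposes compatibly across summands. With those in hand, the remaining steps are bookkeeping in $K_0^\oplus$.
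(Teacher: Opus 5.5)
Your proposal is correct and is essentially the paper's proof. You decompose $M\diamond N$ via Propositions~\ref{conv} and~\ref{prop:main}(4) into normalized shifts of $E_C^{\pm}$ and $P_\omega$ with nonnegative multiplicities, read off the coefficients in $K_0^\oplus$ using $[E_C^-]=\varpi[E_C^+]$ and $[L\langle n\rangle]=v^{-n}[L]$, and then apply the algebra homomorphism $\mathrm{ch}$ of Theorem~\ref{cat-main}, which sends $\varpi\mapsto-1$ on the $E$-part and kills $[P_\omega]$; your remark about ``general $x,y$'' is superfluous, since $x,y$ are already taken from the generating set.
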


\begin{proof}
By Propositions~\ref{prop:main} and~\ref{conv}, a convolution of
objects from $\{E_A^+\}_A\cup\{P_\omega\}_\omega$ decomposes into
normalized shifts of positive, negative and exchange objects, with
nonnegative integral multiplicities. A summand $E_A^+\langle n\rangle$,
$E_A^-\langle n\rangle$ or $P_\omega\langle n\rangle$ contributes
$v^{-n}[E_A^+]$, $\varpi v^{-n}[E_A^+]$ or $v^{-n}[P_\omega]$, respectively. This proves the positivity of the coefficients of $xy$ as desired. 
Thus the structure constants of canonical basis of $\mathbf S_{n,d}^{\mathfrak c,\square}$ are obtained by
specializing $\varpi=-1$.
\end{proof}

\subsection{Bar involution}\label{subsec:bar-involution}
As an application of the above categorification, we can construct a bar involution on $\mathbf{S}^{\mathfrak{c},\square}_{n,d}$ via Verdier duality.

Take $Y_1=Y_2=Y_3=\pmb{\sF}^\mathfrak{c}_\square$. Let ${p}_{ij}:Y_{1}\times Y_2\times Y_3\to Y_i\times Y_j$ be the obvious projections.
Introduce the convolution
\begin{align*}
\odot:D^b_{\mathrm{m},\mathbf{G}}({\pmb{\sF}}^\mathfrak{c}_\square\times{\pmb{\sF}}^\mathfrak{c}_\square)\times D^b_{\mathrm{m},\mathbf{G}}({\pmb{\sF}}^\mathfrak{c}_\square\times{\pmb{\sF}}^\mathfrak{c}_\square) &\to D^b_{\mathrm{m},\mathbf{G}}({\pmb{\sF}}^\mathfrak{c}_\square\times{\pmb{\sF}}^\mathfrak{c}_\square),\\ (\mathcal{F},\mathcal{F}') &\mapsto\mathcal{F}\odot\mathcal{F}':=R{p}_{13*}\big(R{p}_{12}^*(\mathcal{F})\otimes^{\mathbb{L}} R{p}_{23}^*(\mathcal{F}')\big).
\end{align*}

To simplify the discussion on Verdier duality, we henceforth work in the descended category. By the $\mathbb F_{q^2}/\mathbb F_q$-Galois descent equivalence, we identify the category $\mathrm{Semis}^{\mathfrak{c},W}_\square$ of descent data with the corresponding full subcategory of the $\mathbf{G}$-equivariant derived category $D^b_{\mathrm{m},\mathbf{G}}({\pmb{\sF}}^\mathfrak{c}_\square\times{\pmb{\sF}}^\mathfrak{c}_\square)$ on the descended double flag variety ${\pmb{\sF}}^\mathfrak{c}_\square\times{\pmb{\sF}}^\mathfrak{c}_\square$. Since the Frobenius $F$ preserves the convolution correspondence, proper base change identifies the convolution $\diamond$ on $\mathrm{Semis}^{\mathfrak{c},W}_\square$ with the usual convolution $\odot$ defined above on the descended double flag variety.

The variety $\pmb{\sF}_\square^\mathfrak{c}$ is not connected, whose connected components are partial flag varieties of $\mathbf{G}$. Write $\pmb{\sF}_\square^\mathfrak{c}=\bigsqcup_\gamma{\pmb{\sF}}_\gamma$, a disjoint union of some partial flag varieties of $\mathbf{G}$. Write $d_\gamma$ for the dimension of $\pmb{\sF}_\gamma$.
For $\eta,\eta'\in\{\gamma,\delta,\varepsilon\}$, let $\DD_{\eta,\eta'}$ denote the usual Verdier duality on
$\pmb{\sF}_\eta\times\pmb{\sF}_{\eta'}$.
Define the dimension-normalized Verdier duality by
$$
\nDD_{\eta,\eta'}(\mathcal M)
:=
\DD_{\eta,\eta'}(\mathcal M)
\langle-2d_\eta\rangle,\qquad\text{for}\quad \mathcal M\in D^b_{\mathrm{m},\mathbf{G}}(\pmb{\sF}_\eta\times\pmb{\sF}_{\eta'}).
$$

\begin{lem}\label{Verdier}
Let $\mathcal K\in
D^b_{\mathrm{m},\mathbf G}(\pmb{\sF}_\gamma\times\pmb{\sF}_\delta)$ and $\mathcal L\in
D^b_{\mathrm m, \mathbf G}(\pmb{\sF}_\delta\times\pmb{\sF}_\varepsilon)$
be two equivariant intersection cohomology complexes (up to shifts and Tate twists) associated with
closures of diagonal $\mathrm{G}$-orbits. 
There exists a canonical isomorphism
$$
\nDD_{\gamma,\varepsilon}
\left(
\mathcal K\odot\mathcal L
\right)
\cong
\nDD_{\gamma,\delta}(\mathcal K)
\odot
\nDD_{\delta,\varepsilon}(\mathcal L).
$$
equivalently, the usual Verdier duality satisfies
$$
\DD_{\gamma,\varepsilon}
\left(
\mathcal K\odot\mathcal L
\right)
\cong
\left(
\DD_{\gamma,\delta}(\mathcal K)
\odot
\DD_{\delta,\varepsilon}(\mathcal L)
\right)
\langle-2d_\delta\rangle.
$$
\end{lem}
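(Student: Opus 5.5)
The plan is to write the convolution as a composite of three standard functors and to commute Verdier duality past each one, tracking the shifts and twists that appear. Write $Y_{\gamma\delta\varepsilon}=\pmb{\sF}_\gamma\times\pmb{\sF}_\delta\times\pmb{\sF}_\varepsilon$. Then
$$\mathcal K\odot\mathcal L=Rp_{13*}\bigl(p_{12}^*\mathcal K\otimes^{\mathbb L}p_{23}^*\mathcal L\bigr).$$

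\emph{Step 1: the pushforward.} Since $\pmb{\sF}_\delta$ is projective, $p_{13}$ is proper, so $p_{13*}=p_{13!}$. Hence $\DD\circ Rp_{13*}\cong Rp_{13*}\circ\DD$ canonically and compatibly with Weil structures.

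\emph{Step 2: the tensor product, via a diagonal.} Rewrite the tensor product as $\Delta^*(p_{12}^*\mathcal K\boxtimes p_{23}^*\mathcal L)$. Here $\Delta$ is the diagonal of $Y_{\gamma\delta\varepsilon}$, or more economically the map
$$i:Y_{\gamma\delta\varepsilon}\to(\pmb{\sF}_\gamma\times\pmb{\sF}_\delta)\times(\pmb{\sF}_\delta\times\pmb{\sF}_\varepsilon),$$
the base change of the diagonal $\pmb{\sF}_\delta\to\pmb{\sF}_\delta\times\pmb{\sF}_\delta$. With this, $p_{12}^*\mathcal K\otimes p_{23}^*\mathcal L=i^*(\mathcal K\boxtimes\mathcal L)$. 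Verdier duality commutes with external products, so $\DD(\mathcal K\boxtimes\mathcal L)\cong\DD\mathcal K\boxtimes\DD\mathcal L$.

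\emph{Step 3: commuting duality past $i^*$ (the main obstacle).} This needs $\DD i^*\cong i^!\DD$ together with a comparison $i^!\cong i^*\langle-2d_\delta\rangle$ on the relevant objects. Such a comparison is not available for arbitrary complexes. I would establish it using the $\mathbf G$-equivariance and the orbit structure, as follows.

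First use the twisted-product model: each orbit closure is $\mathbf G\times^{\mathbf P}X$ as in the proof of Lemma~\ref{even tate}. Replace $Y_{\gamma\delta\varepsilon}$ by $\mathbf G\times^{\mathbf P_\delta}(\ldots)$, or argue smooth-locally. Locally on $\pmb{\sF}_\delta$, the map $\mathcal K\boxtimes\mathcal L$ restricted to the diagonal in the $\delta$-factors becomes a transversal intersection. The reason is that $\mathcal K$ is $\mathbf G$-equivariant, hence smooth along the $\mathbf G$-orbit directions. Concretely, the map $\mathbf G\times\pmb{\sF}_\delta\to\pmb{\sF}_\delta\times\pmb{\sF}_\delta$, $(g,x)\mapsto(gx,x)$, is smooth. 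Pulling back along it identifies $i$ with the zero section of a smooth morphism, and for complexes pulled back along a smooth morphism of relative dimension $d_\delta$ one has $i^!=i^*\langle-2d_\delta\rangle$, since $i$ is a section of a smooth map. I would first check that $\mathcal K\boxtimes\mathcal L$ is locally of this form: near the diagonal it is the pullback of $\mathcal K\widetilde{\boxtimes}\mathcal L$ on the twisted product $\pmb{\sF}_\gamma\times(\mathbf G\times^{\mathbf P_\delta}\pmb{\sF}_\varepsilon)$ along a smooth map. This uses the induction equivalence already cited from \cite{Ac21}.

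\emph{Step 4: assembly.} Combining the steps gives
$$\DD(\mathcal K\odot\mathcal L)\cong p_{13*}\,i^!\,(\DD\mathcal K\boxtimes\DD\mathcal L)\cong p_{13*}\,i^*(\DD\mathcal K\boxtimes\DD\mathcal L)\langle-2d_\delta\rangle=(\DD\mathcal K\odot\DD\mathcal L)\langle-2d_\delta\rangle.$$
This is the second displayed form. The normalized version follows by substituting $\nDD=\DD\langle-2d_\eta\rangle$: the shifts $\langle-2d_\gamma\rangle$ on the left and $\langle-2d_\gamma\rangle\langle-2d_\delta\rangle$ on the right match, after using the second form. The Tate twists are carried through because $\langle m\rangle$ includes $(m/2)$, and each step is compatible with Weil structures. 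I also need to check the direction of the shift in $i^!=i^*\langle-2d_\delta\rangle$ (a codimension $d_\delta$ section) against the paper's normalization of $\langle\cdot\rangle$. Canonicity follows since each isomorphism used is canonical.
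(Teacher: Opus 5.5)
Your proof is correct, and its skeleton is the paper's. You write $p_{12}^*\mathcal K\otimes^{\mathbb L}p_{23}^*\mathcal L=i^*(\mathcal K\boxtimes\mathcal L)$ for the middle diagonal $i$ (the paper's $\iota$), and use $\DD i^*\cong i^!\DD$ together with $\DD(\mathcal K\boxtimes\mathcal L)\cong\DD\mathcal K\boxtimes\DD\mathcal L$. You then compare $i^!$ with $i^*\langle-2d_\delta\rangle$ on $M=\DD\mathcal K\boxtimes\DD\mathcal L$, and finish with proper pushforward along $p_{13}$.

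Only the key comparison is justified differently. The paper notes that every diagonal orbit fibres smoothly and homogeneously over $\pmb{\sF}_\delta$. Hence $\iota$ is transverse to the product orbit stratification, so it is non-characteristic, and the paper quotes Saito's non-characteristic pullback formula. You instead use equivariance to make $M$ locally a pullback along a smooth retraction. Take a local section $s$ of $\mathbf G\to\pmb{\sF}_\delta$ over an open $\Omega\subset\pmb{\sF}_\delta$. Then $r(y,x,x',z)=(y,x,s(x)s(x')^{-1}z)$ is a smooth retraction of $\pmb{\sF}_\gamma\times\Omega\times\Omega\times\pmb{\sF}_\varepsilon$ onto $\pmb{\sF}_\gamma\times\Omega\times\pmb{\sF}_\varepsilon$. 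By the equivariant structure of $\DD\mathcal L$ one has $M\cong r^*i^*M$, so $i^!M\cong i^!r^!i^*M\langle-2d_\delta\rangle\cong i^*M\langle-2d_\delta\rangle$ there.

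Your route is more elementary and avoids singular support. It also proves the lemma for arbitrary $\mathbf G$-equivariant complexes, not only IC complexes. The paper's route is shorter, given the citation.

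Two corrections. First, the sentence saying that pulling back along $(g,x)\mapsto(gx,x)$ identifies $i$ with a zero section is not right: the preimage of the diagonal is the stabilizer group scheme. The mechanism that actually works is the local retraction you describe next.

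Second, the local isomorphisms depend on the choice of $s$, so ``each isomorphism used is canonical'' does not by itself give a global canonical isomorphism. State the key step as follows. The canonical morphism $i^*M\otimes i^!\Ql\to i^!M$, combined with purity $i^!\Ql\cong\Ql\langle-2d_\delta\rangle$ for this codimension-$d_\delta$ embedding of smooth varieties, is an isomorphism. This claim is local and can be checked on underlying complexes over $\overline{\mathbb{F}_q}$, where your retractions verify it. It then yields a choice-free, Weil-compatible isomorphism, which is exactly what the paper's citation supplies.
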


\begin{proof}
Let
$Y=\pmb{\sF}_\gamma\times\pmb{\sF}_\delta\times\pmb{\sF}_\varepsilon$.
Consider the middle-diagonal embedding
$$
\iota:\ Y
\to
(\pmb{\sF}_\gamma\times\pmb{\sF}_\delta)
\times
(\pmb{\sF}_\delta\times\pmb{\sF}_\varepsilon), \quad
(A,B,C)
\mapsto
((A,B),(B,C)),
$$
which satisfies
$$
p_{12}^*\mathcal K
{\otimes}^{\mathbb L}
p_{23}^*\mathcal L
\cong
\iota^*(\mathcal K\boxt\mathcal L).
$$
The projections of every diagonal
$\mathrm{G}$-orbit in the two adjacent double partial flag varieties onto
$\pmb{\sF}_\delta$ are smooth homogeneous fibrations. Thus $\iota$ is
transverse to the product orbit stratification and is therefore
non-characteristic for $\DD_{\gamma,\delta}(\mathcal K)
\boxtimes
\DD_{\delta,\varepsilon}(\mathcal L)$.
Since $\iota$ is a regular closed embedding of codimension $d_\delta$ and is non-characteristic with respect to
$\DD_{\gamma,\delta}(\mathcal K)
\boxtimes
\DD_{\delta,\varepsilon}(\mathcal L)$,
the non-characteristic pullback formula, together with the standard purity isomorphism, gives
\begin{align*}
\iota^!
\left(
\DD_{\gamma,\delta}(\mathcal K)
\boxtimes
\DD_{\delta,\varepsilon}(\mathcal L)
\right)
\cong
\iota^*
\left(
\DD_{\gamma,\delta}(\mathcal K)
\boxtimes
\DD_{\delta,\varepsilon}(\mathcal L)
\right)
[-2d_\delta](-d_\delta);
\end{align*}
see \cite[Proposition~8.13(1)]{Sa17}.

Using
$\DD_Y\iota^*
\cong
\iota^!\DD$ and 
$\DD(\mathcal K\boxt\mathcal L)
\cong
\DD_{\gamma,\delta}(\mathcal K)
\boxt
\DD_{\delta,\varepsilon}(\mathcal L)$,
we obtain
$$
\DD_Y
\left(
p_{12}^*\mathcal K
{\otimes}^{\mathbb L}
p_{23}^*\mathcal L
\right)
\cong
\left(
p_{12}^*\DD_{\gamma,\delta}(\mathcal K)
{\otimes}^{\mathbb L}
p_{23}^*\DD_{\delta,\varepsilon}(\mathcal L)
\right)
\langle-2d_\delta\rangle.
$$
Since $\pmb{\sF}_\delta$ is proper, the projection $p_{13}$ is also proper. The compatibility of Verdier duality with proper pushforward therefore yields
$$
\DD_{\gamma,\varepsilon}
\left(
\mathcal K\odot\mathcal L
\right)
\cong
\left(
\DD_{\gamma,\delta}(\mathcal K)
\odot
\DD_{\delta,\varepsilon}(\mathcal L)
\right)
\langle-2d_\delta\rangle.
$$
The normalized formula follows immediately from the definition of $\nDD_{\eta,\eta'}$. The resulting compatibility with convolution is analogous to
\cite[Lemma~3.2]{LY20}; see also \cite[\S2.2]{W17} for the convolution formalism on products of partial flag varieties.
\end{proof}

We define the normalized Verdier duality on $D^b_{\mathrm{m},\mathbf{G}}(\pmb{\sF}_\square^\mathfrak{c}\times\pmb{\sF}^\mathfrak{c}_{\square})$ by $\mathfrak{D}_\square^\mathfrak{c}:=\bigoplus_{\gamma,\delta}\mathfrak{D}_{\gamma,\delta}$. Clearly, we have $\mathfrak{D}_\square^\mathfrak{c}\circ\mathfrak{D}_\square^\mathfrak{c}\simeq \mathrm{id}$ and the subcategory $\mathrm{Semis}^{\mathfrak{c},W}_{\square}$ is stable under $\mathfrak{D}_\square^\mathfrak{c}$.
By abuse of notation, we still use $\widetilde{\mathrm{IC}}^\pm_A$ and $P_{\{B,B^\vartheta\}}$ to denote the descended objects, where $A\in\Theta_\square^\mathfrak{c}$ and $B\in\widetilde{\Theta}_\square^\mathfrak{c}\backslash\Theta_\square^\mathfrak{c}$. We write $d(B)$ for the dimension of $\widetilde{\boldsymbol{\mathcal{O}}}_B$ and $r(B)$ for the dimension of the image of $\widetilde{\boldsymbol{\mathcal{O}}}_B$ under the first projection $\widetilde{\pmb{\sF}}_\square^\mathfrak{c}\times\widetilde{\pmb{\sF}}_\square^\mathfrak{c}$. The normalized Verdier duality induces an involution on $K^{\oplus}_0(\mathrm{Semis}^{\mathfrak{c},W}_\square)$, which is also denoted by $\mathfrak{D}_\square^\mathfrak{c}$.
\begin{prop}\label{D_C}
    For $A\in\Theta_\square^\mathfrak{c}$, we have $$\mathfrak{D}_\square^\mathfrak{c}\big(\widetilde{\mathrm{IC}}^\pm_A\langle d(A)-r(A)+n\rangle\big)=\widetilde{\mathrm{IC}}^\pm_A\langle d(A)-r(A)-n\rangle.$$ In particular, $$\mathfrak{D}_\square^\mathfrak{c}\big(\underline{\overline{\mathbb{Q}}_\ell}_{\Delta\pmb{\sF}_\square^\mathfrak{c}}\langle n\rangle\big)=\underline{\overline{\mathbb{Q}}_\ell}_{\Delta \pmb{\sF}_\square^\mathfrak{c}}\langle -n\rangle,$$ where $\Delta \pmb{\sF}_\square^\mathfrak{c}$ is the diagonal of $\pmb{\sF}_\square^\mathfrak{c}\times\pmb{\sF}_\square^\mathfrak{c}$.
    For $B\in\widetilde{\Theta}_\square^\mathfrak{c}\backslash\Theta_\square^\mathfrak{c}$, we have $$\mathfrak{D}_\square^\mathfrak{c}\big(P_{\{B,B^\vartheta\}}\langle d(B)-r(B)+n\rangle\big)=P_{\{B,B^\vartheta\}}\langle d(B)-r(B)-n\rangle.$$
    As a consequence, $\mathrm{ker}(\mathrm{ch})$ is stable under $\mathfrak{D}_\square^\mathfrak{c}$.
    \end{prop}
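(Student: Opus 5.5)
The plan is to reduce everything to the component-wise statement for a single orbit closure and then track descent data. Fix $A\in\Theta_\square^\mathfrak{c}$ with $\mathcal{O}_A\subset\pmb{\sF}_\gamma\times\pmb{\sF}_\delta$. By definition \eqref{normal-IC}, $\widetilde{\mathrm{IC}}_A[\dim\widetilde{\boldsymbol{\mathcal{O}}}_A](\tfrac{d(A)}{2})=\widetilde{\mathrm{IC}}_A\langle d(A)\rangle$ is the perverse-normalized IC complex, which is pure of weight $0$. Hence its Verdier dual is canonically isomorphic to itself, so $\DD(\widetilde{\mathrm{IC}}_A)\simeq\widetilde{\mathrm{IC}}_A\langle 2d(A)\rangle$. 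Combining this with $\DD(\mathcal M\langle m\rangle)=\DD(\mathcal M)\langle -m\rangle$ gives
$$\nDD_{\gamma,\delta}\bigl(\widetilde{\mathrm{IC}}_A\langle d(A)-r(A)+n\rangle\bigr)\simeq\widetilde{\mathrm{IC}}_A\langle 2d(A)-d(A)+r(A)-n-2d_\gamma\rangle.$$
The remaining point is the identity $r(A)=d_\gamma$. This holds because $r(A)$ is the dimension of the image of $\boldsymbol{\mathcal{O}}_A$ under the first projection, and that image is the whole homogeneous component $\pmb{\sF}_\gamma$ (the projection is a $\mathbf G$-equivariant fibration onto a homogeneous space). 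Substituting yields $\widetilde{\mathrm{IC}}_A\langle d(A)-r(A)-n\rangle$ at the level of underlying complexes. The diagonal statement is the special case where $A$ is diagonal: there $d(A)=r(A)$ and the orbit is closed and smooth, so the IC complex is the constant sheaf.

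Next I would check that the sign of the descent datum is preserved, which I expect to be the main obstacle. The descended object $\widetilde{\mathrm{IC}}_A^{+}$ is characterized in Proposition~\ref{prop:main}(2) by the trace of $\phi_+$ on the stalk at a rational point $x_0\in\mathcal O_A$ being $1$, rather than $-1$, in degree $0$. Verdier duality on the descended $\mathbb F_q$-variety sends the descended IC complex $\mathrm{IC}(\boldsymbol{\mathcal O}_A)$ to itself up to the shift and twist above, because the canonical self-duality of an IC complex on an $\mathbb F_q$-variety is defined over $\mathbb F_q$. Alternatively, I can restrict to the open orbit: there $\DD$ of the constant sheaf $\underline{\Ql}$ with its canonical Frobenius is $\underline{\Ql}\langle 2d(A)\rangle$ with its canonical Frobenius. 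The Tate twist $(d(A))$ is absorbed into $\langle\cdot\rangle$, so the trace at $x_0$ after normalization is again $+1$. The minus version follows because $\DD$ is additive and $\DD(-\varphi)$ corresponds to $-\DD(\varphi)$, i.e.\ $\DD$ commutes with the sign twist $\Pi$.

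For $B\in\widetilde\Theta_\square^\mathfrak{c}\setminus\Theta_\square^\mathfrak{c}$, the same computation applies to each of $\widetilde{\mathrm{IC}}_B$ and $\widetilde{\mathrm{IC}}_{B^\vartheta}$. These have equal $d$ and $r$, since $F$ is an isomorphism swapping the orbits and preserving the components. The dual $\nDD(P_{\{B,B^\vartheta\}}\langle d(B)-r(B)+n\rangle)$ is therefore a descent datum on $(\widetilde{\mathrm{IC}}_B\oplus\widetilde{\mathrm{IC}}_{B^\vartheta})\langle d(B)-r(B)-n\rangle$. By the uniqueness part of Proposition~\ref{prop:main}(3), it is isomorphic to $P_{\{B,B^\vartheta\}}\langle d(B)-r(B)-n\rangle$, so no sign needs to be tracked in this case.

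Finally, the kernel of $\mathrm{ch}$ is spanned, up to shifts, by $[\widetilde{\mathrm{IC}}^+_A\oplus\widetilde{\mathrm{IC}}^-_A]$ and $[P_{\{B,B^\vartheta\}}]$ (Theorem~\ref{cat-main}). The formulas above map each of these generators to a generator of the same type with the shift reversed, so $\ker(\mathrm{ch})$ is stable under $\mathfrak D^\mathfrak{c}_\square$.
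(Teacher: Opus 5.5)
Your proposal is correct and follows essentially the same route as the paper. Both arguments combine $r(A)=d_\gamma$ with the weight-zero Verdier self-duality of $\widetilde{\mathrm{IC}}_A\langle d(A)\rangle$, compatible with the $\pm$ descent data, to obtain the shift reversal. Both then use the uniqueness of the exchange descent in Proposition~\ref{prop:main}(3) for $P_{\{B,B^\vartheta\}}$, and read off the stability of $\ker(\mathrm{ch})$ from Theorem~\ref{cat-main}.

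Your sign check on the open orbit and your observation that $\DD$ commutes with the sign twist $\Pi$ spell out what the paper compresses into the phrase ``standard Verdier self-duality \dots with their positive and negative Weil structures.''
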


\begin{proof}
    Suppose $\boldsymbol{\mathcal{O}}_A\subset\pmb{\sF}_\gamma\times\pmb{\sF}_\delta$, then $r(A)=d_\gamma$. By the standard Verdier self-duality of the weight-zero intersection
cohomology complexes with their positive and negative Weil structures,
we have
\begin{align*}
\mathfrak{D}_\square^\mathfrak{c}\big(\widetilde{\mathrm{IC}}^\pm_A\langle d(A)-r(A)+n\rangle\big)=&\mathbb{D}_{\gamma,\delta}\big(\widetilde{\mathrm{IC}}^\pm_A\langle d(A)\rangle\langle -r(A)+n\rangle\big)\langle-2r(A)\rangle\\=&\widetilde{\mathrm{IC}}^\pm_A\langle d(A)\rangle\langle r(A)-n\rangle\langle-2r(A)\rangle\\=&\widetilde{\mathrm{IC}}^\pm_A\langle d(A)-r(A)-n\rangle.
\end{align*}
 In particular, when $\boldsymbol{\mathcal{O}}_A$ is the diagonal copy of $\pmb{\sF}_\gamma\times\pmb{\sF}_\gamma$, we have 
$\mathfrak{D}_\square^\mathfrak{c}\big(\widetilde{\mathrm{IC}}_A\langle n\rangle\big)=\widetilde{\mathrm{IC}}_A\langle-n\rangle$, and hence $$\mathfrak{D}_\square^\mathfrak{c}\big(\underline{\overline{\mathbb{Q}}_\ell}_{\Delta\pmb{\sF}_\square^\mathfrak{c}}\langle n\rangle\big)=\underline{\overline{\mathbb{Q}}_\ell}_{\Delta \pmb{\sF}_\square^\mathfrak{c}}\langle -n\rangle.$$
For $B\in\widetilde{\Theta}_\square^\mathfrak{c}\backslash\Theta_\square^\mathfrak{c}$, the complex $P_{\{B,B^\vartheta\}}\langle d(B)\rangle$, equipped
with its standard $\mathbb{F}_{q^2}$-Weil structure, is Verdier self-dual and admits
a unique descent to $\mathbb{F}_q$ up to isomorphism. Since Verdier
duality commutes with finite \'etale base change, its descended object is also Verdier self-dual. Consequently,
$$\mathfrak{D}_\square^\mathfrak{c}\big(P_{\{B,B^\vartheta\}}\langle d(B)-r(B)+n\rangle\big)=P_{\{B,B^\vartheta\}}\langle d(B)-r(B)-n\rangle.$$
Now, the last assertion follows from Theorem \ref{cat-main}.
\end{proof}

\begin{thm}
    There is a unique involution of $\mathbb Z$-algebra $\bar{}:\mathbf{S}^{\mathfrak{c},\square}_{n,d}\to\mathbf{S}^{\mathfrak{c},\square}_{n,d}$ (called the bar involution)  
such that $\overline{v}=v^{-1}$ and $\overline{\{A\}}=\{A\}\ (\forall A\in\Theta_\square^\mathfrak{c})$.
\end{thm}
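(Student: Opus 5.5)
The bar involution will be transported from the normalized Verdier duality $\mathfrak{D}_\square^\mathfrak{c}$ on $K_0^\oplus(\mathrm{Semis}^{\mathfrak{c},W}_\square)$ through the characteristic homomorphism $\mathrm{ch}$ of Theorem~\ref{cat-main}. Concretely, $\mathfrak{D}_\square^\mathfrak{c}$ is an additive autoequivalence of the descended category with $\mathfrak{D}_\square^\mathfrak{c}\circ\mathfrak{D}_\square^\mathfrak{c}\simeq\mathrm{id}$, so it induces a $\mathbb Z$-linear involution on the split Grothendieck group. It is $v$-antilinear: $\mathbb D(M\langle -1\rangle)=\mathbb D(M)\langle 1\rangle$, so $\mathfrak{D}(v[M])=v^{-1}\mathfrak{D}([M])$. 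By Proposition~\ref{D_C} it stabilizes $\ker(\mathrm{ch})$. Hence it descends to a $\mathbb Z$-linear involution of $K_0^\oplus/\ker(\mathrm{ch})\cong \mathbf{S}^{\mathfrak{c},\square}_{n,d}$. I will define $\overline{x}:=\mathrm{ch}(\mathfrak{D}_\square^\mathfrak{c}(\widehat x))$ for any lift $\widehat x$ of $x$.

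Next I will check the normalization. Proposition~\ref{D_C} with $n=0$ gives $\mathfrak{D}([E_A^+])=[E_A^+]$, where $E_A^+=\widetilde{\mathrm{IC}}_A^+\langle d(A)-r(A)\rangle$. Applying $\mathrm{ch}$ yields $\overline{\{A\}}=\{A\}$, and antilinearity gives $\overline v=v^{-1}$.

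For multiplicativity, I will use Lemma~\ref{Verdier}, which gives $\mathfrak{D}(\mathcal K\odot\mathcal L)\cong\mathfrak{D}(\mathcal K)\odot\mathfrak{D}(\mathcal L)$ for shifted IC complexes on pairs of components $\pmb{\sF}_\gamma\times\pmb{\sF}_\delta$, $\pmb{\sF}_\delta\times\pmb{\sF}_\varepsilon$. The descended indecomposables $\widetilde{\mathrm{IC}}_A^\pm$ and $P_{\{B,B^\vartheta\}}$ become, after base change to $\overline{\mathbb F_q}$, direct sums of such IC complexes. The isomorphism in Lemma~\ref{Verdier} is canonical, built from proper base change, the purity isomorphism for $\iota$, and compatibility of $\mathbb D$ with proper pushforward. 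It is therefore compatible with the Weil/descent structures, and it extends additively to all of $\mathrm{Semis}^{\mathfrak{c},W}_\square$ through the identification of $\diamond$ with $\odot$. Components that do not compose, because $\delta$ does not match, convolve to zero on both sides. Thus $\mathfrak{D}([M][N])=\mathfrak{D}([M])\mathfrak{D}([N])$ in $K_0^\oplus$, and $\mathrm{ch}$ transports this to $\overline{xy}=\overline x\,\overline y$. The unit $\sum_\lambda 1_\lambda$ is the class of the constant sheaf on the diagonal, which Proposition~\ref{D_C} shows is fixed.

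Uniqueness is immediate: any $\mathbb Z$-linear map with $\overline v=v^{-1}$ that fixes the $\mathcal A$-basis $\{\{A\}\}$ is determined by $\overline{\sum c_A\{A\}}=\sum\overline{c_A}\{A\}$. The map is an involution because $\mathfrak{D}^2\simeq\mathrm{id}$, or directly from this formula.

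I expect the main obstacle to be the descent-compatibility of the convolution isomorphism in Lemma~\ref{Verdier}. If its canonicity is not transparent, I would instead argue at the level of classes using the Frobenius-trace route from Theorem~\ref{cat-main}. Since both sides of the Lemma~\ref{Verdier} isomorphism are semisimple, the two decompositions must match up to the signs of the descent data. To pin down those signs, I would compare the $\chi_s$ traces for infinitely many $s$, using that the trace of $\mathbb D$ of a pure weight-zero complex is obtained by $q^{1/2}\mapsto q^{-1/2}$.
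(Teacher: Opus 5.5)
Your proposal is correct and follows essentially the same route as the paper. You transport the normalized Verdier duality $\mathfrak{D}_\square^\mathfrak{c}$ through $\mathrm{ch}$, use Lemma~\ref{Verdier} for multiplicativity and Proposition~\ref{D_C} for stability of $\ker(\mathrm{ch})$ and for $\overline{\{A\}}=\{A\}$, $\overline{v}=v^{-1}$, and get uniqueness because the $\{A\}$ together with $v^{\pm1}$ determine the map; your fallback via $\chi_s$-traces is unnecessary, since the paper works directly in the descended category over $\mathbb F_q$, where the isomorphism of Lemma~\ref{Verdier} is already an isomorphism of Weil complexes.
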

\begin{proof}
Since $\mathrm{ker}(\mathrm{ch})$ is stable under $\mathfrak{D}_\square^\mathfrak{c}$, by Lemma \ref{Verdier}, we have a well-defined involution of $\mathbb{Z}$-algebra given by
$$\overline{\mathfrak{D}}_\square:K^\oplus_0(\mathrm{Semis}^{\mathfrak{c},W}_\square)/\mathrm{ker}(\mathrm{ch})\to K^\oplus_0(\mathrm{Semis}^{\mathfrak{c},W}_\square)/\mathrm{ker}(\mathrm{ch}),\quad [\mathcal{K}]\mapsto [\mathfrak{D}_\square^\mathfrak{c}(\mathcal{K})].$$ Together with Theorem \ref{cat-main}, we have a $\mathbb{Z}$-algebra involution $\bar{}:\mathbf{S}^{\mathfrak{c},\square}_{n,d}\to\mathbf{S}^{\mathfrak{c},\square}_{n,d}$. Since $\mathrm{ch}\big([\underline{\overline{\mathbb{Q}}_\ell}_{\Delta \pmb{\sF}_\square^\mathfrak{c}}\langle \pm1\rangle]\big)=v^{\mp1}$, $\mathrm{ch}\big([\widetilde{\mathrm{IC}}_A^+\langle d(A)-r(A)\rangle]\big)=\{A\}$ and such elements generate the whole algebra $\mathbf{S}^{\mathfrak{c},\square}_{n,d}$ over $\mathbb{Z}$, the theorem follows from Proposition \ref{D_C}.\end{proof}

The bar involution can be extended to ${}_\mathbb{Q}\mathbf{S}^{\mathfrak{c},\square}_{n,d}$ naturally.

\begin{cor}
The elements $\mathbf{e}_i,\mathbf{f}_i$, $\mathbf{t}',\mathbf{t}$ are all bar invariant, while $\overline{\mathbf{d}_a^{\pm1}}=\mathbf{d}_a^{\mp1}$. 
\end{cor}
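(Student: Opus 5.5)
The plan is to treat each generator separately. In every case we express the generator in terms of canonical basis elements $\{A\}$ and diagonal idempotents $1_\lambda$ with coefficients whose bar-behaviour we know, and then apply the bar involution just constructed, which fixes each $\{A\}$ and sends $v\mapsto v^{-1}$.

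For $\mathbf{e}_i,\mathbf{f}_i$ the argument is immediate. By definition $\mathbf{e}_i=\sum_B\{B\}$ and $\mathbf{f}_i=\sum_C\{C\}$, since the relevant $B,C$ are Bruhat-minimal (Lemma~\ref{lem:order-minimal}). Each summand is bar invariant, so $\mathbf{e}_i$ and $\mathbf{f}_i$ are too. For the idempotents, note that a diagonal $\lambda$ is minimal, so $1_\lambda=[\lambda]=\{\lambda\}$ is bar invariant. Then $\overline{\mathbf{d}_a^{\pm1}}=\sum_\lambda v^{\pm 2\lambda_a}1_\lambda=\mathbf{d}_a^{\mp1}$ in the unitary case, and the analogous identity with exponent $\lambda_a$ holds in the orthogonal case. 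The extension to ${}_\mathbb{Q}\mathbf{S}$ is by $\overline{f(v)x}=f(v^{-1})\overline{x}$.

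The real content is $\mathbf{t}'$. Write $A_\lambda=\lambda-E^\theta_{n,n}+E^\theta_{n,n+2}$. The singular locus of the closure of $\boldsymbol{\mathcal O}_{A_\lambda}$ is the diagonal stratum, by the description of $\boldsymbol S^\square(\mathbf t)$ and the reduction to $n=1$. Hence the only lower orbit is the diagonal one, and $\{A_\lambda\}=[A_\lambda]+P_{\lambda,A_\lambda}1_\lambda$. The computations in \S\ref{cal U-2d}, \S\ref{cal-2d+1}, \S\ref{cal-O} identify $P_{\lambda,A_\lambda}$ with $f_{D,\lambda}$ or $g_{2d+2,\lambda}$ when $\lambda_n\ge2$. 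The step that needs care is matching the normalization in \eqref{PA}, namely the factor $v^{-d(A)+d(B)}$ and the sign $(-v)^i$, with the normalized traces used to define $f$ and $g$. I would check this by comparing the degree in which the stalk is the constant sheaf on the open orbit. Granting it, each $\lambda$-component of $\mathbf t'$ equals $\{A_\lambda\}$, hence is bar invariant.

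The edge cases are handled directly. When $\lambda_n=1$ in the unitary case the closure is smooth, and \eqref{KL-2d}/\eqref{KL-2d+1} still give the correct (smooth) coefficient, which I would verify separately. When $\lambda_n=0$ the matrix $A_\lambda$ is not in $\Theta$, so the component reduces to a bar-invariant multiple of $1_\lambda$ ($f$ evaluated at $\lambda_n=0$ must be checked to be symmetric, or the term is absent). In the orthogonal case with $\lambda_n=1$ the paper records $[A_\lambda]+g\,1_\lambda=\{A_\lambda\}+1_\lambda$, which is bar invariant. Finally, $\mathbf t$ differs from $\mathbf t'$ by \eqref{eq:difftt}, namely by $\pm\sum_\lambda(-1)^{\lambda_n}(v+v^{-1})^{-1}1_\lambda$. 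Since $(v+v^{-1})^{-1}$ is bar invariant and $1_\lambda$ is bar invariant, $\mathbf t$ is bar invariant as well. The main obstacle is the normalization check identifying $f_{D,\lambda}$ and $g_{2d+2,\lambda}$ with the KL polynomial $P_{\lambda,A_\lambda}$.
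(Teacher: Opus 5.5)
Your argument is correct and is essentially the paper's proof. As in the paper, $\mathbf e_i,\mathbf f_i$ are sums of canonical basis elements, $\mathbf d_a^{\pm1}$ is handled through $1_\lambda=\{\lambda\}$, $\mathbf t'$ is bar invariant because \eqref{def:tprime} builds it from canonical basis elements and bar-invariant multiples of $1_\lambda$ (the paper simply cites this construction), and $\mathbf t$ then follows from \eqref{eq:difftt}.

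The checks you defer all go through:
\begin{itemize}
\item The rational points of $\overline{\boldsymbol{\mathcal O}}_{A_\lambda}$ are just $\mathcal O_{A_\lambda}\sqcup\mathcal O_\lambda$. Read this directly off \eqref{s(t)}; it does not follow from the statement about the singular locus.
\item The normalization in \eqref{PA} matches, giving $P_{\lambda,A_\lambda}=f_{D,\lambda}$ for $\lambda_n\ge1$ and $P_{\lambda,A_\lambda}=g_{2d+2,\lambda}$ for $\lambda_n\ge2$.
\item At $\lambda_n=0$ in the case $D=2d+1$, you must read \eqref{KL-2d+1} via its closed form. This gives $v+v^{-1}$, consistent with \eqref{eq:difftt}, whereas the literal empty sum would give the non-invariant $v^{-1}$. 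In the other two cases the value is $0$ for $D=2d$ and $v+v^{-1}$ for the orthogonal case, both bar invariant.
\end{itemize}
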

\begin{proof}
The generators $\mathbf{e}_i$ and $\mathbf{f}_i$ are bar invariant since they are sums of canonical basis elements. The element $\mathbf{t}'$ is bar invariant by our canonical construction in \eqref{def:tprime}, which further implies $\overline{\mathbf{t}}=\mathbf{t}$ since the difference between $\mathbf{t}'$ and $\mathbf{t}$ is bar invariant by \eqref{eq:difftt}. For $\mathbf{d}_a^{\pm1}=\sum_{\lambda}v^{\mp2\lambda_a}1_{\lambda}\in \mathbf{S}^{\mathfrak{c}, D}_{n,d}$ (resp. $\sum_{\lambda}v^{\mp\lambda_a}1_{\lambda}\in \mathbf{S}^{\mathfrak{c}, O}_{n,d}$), we have $\overline{\mathbf{d}_a^{\pm1}}=\sum_{\lambda}v^{\pm2\lambda_a}1_{\lambda}$ (resp.  $\sum_{\lambda}v^{\pm\lambda_a}1_{\lambda}$) since $1_\lambda=e_\lambda=[\lambda]=\{\lambda\}$ is bar invariant for each diagonal matrix $\lambda=\mathrm{diag}(\lambda_1,\ldots,\lambda_{n+1},\ldots,\lambda_1)$. That is, $\overline{\mathbf{d}_a^{\pm1}}=\mathbf{d}_a^{\mp1}$.
\end{proof}

\begin{rem}
Under the specializations in Theorem~\ref{thm:specialofS}, with
$(\mathbf u,\mathbf v)=(v^3,v^2),\allowbreak(v,v^2),\allowbreak(v^2,v)$, respectively,
our geometric bar involutions agree with the algebraic ones defined in
\cite[\S4.1]{LL21}, compatible with the bar involutions
of iquantum groups introduced in \cite[Lemmas~2.1 \& 4.1]{BWW18}
(cf.~\cite[Remark~7.3.3]{LL21}). 
\end{rem}

\section{$\mathrm{i}$Quantum groups and $\mathrm{i}$Schur algebras}
\label{sec:iquantumgroups}
In this section, we study the iquantum groups associated with $\mathbf{S}^{\mathfrak{c},\square}_{n,d}$. In particular, we provide a weak categorification for these iquantum groups, with explicit formulations for generators.

\subsection{iquantum groups}\label{subsec:iquantum-groups}
The quasi-split iquantum group $\mathbb{U}_n^\jmath$ of type $\mathrm{AIII}_{2n}$ (with two parameters $\mathbf{u}$ and $\mathbf{v}$) is the $\mathbb{Q}(\mathbf{u},\mathbf{v})$-algebra with generators
$$e_i, f_i \ (i=1,2,\ldots,n)\quad \text{and}\quad d_{a}^{\pm1} \ (a=1,2,\ldots,n+1)$$
subject to the following relations: for $1\leq i,j\leq n$ and $1\leq a,b\leq n+1$,
\begin{gather}
d_ad_a^{-1}=1=d_{a}^{-1}d_a, \quad d_ad_b=d_bd_a,\label{re:1}\\
d_{n+1}e_{i}d_{n+1}^{-1}=\mathbf{v}^{-2\delta_{n,i}}e_i, \quad d_{n+1}f_{i}d_{n+1}^{-1}=\mathbf{v}^{2\delta_{n,i}}f_i,\label{re:2}\\
d_ae_id_a^{-1}=\mathbf{v}^{\delta_{a,i}-\delta_{a,i+1}}e_i,\quad
d_af_id_a^{-1}=\mathbf{v}^{\delta_{a,i+1}-\delta_{a,i}}f_i \quad(a\neq n+1),\label{re:3}\\
e_if_j-f_je_i=\delta_{i,j}\frac{d_id_{i+1}^{-1}-d_i^{-1}d_{i+1}}{\mathbf{v}-\mathbf{v}^{-1}} \quad\big((i,j)\neq(n,n)\big), \label{re:4}\\
e_i^2e_j+e_je_i^2=\llbracket2\rrbracket_{\mathbf{v}}e_ie_je_i,\quad f_i^2f_j+f_jf_i^2=\llbracket2\rrbracket_{\mathbf{v}}f_if_jf_i \quad (|i-j|=1),\label{re:5}\\
e_ie_j=e_je_i,\quad f_if_j=f_jf_i \quad (|i-j|>1), \label{re:6}\\
e_n^2f_n+f_ne_n^2=\llbracket2\rrbracket_{\mathbf{v}}\big(e_nf_ne_n-e_n(d_nd_{n+1}^{-1}+d_n^{-1}d_{n+1})\big), \label{re:7}\\
f_n^2e_n+e_nf_n^2=\llbracket2\rrbracket_{\mathbf{v}}\big(f_ne_nf_n-(d_nd_{n+1}^{-1}+d_n^{-1}d_{n+1})f_n\big). \label{re:8}
\end{gather}

\begin{rem}
The iquantum group $\mathbb{U}_n^\jmath$ is isomorphic to the one introduced in \cite{BWW18, LL21}, where the last two relations are 
\begin{gather*}
e_n^2f_n+f_ne_n^2=\llbracket2\rrbracket_{\mathbf{v}}\big(e_nf_ne_n-e_n(\mathbf{u}\mathbf{v}d_nd_{n+1}^{-1}+\mathbf{u}^{-1}\mathbf{v}^{-1}d_n^{-1}d_{n+1})\big),\\
f_n^2e_n+e_nf_n^2=\llbracket2\rrbracket_{\mathbf{v}}\big(f_ne_nf_n-(\mathbf{u}\mathbf{v}d_nd_{n+1}^{-1}+\mathbf{u}^{-1}\mathbf{v}^{-1}d_n^{-1}d_{n+1})f_n\big). 
\end{gather*}
It suffices to replace their $d_{n+1}$ by our $\mathbf{u}\mathbf{v}d_{n+1}$. The advantage of our presentation is that the relations do not involve the parameter $\mathbf{u}$. 
\end{rem}

The quasi-split iquantum group $\mathbb{U}_n^\imath$ of type $\mathrm{AIII}_{2n-1}$ is the $\mathbb{Q}(\mathbf{u},\mathbf{v})$-algebra with generators 
$$e_i, \ f_i \ (i=1,2,\ldots,n-1), \quad t \quad \text{and}\quad d_{a}^{\pm1} \ (a=1,2,\ldots,n),$$ 
subject to the relations \eqref{re:1}, 
\eqref{re:3}-\eqref{re:6} and 
\begin{gather*}
td_a=d_at,\qquad te_i=e_it,\quad tf_i=f_it\quad(i\neq n-1),\\
 e^2_{n-1}t-\llbracket2\rrbracket_{\mathbf{v}}e_{n-1}te_{n-1}+te^2_{n-1}=0,\quad
    f^2_{n-1}t-\llbracket2\rrbracket_{\mathbf{v}}f_{n-1}tf_{n-1}+tf^2_{n-1}=0,\\
    t^2e_{n-1}-\llbracket2\rrbracket_{\mathbf{v}}te_{n-1}t+e_{n-1}t^2=e_{n-1},\quad
    t^2f_{n-1}-\llbracket2\rrbracket_{\mathbf{v}}tf_{n-1}t+f_{n-1}t^2=f_{n-1}.
\end{gather*}

\subsection{Stabilization construction}
 
Imitating the stabilization construction of quantum $\mathfrak{gl}_n$ due to Beilinson-Lusztig-MacPherson \cite{BLM90}, Lai and the first author \cite{LL21} proved that the family $\{\mathbb{S}_{n,d}^\mathfrak{c}~|~d\in\mathbb{N}\}$ of iSchur algebras admits a stabilization property, and hence constructed a stabilization algebra isomorphic to (the modified version of) the iquantum group $\mathbb{U}^\mathfrak{c}$ ($\mathfrak{c}=\imath,\jmath$). Thanks to Theorem~\ref{thm:specialofS}, the approach developed in \cite{LL21} is also valid for the iSchur algebras $\mathbf{S}_{n,d}^{\mathfrak{c},\square}$ ($\mathfrak{c}=\imath,\jmath$; $\square=D,O$). Namely, we can establish a stabilization property for $\{\mathbf{S}_{n,d}^{\mathfrak{c},\square}~|~d\in\mathbb{N}\}$, and then realize some specializations of (the modified version of) the iquantum group $\mathbb{U}^\mathfrak{c}_n$ as stabilization algebras. Here the specializations are as follows: $\mathbf{u}^2=\q^3, \mathbf{v}^2=\q^2$ if $\square=2d+1$; $\mathbf{u}^2=\q, \mathbf{v}^2=\q^2$ if $\square=2d$; and $\mathbf{u}^2=\q^2, \mathbf{v}^2=\q$ if $\square=O$. We still denote by $e_i, f_i, t, d_a^{\pm1}$ the generators of these specializations of $\mathbb{U}^\mathfrak{c}$.

The stabilization construction provides an explicit algebra homomorphism from iquantum groups to iSchur algebras as follows. Recall $\q=v^2$.
\begin{prop}\label{prop:surjqsch}
\begin{itemize}
\item[(1)]
There are surjective homomorphisms of $\mathbb{Q}(v)$-algebras 
$$\mathbb{U}^\jmath_n|_{\mathbf{u}^2=\q^3, \mathbf{v}^2=\q^2}\to {}_{\mathbb{Q}}\mathbf{S}^{\jmath,2d+1}_{n,d},\quad \mathbb{U}^\jmath_{n}|_{\mathbf{u}^2=\q, \mathbf{v}^2=\q^2}\to {}_{\mathbb{Q}}\mathbf{S}^{\jmath,2d}_{n,d}, \quad \mathbb{U}^\jmath_{n}|_{\mathbf{u}^2=\q^2, \mathbf{v}^2=\q}\to {}_{\mathbb{Q}}\mathbf{S}^{\jmath,O}_{n,d}$$
given by 
$$e_i \mapsto\mathbf{e}_i, \quad f_i\mapsto\mathbf{f}_i,\quad d_i \mapsto\mathbf{d}_i \quad (1\leq i\leq n), \quad d_{n+1}\mapsto \left\{\begin{array}{ll}v^{-3}\mathbf{d}_{n+1}, & \text{for } {}_{\mathbb{Q}}\mathbf{S}^{\jmath,D}_{n,d},\\
v^{-1}\mathbf{d}_{n+1}, & \text{for } {}_{\mathbb{Q}}\mathbf{S}^{\jmath,O}_{n,d}.
\end{array}\right.$$
\item[(2)]
There are surjective homomorphisms of $\mathbb{Q}(v)$-algebras 
$$\mathbb{U}^\imath_n|_{\mathbf{u}^2=\q^3, \mathbf{v}^2=\q^2}\to {}_{\mathbb{Q}}\mathbf{S}^{\imath,2d+1}_{n,d},\quad \mathbb{U}^\imath_{n}|_{\mathbf{u}^2=\q, \mathbf{v}^2=\q^2}\to {}_{\mathbb{Q}}\mathbf{S}^{\imath,2d}_{n,d}, \quad \mathbb{U}^\imath_{n}|_{\mathbf{u}^2=\q^2, \mathbf{v}^2=\q}\to {}_{\mathbb{Q}}\mathbf{S}^{\imath,O}_{n,d}$$ given by 
$$e_i \mapsto\mathbf{e}_i,\quad  f_i\mapsto\mathbf{f}_i,\quad t\mapsto \mathbf{t},\quad d_a \mapsto\mathbf{d}_a \quad(1\leq i\leq n-1, 1\leq a\leq n).$$
\end{itemize}
\end{prop}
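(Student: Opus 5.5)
The plan is to follow the strategy of \cite{LL21} and \cite{BKLW18}: verify that the assigned images satisfy the defining relations of the specialized $\imath$quantum group, and then prove surjectivity from the generation lemmas of \S\ref{generator}. For the $\jmath$-version (1), the relations of ${}_{\mathbb{Q}}\mathbf{S}^{\jmath,\square}_{n,d}$ are already known: they are \eqref{common} together with the two Serre-type relations of $\mathbf{S}^{\jmath,D}_{n,d}$, and \eqref{common_D} together with its two Serre-type relations in the orthogonal case. I will simply compare these with \eqref{re:1}--\eqref{re:8} after the specialization.

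In the unitary case one has $\mathbf{v}=v^2$, so \eqref{re:2}--\eqref{re:6} are exactly \eqref{common}. The normalization $d_{n+1}\mapsto v^{-3}\mathbf{d}_{n+1}$ is compatible with \eqref{re:2}, because conjugation is insensitive to scalars. In \eqref{re:7}--\eqref{re:8} the term $d_nd_{n+1}^{-1}+d_n^{-1}d_{n+1}$ maps to $v^3\mathbf{d}_n\mathbf{d}_{n+1}^{-1}+v^{-3}\mathbf{d}_n^{-1}\mathbf{d}_{n+1}$, which is precisely the expression appearing in the proved relations. The orthogonal case is identical, with $\mathbf{v}=v$ and the scalar $v^{-1}$ in place of $v^{-3}$. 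This gives well-defined algebra homomorphisms. They are surjective because the $\jmath$-generation lemma shows that $\mathbf{e}_i$, $\mathbf{f}_i$ and $\mathbf{d}_a$ generate ${}_{\mathbb{Q}}\mathbf{S}^{\jmath,\square}_{n,d}$, and scaling $\mathbf{d}_{n+1}$ by an invertible scalar does not change the algebra they generate.

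For (2), the relations \eqref{common}/\eqref{common_D}, \eqref{i_D}/\eqref{i_O} and \eqref{it2} established for $\mathbf{t}$ match the defining relations of $\mathbb{U}^\imath_n$ at the corresponding specialization, with $\llbracket2\rrbracket_{\mathbf{v}}$ becoming $\llbracket2\rrbracket_{v^2}$ or $\llbracket2\rrbracket_v$. It is important to map $t\mapsto\mathbf{t}$ rather than to $\mathbf{t}'$, because by Remark~\ref{rem:tprime-relations} the element $\mathbf{t}'$ fails \eqref{it2} in the unitary case. Surjectivity then follows from the $\imath$-generation lemma.

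The main obstacle is that the $\imath$-relations for $\mathbf{t}$ were stated in the paper without proof. I would justify them through Theorem~\ref{thm:specialofS}: the algebraic two-parameter $\mathbb{S}^{\imath}_{n,d}$ receives $\mathbb{U}^\imath_n$ by \cite{LL21}, and its $\mathbf{t}$ of \cite[(7.3.25)]{LL21} specializes to our \eqref{def:t}. Composing with the specialization maps of Theorem~\ref{thm:specialofS} then yields the homomorphisms directly. The stabilization argument of \cite{LL21} is only needed if one wants the modified-form statement; the surjections themselves follow from this composition.

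The delicate check is that the image of the \cite{LL21} generator $d_{n+1}$, which carries a $\mathbf{u}\mathbf{v}$ normalization, becomes $v^{-3}\mathbf{d}_{n+1}$ (respectively $v^{-1}\mathbf{d}_{n+1}$). This follows from $\mathbf{u}\mathbf{v}=v^3\cdot v^2$ versus the present convention, combined with the Remark after \eqref{re:8}. The check therefore reduces to careful bookkeeping of the square roots $\mathbf{u}=v^3,v,v^2$ and $\mathbf{v}=v^2,v^2,v$.
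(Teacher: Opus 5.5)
Your argument is correct, but it follows a different route from the paper. The paper obtains these maps from the stabilization construction. Via Theorem~\ref{thm:specialofS}, the BLM-type machinery of \cite{LL21} is transferred to the families $\{\mathbf{S}^{\mathfrak{c},\square}_{n,d}\}_{d}$, and the specialized modified $\imath$quantum groups are realized as stabilization algebras. The surjections are then the projections onto each finite level.

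You instead check the defining relations directly in each finite-dimensional algebra and deduce surjectivity from the generation lemmas of \S\ref{generator}.

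For (1) this is fully self-contained. The relations of ${}_{\mathbb{Q}}\mathbf{S}^{\jmath,\square}_{n,d}$ are established in \S\ref{j-version}. Comparing the Serre-type ones with \eqref{re:7}--\eqref{re:8} is exactly what forces the scalars $v^{-3}$ and $v^{-1}$ on $d_{n+1}$, so no stabilization is needed.

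For (2) you rightly note that the $\mathbf{t}$-relations \eqref{i_D}, \eqref{it2}, \eqref{i_O} are only asserted in the paper. Your fallback composes the generic map $\mathbb{U}^\imath_n\to\mathbb{S}^\imath_{n,d}$ of \cite{LL21} with the specialization of Theorem~\ref{thm:specialofS}. This uses the same external input as the paper, just without the stabilization step. One point should be made explicit there.
\begin{itemize}
\item There is no ring map $\mathbb{Q}(\mathbf{u},\mathbf{v})\to\mathbb{Q}(v)$ with $\mathbf{u}\mapsto v^3$, $\mathbf{v}\mapsto v^2$, so you cannot specialize from $\mathbb{Q}(\mathbf{u},\mathbf{v})$ directly.
\item Instead, observe that the images of the generators lie in $R\otimes\mathbb{S}^\imath_{n,d}$. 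This includes the scalar coefficients in $\mathbf{t}$ and in the commutator relation. Here $R$ is a localization of $\mathbb{Z}[\mathbf{u}^{\pm1},\mathbf{v}^{\pm1}]$ at factors such as $\mathbf{v}-\mathbf{v}^{-1}$.
\item The relations then hold over $R$ because $\mathbb{S}^\imath_{n,d}$ is free.
\item Finally, these denominators remain nonzero under all three specializations.
\end{itemize}
The paper's route yields more than the statement: compatibility in $d$, and the modified form itself as a limit algebra. Yours yields exactly the statement, using only finite-level computations.

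Your last paragraph can be dropped. In (1) the direct comparison already fixes the scalars, and in (2) there is no $d_{n+1}$. As written, its arithmetic is also off. You have $\mathbf{u}\mathbf{v}=v^5$ for $D=2d+1$ but $v^3$ for $D=2d$, while the required scalar is $v^{-3}$ in both cases. The two cases only agree once you account for the shift of the central diagonal entry under $e_A\mapsto e_{A-E_{n+1,n+1}}$.
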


\subsection{Categorification of iquantum groups}\label{subsec:iquantum-categorification}
For
$\square=D,SO$ and $\mathfrak{c}=\imath,\jmath$, put $\mathcal C=\mathrm{Semis}^{\mathfrak c,W}_\square$.
For $A\in\Theta_\square^{\mathfrak c}$, set
$$
\mathcal L_A=\widetilde{\mathrm{IC}}_A^+
  \langle d(A)-r(A)\rangle,$$
whose image under the characteristic homomorphism $\mathrm{ch}$ given in Theorem~\ref{cat-main} is $$\mathrm{ch}([\mathcal L_A])=\{A\}.$$
For diagonal $\lambda$, we have
$\mathcal L_\lambda=\widetilde{\mathrm{IC}}_\lambda^+$ and
$\mathrm{ch}([\mathcal L_\lambda])=1_\lambda$.

Let $\kappa_\square=2$ if $\square=D$, and $1$ if $\square=SO$.
Denote
\begin{align*}
\mathcal E_i&=
  \bigoplus_{\substack{B\in\Theta_\square^{\mathfrak c}\\
  B-E^\theta_{i+1,i}\ \mathrm{diagonal}}}\mathcal L_B,\quad
\mathcal F_i&=
  \bigoplus_{\substack{C\in\Theta_\square^{\mathfrak c}\\
  C-E^\theta_{i,i+1}\ \mathrm{diagonal}}}\mathcal L_C,\quad
\mathcal D_a^\pm&=
  \bigoplus_{\lambda\in\Theta_\square^{\mathfrak c,\mathrm{diag}}}
  \mathcal L_\lambda\langle\pm\kappa_\square\lambda_a\rangle,
\end{align*}
where the indices range over $1\le i\le n$, $1\le a\le n+1$ if $\mathfrak{c}=\jmath$; and $1\le i\le n-1$, $1\le a\le n$ if $\mathfrak{c}=\imath$. Since $\mathrm{ch}([L\langle m\rangle])
=v^{-m}\mathrm{ch}([L])$, the definitions of the generators give
\begin{equation}\label{eq:asscatefd}
\mathrm{ch}([\mathcal E_i])=\mathbf e_i,\quad
\mathrm{ch}([\mathcal F_i])=\mathbf f_i,\quad
\mathrm{ch}([\mathcal D_a^\pm])=\mathbf d_a^{\pm1}.
\end{equation}
For the generator $d_{n+1}^{\pm1}$, the rescaling in Proposition~\ref{prop:surjqsch} is
realized by $\mathcal D_{n+1}^\pm\langle\pm3\rangle$ in the case of $\square=D$, and by $\mathcal D_{n+1}^\pm\langle\pm1\rangle$ in the case of $\square=SO$.

For $\mathfrak c=\imath$, write
$A_\lambda=\lambda-E^\theta_{n,n}+E^\theta_{n,n+2}$.
When $\lambda_n\ge1$, the calculations in
\eqref{KL-2d}, \eqref{KL-2d+1} and \eqref{KL-O} give
$$
\mathcal T'_\lambda=
\begin{cases}
\mathcal L_{A_\lambda}\oplus\mathcal L_\lambda,
  &\square=SO,\ \lambda_n=1,\\
\mathcal L_{A_\lambda},&\text{otherwise},
\end{cases}
\qquad
\mathrm{ch}([\mathcal T'_\lambda])=\mathbf t'1_\lambda.
$$
In particular, the component with $\lambda_n=1$ in the orthogonal case
requires an additional diagonal IC object, as explained after
\eqref{KL-O}. If $\lambda_n=0$, we set
$$
\mathcal T'_\lambda=
\begin{cases}
0,&\square=2d,\\
\mathcal L_\lambda\langle1\rangle\oplus
\mathcal L_\lambda\langle-1\rangle,&\square=2d+1,SO.
\end{cases}
$$
Thus $\mathcal T'=\bigoplus_\lambda\mathcal T'_\lambda$ realizes
$\mathbf t'$. i.e.,
\begin{equation}\label{eq:asscatt}
\mathrm{ch}([\mathcal T'])=\mathbf t'.
\end{equation}
The algebraic generator $t$ maps to $\mathbf t$.
In the case of $\square=D$, it is recovered from $\mathbf t'$ by the
diagonal correction in \eqref{eq:difftt} after extending scalars
to $\mathbb Q(v)$.

By Theorem~\ref{cat-main}, convolution induces an isomorphism
$$
\mathbb Q(v)\otimes_{\mathcal A}
\bigl(K_0^\oplus(\mathcal C)/\ker(\mathrm{ch})\bigr)
\ \simeq\ {}_{\mathbb Q}\mathbf S_{n,d}^{\mathfrak c,\square}.
$$
Together with the surjections given in Proposition~\ref{prop:surjqsch},
The formulas in \eqref{eq:asscatefd} and \eqref{eq:asscatt} give a weak categorification of the specialized iquantum
groups through their finite-dimensional iSchur quotients.
Moreover, Proposition~\ref{D_C} shows that normalized Verdier
duality fixes $\mathcal E_i,\mathcal F_i,\mathcal T'$ up to
isomorphism and interchanges $\mathcal D_a^+$ and
$\mathcal D_a^-$, realizing the bar involution geometrically.

\appendix
\section{Convolution of mixed complexes on flag varieties}\label{app:mixed-convolution}

In this appendix, we study the convolution of mixed complexes on flag varieties, by which we give a proof of Lemma~\ref{prop:semis-convolution}. In fact, this procedure also justify the usual monoidal categorifications of quantum Schur algebras in the case of equal parameters by semisimple complexes, corresponding to the geometric
constructions in \cite{BLM90,BKLW18,FL15}.

\subsection{Convolution of mixed complexes}\label{app:convolution-models}
Let $\mathbf{G}$ be a split connected reductive group over $\mathbb F_q$,
and $\mathbf{P}, \mathbf{Q}_1, \mathbf{Q}_2$ its parabolic subgroups. 

For $K\in D^b_{\mathrm m,\mathbf{G}}(\mathbf{G}/\mathbf{Q}_1\times\mathbf{G}/\mathbf{P})$ and
$L\in D^b_{\mathrm m,\mathbf{G}}(\mathbf{G}/\mathbf{P}\times\mathbf{G}/\mathbf{Q}_2)$, define
\begin{equation}\label{eq:app-double-convolution}
K\star L=Rp_{13*}(p_{12}^*K\otimes^{\mathbb L}p_{23}^*L),
\end{equation}
where $p_{ij}$ are the obvious projections from $\mathbf{G}/\mathbf{Q}_1\times \mathbf{G}/\mathbf{P}\times\mathbf{G}/\mathbf{Q}_2$.
This is the unshifted convolution used in the main text (see \S\ref{subsec:convolution}).

Consider
\begin{equation}\label{eq:app-twisted-diagram}
\mathbf{G}/\mathbf{P}\times \mathbf{G}/\mathbf{Q}_2\xleftarrow{\ a\ }\mathbf{G}\times \mathbf{G}/\mathbf{Q}_2
\xrightarrow{\ b\ }\mathbf{G}\times^{\mathbf{P}}\mathbf{G}/\mathbf{Q}_2
\xrightarrow{\ m\ }\mathbf{G}/\mathbf{Q}_2,
\end{equation}
where
$$
a(h,z)=(h\mathbf{P},z),\quad b(h,z)=[h,z],\quad m([h,z])=hz.
$$
Here $\mathbf{Q}_1\times \mathbf{P}$ acts on $\mathbf{G}\times \mathbf{G}/\mathbf{Q}_2$ by
$(p,r)(h,z)=(phr^{-1},rz)$ and on $\mathbf{G}/\mathbf{P}\times\mathbf{G}/\mathbf{Q}_2$ by
$(p,r)(y,z)=(py,rz)$. The map $b$ is the quotient by $\mathbf{P}$,
and the remaining $\mathbf{Q}_1$-action on $\mathbf{G}\times^{\mathbf{P}} \mathbf{G}/\mathbf{Q}_2$ is the left multiplication on the first factor.
For $\mathcal F\in D^b_{\mathrm m,\mathbf{Q}_1}(\mathbf{G}/\mathbf{P})$ and
$\mathcal H\in D^b_{\mathrm m,\mathbf{P}}(\mathbf{G}/\mathbf{Q}_2)$, let
$\mathcal F\widetilde\boxtimes\mathcal H\in D^b_{\mathrm m,\mathbf{Q}_1}(\mathbf{G}\times^{\mathbf{P}} \mathbf{G}/\mathbf{Q}_2)$ be
the unshifted twisted external tensor product
(cf. \cite[\S7.2]{Ac21}), characterized by equivariant descent:
$$
b^*(\mathcal F\widetilde\boxtimes\mathcal H)
\simeq a^*(\mathcal F\boxtimes\mathcal H).
$$
Set
\begin{equation}\label{eq:app-single-convolution}
\mathcal F*_{\mathbf{P}}\mathcal H
=Rm_*(\mathcal F\widetilde\boxtimes\mathcal H).
\end{equation}
The $\mathbf{G}$-equivariant isomorphism $\mathbf{G}\times^{\mathbf{P}} \mathbf{G}/\mathbf{Q}_2\simeq\mathbf{G}/\mathbf{P}\times\mathbf{G}/\mathbf{Q}_2$, $[h,z]\mapsto(h\mathbf{P},hz)$,
identifies $m$ with the second projection. Hence $m$ is proper.

For parabolic subgroups $\mathbf{H}$ and $\mathbf{J}$,
let $i_{\mathbf{H},\mathbf{J}}:\mathbf{G}/\mathbf{J}\to \mathbf{G}/\mathbf{H}\times \mathbf{G}/\mathbf{J}:x\mapsto(\mathbf{H},x)$. We use the
unshifted form of the equivariant-descent equivalence in
\cite[Theorem~6.5.10]{Ac21}:
\begin{equation}\label{eq:app-raw-restriction}
\mathsf R_{\mathbf{H},\mathbf{J}}=i_{\mathbf{H},\mathbf{J}}^*\circ\operatorname{For}^{\mathbf{G}}_{\mathbf{H}}:
D^b_{\mathrm m,\mathbf{G}}(\mathbf{G}/\mathbf{H}\times \mathbf{G}/\mathbf{J})\to
D^b_{\mathrm m,\mathbf{H}}(\mathbf{G}/\mathbf{J}).
\end{equation}
\begin{prop}\label{prop:app-model-comparison}
 There is a natural isomorphism
\begin{equation}\label{eq:app-model-comparison}
\mathsf R_{\mathbf{Q}_1,\mathbf{Q}_2}(K\star L)
\simeq
\mathsf R_{\mathbf{Q}_1,\mathbf{P}}(K)*_{\mathbf{P}}\mathsf R_{\mathbf{P},\mathbf{Q}_2}(L), 
\end{equation}
which is compatible with the Frobenius structures.
\end{prop}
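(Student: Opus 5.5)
The plan is to reduce both sides of \eqref{eq:app-model-comparison} to the same pushforward from one common space by base change. Throughout, base change is used in the equivariant mixed category, and every morphism involved is defined over $\mathbb F_q$.

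\textbf{Step 1: the left-hand side.} Write $X=\mathbf{G}/\mathbf{Q}_1\times\mathbf{G}/\mathbf{P}\times\mathbf{G}/\mathbf{Q}_2$. Since $p_{13}$ is proper, proper base change along $i_{\mathbf{Q}_1,\mathbf{Q}_2}$ identifies $\mathsf R_{\mathbf{Q}_1,\mathbf{Q}_2}(K\star L)$ with
\[
Rq_*\bigl(j^*(p_{12}^*K\otimes^{\mathbb L}p_{23}^*L)\bigr).
\]
Here $j:\{\mathbf{Q}_1\}\times\mathbf{G}/\mathbf{P}\times\mathbf{G}/\mathbf{Q}_2\hookrightarrow X$ is the inclusion of the fibre, and $q$ is the projection of that fibre to $\mathbf{G}/\mathbf{Q}_2$. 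Forgetting equivariance from $\mathbf{G}$ to $\mathbf{Q}_1$ commutes with all these functors. Pullback is monoidal, so the integrand is
\[
(p_1^*\mathsf R_{\mathbf{Q}_1,\mathbf{P}}K)\otimes^{\mathbb L}(\mathrm{pr}_{23}^*L)\big|_{\mathbf{G}/\mathbf{P}\times\mathbf{G}/\mathbf{Q}_2}
\]
on $\mathbf{G}/\mathbf{P}\times\mathbf{G}/\mathbf{Q}_2$, regarded as a $\mathbf{Q}_1$-equivariant complex, with $p_1$ the first projection.

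\textbf{Step 2: the right-hand side.} Use the isomorphism
\[
\psi:\mathbf{G}\times^{\mathbf{P}}\mathbf{G}/\mathbf{Q}_2\to\mathbf{G}/\mathbf{P}\times\mathbf{G}/\mathbf{Q}_2,\qquad [h,z]\mapsto(h\mathbf{P},hz).
\]
It is $\mathbf{G}$-equivariant and carries $m$ to the second projection, which is $q$ above. It therefore suffices to exhibit a natural isomorphism
\[
\mathsf R_{\mathbf{Q}_1,\mathbf{P}}(K)\,\widetilde\boxtimes\,\mathsf R_{\mathbf{P},\mathbf{Q}_2}(L)\;\simeq\;\psi^*\bigl(p_1^*\mathsf R_{\mathbf{Q}_1,\mathbf{P}}K\otimes^{\mathbb L}L\bigr),
\]
where on the right $L$ is viewed as a $\mathbf{Q}_1$-equivariant complex by forgetting. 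Since $b$ is a $\mathbf{P}$-torsor, $b^*$ is fully faithful on equivariant objects, and the twisted product is characterized by $b^*$. So it is enough to compare $b^*$ of both sides on $\mathbf{G}\times\mathbf{G}/\mathbf{Q}_2$.

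\textbf{Step 3: the comparison after $b^*$.} On $\mathbf{G}\times\mathbf{G}/\mathbf{Q}_2$ we have $\psi\circ b(h,z)=(h\mathbf{P},hz)$. The first factor matches $a^*(\mathcal F\boxtimes 1)$ on the nose. The second factor requires
\[
(h,z)\mapsto L|_{(h\mathbf{P},hz)}\;\simeq\;L|_{(\mathbf{P},z)}=\mathsf R_{\mathbf{P},\mathbf{Q}_2}(L)_z .
\]
This is exactly the $\mathbf{G}$-equivariance of $L$, applied to the action map $(h,(\mathbf{P},z))\mapsto h\cdot(\mathbf{P},z)$. It is the same mechanism that makes $\mathsf R_{\mathbf{P},\mathbf{Q}_2}$ an equivalence in \cite[Theorem~6.5.10]{Ac21}. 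The remaining checks are the following.
\begin{enumerate}
\item The resulting isomorphism is compatible with the $\mathbf{P}$-descent datum, so that it descends along $b$. This follows from the cocycle condition for the $\mathbf{G}$-equivariant structure of $L$.
\item It is $\mathbf{Q}_1$-equivariant.
\end{enumerate}
I expect this descent bookkeeping in the equivariant derived category to be the main obstacle. I would handle it by working with the simplicial (acyclic resolution) model of equivariant complexes, in which the equivariance isomorphism is a genuine sheaf isomorphism on $\mathbf{G}\times Y$ satisfying the cocycle identity. Naturality in $K$ and $L$ is then clear, because every step is a composite of natural base-change and equivariance isomorphisms.

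\textbf{Step 4: Frobenius compatibility.} The groups $\mathbf{G},\mathbf{P},\mathbf{Q}_i$, the maps $a,b,m,\psi,i_{\mathbf{H},\mathbf{J}}$, and the equivariance isomorphisms of the mixed (Weil) complexes are all defined over $\mathbb F_q$. The base-change and monoidal isomorphisms commute with the Weil structures. Hence the composite isomorphism intertwines the Frobenius structures.
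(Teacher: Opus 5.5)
Your proposal is correct and is essentially the argument the paper has in mind. The paper only sketches it (equivariant descent, proper base change and the projection formula over $\mathbb F_q$, following the hints to \cite[Exercise~7.7.1]{Ac21}), and your Steps~1--4 carry it out; because you identify the integrand on the fibre $\{\mathbf Q_1\}\times\mathbf G/\mathbf P\times\mathbf G/\mathbf Q_2$ before pushing forward, your arrangement does not even need the projection formula.

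The descent bookkeeping you flag in Step~3 can be bypassed. The isomorphism $\psi^*L\simeq\underline{\Ql}_{\mathbf G/\mathbf P}\widetilde\boxtimes\mathsf R_{\mathbf P,\mathbf Q_2}(L)$ is precisely the statement that induction is quasi-inverse to $\mathsf R_{\mathbf P,\mathbf Q_2}$ \cite[Theorem~6.5.10]{Ac21}. Combining it with the monoidality of the equivalence $b^*$ and the equality $p_1\circ\psi\circ b=p_1\circ a$ of $(\mathbf Q_1\times\mathbf P)$-equivariant maps gives $\mathcal F\widetilde\boxtimes\mathcal H\simeq\psi^*(p_1^*\mathcal F\otimes^{\mathbb L}L)$ directly in the equivariant mixed category, with no hand-checked cocycle conditions (which in the Bernstein--Lunts category would not by themselves produce an isomorphism).
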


\begin{proof}
This is a mixed-sheaf analog of \cite[Exercise~7.7.1]{Ac21} for
partial flag varieties, with our unshifted conventions. The proof
follows the four steps outlined in
the hints to that exercise, using equivariant descent, proper base
change, and the projection formula. These constructions are defined
over $\mathbb F_q$. We omit the details.
\end{proof}

We use unshifted equivariant descent and convolution.
Thus \eqref{eq:app-model-comparison} involves no shift
or Tate twist.

\begin{prop}\label{prop:app-purity}
The equivalence $\mathsf R_{\mathbf{H},\mathbf{J}}$ preserves and reflects
purity of any given weight. Both convolutions $*_{\mathbf{P}}$ and $\star$ preserve purity of weight zero.
\end{prop}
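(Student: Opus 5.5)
The plan has three parts: first purity for $\mathsf R_{\mathbf{H},\mathbf{J}}$, then purity for $*_{\mathbf{P}}$ via Deligne's Weil II, and finally purity for $\star$ by transport through Proposition~\ref{prop:app-model-comparison}.

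\textbf{Step 1: $\mathsf R_{\mathbf{H},\mathbf{J}}$ preserves and reflects purity.} The functor is $i_{\mathbf{H},\mathbf{J}}^*$ composed with a forgetful functor, and the forgetful functor does not change the underlying Weil complex. I will use the $\mathbf{G}$-equivariant isomorphism
\[
\mathbf{G}\times^{\mathbf{H}}\mathbf{G}/\mathbf{J}\simeq \mathbf{G}/\mathbf{H}\times\mathbf{G}/\mathbf{J}.
\]
Under it, an object $K$ corresponds to $\mathsf R_{\mathbf{H},\mathbf{J}}(K)$ by the equivalence of \cite[Theorem~6.5.10]{Ac21}, and this correspondence is defined over $\mathbb F_q$. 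The pullback of $K$ along the smooth surjection $\mathbf{G}\times\mathbf{G}/\mathbf{J}\to\mathbf{G}\times^{\mathbf{H}}\mathbf{G}/\mathbf{J}$ is isomorphic to $\underline{\Ql}_{\mathbf{G}}\boxtimes\mathsf R_{\mathbf{H},\mathbf{J}}(K)$. Smooth pullback of relative dimension $e$, taken with the normalized shift $\langle e\rangle$, preserves and reflects pointwise weights, both $*$- and $!$-weights. The constant sheaf $\underline{\Ql}_{\mathbf{G}}\langle\dim\mathbf{G}\rangle$ is pure of weight $0$ on the smooth variety $\mathbf{G}$, so purity of weight $w$ of $K$ is equivalent to purity of weight $w$ of $\mathsf R_{\mathbf{H},\mathbf{J}}(K)$. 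Since purity is checked after forgetting equivariance, the equivariant setting adds nothing.

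\textbf{Step 2: $*_{\mathbf{P}}$ preserves weight-zero purity.} Let $\mathcal F$ and $\mathcal H$ be pure of weight $0$. By the defining property $b^*(\mathcal F\widetilde\boxtimes\mathcal H)\simeq a^*(\mathcal F\boxtimes\mathcal H)$:
\begin{itemize}
\item $\mathcal F\boxtimes\mathcal H$ is pure of weight $0$, since external products add weights.
\item $a$ and $b$ are smooth and surjective (a $\mathbf{P}$-torsor and a quotient map, respectively) with relative dimensions $\dim\mathbf{P}$ and $\dim\mathbf{P}$. Hence the same weight-reflection argument as in Step 1 applies.
\end{itemize}
The unshifted conventions line up, so $\mathcal F\widetilde\boxtimes\mathcal H$ is pure of weight $0$. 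Finally, $m$ is proper, as noted before the proposition, so $Rm_*$ preserves purity by Deligne's Weil II.

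\textbf{Step 3: $\star$ preserves weight-zero purity.} Let $K$ and $L$ be pure of weight $0$. Proposition~\ref{prop:app-model-comparison} identifies $\mathsf R_{\mathbf{Q}_1,\mathbf{Q}_2}(K\star L)$, compatibly with Frobenius, with $\mathsf R_{\mathbf{Q}_1,\mathbf{P}}(K)*_{\mathbf{P}}\mathsf R_{\mathbf{P},\mathbf{Q}_2}(L)$. The two inputs on the right are pure of weight $0$ by Step 1, so the right-hand side is pure of weight $0$ by Step 2. Reflection in Step 1 then gives that $K\star L$ is pure of weight $0$.

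\textbf{Main obstacle.} I expect the delicate point to be the shift and twist bookkeeping in the unshifted conventions. Smooth pullback preserves weights only after the normalized shift $\langle\text{rel.\ dim}\rangle$. I must check that the relative dimensions of $a$, $b$ and of the torsor in Step 1 either cancel or contribute matching normalized shifts on both sides, so that weight $0$ goes to weight $0$ rather than to weight $\pm\dim\mathbf{P}$. If they do not cancel, the statement should be read as purity of a weight that can be renormalized to $0$, and I would adjust $\widetilde\boxtimes$ by $\langle\dim\mathbf{P}\rangle$ accordingly.
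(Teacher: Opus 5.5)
Your proposal is correct and follows essentially the same route as the paper. Your composite $\mathbf{G}\times\mathbf{G}/\mathbf{J}\to\mathbf{G}\times^{\mathbf{H}}\mathbf{G}/\mathbf{J}\simeq\mathbf{G}/\mathbf{H}\times\mathbf{G}/\mathbf{J}$ is the paper's map $c(g,z)=(g\mathbf{H},gz)$, with $c^*K\simeq p^*\mathsf R_{\mathbf{H},\mathbf{J}}(K)$ coming from the equivariant structure. Your Steps 2 and 3 match the paper's use of $b^*(\mathcal F\widetilde\boxtimes\mathcal H)\simeq a^*(\mathcal F\boxtimes\mathcal H)$, properness of $m$, and Proposition~\ref{prop:app-model-comparison}.

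The obstacle you flag is not a real one. Since $\langle e\rangle=[e](e/2)$ is weight-neutral, smooth pullback preserves and reflects purity of weight $w$ whether or not you normalize, and $a,b$ both have relative dimension $\dim\mathbf{P}$ anyway. So no renormalization of $\widetilde\boxtimes$ is needed.
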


\begin{proof}
Put $\mathbf{W}=\mathbf{G}/\mathbf{H}\times\mathbf{G}/\mathbf{J}$.
Let $\mu:\mathbf{G}\times \mathbf{W}\to \mathbf{W}$ be the diagonal action,
and let $\operatorname{pr}_2:\mathbf{G}\times \mathbf{W}\to\mathbf{W}$
be the projection.
Define
$$
\begin{aligned}
j:\mathbf{G}\times\mathbf{G}/\mathbf{J}
&\longrightarrow \mathbf{G}\times \mathbf{W},
& (g,z)&\longmapsto (g,(\mathbf{H},z)),\\
c:\mathbf{G}\times\mathbf{G}/\mathbf{J}
&\longrightarrow \mathbf{W},
& (g,z)&\longmapsto (g\mathbf{H},gz).
\end{aligned}
$$
Let $p:\mathbf{G}\times\mathbf{G}/\mathbf{J}\to\mathbf{G}/\mathbf{J}$
be the projection.
All these maps are defined over $\mathbb F_q$.
The following diagrams commute:
\[
\begin{tikzcd}[column sep=2em,row sep=2em]
\mathbf{G}\times\mathbf{G}/\mathbf{J}
  \arrow[r,"j"] \arrow[d,"c"']
  & \mathbf{G}\times \mathbf{W} \arrow[d,"\mu"]\\
\mathbf{W} \arrow[r,equal]
  & \mathbf{W}
\end{tikzcd}
\qquad
\begin{tikzcd}[column sep=2em,row sep=2em]
\mathbf{G}\times\mathbf{G}/\mathbf{J}
  \arrow[r,"j"] \arrow[d,"p"']
  & \mathbf{G}\times\mathbf{W} \arrow[d,"\operatorname{pr}_2"]\\
\mathbf{G}/\mathbf{J} \arrow[r,"i_{\mathbf{H},\mathbf{J}}"']
  & \mathbf{W} .
\end{tikzcd}
\]

For $K\in D^b_{\mathrm m,\mathbf{G}}(\mathbf W)$,
its $\mathbf{G}$-equivariant structure gives an isomorphism compatible with the Frobenius:
\[
\mu^*K\simeq\operatorname{pr}_2^*K.
\]
Pulling back along $j$ and using the diagrams gives
\[
c^*K
\simeq j^*\mu^*K
\simeq j^*\operatorname{pr}_2^*K
\simeq p^*i_{\mathbf{H},\mathbf{J}}^*K
\simeq p^*\mathsf R_{\mathbf{H},\mathbf{J}}(K).
\]
Both $c$ and $p$ are smooth and surjective.
Hence smooth pullback and descent show that $K$ is pure
of weight $w$ if and only if $\mathsf R_{\mathbf{H},\mathbf{J}}(K)$ is
pure of weight $w$.

Now let $\mathcal F$ and $\mathcal H$ be pure of
weight zero under the convolution $*_{\mathbf{P}}$.
Their external product is pure of weight zero.
The defining isomorphism
\[
b^*(\mathcal F\widetilde\boxtimes\mathcal H)
\simeq a^*(\mathcal F\boxtimes\mathcal H)
\]
and smooth pullback and descent imply that
$\mathcal F\widetilde\boxtimes\mathcal H$ is pure
of weight zero.
Since $m$ is proper,
\[
\mathcal F*_{\mathbf{P}}\mathcal H
=Rm_*(\mathcal F\widetilde\boxtimes\mathcal H)
\]
is also pure of weight zero.
Then the assertion for the convolution $\star$ follows from
Proposition~\ref{prop:app-model-comparison}
and the first assertion.
\end{proof}

\subsection{Proof of Lemma~\ref{prop:semis-convolution}}
\label{app:proof-semis-convolution}

For an $\mathbb{F}_q$ variety $\mathbf{X}$, we use the unshifted intersection complex $\IC_{\mathbf{X}}$, whose restriction
to the smooth dense stratum is $\Ql$ in degree zero, with its canonical Frobenius structure. The unshifted equivalences
$\mathsf R_{\mathbf{H},\mathbf{J}}$ identify the corresponding unshifted IC complexes
and intertwine the two convolutions by
Proposition~\ref{prop:app-model-comparison}. Thus it suffices to work for the convolution $*_\mathbf{P}$ instead of $\star$.

Choose standard representatives $\mathbf{P}, \mathbf{Q}_1,\mathbf{Q}_2$ containing a fixed Borel subgroup $\mathbf{B}$. Write $X_\mathbf{B}(w)=\overline{\mathbf{B}w\mathbf{B}/\mathbf{B}}$ for the Schubert variety corresponding to $w\in W$, and
$\pi_H:\mathbf{G}/\mathbf{B}\to \mathbf{G}/\mathbf{H}$ for a standard parabolic $\mathbf{H}$.

We use the well-known Tate decomposition for the product $*_\mathbf{B}$,
i.e., the finite-type case of \cite[Theorem~2.2.7]{dCHL18} with both
parabolic subgroups equal to $\mathbf{B}$:
\begin{equation}\label{eq:app-full-flag-tate}
\IC_{X_\mathbf{B}(u)}*_\mathbf{B}\IC_{X_\mathbf{B}(v)}
\simeq\bigoplus_{w,j}
\IC_{X_\mathbf{B}(w)}(-j)[-2j]^{\oplus m_{w,j}},
\qquad m_{w,j}\in\mathbb N.
\end{equation}
The isomorphism is over $\mathbb F_q$, and the IC complexes have their canonical
Frobenius structures.

First, $R\pi_{\mathbf{H}*}\IC_{X_\mathbf{B}(w)}$ also has an IC decomposition with
Tate multiplicity spaces. In fact, there is a Cartesian diagram
$$
\begin{tikzcd}[column sep=large,row sep=large]
\mathbf{G}/\mathbf{B}\times_{\mathbf{G}/\mathbf{H}}\mathbf{G}/\mathbf{B} \arrow[r,"r_1"] \arrow[d,"r_2"']
  &\mathbf{G}/\mathbf{B}\arrow[d,"\pi_\mathbf{H}"]\\
\mathbf{G}/\mathbf{B}\arrow[r,"\pi_\mathbf{H}"']&\mathbf{G}/\mathbf{H} .
\end{tikzcd}
$$
Identify its upper-left corner with $\mathbf{G}\times^\mathbf{B}(\mathbf{H}/\mathbf{B})$ by
$[g,h\mathbf{B}]\mapsto(g\mathbf{B},gh\mathbf{B})$. Since $\mathbf{H}/\mathbf{B}=X_\mathbf{B}(w_\mathbf{H})$ is smooth, where $w_\mathbf{H}$
is the longest element of the Weyl group $W_\mathbf{H}$ of $\mathbf{H}$, proper base change gives
\begin{equation}\label{eq:app-projection-push-pull}
\pi_\mathbf{H}^*R\pi_{\mathbf{H}*}K
\simeq Rr_{2*}r_1^*K
\simeq K*_\mathbf{B}\underline{\Ql}_{\mathbf{H}/\mathbf{B}}.
\end{equation}
For $K=\IC_{X_\mathbf{B}(w)}$, put $C=R\pi_{\mathbf{H}*}K$.
By the decomposition theorem over $\mathbb F_q$
\cite[Proposition~5.1.1]{dCHL18}, $B$-equivariance and
connectedness of orbit stabilizers, we have
$$
C\simeq\bigoplus_{D,n}\IC_D\otimes V_{D,n}[-n],
$$where $D$ ranges over the generalized Schubert varieties of $\mathbf{G}/\mathbf{H}$.
Choose $x\in D^\circ(\mathbb F_q)$ and a rational lift
$y\in\pi_\mathbf{H}^{-1}(x)(\mathbb F_q)$, where $D^\circ$ is the smooth locus of $D$.
Since $(\IC_D)_{x}=\Ql$, $V_{D,n}$ is a direct summand of
$$
\mathsf H^n(C)_{x}
\simeq\mathsf H^n(\pi_\mathbf{H}^*C)_{y}.
$$
By \eqref{eq:app-full-flag-tate},
\eqref{eq:app-projection-push-pull} and the stalk properties
of IC complexes of ordinary Schubert variety, this space vanishes for odd $n$
and is a direct sum of $\Ql(-n/2)$ for even $n$. Moreover, $R\pi_{\mathbf{H}*}$ preserves the weight.
Thus $C$ has a split Tate IC decomposition of the form
\eqref{eq:app-full-flag-tate}.

Now let $\mathbf{X}_1\subset \mathbf{G}/\mathbf{P}$ be a $\mathbf{Q}_1$-orbit closure and
$\mathbf{X}_2\subset \mathbf{G}/\mathbf{Q}_2$ a $P'$-orbit closure. Let $\widetilde{\mathbf{X}}_1\subset \mathbf{G}$
be the inverse image of $\mathbf{X}_1$. Denote $\boldsymbol{\mathcal{Z}} =\widetilde {\mathbf{X}}_1\times^{\mathbf{P}}\mathbf{X}_2$. The map
$$m:\boldsymbol{\mathcal{Z}}\to \mathbf{G}/\mathbf{Q}_2,\quad [g,z]\mapsto gz
$$
is projective, because $\boldsymbol{\mathcal Z}$ is a closed subvariety of
$\mathbf{G}/\mathbf{P}\times \mathbf{G}/\mathbf{Q}_2$ under $[g,z]\mapsto(g\mathbf{P},gz)$.
Compatibility of intersection complexes with external products and
smooth pullback gives
\begin{equation}\label{eq:app-twisted-ic}
\IC_{\mathbf{X}_1}\widetilde\boxtimes\IC_{\mathbf{X}_2}\simeq\IC_{\boldsymbol{\mathcal Z}}
\quad\text{and}\quad
\IC_{\mathbf{X}_1}*_{\mathbf{P}}\IC_{\mathbf{X}_2}\simeq Rm_*\IC_{\boldsymbol{\mathcal Z}};
\end{equation}
cf. \cite[Lemma~4.1.2 \& the proof of Theorem~2.2.1]{dCHL18}.

Choose $w_1,w_2$ such that
$\pi_{\mathbf{P}}^{-1}(\mathbf{X}_1)=\mathbf{X}_\mathbf{B}(w_1)$ and $\pi_{\mathbf{Q}_2}^{-1}(\mathbf{X}_2)=\mathbf{X}_\mathbf{B}(w_2)$.
They are the longest representatives of the corresponding double
cosets. Let $\widetilde{\mathbf{X}}_\mathbf{B}(w_1)\subset\mathbf{G}$ be the inverse image of $\mathbf{X}_\mathbf{B}(w_1)$. Denote
$\boldsymbol{\mathcal Y}=\widetilde{\mathbf{X}}_{\mathbf{B}}(w_1)\times^{\mathbf{B}}\mathbf{X}_\mathbf{B}(w_2)$, which satisfies the following commutative
diagram
\begin{equation}\label{eq:app-full-flag-reduction}
\begin{tikzcd}[column sep=large,row sep=large]
\boldsymbol{\mathcal Y}\arrow[r,"\alpha"]\arrow[d,"p_\mathbf{B}"']
  &\boldsymbol{\mathcal Z}\arrow[d,"m"]\\
\mathbf{G}/\mathbf{B}\arrow[r,"\pi_{\mathbf{Q}_2}"']&\mathbf{G}/\mathbf{Q}_2 ,
\end{tikzcd}
\qquad
\begin{gathered}
\alpha([g,h\mathbf{B}])=[g,h\mathbf{Q}_2],\\
p_\mathbf{B}([g,h\mathbf{B}])=gh\mathbf{B} .
\end{gathered}
\end{equation}
By the convolution formula \eqref{eq:app-full-flag-tate} and the preceding projection argument, respectively, we have
\begin{align*}
Rp_{\mathbf{B}*}\IC_{\boldsymbol{\mathcal Y}}
&\simeq\bigoplus_{w,j}\IC_{X_\mathbf{B}(w)}(-j)[-2j]^{\oplus m_{w,j}},\\
R\pi_{\mathbf{Q}_2*}\IC_{\mathbf{X}_\mathbf{B}(w)}
&\simeq\bigoplus_{D,a}\IC_D(-a)[-2a]^{\oplus b_{w,D,a}},
\end{align*}
where $D$ ranges over Schubert varieties in $\mathbf{G}/\mathbf{Q}_2$.
Composing these two decompositions gives a split Tate IC
decomposition of $R\pi_{\mathbf{Q}_2*}Rp_{\mathbf{B}*}\IC_{\boldsymbol{\boldsymbol{\mathcal Y}}}$.
Since $\alpha$ is proper and surjective, combining this with
\cite[Theorem~2.1.2]{dCHL18} applied to $\alpha$ gives a split Tate IC
decomposition of $M=Rm_*\IC_{\boldsymbol{\mathcal Z}}$.
Since all morphisms in \eqref{eq:app-full-flag-reduction} are
$\mathbf{Q}_1$-equivariant, all summands in the above decomposition are
$\mathbf{Q}_1$-equivariant.
Moreover, convolution preserves purity of weight zero by Proposition~\ref{prop:app-purity}. Thus, we arrive at Lemma~\ref{prop:semis-convolution} by applying Proposition~\ref{prop:app-model-comparison} over $\mathbb F_{q^2}$.

\begin{rem}
Similarly to the proof of Lemma~\ref{prop:semis-convolution}, we obtain the known (though possibly never written down) result that the geometric constructions in \cite{BLM90,BKLW18,FL15} admit the usual interpretation in terms of Lusztig-type convolution categories generated by split equivariant intersection cohomology complexes and their normalized shifts. 
With appropriate normalizations, their graded split Grothendieck groups recover the corresponding quantum Schur algebras and their canonical bases. As an application, a normalized Verdier duality similar to that in \S\ref{subsec:bar-involution} gives the bar involutions on these quantum Schur algebras. This shows that the bar involution defined in \cite{BKLW18} is indeed an $\mathbb Z$-algebra involution, whereas the statement therein only asserts that the bar is an anti-linear involution.
\end{rem}

\section{Negative coefficients and IC descents}\label{AppendixB}
This appendix shows, taking the unitary case as an example, that the positive descent objects do not form a monoidal subcategory. The negative descent objects and exchange objects occur naturally in the convolution of two positive ones. This phenomenon forces us to consider the category $\mathrm{Semis}^{\mathfrak{c},W}_\square$, which contains negative and exchange objects to categorify the iSchur algebras in this paper. Throughout, we work over $\overline{\mathbb F_q}$. All varieties are
equipped with $F$-structures, and all complexes are equipped with Weil
structures with respect to $F$.
\subsection{A small resolution in the unitary case}
\label{sec:fc-common}

In this section, we take $n=1$, $d=2$ and $D=2d=4$ in the unitary $\jmath$-case.
The partial flag varieties
\[
V=\overline{\mathbb F_q}^4,\qquad X=\Flag(1,3;V),\qquad Y=\Gr(2,V)
\]
have types $(1,2,1)$ and $(2,0,2)$, dimensions $5$ and $4$, and split
$\Fqq$-forms $\Flag(1,3;\Fqq^4)$ and $\Gr(2,\Fqq^4)$.
The Frobenius is given by $F(L,H)=(H^\perp, L^\perp)$ and
$F(U)=U^\perp$ for $(L,H)\in X$ and $U\in Y$. 

Put
\begin{equation}\label{eq:fc-T}
T=\{((L,H),(L',H'))\in X\times X \mid L\subset H',\ L'\subset H\},
\end{equation}
which is the closure of the orbit $\mathcal{O}_{A_T}$ with $A_T=\sm{0&1&0\\1&0&1\\0&1&0}$,
and let $\pi\colon\widetilde T\to T$ forget $U$, where
\begin{equation}\label{eq:fc-resolution}
\widetilde T=\{((L,H),(L',H'),U)\in X\times X\times Y \mid
 L+L'\subset U\subset H\cap H'\}.
\end{equation}

Denote $\widetilde{\mathrm{IC}}^{\pm}_T:=\widetilde{\mathrm{IC}}^\pm_{A_T}$ for short. We shall always use an orbit closure (instead of the associated index matrix) as a subscript to denote the corresponding IC complex.

\begin{prop}\label{prop:fc-small}
The map $\pi$ is a nontrivial $F$-compatible small resolution.
Moreover, $\dim T=\dim\widetilde T=8$, and
\begin{equation}\label{eq:fc-small-IC}
R\pi_*\const{\widetilde T}\simeq\tIC_{T}^+
\end{equation}
equips with the canonical descent structures.
\end{prop}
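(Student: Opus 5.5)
The plan is to verify four things in order: that $\widetilde T$ is smooth of dimension $8$ and $\pi$ is $F$-equivariant; that $\pi$ is birational onto $T$ with $\dim T=8$; that $\pi$ is small, by analysing its fibres; and finally that the pushforward, with its induced descent datum, is the positive descent $\tIC_T^+$ singled out in Proposition~\ref{prop:main}(2).

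\emph{Smoothness, dimension and $F$-compatibility.} Project $\widetilde T\to Y=\Gr(2,V)$ by $((L,H),(L',H'),U)\mapsto U$. Once $U$ is fixed, the data $L\subset U$, $L'\subset U$, $H\supset U$ and $H'\supset U$ are four independent choices of a point of $\mathbb P^1$. Hence $\widetilde T$ is a $(\mathbb P^1)^4$-bundle over the smooth $4$-dimensional $Y$, so it is smooth, projective and irreducible of dimension $8$.

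For $F$-compatibility, let $F$ act on $\widetilde T$ by
\[
((L,H),(L',H'),U)\mapsto \bigl((H^\perp,L^\perp),(H'^\perp,L'^\perp),U^\perp\bigr),
\]
using the left orthogonal of Lemma~\ref{F-map}. Taking orthogonals reverses inclusions and sends $+$ to $\cap$. Therefore the condition $L+L'\subset U\subset H\cap H'$ becomes $H^\perp+H'^\perp\subset U^\perp\subset L^\perp\cap L'^\perp$, which is exactly the defining condition for the image point. The same argument shows that $F$ preserves $T$, and $\pi\circ F=F\circ\pi$ holds by construction.

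\emph{Birationality and smallness.} A point of $T$ satisfies $L+L'\subset H\cap H'$, so $\pi$ is surjective onto $T$. I would stratify $T$ according to whether $L=L'$ and whether $H=H'$.
\begin{itemize}
\item If $L\neq L'$, then $U=L+L'$ is forced. This covers both the open stratum, where also $H\neq H'$ and $U=L+L'=H\cap H'$, and the stratum $L\neq L'$, $H=H'$.
\item If $L=L'$ but $H\neq H'$, then $H\cap H'$ is $2$-dimensional, so $U=H\cap H'$ is forced.
\item If $L=L'$ and $H=H'$, which is the diagonal $\Delta X$ of dimension $5$, then $U$ ranges over the $2$-planes with $L\subset U\subset H$, a $\mathbb P^1$.
\end{itemize}
So $\pi$ is an isomorphism off the diagonal, in particular birational, and $\dim T=8$. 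Its only positive-dimensional fibres lie over $\Delta X$, which has codimension $3>2\cdot 1$. Hence $\pi$ is small, and it is nontrivial because these fibres are $\mathbb P^1$'s. I would also check that the open stratum is a single orbit with matrix $A_T$ and that $\dim\boldsymbol{\mathcal O}_{A_T}=8$ via Lemma~\ref{lem:d(A)}; this identifies $T$ as the closure of $\boldsymbol{\mathcal O}_{A_T}$.

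\emph{Identification with $\tIC_T^+$.} Since $\widetilde T$ is smooth and $\pi$ is small and proper, the standard small-resolution criterion combined with BBDG gives $R\pi_*\const{\widetilde T}\simeq \tIC_T$, with the unshifted normalization \eqref{normal-IC}. The canonical $F$-Weil structure on $\const{\widetilde T}$ satisfies the cocycle condition over $\mathbb F_{q^2}$. Pushing it forward, using $\pi\circ F=F\circ\pi$ and proper base change, gives a descent datum on $\tIC_T$. By Proposition~\ref{prop:main}(2) this descent is $\tIC_T^{\pm}$, and the sign is read off from the trace at a point $x_0\in\mathcal O_{A_T}$. There the fibre is a single rational point, so the trace on the stalk $\Ql$ is $1$, which forces the $+$ sign.

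I expect the main obstacle to be the bookkeeping that the pushforward Weil structure genuinely satisfies the cocycle condition $\varphi\circ F^*\varphi=\Psi$. This should follow because $F^2=F_{q^2}$ on $\widetilde T$ and on its split $\mathbb F_{q^2}$-form, but it has to be checked against the identification $(V^\perp)^\perp=F_{q^2}(V)$ from the remark after Lemma~\ref{F-map}.
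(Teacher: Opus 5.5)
Your proposal is correct and follows the paper's proof essentially step by step: the $(\mathbb P^1)^4$-bundle over $Y$, the $\perp$-swap defining the $\mathbb F_q$-form, $\pi$ being an isomorphism off $\Delta_X$ with $\mathbb P^1$-fibres over it ($\operatorname{codim}_T\Delta_X=3>2$), and the IC extension property with the sign fixed by the trivial fibre over the dense orbit. The cocycle condition you flag is automatic: $F^2=F_{q^2}$ makes $F$ an $\mathbb F_q$-structure on $\widetilde T$ over which $\pi$ descends, so $R\pi_*$ simply carries the canonical Weil structure.
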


\begin{proof}
Over $U\in Y$, the projection $\widetilde T\to Y$ has fiber
$(\PP^1)^4$, given by $L,L'\subset U$ and $H,H'\supset U$. Hence
$\widetilde T$ is smooth and irreducible of dimension $8$.
The map $\pi$ is projective and surjective. If $L\ne L'$, then
$U=L+L'$; if $H\ne H'$, then $U=H\cap H'$. Thus $\pi$ is an isomorphism
off the diagonal $\Delta_X$, over which its fiber is
$\PP(H/L)\cong\PP^1$. As $\operatorname{codim}_T\Delta_X=3>2$, the map
$\pi$ is small. Since $\perp$ reverses inclusions and preserves
dimension two, the defining condition of $\widetilde T$ becomes
$H^\perp+(H')^\perp\subset U^\perp\subset L^\perp\cap(L')^\perp$.
Hence
\[
F((L,H),(L',H'),U)=((H^\perp,L^\perp),((H')^\perp,(L')^\perp),U^\perp)
\]
defines an $\Fq$-form of $\widetilde T$ with $\pi\circ F=F\circ\pi$.
Equation~\eqref{eq:fc-small-IC} follows from the IC extension property
; the isomorphism is the identity on
the dense orbit, so it respects the positive descent structures.
\end{proof}

We also use the smooth orbit closures
\[
R=\{((L,H),U)\in X\times Y \mid L\subset U\subset H\},
\qquad C=X\times Y,
\]
namely the closed orbit, of dimension $6$, and the closure of the open
orbit, of dimension $9$. Their indices are
$A_R=\sm{1&0&0\\1&0&1\\0&0&1}$ and $A_C=\sm{0&0&1\\1&0&1\\1&0&0}$.
The
normalized objects are
\begin{equation}\label{eq:fc-normalizations}
E_T^+=\tIC_T^+\sh3,\qquad
E_R^+=\tIC_R^+\sh1,\qquad
E_C^\pm=\tIC_C^\pm\sh4.
\end{equation}
Let $\varepsilon$ be the rank-one Weil module on which the Frobenius acts
by $-1$. It becomes trivial over $\Fqq$.

\subsection{A convolution containing negative IC descents}
\label{sec:fc-negative}

\begin{prop}\label{prop:fc-negative}
There is an isomorphism
\begin{equation}\label{eq:fc-negative-product}
E_T^+\diamond E_C^+\simeq
E_C^+\sh{\pm3}
\oplus\bigl((E_C^+)^{\oplus2}\oplus E_C^-\bigr)\sh{\pm1},
\end{equation}
where $K\sh{\pm a}$ means $K\sh a\oplus K\sh{-a}$. In particular,
$E_C^-\sh1$ and $E_C^-\sh{-1}$ occur.
\end{prop}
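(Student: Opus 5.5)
We will compute $E_T^+\diamond E_C^+$ geometrically through the small resolution of Proposition~\ref{prop:fc-small}, and keep track of the Frobenius action on multiplicity spaces, not only of the underlying complex over $\Fqq$.

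\textbf{Step 1: reduce to a pushforward.} By \eqref{eq:fc-small-IC}, $\tIC_T^+\simeq R\pi_*\const{\widetilde T}$. Since $\tIC_C^+$ is the constant sheaf on $C=X\times Y$, the base change and projection formulas give
\[
\tIC_T^+\star\tIC_C^+\simeq R q_*\const{Z}, \qquad Z=\{((L,H),(L',H'),U,U')\mid L+L'\subset U\subset H\cap H'\}\subset \widetilde T\times Y ,
\]
where $q$ sends a point of $Z$ to $((L,H),U')\in X\times Y$. This identification is $F$-equivariant: $F$ acts on $Z$ by the formula in the proof of Proposition~\ref{prop:fc-small} together with $U'\mapsto U'^\perp$. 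Forgetting $U'$, the fibre of $q$ over a point of $X\times Y$ is independent of $U'$. It is the variety
\[
Z_{(L,H)}=\{((L',H'),U)\mid L+L'\subset U\subset H\cap H'\}.
\]
Fibring $Z_{(L,H)}$ over the choice of $U$ with $L\subset U\subset H$ gives a $\PP^1$-bundle, with base $\PP(H/L)$, whose fibres are $\PP(U)\times\PP(V/U)\cong\PP^1\times\PP^1$. Hence $Z_{(L,H)}$ is smooth projective of dimension $3$, and $Rq_*\const Z\simeq \const C\otimes R\Gamma(Z_{(L,H)})$. The $F$-structure on the multiplicity space is the geometric Frobenius of the $\mathbb F_q$-form of $Z_{(L,H)}$ at a rational point $(L,H)=(L,L^\perp)$.

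\textbf{Step 2: Frobenius on $H^\bullet(Z_{(L,H)})$.} The Betti numbers are $1,3,3,1$ in degrees $0,2,4,6$. For the form, note that $F$ sends the fibre $\PP(U)\times\PP(V/U)$ at a rational $U$ to itself via $L'\mapsto H'^\perp$, $H'\mapsto L'^\perp$. So, exactly as in Lemma~\ref{F action} and Corollary~\ref{trace}, $F$ acts as $F_q\circ\sigma$ with $\sigma$ swapping the two $\PP^1$ factors. The base $\PP(H/L)$ is defined over $\mathbb F_q$ in the ordinary way. In $H^2$ we get classes $h$ (base), $\alpha$, $\beta$ (fibre), and $\sigma$ fixes $h$ and $\alpha+\beta$ and negates $\alpha-\beta$. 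Leray--Hirsch then shows that the degree-$2$ and degree-$4$ parts each decompose as $2$ copies of the trivial Weil module plus one copy of $\varepsilon$, all suitably Tate twisted. In degree $4$ the basis is $h\alpha, h\beta, \alpha\beta$ modulo the relations, and the $\varepsilon$ part is $h(\alpha-\beta)$. Degrees $0$ and $6$ are one-dimensional with trivial sign.

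\textbf{Step 3: normalize and apply descent.} Insert the shifts $\sh3$ and $\sh4$ from \eqref{eq:fc-normalizations} and the convolution convention. The degrees $0,2,4,6$ then become the normalized shifts $\sh{3},\sh{1},\sh{-1},\sh{-3}$ of $E_C$. Purity places each degree-$2k$ piece at Tate twist $(-k)$, so normalized shifts absorb the twists. Each multiplicity space is a sum of trivial and $\varepsilon$ lines. By Proposition~\ref{prop:main}(2), tensoring $\tIC_C^+$ with $\varepsilon$ gives $\tIC_C^-$, and Lemma~\ref{prop:descent-splitting} splits everything by perverse degree. Together these yield \eqref{eq:fc-negative-product}.

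\textbf{Main obstacle.} The delicate point is Step 2: pinning down that the induced action on the fibre is the swap-twisted Frobenius, with the correct sign conventions coming from the chosen positive descents $\phi_+$ on $\tIC_T$ and $\tIC_C$. One also has to confirm that the sign appears in both degrees $2$ and $4$, which is a Poincar\'e-duality-compatible check. To settle the signs, I would cross-check with a point count. The trace of $F$ on $R\Gamma(Z_{(L,H)})$ should equal the number of $\mathbb F_q$-rational points of the form, namely $(q+1)(q^2+1)$ computed via Weil restriction. Comparing this with
\[
1+(2q-q)+(2q^2-q^2)+q^3=1+q+q^2+q^3
\]
confirms exactly one $\varepsilon$ in each of degrees $2$ and $4$.
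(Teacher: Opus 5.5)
Your proposal is correct and follows essentially the same route as the paper: you reduce via the small resolution to the pushforward along the smooth projective fibration $\widetilde T\times Y\to X\times Y$ with fibre $Z_x$. This fibre is a $\PP^1\times\PP^1$-bundle over $\PP(H/L)$, not a $\PP^1$-bundle as your wording says. You then compute $F$ on $H^\bullet(Z_x)$, with $h\mapsto qh$ and the two fibre classes swapped, and read off $E_C^\pm\sh{3-2j}$ from degree $2j$.

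The point count $(q+1)(q^2+1)=1+q+q^2+q^3$ is a harmless extra consistency check that the paper does not use. For Leray--Hirsch, take $\alpha,\beta$ to be global classes, as the paper does with $a,b$: pullbacks of hyperplane classes along $(W,L',H')\mapsto L'\in\PP(H)$ and $(W,L',H')\mapsto H'/L\in\PP((V/L)^*)$.
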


\begin{proof}
Since $C=X\times Y$, the convolution is computed on
$T\times_XC=T\times Y$ via
\begin{equation}\label{eq:fc-negative-diagram}
\begin{tikzcd}[column sep=large]
\widetilde T\times Y\arrow[r,"\pi\times\mathrm{id}_Y"]
 \arrow[rr,bend right=18,"g"']
&T\times Y\arrow[r,"m"]&X\times Y,
\end{tikzcd}
\end{equation}
where $m(x,x',U)=(x,U)$ and $g(x,x',W,U)=(x,U)$. As
$\pi\times\mathrm{id}_Y$ is small and $F$-compatible and $\tIC_C^+$ is
constant, \eqref{eq:fc-small-IC} and proper base change give
$\tIC_T^+\diamond\tIC_C^+\simeq Rg_*\const{\widetilde T\times Y}$.

We have $g=g'\times\mathrm{id}_Y$, where $g'\colon\widetilde T\to X$,
$(x,x',W)\mapsto x$, is $\mathrm{GL}(V)$-equivariant. Writing
$X=\mathrm{GL}(V)/P_x$ with $P_x$ the stabilizer of $x=(L,H)$, we get
$\widetilde T\simeq\mathrm{GL}(V)\times^{P_x}Z_x$, where
\begin{equation}\label{eq:fc-negative-fiber}
Z_x=\{(W,L',H')\mid L\subset W\subset H,\ L'\subset W\subset H'\}.
\end{equation}
Since $\mathrm{GL}(V)\to X$ is Zariski-locally trivial and $Z_x$ is
smooth and projective, $g$ is a Zariski-locally trivial smooth
projective fibration with fiber $Z_x$. As $P_x$ is a connected
parabolic subgroup, the local systems $R^ig_*\const{\widetilde T\times Y}$
are geometrically constant.

Fix $x\in X^F$. Via $W$, $Z_x$ is the fiber product of two
$\PP^1$-bundles over $\PP(H/L)\cong\PP^1$, parametrizing $L'\subset W$
and $H'\supset W$. Let $h$ be the pullback of the hyperplane class of
$\PP(H/L)$, and let $a,b$ be the pullbacks of the hyperplane classes
under $(W,L',H')\mapsto L'\in\PP(H)$ and
$(W,L',H')\mapsto H'/L\in\PP((V/L)^*)$. Frobenius acts on $Z_x$ by
$(W,L',H')\mapsto(W^\perp,(H')^\perp,(L')^\perp)$, so
$F(h)=qh$, $F(a)=qb$ and $F(b)=qa$. By the projective bundle formula,
odd cohomology vanishes, and the $\varepsilon$-parts are spanned by
$a-b$ and $h(a-b)$:
\begin{equation}\label{eq:fc-negative-Weil}
\begin{tabular}{|c|c|c|c|c|}
\hline
degree&$0$&$2$&$4$&$6$\\
\hline
basis of $H^r_{\et}(Z_x)$&$1$&$h,a,b$&$ha,hb,ab$&$hab$\\
$H^r_{\et}(Z_x)$&$\Ql$&$(\Ql^{\oplus2}\oplus\varepsilon)(-1)$
 &$(\Ql^{\oplus2}\oplus\varepsilon)(-2)$&$\Ql(-3)$\\
$\mathrm{tr}(F)$&$1$&$q$&$q^2$&$q^3$\\
\hline
\end{tabular}
\end{equation}
The convolution has only full-support IC summands, with these
multiplicity spaces. The input shifts add up to $\sh7$ and
$E_C^\pm=\tIC_C^\pm\sh4$, so degree $2j$ contributes
$E_C^\pm\sh{3-2j}$. The proof is complete.
\end{proof}

\subsection{A convolution containing an exchange descent}
\label{sec:fc-exchange}

In $C=X\times Y$, the subvarieties
\[
D_1=\{((L,H),U)\mid L\subset U\},\qquad
D_2=\{((L,H),U)\mid U\subset H\}
\]
are smooth $\PP^2$-bundles over $X$ of dimension $7$ with
$D_1\cap D_2=R$. The geometric orbits are $O=C\setminus(D_1\cup D_2)$,
$D_i^\circ=D_i\setminus R$ for $i=1,2$, and $R$; the indices of
$D_1^\circ$ and $D_2^\circ$ are
$B_1=\sm{1&0&0\\0&0&2\\1&0&0}$ and $B_2=\sm{0&0&1\\2&0&0\\0&0&1}$.
Since $L\subset U$ if and only if $U^\perp\subset L^\perp$, Frobenius
exchanges $D_1^\circ$ and $D_2^\circ$, and $B_1^\vartheta=B_2$.
Put $\omega=\{B_1,B_2\}$ and $\cP_\omega=P_\omega\sh2$.

\begin{prop}\label{prop:fc-exchange}
There is an isomorphism
\begin{equation}\label{eq:fc-exchange-product}
E_T^+\diamond E_R^+
 \simeq E_C^+\oplus\cP_\omega\oplus E_R^+\sh{\pm1}.
\end{equation}
Thus an exchange summand occurs.
\end{prop}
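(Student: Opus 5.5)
We follow the pattern of Proposition~\ref{prop:fc-negative}. The first step is to use \eqref{eq:fc-small-IC}: since $\pi$ is small and $F$-compatible, $\tIC_T^+\diamond\tIC_R^+\simeq Rg_*\const{Z}$ with positive descent. Here
\[
Z=\{((L,H),(L',H'),W,U)\mid L+L'\subset W\subset H\cap H',\ L'\subset U\subset H'\}
\]
and $g\colon Z\to X\times Y$ is $(x,x',W,U)\mapsto(x,U)$. The space $Z=\widetilde T\times_X R$ is smooth of dimension $8+6-5=9$, because $R\to X$ is a smooth $\PP^1\times\PP^1$-bundle. So $g$ is a proper map from a smooth $9$-dimensional variety onto $C$, which has dimension $9$. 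The decomposition theorem, together with Lemma~\ref{prop:semis-convolution}, expresses $Rg_*\const Z$ as a sum of shifted $\tIC$'s of orbit closures $C$, $D_1$, $D_2$, $R$ with Tate multiplicity spaces. Proposition~\ref{prop:main} then converts these multiplicities into positive, negative and exchange descents.

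Next we compute the fibres of $g$ over each geometric orbit. Over $O$ we have $L\not\subset U$ and $U\not\subset H$. A point in the fibre needs $L'\subset U\cap H$ and $L+L'\subset W$ with $W$ of dimension two. This forces $L'=U\cap H$, $H'=U+L$ and $W=L+L'$, so $g$ is birational. Over $D_1^\circ$ (where $L\subset U\not\subset H$) and over $D_2^\circ$ the fibre should be $\PP^1$. Over $R$ the fibre should be a surface. We will identify it, by parametrizing $W$ and $L'$ or $H'$, as a union of two $\PP^1$-bundles over $\PP^1$ glued along a curve, or as a $\PP^1\times\PP^1$-type surface, and compute its cohomology with the Frobenius action. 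As in \eqref{eq:fc-negative-Weil}, $F$ acts on classes via $\perp$, which swaps the roles of $L'$ and $H'$, so it swaps two divisor classes.

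Then we peel off summands by degree and support. The generic part gives $\tIC_C^+$ with trivial Weil structure. Because $C$ is smooth, its stalks are $\Ql$ everywhere. The excess cohomology $H^2(\PP^1)$ over $D_i^\circ$ has dimension $1$, and $2\dim(\text{fibre})=2=\operatorname{codim}D_i$, so this is the relevant middle case. It gives summands $\tIC_{D_1}\sh{\cdot}\oplus\tIC_{D_2}\sh{\cdot}$, and since $F$ swaps $D_1^\circ$ and $D_2^\circ$ they assemble into the exchange object $P_\omega$. Over $R$ we subtract the stalks of $\tIC_C$ and of $\tIC_{D_1}\oplus\tIC_{D_2}$; both are constant, as $D_i$ is smooth. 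We then use self-duality to read off the leftover $\tIC_R\sh{\pm1}$ with trivial ($+$) Weil action. The point to check is that the two swapped classes over $R$ are fully absorbed by the exchange summand, which leaves only $F$-invariant, positive pieces.

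Last, we normalize. The input shifts total $\sh4$ (from $E_T^+=\tIC_T^+\sh3$ and $E_R^+=\tIC_R^+\sh1$). Using $E_C=\tIC_C\sh4$, $\cP_\omega=P_\omega\sh2$ and $E_R=\tIC_R\sh1$, the degrees should match \eqref{eq:fc-exchange-product}. We expect the main obstacle to be the fibre over $R$: getting its cohomology and $F$-action right, and verifying that the shift bookkeeping produces exactly $E_R^+\sh{\pm1}$ with no negative descent.
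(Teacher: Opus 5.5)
Your plan follows the paper: replace $\tIC_T^+$ by $R\pi_*\const{\widetilde T}$, push $\const{\widetilde Z}$ forward along $\widetilde Z=\widetilde T\times_X R\to X\times Y$, compute fibres over $O$, $D_1^\circ$, $D_2^\circ$, $R$, and read off descents from the Frobenius action on fibre cohomology. One small slip: $R\to X$ is a $\PP^1$-bundle with fibre $\PP(H/L)$, not a $\PP^1\times\PP^1$-bundle, though your count $8+6-5=9$ is right. Your fibres over $O$ and $D_i^\circ$ are correct, and so is pairing the two $D_i$-summands into an exchange object.

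The gap is that the step carrying all the content, the fibre over $R$ with its Weil structure, is left open, and both shapes you suggest are wrong. Over $((L,H),U)\in R$ you must split according to whether $W=U$:
\begin{itemize}
\item If $W\neq U$, then $U\cap W=L$ and $U+W=H$ force $(L',H')=(L,H)$. This gives a $\PP^1$ of choices $W\in\PP(H/L)$.
\item If $W=U$, then $L'\in\PP(U)$ and $H'\in\PP((V/U)^*)$ are free. This gives a $\PP^1\times\PP^1$.
\end{itemize}
The two pieces meet only at $(U,L,H)$. So the fibre is $\PP^1\cup_{\mathrm{pt}}(\PP^1\times\PP^1)$, with Betti numbers $1,3,1$ in degrees $0,2,4$.

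Your candidates fail the self-duality check you plan to use. Remove the stalks of $\tIC_C$ and $(\tIC_{D_1}\oplus\tIC_{D_2})\sh{-2}$. A $\PP^1\times\PP^1$-type surface then leaves ranks $0,1$ in degrees $2,4$. Two $\PP^1$-bundles over $\PP^1$ glued along a curve leave ranks $1,2$. The residual must be symmetric about degree $3=\operatorname{codim}_C R$, so neither can produce $\tIC_R\sh{-2}\oplus\tIC_R\sh{-4}$.

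The signs also depend on this structure. At $x\in R^F$, Frobenius acts on the fibre by $(W,L',H')\mapsto(W^\perp,(H')^\perp,(L')^\perp)$. It preserves both components and their intersection point. On the $\PP^1$-component it gives $H^2=\Ql(-1)$. It swaps the two factors of $\PP^1\times\PP^1$, because $\perp$ exchanges lines in $U$ with hyperplanes containing $U=U^\perp$. Mayer--Vietoris then gives $H^2=(\Ql^{\oplus2}\oplus\varepsilon)(-1)$ and $H^4=\Ql(-2)$.

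The exchange summand contributes exactly $(\Ql\oplus\varepsilon)(-1)$ in degree $2$ at $x$. What remains, $\Ql(-1)$ in degree $2$ and $\Ql(-2)$ in degree $4$, is positive. This gives $\tIC_R^+\sh{-2}\oplus\tIC_R^+\sh{-4}$, which becomes $E_R^+\sh{\pm1}$ after the total shift $\sh4$. The positive class left in degree $2$ comes from the $F$-stable extra $\PP^1$-component, which your proposal never identifies. Without it you cannot rule out a negative descent on $R$, which is exactly the point you flagged as needing a check.
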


\begin{proof}
Let $Z=T\times_XR$, formed with the second flag of $T$ and the first
flag of $R$, and $\widetilde Z=\widetilde T\times_TZ$:
\begin{equation}\label{eq:fc-exchange-diagram}
\begin{tikzcd}[column sep=large,row sep=large]
&\widetilde Z\arrow[r]\arrow[d,"\rho"']\arrow[dl,"f"']
 &\widetilde T\arrow[d,"\pi"]\\
X\times Y&Z\arrow[l,"m"]\arrow[r]&T
\end{tikzcd}
\end{equation}
Here the square is Cartesian, $m(x,x',U)=(x,U)$ and $f=m\circ\rho$.
Since $R\to X$ is a $\PP^1$-bundle, so is $\widetilde Z\to\widetilde T$. Hence $\widetilde Z$ is smooth of dimension $9$, and $\rho$, a smooth
base change of $\pi$, is small. All maps are $F$-compatible, and proper
base change gives $\tIC_T^+\diamond\tIC_R^+\simeq Rf_*\const{\widetilde Z}$.

The fiber of $f$ over $((L,H),U)$ consists of the triples $(W,L',H')$
with
\begin{equation}\label{eq:fc-exchange-fiber}
L\subset W\subset H,\qquad L'\subset U\cap W,\qquad U+W\subset H'.
\end{equation}
Thus $U\cap W\ne0$. If $\dim(U\cap W)=1$, then $L'=U\cap W$ and
$H'=U+W$; if $W=U$, then $L'\in\PP(U)$ and $H'\in\PP((V/U)^*)$ are
arbitrary. On $O$, $J=U\cap H$ is a line different from $L$, and
$U\cap W\subset J$ forces $W=L+J$, $L'=J$ and $H'=U+L$. On $D_1^\circ$,
where $U\cap H=L$, and on $D_2^\circ$, where $U\subset H$, every
$W\in\PP(H/L)$ occurs, with $(L',H')=(L,U+W)$ and $(U\cap W,H)$, respectively. On $R$, the choices with $(L',H')=(L,H)$ form $\PP(H/L)\cong\PP^1$, those with $W=U$ form
$\PP(U)\times\PP((V/U)^*)\cong\PP^1\times\PP^1$, and the two meet at $(U,L,H)$. For a point fixed by $F^2$ in each stratum, this gives the following fibers and cohomology with the split $\Fqq$-Weil structure;
all other cohomology vanishes:
\begin{equation}\label{eq:fc-exchange-fiber-table}
\begin{tabular}{|c|c|c|c|c|}
\hline
stratum&fiber&$H^0_{\et}$&$H^2_{\et}$&$H^4_{\et}$\\
\hline
$O$&$\mathrm{pt}$&$\Ql$&$0$&$0$\\
$D_1^\circ,\ D_2^\circ$&$\PP^1$&$\Ql$&$\Ql(-1)$&$0$\\
$R$&$\PP^1\cup_{\mathrm{pt}}(\PP^1\times\PP^1)$
 &$\Ql$&$\Ql(-1)^{\oplus3}$&$\Ql(-2)$\\
\hline
\end{tabular}
\end{equation}
The four orbit closures are smooth and the geometric stabilizers are connected, so the equivariant local systems are geometrically
constant. By the split Tate convolution decomposition, the stalks determine the multiplicities successively:
\begin{equation}\label{eq:fc-exchange-split}
\tIC_T\star\tIC_R\simeq
\tIC_C\oplus(\tIC_{D_1}\oplus\tIC_{D_2})\sh{-2}
\oplus\tIC_R\sh{-2}\oplus\tIC_R\sh{-4}.
\end{equation}

Over an $\Fq$-rational point of $O$, the fiber is an $\Fq$-rational
point, so the full-support summand has positive descent. The two $D_i$-summands are exchanged by $F$ and form $P_\omega\sh{-2}$. At $x\in R^F$, Frobenius acts on the fiber by $(W,L',H')\mapsto(W^\perp,(H')^\perp,(L')^\perp)$. It preserves both components and their intersection point, and exchanges the two factors of $\PP^1\times\PP^1$. Mayer--Vietoris sequence gives the first row of the table below. The exchange summand has two constant stalks
interchanged by $F$, so it contributes $(\Ql\oplus\varepsilon)(-1)$ in
degree two; subtracting it and the full-support contribution gives the
last row:
\begin{equation}\label{eq:fc-exchange-descent-table}
\begin{tabular}{|c|c|c|c|}
\hline
degree&$0$&$2$&$4$\\
\hline
$H^\bullet_{\et}(f^{-1}(x))$&$\Ql$
 &$(\Ql^{\oplus2}\oplus\varepsilon)(-1)$&$\Ql(-2)$\\
full-support summand&$\Ql$&$0$&$0$\\
exchange summand&$0$&$(\Ql\oplus\varepsilon)(-1)$&$0$\\
summands supported on $R$&$0$&$\Ql(-1)$&$\Ql(-2)$\\
\hline
\end{tabular}
\end{equation}
We can read from \eqref{eq:fc-exchange-split} and \eqref{eq:fc-exchange-descent-table} that $$\tIC_T^+\diamond\tIC_R^+\simeq
\tIC_C^+\oplus P_\omega\sh{-2}
\oplus\tIC_R^+\sh{-2}\oplus\tIC_R^+\sh{-4}.$$ The input
shifts add up to $\sh4$, which turns the summands into $E_C^+$,
$P_\omega\sh2=\cP_\omega$ and $E_R^+\sh{\pm1}$. The proof is complete.
\end{proof}

\end{document}